\theoremstyle{plain}
\newtheorem{thm}[subsection]{Theorem}
\newtheorem{sbthm}[subsubsection]{Theorem}
\newtheorem{sbprop}[subsubsection]{Proposition}
\newtheorem{sbcor}[subsubsection]{Corollary}
\newtheorem{sblem}[subsubsection]{Lemma}
\theoremstyle{definition}
\newtheorem{para}[subsection]{}
\newtheorem{sbeg}[subsubsection]{Example}
\newtheorem{sbrem}[subsubsection]{Remark}
\newtheorem{sbpara}[subsubsection]{}
\newtheorem*{example}{Example}
\newtheorem*{ack}{Acknowledgments}
\newenvironment{pf}{\proof[\proofname]}{\endproof}
\DeclareMathOperator{\PGL}{PGL}
\DeclareMathOperator{\GL}{GL}
\DeclareMathOperator{\SL}{SL}
\DeclareMathOperator{\PO}{PO}
\DeclareMathOperator{\PU}{PU}
\DeclareMathOperator{\SO}{SO}
\DeclareMathOperator{\SU}{SU}
\DeclareMathOperator{\diag}{diag}
\DeclareMathOperator{\Iw}{Iw}
\DeclareMathOperator{\BM}{BM}
\DeclareMathOperator{\KP}{KP}
\newcommand{\mb}{\mathbb}
\setlist[1]{itemsep=0.5ex, topsep=1ex}
\setlist[2]{nosep}
\begin{document}

\title{Compactifications of $S$-arithmetic quotients for the projective general linear group}

\author{Takako Fukaya, Kazuya Kato, Romyar Sharifi}

\maketitle

\newcommand\Cal{\mathcal}
\newcommand\define{\newcommand}

\define\gp{\mathrm{gp}}%
\define\fs{\mathrm{fs}}%
\define\an{\mathrm{an}}%
\define\mult{\mathrm{mult}}%
\define\Ker{\mathrm{Ker}\,}%
\define\Coker{\mathrm{Coker}\,}%
\define\Hom{\mathrm{Hom}\,}%
\define\Ext{\mathrm{Ext}\,}%
\define\rank{\mathrm{rank}\,}%
\define\gr{\mathrm{gr}}%
\define\cHom{\Cal{Hom}}
\define\cExt{\Cal Ext\,}%

\define\cB{\Cal B}
\define\cC{\Cal C}
\define\cD{\Cal D}
\define\cO{\Cal O}
\define\cS{\Cal S}
\define\cT{\Cal T}
\define\cM{\Cal M}
\define\cG{\Cal G}
\define\cH{\Cal H}
\define\cR{\Cal R}
\define\cE{\Cal E}
\define\cF{\Cal F}
\define\cN{\Cal N}
\define\fF{\frak F}
\define\Dc{\check{D}}
\define\Ec{\check{E}}

\newcommand{\N}{{\mathbb{N}}}
\newcommand{\Q}{{\mathbb{Q}}}
\newcommand{\Z}{{\mathbb{Z}}}
\newcommand{\R}{{\mathbb{R}}}
\newcommand{\C}{{\mathbb{C}}}
\newcommand{\bN}{{\mathbb{N}}}
\newcommand{\bQ}{{\mathbb{Q}}}
\newcommand{\bF}{{\mathbb{F}}}
\newcommand{\bZ}{{\mathbb{Z}}}
\newcommand{\bP}{{\mathbb{P}}}
\newcommand{\bR}{{\mathbb{R}}}
\newcommand{\bC}{{\mathbb{C}}}
\newcommand{\bbQ}{{\bar \mathbb{Q}}}
\newcommand{\ol}[1]{\overline{#1}}
\newcommand{\too}{\longrightarrow}
\newcommand{\respect}{\rightsquigarrow}
\newcommand{\compatible}{\leftrightsquigarrow}
\newcommand{\upc}[1]{\overset {\lower 0.3ex \hbox{${\;}_{\circ}$}}{#1}}
\newcommand{\Gmlog}{\bG_{m, \log}}
\newcommand{\Gm}{\bG_m}
\newcommand{\ep}{\varepsilon}
\newcommand{\Spec}{\operatorname{Spec}}
\newcommand{\val}{{\mathrm{val}}} 
\newcommand{\tor}{\mathrm{tor}}
\newcommand{\n}{\operatorname{naive}}
\newcommand{\bs}{\backslash}
\newcommand{\Gal}{\operatorname{{Gal}}}
\newcommand{\gal}{{\rm {Gal}}({\bar \Q}/{\Q})}
\newcommand{\galp}{{\rm {Gal}}({\bar \Q}_p/{\Q}_p)}
\newcommand{\gall}{{\rm{Gal}}({\bar \Q}_\ell/\Q_\ell)}
\newcommand{\wep}{W({\bar \Q}_p/\Q_p)}
\newcommand{\wel}{W({\bar \Q}_\ell/\Q_\ell)}
\newcommand{\Ad}{{\rm{Ad}}}
\newcommand{\BS}{{\rm {BS}}}
\newcommand{\even}{\operatorname{even}}
\newcommand{\End}{{\rm {End}}}
\newcommand{\odd}{\operatorname{odd}}
\newcommand{\np}{\text{non-$p$}}
\newcommand{\g}{{\gamma}}
\newcommand{\G}{{\Gamma}}
\newcommand{\Lam}{{\Lambda}}
\newcommand{\La}{{\Lambda}}
\newcommand{\lam}{{\lambda}}
\newcommand{\la}{{\lambda}}
\newcommand{\uL}{{{\hat {L}}^{\rm {ur}}}}
\newcommand{\uQp}{{{\hat \Q}_p}^{\text{ur}}}
\newcommand{\sel}{\operatorname{Sel}}
\newcommand{\dt}{{\rm{Det}}}
\newcommand{\Sig}{\Sigma}
\newcommand{\fil}{{\rm{fil}}}
\newcommand{\spl}{{\rm{spl}}}
\newcommand{\st}{{\rm{st}}}
\newcommand{\Isom}{{\rm {Isom}}}
\newcommand{\Mor}{{\rm {Mor}}}
\newcommand{\bg}{\bar{g}}
\newcommand{\id}{{\rm {id}}}
\newcommand{\cone}{{\rm {cone}}}
\newcommand{\al}{a}
\newcommand{\ChL}{{\cal{C}}(\La)}
\newcommand{\Image}{{\rm {Image}}}
\newcommand{\toric}{{\operatorname{toric}}}
\newcommand{\torus}{{\operatorname{torus}}}
\newcommand{\Aut}{{\rm {Aut}}}
\newcommand{\Qp}{{\mathbb{Q}}_p}
\newcommand{\barQp}{{\mathbb{Q}}_p}
\newcommand{\Qpur}{{\mathbb{Q}}_p^{\rm {ur}}}
\newcommand{\Zp}{{\mathbb{Z}}_p}
\newcommand{\Zl}{{\mathbb{Z}}_l}
\newcommand{\Ql}{{\mathbb{Q}}_l}
\newcommand{\Qlur}{{\mathbb{Q}}_l^{\rm {ur}}}
\newcommand{\F}{{\mathbb{F}}}
\newcommand{\eps}{{\epsilon}}
\newcommand{\epsLa}{{\epsilon}_{\La}}
\newcommand{\epsLaVxi}{{\epsilon}_{\La}(V, \xi)}
\newcommand{\epsOLaVxi}{{\epsilon}_{0,\La}(V, \xi)}
\newcommand{\Qplin}{{\mathbb{Q}}_p(\mu_{l^{\infty}})}
\newcommand{\otimesQplin}{\otimes_{\Qp}{\mathbb{Q}}_p(\mu_{l^{\infty}})}
\newcommand{\galFl}{{\rm{Gal}}({\bar {\Bbb F}}_\ell/{\Bbb F}_\ell)}
\newcommand{\gallur}{{\rm{Gal}}({\bar \Q}_\ell/\Q_\ell^{\rm {ur}})}
\newcommand{\galFF}{{\rm {Gal}}(F_{\infty}/F)}
\newcommand{\galFv}{{\rm {Gal}}(\bar{F}_v/F_v)}
\newcommand{\galF}{{\rm {Gal}}(\bar{F}/F)}
\newcommand{\epsVxi}{{\epsilon}(V, \xi)}
\newcommand{\epsOVxi}{{\epsilon}_0(V, \xi)}
\newcommand{\plim}{\lim_
{\scriptstyle 
\longleftarrow \atop \scriptstyle n}}
\newcommand{\BT}{{\cal B}{\cal T}}
\newcommand{\sig}{{\sigma}}
\newcommand{\ga}{{\gamma}}
\newcommand{\del}{{\delta}}
\newcommand{\Vss}{V^{\rm {ss}}}
\newcommand{\Bst}{B_{\rm {st}}}
\newcommand{\Dpst}{D_{\rm {pst}}}
\newcommand{\Dcrys}{D_{\rm {crys}}}
\newcommand{\DdR}{D_{\rm {dR}}}
\newcommand{\Fin}{F_{\infty}}
\newcommand{\Kla}{K_{\lambda}}
\newcommand{\Ola}{O_{\lambda}}
\newcommand{\Mla}{M_{\lambda}}
\newcommand{\Det}{{\rm{Det}}}
\newcommand{\Sym}{{\rm{Sym}}}
\newcommand{\LaSa}{{\La_{S^*}}}
\newcommand{\cX}{{\cal {X}}}
\newcommand{\MHG}{{\frak {M}}_H(G)}
\newcommand{\tauMla}{\tau(M_{\lambda})}
\newcommand{\Fvur}{{F_v^{\rm {ur}}}}
\newcommand{\Lie}{{\rm {Lie}}}
\newcommand{\cL}{{\cal {L}}}
\newcommand{\cW}{{\cal {W}}}
\newcommand{\fq}{{\frak {q}}}
\newcommand{\cont}{{\rm {cont}}}
\newcommand{\SC}{{SC}}
\newcommand{\Om}{{\Omega}}
\newcommand{\dR}{{\rm {dR}}}
\newcommand{\crys}{{\rm {crys}}}
\newcommand{\hatSig}{{\hat{\Sigma}}}
\newcommand{\rdet}{{{\rm {det}}}}
\newcommand{\ord}{{{\rm {ord}}}}
\newcommand{\BdR}{{B_{\rm {dR}}}}
\newcommand{\BdRO}{{B^0_{\rm {dR}}}}
\newcommand{\Bcrys}{{B_{\rm {crys}}}}
\newcommand{\Qw}{{\mathbb{Q}}_w}
\newcommand{\barkappa}{{\bar{\kappa}}}
\newcommand{\cP}{{\Cal {P}}}
\newcommand{\cQ}{{\Cal {Q}}}
\newcommand{\cZ}{{\Cal {Z}}}
\newcommand{\oppLa}{{\Lambda^{\circ}}}
\newcommand{\sn}{{{\rm {sn}}}}
\newcommand{\et}{\text{\'et}}

\begin{center}
Dedicated to Professor  John Coates on the occasion of his 70th birthday 
\end{center}

\begin{abstract}
Let $F$ be a global field, let $S$ be a nonempty finite set of places of $F$ which contains the archimedean places of $F$, let $d\geq 1$, and let $X =\prod_{v\in S} X_v$ where $X_v$ is the symmetric space (resp., Bruhat-Tits building) associated to $\PGL_d(F_v)$ if $v$ is archimedean (resp., non-archimedean). 
 In this paper, we construct compactifications $\Gamma\bs \bar X$ of the quotient spaces $\Gamma \bs X$ for $S$-arithmetic subgroups  $\Gamma$  of $\PGL_d(F)$.  
The constructions make delicate use of the maximal Satake compactification of $X_v$ (resp., the polyhedral compactification of $X_v$ of G\'erardin and Landvogt) for $v$ archimedean (resp., non-archimedean). 
 We also consider a variant of $\bar X$ in which we use the standard Satake compactification of $X_v$ (resp., the compactification of $X_v$ due to Werner). 
\end{abstract}

\section{Introduction}

\begin{para}\label{Intro1} Let $d\geq 1$, and let 
$X=\PGL_d(\R)/\PO_d(\R)\cong \SL_d(\R)/\SO_d(\R)$. The Borel-Serre space (resp., reductive Borel-Serre space) $\bar X$ contains $X$ as a dense open subspace \cite{BS} (resp., \cite{Z}).  If $\Gamma$ is a subgroup of $\PGL_d(\Z)$ of finite index, this gives rise to a compactification 
	$\Gamma \bs \bar X$ of $\Gamma \bs X$.

 \end{para}

\begin{para} Let $F$ be a global field, which is to say either a number field or a function field in one variable over a finite field.  For a place $v$ of $F$, let $F_v$ be the local field of $F$ at $v$.  Fix $d\geq 1$.

In this paper, we 
will consider the space $X_v$ of all homothety  classes of norms on $F_v^d$ and a certain space 
  $\bar X_{F,v}$ which contains $X_v$ as a dense open subset.  For $F=\Q$ and $v$ the real place, $X_v$ is identified with $\PGL_d(\R)/\PO_d(\R)$, and $\bar X_{F,v}$ is identified with the reductive Borel-Serre space associated to $\PGL_d(F_v)$. We have the following analogue of \ref{Intro1}.
  
  \end{para}
  
  \begin{thm}\label{thm0} Let $F$ be a function field in one variable over a finite field, let $v$ be a place of $F$, and let $O$ be the subring of $F$ consisting of all elements which are integral outside $v$. Then for any subgroup $\Gamma$ of $\PGL_d(O)$ of finite index, the quotient $\Gamma \bs \bar X_{F,v}$ is a compact Hausdorff space 
  which contains $\Gamma \bs X_v$ as a dense open subset. 
  \end{thm}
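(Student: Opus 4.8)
The plan is to run the argument of Borel--Serre --- in the form refined by Zucker for the reductive Borel--Serre bordification --- in this setting, the two essential inputs being the reduction theory of $\PGL_d$ over the function field $F$ and the description, built into $\bar X_{F,v}$ by way of the G\'erardin--Landvogt polyhedral compactification, of the topology of $\bar X_{F,v}$ near its boundary. Recall from the construction the relevant structure: $\bar X_{F,v}=\bigsqcup_P X_v[P]$, the union being over the $F$-rational parabolic subgroups $P$ of $\PGL_d$ with $P=\PGL_d$ giving the open dense stratum $X_v$; the stratum $X_v[P]$ is the Bruhat--Tits building at $v$ of the semisimple quotient of a Levi of $P$, so that for $P$ the stabiliser of a flag of type $(n_1,\dots,n_k)$ it is a building for $\prod_{j=1}^k\PGL_{n_j}(F_v)$; the group $\PGL_d(F)$ acts continuously with $g\cdot X_v[P]=X_v[gPg^{-1}]$; and $\bar X_{F,v}$ is locally compact Hausdorff, a neighbourhood of a point of $X_v[P]$ being modelled on the product of a neighbourhood in $X_v[P]$ with a corner $[t_0,\infty]^{k-1}$ whose faces at $\infty$ recover the smaller strata $X_v[Q]$, $Q\subseteq P$. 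For $F=\Q$ and $v$ real this $\bar X_{F,v}$ is the reductive Borel--Serre space, and the argument below is modelled on that case.

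First I would invoke Harder's reduction theory for $\PGL_d$ over $F$ relative to the single place $v$: it provides finitely many $F$-parabolic subgroups $P_1,\dots,P_n$ and, for each $i$, a Siegel set $\mathfrak S_i\subseteq X_v$ attached to $P_i$ (a relatively compact set $\omega_i$ pushed off in the dominant directions of the split part of $P_i$) such that $X_v=\bigcup_{i=1}^n\Gamma\cdot\mathfrak S_i$ and, for all $i,j$, the set $\{\gamma\in\Gamma:\gamma\mathfrak S_i\cap\mathfrak S_j\neq\emptyset\}$ is finite. For $\Gamma=\PGL_d(O)$ this is Harder's theorem; for a general finite-index $\Gamma$ it follows from that case after translating the $\mathfrak S_i$ by a set of coset representatives of $\Gamma\bs\PGL_d(O)$. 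Over a function field this is combinatorially concrete: $X_v$ is a locally finite simplicial complex and a Siegel set is, up to the compact factor $\omega_i$, a union of simplicial sectors pointing in the boundary direction cut out by $P_i$.

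The heart of the matter --- and the step I expect to be the main obstacle --- is to show that the closure $\overline{\mathfrak S_i}$ of each $\mathfrak S_i$ in $\bar X_{F,v}$ is compact. Using the local model above one checks that as the split part of $P_i$ goes to infinity within $\mathfrak S_i$ the corresponding points of $X_v$ converge in $\bar X_{F,v}$, their limits lying in the strata $X_v[Q]$ with $Q\subseteq P_i$, while the remaining, bounded data stay in a fixed compact set; concretely $\overline{\mathfrak S_i}$ is exhibited as the continuous image of a compact set of the shape $\overline{\omega_i}\times C$ with $C$ a compact corner. The real content is to verify that the topology placed on $\bar X_{F,v}$ genuinely makes Siegel sets converge onto their anticipated boundary strata with nothing escaping to infinity; this is exactly where the construction of $\bar X_{F,v}$, and beneath it the properties of the G\'erardin--Landvogt compactification, are brought to bear. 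Granting this, one also checks $\bar X_{F,v}=\Gamma\cdot\bigcup_{i=1}^n\overline{\mathfrak S_i}$: on the open stratum this is reduction theory, and on a boundary stratum $X_v[P]$ --- a building for $\prod_j\PGL_{n_j}$ --- one applies reduction theory to those smaller groups, equivalently the theorem inductively in $d$, together with the compatibility that a Siegel set of $\PGL_d$ attached to an $F$-parabolic contained in $P$ restricts on $X_v[P]$ to a translate of a Siegel set of $\prod_j\PGL_{n_j}$.

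Finally, the image of each $\overline{\mathfrak S_i}$ in $\Gamma\bs\bar X_{F,v}$ is compact, being the continuous image of a compact set; there are finitely many, and by the previous paragraph they cover, so $\Gamma\bs\bar X_{F,v}$ is compact. For the Hausdorff property --- which does not follow from proper discontinuity, since boundary stabilisers in $\Gamma$ are infinite --- one shows, as in the case of the reductive Borel--Serre space (Zucker), that the set $R_\Gamma=\{(z,\gamma z):z\in\bar X_{F,v},\ \gamma\in\Gamma\}$ is closed in $\bar X_{F,v}\times\bar X_{F,v}$; since the quotient map is open, $\Gamma\bs\bar X_{F,v}$ is then Hausdorff. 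Closedness of $R_\Gamma$ is again a separation statement proved by reduction theory: given $(z_\alpha,\gamma_\alpha z_\alpha)\to(z,w)$, one places the points inside translates of the $\mathfrak S_i$, uses the finiteness statement to see that the $\gamma_\alpha$ become constant modulo stabilisers, and passes to the limit to get $w\in\Gamma z$. That $\Gamma\bs X_v$ is open in $\Gamma\bs\bar X_{F,v}$ holds because $X_v$ is open and $\Gamma$-stable and the quotient map is open; it is dense because $X_v$ is dense in $\bar X_{F,v}$.
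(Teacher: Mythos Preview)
Your overall strategy is the paper's: Harder's reduction theory supplies Siegel-type sets whose closures in $\bar X_{F,v}$ are compact and, after $\Gamma$-translation, cover (the paper's Propositions \ref{red21} and \ref{prop6}), and a separation argument gives Hausdorffness (Proposition \ref{ga=b3}). The compactness of the Siegel closures and the inductive reduction to Levi factors on boundary strata go through essentially as you describe.

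The Hausdorff argument, however, needs more than you indicate. The classical finiteness $\sharp\{\gamma:\gamma\frak S_i\cap\frak S_j\neq\varnothing\}<\infty$ is correct for Siegel sets with compact $\omega_i$ inside $X_v$, but the closures $\overline{\frak S_i}$ are \emph{not} neighborhoods of their boundary points: near $(P,\mu)\in\bar X_{F,v}$ one needs arbitrarily large $P_u(F_v)$-translates --- in the non-archimedean setting the equivalence relation of Proposition \ref{lst0}(4) identifies $(g,t)\sim(g',t)$ only when the entries of $g^{-1}g'$ are bounded by a quantity going to $\infty$ as $t\to 0$, so no compact $\omega$ can fill out a full neighborhood. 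Hence in your sequence argument you cannot arrange the translating elements $\delta_\alpha\in\Gamma$ (placing $z_\alpha$ into a fixed $\frak S_i$) to be eventually constant, and the argument stalls. The paper's remedy is twofold: first, a genuine neighborhood of a boundary point has the form $\Gamma_{K,(P)}\varphi\,\frak S(c_1,c_2)$, i.e., one must saturate by the full stabiliser-type subgroup (Proposition \ref{lemcc}); second, in place of Siegel finiteness one uses Harder's sharper statement (Lemma \ref{redc}, extended to $\bar X_{F,v}$ in Proposition \ref{red2cc}) that if $\gamma$ carries a sufficiently \emph{deep} part of a Siegel domain back into a Siegel domain then $\gamma\in P_I(F)$. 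One then concludes by the discreteness of the image of $\Gamma\cap P(F)$ in the Levi quotient (Lemma \ref{ga=b1}). Your phrase ``constant modulo stabilisers'' gestures at the right picture, but the actual mechanism is this parabolic-containment refinement of reduction theory, not the raw finiteness.

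A smaller point: your local model ``neighbourhood in $X_v[P]$ times a corner'' is only correct after quotienting by $P_u(F_v)$ (Corollary \ref{keycor}); before the quotient, $\bar X_{F,v}(P)$ is a quotient of $P_u(F_v)\times\frak Z_{F,v}(P)\times\R_{\ge 0}^m$, not a product. This is precisely why compact-$\omega$ Siegel closures fail to be neighborhoods of boundary points.
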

  
  \begin{para}
  Our space $\bar X_{F,v}$ is not a very new object. 
  In the case that $v$ is non-archimedean, $X_v$ is identified as a topological space with the Bruhat-Tits building of $\PGL_d(F_v)$. In this case, $\bar X_{F,v}$ is similar to the polyhedral compactification of $X_v$ of G\'erardin \cite{G} and Landvogt \cite{L}, which we denote by $\bar X_v$.  To each element of $\bar X_v$ is associated a
 parabolic subgroup 
  of $\PGL_{d,F_v}$. We define $\bar X_{F,v}$ as the subset of $\bar X_v$ consisting of all elements for which the associated parabolic subgroup is $F$-rational. 
  We endow $\bar X_{F,v}$ with a topology which is different from its topology as a subspace of $\bar X_v$. 
   
   In the case $d=2$, the boundary $\bar X_v\setminus X_v$ of $\bar X_v$ is $\mb{P}^1(F_v)$, whereas the boundary $\bar X_{F,v}\setminus X_v$ of $\bar X_{F,v}$ is $\mb{P}^1(F)$.
Unlike
  $\bar X_v$, the space $\bar X_{F,v}$ is not compact, but the arithmetic quotient as in \ref{Intro1} and \ref{thm0} is compact (see \ref{thm1}).

 \end{para}

\begin{para} In \S\ref{quotS}, we derive the following generalization of \ref{Intro1} and \ref{thm0}. 

Let $F$ be a global field. For a nonempty finite set $S$ of places of $F$, let $\bar X_{F,S}$ be the subspace of $\prod_{v\in S} \bar X_{F,v}$ consisting of all elements $(x_v)_{v\in S}$ such that the $F$-parabolic subgroup associated to $x_v$ is independent of $v$.  Let $X_S$ denote the subspace $\prod_{v \in S} X_v$ of $\bar X_{F,S}$.

Let $S_1$ be a nonempty finite set of places of $F$ containing all archimedean places of $F$, let $S_2$ be a finite set of places of $F$ which is disjoint from $S_1$, and let $S=S_1\cup S_2$. 
Let $O_S$ be the subring of $F$ consisting of all elements which are integral outside $S$. 

Our main result is the following theorem (see Theorem \ref{main}).

\end{para}

\begin{thm}\label{thm1} Let $\Gamma$ be a subgroup of $\PGL_d(O_S)$ of finite index. Then the quotient $\Gamma \bs (\bar X_{F, S_1} \times X_{S_2})$ is a compact Hausdorff space which contains $\Gamma \bs X_S$ as a dense open subset.

\end{thm}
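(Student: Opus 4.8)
The plan is to reduce the general statement of Theorem~\ref{thm1} to the single-place case (Theorems~\ref{thm0} and its archimedean analogue, i.e.\ \ref{Intro1} via the reductive Borel--Serre construction) together with a compactness argument for the action of the $S$-arithmetic group on the fibered product. First I would recall the local picture: for each $v \in S_1$ we have the space $\bar X_{F,v} \supseteq X_v$ on which $\PGL_d(F_v)$ acts, with each point carrying an associated $F$-parabolic subgroup $P$; for $v \in S_2$ we only have $X_v$ itself (the Bruhat--Tits building), also with a $\PGL_d(F_v)$-action. The space $\bar X_{F,S_1}\times X_{S_2}$ is the subspace of $\prod_{v\in S_1}\bar X_{F,v}\times\prod_{v\in S_2}X_v$ cut out by the condition that the parabolic attached to the $v$-component is the same $F$-parabolic $P$ for all $v\in S_1$ (points of $X_{S_2}$ impose no parabolic, being "interior"). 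The diagonal action of $\G \subseteq \PGL_d(O_S)$ preserves this condition because conjugation by $\G$ acts on the set of $F$-parabolics simultaneously in all components.

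Next I would stratify $\bar X_{F,S_1}\times X_{S_2}$ by the $\PGL_d(F)$-conjugacy class of the associated parabolic $P$. For the parabolic $P$, the corresponding stratum is a product over $v\in S_1$ of the boundary stratum of $\bar X_{F,v}$ attached to $P$ — which, as in the reductive Borel--Serre / Gérardin--Landvogt picture, is (a quotient of) the space $X_v^{L}$ attached to the Levi quotient $L$ of $P$ — times $\prod_{v\in S_2}X_v$. Thus each stratum is, up to the Levi passage, again of the same shape $\bar X^{L}_{F,S_1}\times X^{L}_{S_2}$ but for the smaller group $L$ (a product of $\PGL$'s of smaller rank). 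The stabilizer of the stratum in $\G$ is commensurable with an $S$-arithmetic subgroup of $P(F)$, and modulo the unipotent radical it is commensurable with an $S$-arithmetic subgroup of $L(F)$. By induction on $d$ (the base case $d=1$ being trivial, where $X_v$ is a point), the quotient of each stratum by the relevant arithmetic group is compact Hausdorff; this handles the closed strata set-theoretically.

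The Hausdorff and density assertions are comparatively soft: density of $\G\bs X_S$ follows because $X_v$ is dense in $\bar X_{F,v}$ by construction and the fibered-product and quotient operations preserve density of an open saturated subset; Hausdorffness reduces to showing that the $\G$-action on $\bar X_{F,S_1}\times X_{S_2}$ is proper, which in turn follows from properness of the $\PGL_d(O_S)$-action together with the discreteness of $\G$ and a separation-of-orbits argument across the (finitely many $\G$-orbits of) strata. The main obstacle is \emph{compactness of the whole quotient}, not merely of each stratum: one must show that the strata fit together so that $\G\bs(\bar X_{F,S_1}\times X_{S_2})$ is covered by finitely many compact pieces and that boundary strata are limits of interior points in a controlled way. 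Concretely, I would fix a point $o\in X_S$, use reduction theory for $\PGL_d(O_S)$ acting on $X_S$ (a Siegel-domain / fundamental-set description, available because $O_S$-arithmetic quotients of products of symmetric spaces and buildings are described by a finite union of Siegel sets, each a translate along a "cone toward a single cusp"), and then take closures of these Siegel sets inside $\bar X_{F,S_1}\times X_{S_2}$. The key technical point — and the crux of the whole proof — is that the closure of a Siegel set is compact in the topology we have placed on $\bar X_{F,v}$ (which, recall, is \emph{not} the subspace topology from $\bar X_v$): one must verify that as a point in a Siegel set runs off to infinity toward a rational cusp $P$, it converges in $\bar X_{F,S_1}\times X_{S_2}$ to a point of the $P$-stratum, and that the relevant "non-$P$-rational directions" (which are exactly the directions removed in passing from $\bar X_v$ to $\bar X_{F,v}$) do not occur within an $O_S$-Siegel set. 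Granting this, finitely many translated closed Siegel sets cover $\bar X_{F,S_1}\times X_{S_2}$ modulo $\G$, each has compact image, and compactness of the quotient follows.
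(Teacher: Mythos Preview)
Your compactness outline is essentially the paper's: reduction theory gives finitely many Siegel-type sets $\frak S(C;c_1,c_2)$ whose $\Gamma R$-translates cover $\bar X_{F,S_1}\times X_{S_2}$, and these have compact image (Proposition~\ref{red21} and Proposition~\ref{prop6}). The identification of the ``key technical point'' (closures of Siegel sets are compact in the right topology because only $F$-rational directions to infinity occur) is correct.

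The Hausdorff argument, however, has a genuine gap. The $\Gamma$-action on $\bar X_{F,S_1}\times X_{S_2}$ is \emph{not} proper: a boundary point $a$ with associated parabolic $P$ has stabilizer containing $\Gamma_{(P)}\supset \Gamma\cap P_u(F)$, which is infinite, so the map $\Gamma\times\bar{\frak X}\to\bar{\frak X}\times\bar{\frak X}$ is not proper and you cannot invoke properness to get Hausdorffness. What the paper proves instead (Proposition~\ref{ga=b3}) is the weaker orbit-separation property: for any $a,a'$ there are neighborhoods $U,U'$ such that $\gamma U\cap U'\neq\varnothing$ forces $\gamma a=a'$. Your phrase ``separation-of-orbits argument across the strata'' gestures at this, but the substance is missing. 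The crucial input is the Siegel property of Proposition~\ref{red2cc}: if $\gamma\in\Gamma$ carries a point \emph{sufficiently deep} in a Siegel set (i.e., with $t_{v,i}$ small for $i\in I$) back into a Siegel set, then $\gamma\in P_I(F)$. Only with this can you reduce to the Levi and use discreteness of the image of $\Gamma\cap P(F)$ in $\prod_i\PGL_{V_i/V_{i-1}}(\mb A_{F,S})$ (Lemma~\ref{ga=b1}); your induction-on-$d$ idea is morally this Levi reduction, but without Proposition~\ref{red2cc} you have no mechanism to force an arbitrary $\gamma$ bringing $U$ near $U'$ into $P(F)$ in the first place. Separating points of \emph{different} parabolic type (Lemma~\ref{r=s1}) also requires this, via the quantitative comparison of $t_{v,i}$-coordinates in Proposition~\ref{kep1}.
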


\begin{para} If $F$ is a number field and $S_1$ coincides with the set of archimedean places of $F$, then the space $\bar X_{F,S_1}$ is the maximal Satake space of the Weil restriction of $\PGL_{d,F}$ from $F$ to $\Q$.  In this case, the theorem is known for $S = S_1$ through the work of Satake \cite{Sa2} and in general through the work of Ji, Murty, Saper, and Scherk \cite[4.4]{JMSS}.  
\end{para}

\begin{para}

We also consider a variant $\bar X_{F,v}^{\flat}$ of $\bar X_{F,v}$ and a variant $\bar X_{F,S}^{\flat}$ of $\bar X_{F,S}$ with continuous surjections 
$$\bar X_{F,v}\to \bar X_{F,v}^{\flat}, \quad \bar X_{F,S} \to \bar X_{F,S}^{\flat}.$$  
In the case $v$ is non-archimedean (resp., archimedean), $\bar X_{F,v}^{\flat}$ is the part with ``$F$-rational boundary'' in Werner's compactification (resp., the standard Satake compactification) $\bar X^{\flat}_v$ of $X_v$ \cite{W1, W2} (resp., \cite{Sa1}), endowed with a new topology.  We will obtain an analogue of \ref{thm1} for this variant.

To grasp the relationship with the Borel-Serre compactification \cite{BS}, we also consider a variant $\bar X_{F,v}^{\sharp}$ of $\bar X_{F,v}$ which has a continuous surjection 
$\bar X_{F,v}^{\sharp}\to \bar X_{F,v}$,   
and we show that in the case that $F=\Q$ and $v$ is the real place, $\bar X_{\Q,v}^{\sharp}$ coincides with the Borel-Serre space associated to $\PGL_{d,\Q}$ (\ref{Qcase}).   
If $v$ is non-archimedean, the space $\bar X_{F,v}^{\sharp}$ is not Hausdorff (\ref{nonH}) and does not seem useful. 
\end{para}

\begin{para} What we do in this paper is closely related to what Satake did in \cite{Sa1} and \cite{Sa2}. In \cite{Sa1}, he defined a compactification of a symmetric Riemannian space.  In \cite{Sa2}, he took the part of this compactification with ``rational boundary'' and endowed it with the Satake topology. Then he showed that the quotient of this part by an arithmetic group is compact.  We take the part $\bar X_{F,v}$ of $\bar X_v$ with ``$F$-rational boundary''  to have a compact quotient by an arithmetic group. 
So, the main results and their proofs in this paper might be  evident to the experts in the theory of Bruhat-Tits buildings, but we have not found them in the literature. 
\end{para}

\begin{para}
We intend to apply the compactification \ref{thm0} to the construction of toroidal compactifications of the moduli space of Drinfeld modules of rank $d$ in a forthcoming paper.  In Section \ref{toro}, we give a short explanation of this plan, along with two other potential applications, to asymptotic behavior of heights of motives and to modular symbols over function fields.

\end{para}

\begin{para}  We plan to generalize the results of this paper from $\PGL_d$ to general reductive groups in another forthcoming paper.  The reason why we separate the $\PGL_d$-case from the general case  is as follows.  For $\PGL_d$, we can describe the space $\bar X_{F,v}$ via norms on finite-dimensional vector spaces over $F_v$ (this method is not used for general reductive groups), and  these norms play an important role in the analytic theory of  toroidal compactifications.

\end{para}

\begin{para} In \S\ref{sec2}, we review the compactifications of Bruhat-Tits buildings in the non-archimedean setting and symmetric spaces in the archimedean setting.  In \S\ref{globspace} and \S\ref{quotS}, we discuss our compactifications. 

\end{para}

\begin{para} 
We plan to apply the results of this paper to the study of Iwasawa theory over a function field $F$. 
We dedicate this paper to John Coates, who has played a leading role in the development of Iwasawa theory.
\end{para}

\begin{ack}
The work of the first two authors was supported in part by the National Science Foundation under
	Grant No.~1001729.
	The work of the third author was partially supported by the National Science Foundation under Grant Nos.\ 
	1401122/1661568 and 1360583, and by a grant from the Simons Foundation (304824 to R.S.).
\end{ack}

\section{Spaces associated to local fields}\label{sec2}

In this section, we briefly review the compactification of the symmetric space (resp., of the Bruhat-Tits building) associated to $\PGL_d$ of an archimedean (resp., non-archimedean) local field. See the papers of Satake \cite{Sa1} and Borel-Serre \cite{BS} (resp., G\'erardin \cite{G}, Landvogt  \cite{L}, and Werner \cite{W1, W2}) for details.

Let $E$ be a local field. This means that $E$ is a locally compact topological field with a non-discrete topology.  That is, $E$ is isomorphic to $\R$, $\C$, a finite extension of $\Q_p$ for some prime number $p$, or $\F_q(\!(T)\!)$ for a finite field $\F_q$. 

Let $|\;\;| \colon E \to \R_{\geq 0}$ be the normalized absolute value.  If $E\cong \R$, this is the usual absolute value. If $E\cong \C$, this is the square of the usual absolute value. If $E$ is non-archimedean, this is the unique multiplicative map $E\to \R_{\geq 0}$ such that $|a|=\sharp(O_E/aO_E)^{-1}$ if $a$ is a nonzero element of the valuation ring $O_E$ of $E$.

Fix a positive integer $d$ and a $d$-dimensional $E$-vector space $V$. 

\subsection{Norms}

\begin{sbpara}\label{met} We recall the definitions of norms and semi-norms on $V$.   

A norm (resp., semi-norm) on $V$ is a map $\mu \colon V \to \R_{\geq 0}$ for which there exist an $E$-basis $(e_i)_{1\leq i\leq d}$ of $V$ and an element $(r_i)_{1\leq i\leq d}$ of $\R^d_{>0}$ (resp.,  $\R^d_{\ge 0}$) such that 
$$
\mu(a_1e_1+\dots+a_de_d)= 
\begin{cases}
(r_1^2|a_1|^2\dots + r_d^2|a_d|^2)^{1/2} & \text{if $E\cong \R$},\\
r_1|a_1|+\dots +r_d|a_d| & \text{if $E\cong \C$},\\
\max(r_1|a_1|, \dots, r_d|a_d|) & \text{otherwise.}
\end{cases}
$$
for all $a_1,\dots, a_d\in E$.

\end{sbpara}

\begin{sbpara}\label{met2}
We will call the norm (resp., semi-norm) $\mu$ in the above, the norm (resp., semi-norm) given by the basis $(e_i)_i$ and by $(r_i)_i$.
\end{sbpara}

\begin{sbpara} We have the following characterizations of norms and semi-norms.
\begin{enumerate}
	\item[(1)] If $E \cong \R$ (resp., $E \cong \C$), then there is a one-to-one correspondence 
	between semi-norms on $V$ and 
	symmetric bilinear (resp., Hermitian) forms $(\;\,,\;)$ on $V$ such that $(x,x) \ge 0$ for all $x \in V$.
	The semi-norm $\mu$ corresponding to $(\;\,,\;)$ is given by $\mu(x) = (x,x)^{1/2}$
	(resp., $\mu(x) = (x,x)$).  This restricts to a correspondence between norms and forms that
	are positive definite.
	\item[(2)] If $E$ is non-archimedean, then (as in \cite{GI}) a map $\mu \colon  V\to \R_{\geq 0}$
	is a norm (resp., semi-norm) if and only if $\mu$ satisfies the
	following (i)--(iii) (resp., (i) and (ii)):
	\begin{enumerate}
		\item[(i)] $\mu(ax)= |a|\mu(x)$ for all $a\in E$ and $x\in V$,
		\item[(ii)] $\mu(x+y) \leq \max(\mu(x), \mu(y))$ for all $x,y\in V$, and
		\item[(iii)] $\mu(x) >0$ if $x\in V\setminus \{0\}$.
	\end{enumerate}
\end{enumerate}
These well-known facts imply that if $\mu$ is a norm (resp., semi-norm) on $V$
and $V'$ is an $E$-subspace of $V$, then the restriction of $\mu$ to
$V'$ is a norm (resp., semi-norm) on $V'$.

\end{sbpara}

\begin{sbpara}\label{eqv} We say that two norms (resp., semi-norms) $\mu$ and $\mu'$ on $V$ are equivalent if $\mu' =c\mu$ for some $c\in \R_{>0}$.

\end{sbpara}

\begin{sbpara}\label{actn} The group $\GL_V(E)$ acts on the set of all norms (resp., semi-norms) on $V$: for $g\in \GL_V(E)$ and a norm (resp., semi-norm) $\mu$ on $V$, $g\mu$ is defined as $\mu\circ g^{-1}$. This action preserves the equivalence in \ref{eqv}.

\end{sbpara}

\begin{sbpara}\label{dual} Let $V^*$ be the dual space of $V$. Then there is a bijection between the set of norms on $V$ and the set of norms on $V^*$.  That is, for a norm $\mu$ on $V$, the corresponding norm $\mu^*$ on $V^*$ is given by 
\begin{eqnarray*}
	\mu^*(\varphi)= \sup\Bigg( \frac{|\varphi(x)|}{\mu(x)} \mid x\in V\setminus \{0\} \Bigg)
	&& \text{for } \varphi \in V^*.
\end{eqnarray*}
For a norm $\mu$ on $V$ associated to a basis $(e_i)_i$ of $V$ and $(r_i)_i \in \R^d_{>0}$,  
the norm $\mu^*$ on $V^*$ is associated to the dual basis $(e_i^*)_i$ of $V^*$ and 
$(r_i^{-1})_i$.  This proves the bijectivity.

\end{sbpara}

\begin{sbpara}\label{actdual}  For a norm $\mu$ on $V$ and for $g\in \GL_V(E)$, we have
$$(\mu \circ g)^*= \mu^* \circ (g^*)^{-1},$$
which is to say $(g\mu)^*= (g^*)^{-1}\mu^*$,
where $g^*\in \GL_{V^*}(E)$ is the transpose of $g$. 

\end{sbpara}

\subsection{Definitions of the spaces}

\begin{sbpara}\label{bs1} 
Let $X_V$ denote the set of all equivalence classes of norms on $V$ (as in \ref{eqv}). We endow $X_V$ with the quotient topology of the subspace topology on the set of all norms on $V$ inside $\R^V$.
\end{sbpara}

\begin{sbpara} \label{XV} In the case that $E$ is archimedean, we have 
$$
	X_V\cong \begin{cases} \PGL_d(\R)/\PO_d(\R)\cong \SL_d(\R)/\SO_d(\R) & \text{if } E \cong \R\\
	\PGL_d(\C)/\PU(d)\cong \SL_d(\C)/\SU(d) & \text{if } E \cong \C.
	\end{cases}
$$ 
In the case $E$ is non-archimedean, $X_V$ is identified with (a geometric realization of) the Bruhat-Tits building associated to $\PGL_V$ \cite{BT} (see also \cite[Section 2]{DH}).

\end{sbpara}

\begin{sbpara}\label{par} Recall that for a finite-dimensional vector space $H\neq 0$ over a field $I$, the following four objects are in one-to-one correspondence:
\begin{enumerate}
\item[(i)] a parabolic subgroup of the algebraic group $\GL_H$ over $I$,
\item[(ii)] a parabolic subgroup of the algebraic group $\PGL_H$ over $I$, 
\item[(iii)] a parabolic subgroup of the algebraic group $\SL_H$ over $I$, and
\item[(iv)] a flag of $I$-subspaces of $H$ (i.e., a set of subspaces containing $\{0\}$ and $H$ and totally ordered under inclusion).
\end{enumerate}
The bijections (ii) $\mapsto$ (i) and (i) $\mapsto$ (iii) are the taking of inverse images. The bijection (i) $\mapsto$ (iv) sends a parabolic subgroup $P$ to the set of all $P$-stable $I$-subspaces of $H$, and the converse map takes a flag to its isotropy subgroup in $\GL_H$. 
\end{sbpara}

\begin{sbpara}\label{bs2}  Let $\bar X_V$ be the set of all  pairs $(P, \mu)$, where $P$ is a parabolic subgroup of the algebraic group $\PGL_V$ over $E$ and, if 
$$0=V_{-1}\subsetneq V_0 \subsetneq\dots \subsetneq V_m=V$$
denotes the flag corresponding to $P$ (\ref{par}), then $\mu$ is a family $(\mu_i)_{0\leq i\leq m}$, where $\mu_i$ is an equivalence class of norms on $V_i/V_{i-1}$. 

We have an embedding $X_V \hookrightarrow \bar X_V$ which sends $\mu$ to $(\PGL_V, \mu)$.

\end{sbpara}

\begin{sbpara}\label{bs3} Let $\bar X_V^{\flat}$ be the set of all equivalence classes of nonzero semi-norms on the dual space $V^*$ of $V$ (\ref{eqv}). 
We have an embedding $X_V \hookrightarrow \bar X^{\flat}_V$ which sends $\mu$ to $\mu^*$ (\ref{dual}).

This set $\bar X_V^{\flat}$ is also identified with the set of pairs $(W, \mu)$ with $W$ a nonzero $E$-subspace of $V$ and $\mu$ an equivalence class of a norm on $W$. In fact, $\mu$ corresponds to an equivalence class $\mu^*$ of a norm on the dual space $W^*$ of $W$ (\ref{dual}), and $\mu^*$ is identified via the projection $V^*\to W^*$ with an equivalence class of semi-norms on $V^*$. 

We call the understanding of $\bar X_V^{\flat}$ as the set of such pairs $(W,\mu)$ the definition of $\bar X_V^{\flat}$ in the second style. 
In this interpretation of $\bar X^{\flat}_V$, the above embedding $X_V\to \bar X_V^{\flat}$ is written as $\mu\mapsto (V, \mu)$.

\end{sbpara}

\begin{sbpara}
In the case that $E$ is non-archimedean, $\bar X_V$  is 
the polyhedral compactification  of the Bruhat-Tits building $X_V$ by G\'erardin \cite{G} and Landvogt \cite{L} (see also \cite[Proposition 19]{GR}), and $\bar X_V^{\flat}$ is the compactification of $X_V$ by Werner \cite{W1, W2}.  
In the case that $E$ is archimedean, $\bar X_V$ is the maximal Satake compactification, and $\bar X_V^{\flat}$
is the minimal Satake compactification for the standard projective representation of $\PGL_V(E)$, as constructed by Satake in \cite{Sa1} (see also \cite[1.4]{BJ}).  The topologies of $\bar X_V$ and $\bar X_V^{\flat}$ are reviewed in Section \ref{2.3} below.

\end{sbpara}

\begin{sbpara}\label{cansur} We have a canonical surjection
$ \bar X_V\to \bar X_V^{\flat}$
which sends $(P, \mu)$ to $(V_0, \mu_0)$, where $V_0$ is as in \ref{bs2}, and where we use the definition of $\bar X_V^{\flat}$ of the second style in \ref{bs3}.
This surjection is compatible with the inclusion maps from $X_V$ to these spaces. 
\end{sbpara}

\begin{sbpara}\label{act2} We have the natural actions of  $\PGL_V(E)$ on $X_V$, $\bar X_V$ and $\bar X_V^{\flat}$ by \ref{actn}. 
These actions are compatible with the canonical maps between these spaces. 
\end{sbpara}

\subsection{Topologies}\label{2.3}

\begin{sbpara}\label{cover} We define a topology on $\bar X_V$.

Take a basis $(e_i)_i$ of $V$.
We have a commutative diagram
$$
\begin{tikzcd}[column sep = large, row sep = small]\PGL_V(E) \times \R^{d-1}_{>0} \arrow{r} \arrow[hook]{d} & X_V  \arrow[hook]{d} \\
\PGL_V(E) \times \R^{d-1}_{\geq 0} \arrow{r} & \bar X_V. 
\end{tikzcd}
$$
Here the upper arrow is $(g,t) \mapsto g\mu$,
where $\mu$ is the class of the norm on $V$ associated to $((e_i)_i, (r_i)_i)$
with $r_i= \prod_{1\leq j<i} t_j^{-1}$, and where $g\mu$ is defined by the action of $\PGL_V(E)$ on $X_V$ (\ref{act2}).
 The lower arrow is $(g,t)\mapsto g(P, \mu)$, where $(P, \mu)\in \bar X_V$ is defined as follows, and 
$g(P,\mu)$ is then defined by the action of $\PGL_V(E)$ on $\bar X_V$ (\ref{act2}).
Let 
$$
	I=\{j \mid t_j=0\} \subset \{1, \dots, d-1 \},
$$ and write 
$$
 	I=\{c(i)\mid 0\leq i\leq m-1\},
$$ 
where $m=\sharp I$ and $1\leq c(0)<\dots < c(m-1)\leq d-1$.
If we also let $c(-1)=0$ and $c(m)=d$, then the set of 
$$
	V_i= \sum_{j=1}^{c(i)} F e_j
$$
with $-1 \le i \le m$ forms a flag in $V$, and
$P$ is defined to be the corresponding parabolic subgroup of $\PGL_V$ (\ref{par}). For $0\leq i \leq m$, 
we take $\mu_i$ to be the equivalence class of the norm on $V_i/V_{i-1}$ given by the basis $(e_j)_{c(i-1)<j\leq c(i)}$
and the sequence $(r_j)_{c(i-1)<j\leq c(i)}$ with $r_j= \prod_{c(i-1) <k < j} t_k^{-1}$.

Both the upper and the lower horizontal arrows in the diagram are surjective, and the topology on $X_V$ coincides with the topology as a quotient space of $\PGL_V(E) \times \R_{>0}^{d-1}$ via the upper horizontal arrow. 
The topology on $\bar X_V$ is defined as the quotient topology of the topology on $\PGL_V(E)\times \R_{\geq 0}^{d-1}$ via the lower horizontal arrow. 
It is easily seen that this topology is independent of the choice of the basis $(e_i)_i$. 

\end{sbpara}

\begin{sbpara} The space $\bar X_V^{\flat}$ has the following topology: the space of all nonzero semi-norms on $V^*$ has a topology as a subspace of the product $\R^{V^*}$, and $\bar X_V^{\flat}$ has a topology as a quotient of it.

\end{sbpara}

\begin{sbpara}\label{comH}  Both $\bar X_V$ and $\bar X_V^{\flat}$ are compact Hausdorff spaces containing $X_V$ as a dense open subset. This is proved in \cite{G, L, W1, W2} in the case that $E$ is non-archimedean and in \cite{Sa1, BJ} in the archimedean case.

\end{sbpara}

\begin{sbpara}

The topology on $\bar X^{\flat}_V$ coincides with the image of the topology on $\bar X_V$. In fact, it is easily seen that the canonical map $\bar X_V\to \bar X_V^{\flat}$ is continuous (using, for instance, \cite[Theorem 5.1]{W2}). Since both spaces are compact Hausdorff and this continuous map is surjective, the topology on $\bar X_V^{\flat}$ is the image of that of $\bar X_V$. 
\end{sbpara}

\section{Spaces associated to global fields} \label{globspace}
Let $F$ be a global field, which is to say, either a number field or a function field in one variable over a finite field. We fix a finite-dimensional $F$-vector space $V$ of dimension $d\geq 1$. 
For a place $v$ of $F$, let $V_v=F_v \otimes_F V$. We set $X_v = X_{V_v}$ and $X_v^{\flat} = X_{V_v}^{\flat}$ for 
brevity.  If $v$ is non-archimedean, 
we let $O_v$, $k_v$, $q_v$, $\varpi_v$ denote the valuation ring of $F_v$, the residue field of $O_v$, the order of $k_v$, and a fixed uniformizer in $O_v$, respectively.

In this section, we define sets $\bar X_{F,v}^{\star}$ containing $X_v$ for $\star \in \{ \sharp,\;\,, \flat\}$, which serve as our rational partial compactifications.  Here, $\bar X_{F,v}$ (resp., $\bar X_{F,v}^{\flat}$) is defined as a subset of 
$X_v$ (resp., $\bar X_v^{\flat}$),  and $\bar X^{\sharp}_{F,v}$ has $\bar X_{F,v}$ as a quotient.  In \S\ref{uphp}, by way of example, we describe these sets and various topologies on them in the case that $d = 2$, $F = \Q$, and $v$ is the real place. For $\star \neq \sharp$, we construct more generally sets $\bar X_{F,S}^{\star}$ for a nonempty finite set $S$ of places of $F$.   In \S\ref{defspace}, we describe $\bar X_{F,S}^{\star}$ as a subset of $\prod_{v\in S} \bar X_{F,v}^{\star}$.

In \S\ref{BSsec} and \S\ref{Satake}, we define topologies on these sets.  That is, in \S\ref{BSsec}, we define the ``Borel-Serre topology'', while in \S\ref{Satake}, we define the ``Satake topology'' on $\bar X_{F,v}$ and, assuming $S$ contains all archimedean places of $F$, on $\bar X_{F,S}^{\flat}$. 
 In \S\ref{propX}, we prove results on $\bar X_{F,v}$. 
  In \S\ref{comptop}, we compare the following topologies on $\bar X_{F,v}$ (resp., $\bar X_{F,v}^{\flat}$): the Borel-Serre topology, the Satake topology, and the topology as a subspace of $\bar X_v$ (resp., $\bar X_v^{\flat}$). In \S\ref{relations}, we describe the relationship between these spaces and Borel-Serre and reductive Borel-Serre spaces.

\subsection{Definitions of the spaces} \label{defspace}

\begin{sbpara}\label{bs6} Let $\bar X_{F,v}=\bar X_{V, F, v}$ be the subset of $\bar X_v$ consisting of all elements $(P, \mu)$  such that $P$ is $F$-rational. If $P$ comes from a parabolic subgroup $P'$ of $\PGL_V$ over $F$, we also denote $(P, \mu)$ by $(P',\mu)$.

\end{sbpara}

\begin{sbpara}
Let $\bar X_{F,v}^{\flat}$ be the subset of $\bar X_v^{\flat}$ consisting of all elements $(W, \mu)$ such that $W$ is $F$-rational (using the definition of $\bar X_v^{\flat}$ in the second style in \ref{bs3}). If $W$ comes from an $F$-subspace $W'$ of $V$, we also denote $(W,\mu)$ by $(W',\mu)$. 

\end{sbpara}

\begin{sbpara}\label{sharp}  Let $\bar X_{F,v}^{\sharp}$ be the set of all triples $(P, \mu, s)$ such that $(P, \mu)\in \bar X_{F,v}$ and $s$ is a splitting 
$$
	s \colon \bigoplus_{i=0}^m\;  (V_i/V_{i-1})_v \xrightarrow{\sim} V_v
$$ 
over $F_v$ of the filtration $(V_i)_{-1 \le i \le m}$ of $V$ corresponding to $P$.

We have an embedding 
$X_v \hookrightarrow \bar X_{F,v}^{\sharp}$ that sends $\mu$ to $(\PGL_V, \mu, s)$, where $s$ is the identity map of $V_v$. 

\end{sbpara}

\begin{sbpara}\label{bs7} We have a diagram with a commutative square
$$\begin{tikzcd}[row sep = small] 
\bar X_{F,v}^{\sharp} \arrow[two heads]{r} & \bar X_{F,v} \arrow[two heads]{r} \arrow[hook]{d}& \bar X_{F,v}^{\flat} \arrow[hook]{d} \\
& \bar X_v \arrow[two heads]{r} & \bar X_v^{\flat}.
\end{tikzcd}$$
Here, the first arrow in the upper row forgets the splitting $s$, and the second arrow in the upper row is $(P, \mu) \mapsto (V_0, \mu_0)$, as is the lower arrow (\ref{cansur}).
\end{sbpara}

\begin{sbpara} The group $\PGL_V(F)$ acts on the sets $\bar X_{F,v}$, $\bar X_{F,v}^{\flat}$ and $\bar X_{F,v}^{\sharp}$
in the canonical manner.

\end{sbpara}

 \begin{sbpara}  Now let $S$ be a nonempty finite set of places of $F$. 
 \begin{itemize}
 \itemsep = 0ex
\item Let $\bar X_{F,S}$ be the subset of $\prod_{v\in S} \bar X_{F,v}$ consisting of all elements $(x_v)_{v\in S}$ such that the parabolic subgroup of $G=\PGL_V$ associated to $x_v$  is independent of $v$. 
\item Let $\bar X_{F,S}^{\flat}$ be the subset of $\prod_{v\in S} \bar X_{F,v}^{\flat}$ consisting of all elements $(x_v)_{v\in S}$ such that the $F$-subspace of $V$ associated to $x_v$  is independent of $v$. 
\end{itemize}
We will denote an element of $\bar X_{F,S}$ as $(P,\mu)$, where $P$ is a parabolic subgroup of $G$ and $\mu\in \prod_{v\in S, 0\leq i\leq m} X_{(V_i/V_{i-1})_v}$ with $(V_i)_i$ the flag corresponding to $P$. We will denote an element of $\bar X_{F,S}^{\flat}$ as $(W,\mu)$, where $W$ is a nonzero $F$-subspace of $V$ and $\mu\in \prod_{v\in S} X_{W_v}$. 
We have a canonical surjective map 
$$ \bar X_{F,S} \to \bar X_{F,S}^{\flat}$$
which commutes with the inclusion maps from $X_S$ to these spaces. 
  \end{sbpara}

\subsection{Example: Upper half-plane} \label{uphp}

\begin{sbpara} Suppose that $F = \Q$, $v$ is the real place, and $d=2$.  
 
In this case, the sets $X_v$, $\bar X_v=\bar X_v^{\flat}$, $\bar X_{\Q,v}=\bar X_{\Q,v}^{\flat}$, and $\bar X_{\Q,v}^{\sharp}$ are described by using the upper half-plane. In \S\ref{sec2}, we discussed topologies on the first two spaces.  The remaining spaces also have natural topologies, as will be discussed in \S\ref{BSsec} and \S\ref{Satake}: the space $\bar X_{\Q,v}^{\sharp}$ is endowed with the Borel-Serre  topology, and $\bar X_{\Q,v}$ has two topologies, the Borel-Serre topology and Satake topology, which are both different from its topology as a subspace of $\bar X_v$.  In this section, as a prelude to \S\ref{BSsec} and \S\ref{Satake}, we describe what the Borel-Serre and Satake topologies look like in this special case. 

\end{sbpara}

\begin{sbpara} Let $\frak H=\{x+yi\mid x,y\in \R, y>0\}$ be the upper half-plane.  
 Fix a basis $(e_i)_{i=1,2}$ of $V$.
For $z \in \frak H$, let $\mu_z$ denote the class of the norm on $V$ corresponding to the class of the norm on $V^*$
given by $ae^*_1+be_2^*\mapsto |az+b|$ for $a,b\in \R$.   
Here $(e_i^*)_{1 \le i \le d}$ is the dual basis of $(e_i)_i$, and $|\;\;|$ denotes the usual absolute value (not the normalized absolute value) on $\C$. 
We have a homeomorphism
\begin{eqnarray*}
	\frak H \xrightarrow{\sim} X_v, \quad z\mapsto \mu_z
\end{eqnarray*}
which is compatible with the actions of $\SL_2(\R)$.

For the square root $i\in \frak H$ of $-1$, the norm $ae_1+be_2 \mapsto (a^2+b^2)^{1/2}$ has class $\mu_i$.  For $z=x+yi$, we have
$$\mu_z = \begin{pmatrix} y & x \\0 & 1\end{pmatrix}\mu_i.$$
The action of $\begin{pmatrix} -1 & 0 \\ 0 & 1 \end{pmatrix}\in \GL_2(\R)$ on $X_v$ corresponds to $x+yi\mapsto -x+yi$ on $\frak H$. 
\end{sbpara}

\begin{sbpara}
The inclusions $$X_v\subset \bar X_{\Q,v}\subset \bar X_v$$
can be identified with
$$\frak H \subset \frak H \cup \mb{P}^1(\Q) \subset \frak H \cup \mb{P}^1(\R).$$
Here $z \in \mb{P}^1(\R)=\R \cup \{\infty\}$ corresponds to the class in $\bar X_v^{\flat}= \bar X_v$ of the semi-norm $ae_1^*+be_2^*\mapsto |az +b|$ (resp., $ae_1^*+be_2^*\mapsto |a|$) on $V^*$ if $z \in \R$ (resp., $z=\infty$). These identifications are compatible with the actions of $\PGL_V(\Q)$.

The topology on $\bar X_v$ of \ref{cover} is the topology as a subspace of $\mb{P}^1(\C)$.

\end{sbpara}

\begin{sbpara} Let $B$ be the Borel subgroup of $\PGL_V$ consisting of all upper triangular matrices for the basis $(e_i)_i$, and let $0=V_{-1}\subsetneq V_0= \Q e_1\subsetneq V_1=V$ be the corresponding flag. Then $\infty\in \mb{P}^1(\Q)$ is understood as the point $(B, \mu)$ of $\bar X_{\Q,v}$, where $\mu$ is the unique element of
$X_{(V_0)_v}\times X_{(V/V_0)_v} $.

Let $\bar X_{\Q,v}(B)= \frak H\cup \{\infty\}\subset \bar X_{\Q,v}$ and let $\bar X_{\Q,v}^{\sharp}(B)$ be the inverse image of $\bar X_{\Q,v}(B)$ in $\bar X_{\Q,v}^{\sharp}$. 
Then for the Borel-Serre topology defined in \S\ref{BSsec}, we have a homeomorphism 
$$\bar X_{\Q,v}^{\sharp}(B)\cong \{x+yi  \mid  x\in \R, 0<y\leq \infty\}\supset \frak H.$$
Here $x+\infty i$ corresponds to $(B, \mu, s)$ where
 $s$ is the splitting of the filtration $(V_{i,v})_i$ 
given by the embedding $(V/V_0)_v \to V_v$ that sends the class of $e_2$ to $xe_1+e_2$. 

The Borel-Serre topology on $\bar X_{\Q,v}^{\sharp}$ is characterized by the properties that
\begin{enumerate}
\item[(i)] the action of the discrete group $\GL_V(\Q)$ on $\bar X_{\Q,v}^{\sharp}$ is continuous,
\item[(ii)] the subset $\bar X_{\Q,v}^{\sharp}(B)$ is open, and 
\item[(iii)] as a subspace, $\bar X_{\Q,v}^{\sharp}(B)$ is homeomorphic to $\{x+yi \mid x \in \R, 0 < y \le \infty \}$
as above.
\end{enumerate}

\end{sbpara}

\begin{sbpara}\label{HBS}
The Borel-Serre and Satake topologies on $\bar X_{\Q,v}$ (defined in \S\ref{BSsec} and \S\ref{Satake}) 
are characterized by the following properties:
\begin{enumerate}
\item[(i)] The subspace topology on $X_v\subset \bar X_{\Q,v}$ coincides with the topology on $\frak H$. 
\item[(ii)] The action of the discrete group $\GL_V(\Q)$ on $\bar X_{\Q,v}$ is continuous.  
\item[(iii)] The following sets (a) (resp., (b)) form a base of neighborhoods of $\infty$ for the Borel-Serre (resp., Satake) topology:
\begin{itemize}
	\item[(a)] the sets $U_f =\{x+yi\in \frak H \mid y \ge f(x)\}\cup \{\infty\}$ for continuous 
	$f \colon \R\to \R$,
	\item[(b)] the sets $U_c= \{x+yi\in \frak H \mid y \ge c\}\cup \{\infty\}$ with $c\in \R_{>0}$. 
\end{itemize}
\end{enumerate}
The Borel-Serre topology on $\bar X_{\Q,v}$ is the image of the Borel-Serre topology on $\bar X_{\Q,v}^{\sharp}$. 
\end{sbpara}

\begin{sbpara} For example, the set $\{x+yi\in \frak H \mid y > x\}\cup \{\infty\}$ is a neighborhood of $\infty$ for the Borel-Serre topology, but it is not a neighborhood of $\infty$ for the Satake topology.

\end{sbpara}

\begin{sbpara} For any subgroup $\Gamma$ of $\PGL_2(\Z)$ of finite index, the Borel-Serre and Satake topologies induce the same topology on the quotient space $X(\Gamma) = \Gamma \bs \bar X_{\Q,v}$.  Under this quotient topology, $X(\Gamma)$ is compact Hausdorff.   If $\Gamma$ is the image of a congruence subgroup of $\SL_2(\Z)$, then 
this is the usual topology on the modular curve $X(\Gamma)$.

\end{sbpara}

\subsection{Borel-Serre topology} \label{BSsec}

\begin{sbpara}
For a parabolic subgroup $P$ of $\PGL_V$, let $\bar X_{F,v}(P)$ (resp., $\bar X_{F,v}^{\sharp}(P)$) be the subset of $\bar X_{F,v}$ (resp., $\bar X_{F,v}^{\sharp}$) consisting of all elements $(Q, \mu)$ (resp., $(Q,\mu,s)$) such that $Q\supset P$. 

The action of $\PGL_V(F_v)$ on $\bar X_v$ induces an action of $P(F_v)$ on $\bar X_{F,v}(P)$. We have also an action of $P(F_v)$ on  $\bar X_{F,v}^{\sharp}(P)$ given by 
$$
	g(\alpha, s) = (g\alpha, g\circ s\circ g^{-1})
$$ 
for $g\in P(F_v)$, $\alpha \in \bar X_{F,v}(P)$, and $s$ a splitting of the filtration. 

\end{sbpara}

\begin{sbpara}\label{Pstr1} Fix a basis $(e_i)_i$ of $V$. 
Let $P$ be a parabolic subgroup of $\PGL_V$ such that
\begin{itemize}
	\item if $0=V_{-1}\subsetneq V_0\subsetneq \dots \subsetneq V_m=V$ denotes the flag of 
	$F$-subspaces corresponding to $P$, 
	then each $V_i$ is generated by the $e_j$ with $1\leq j\leq c(i)$, where $c(i)= \dim(V_i)$. 
\end{itemize}
This condition on $P$ is equivalent to the condition that $P$ contains the Borel subgroup $B$ of $\PGL_V$ consisting of all upper triangular matrices with respect to $(e_i)_i$. 
Where useful, we will identify $\PGL_V$ over $F$ with
$\PGL_d$ over $F$ via the basis $(e_i)_i$.  

Let 
$$
	\Delta(P)=\{\dim(V_j) \mid 0\leq j\leq m-1\}\subset \{1,\dots, d-1\},
$$
and let $\Delta'(P)$ be the complement of $\Delta(P)$ in $\{1,\dots, d-1\}$. 
Let $\R_{\geq 0}^{d-1}(P)$ be the open subset of $\R_{\geq 0}^{d-1}$ given by
$$
	\R_{\geq 0}^{d-1}(P) = \{ (t_i)_{1\leq i\leq d-1} \in \R_{\geq 0}^{d-1} \mid t_i > 0 \text{ for all } i \in \Delta'(P) \}.
$$
In particular, we have
$$
	\R_{\geq 0}^{d-1}(P)\cong \R_{>0}^{\Delta'(P)}\times \R_{\geq 0}^{\Delta(P)}.
$$ 
\end{sbpara}

\begin{sbpara}\label{cover2} 
With $P$ as in \ref{Pstr1}, the map $\PGL_V(F_v) \times \R^{d-1}_{\geq 0} \to \bar X_v$ in \ref{cover} induces a map 
 $$\bar{\pi}_{P,v} \colon P(F_v) \times \R^{d-1}_{\geq 0}(P)\to \bar X_{F,v}(P),$$
 which restricts to a map
 $$
 	\pi_{P,v} \colon P(F_v) \times \R^{d-1}_{> 0} \to X_{F,v}.
 $$ 
The map $\bar{\pi}_{P,v}$ is induced by a map
$$ \bar{\pi}_{P,v}^{\sharp} \colon P(F_v) \times \R^{d-1}_{\geq 0}(P)\to \bar X_{F,v}^{\sharp}(P)$$
defined as $(g,t) \mapsto g(P, \mu, s)$ where $(P,\mu)$ is as in \ref{cover}  and $s$ is the splitting of the filtration $(V_i)_{-1 \le i \le m}$ defined by the basis $(e_i)_i$.  For this splitting $s$, we set 
$$
	V^{(i)} = s(V_i/V_{i-1}) = \sum_{c(i-1)<j\le c(i)} Fe_j
$$
for $0 \le i \le m$ so that $V_i = V_{i-1} \oplus V^{(i)}$ and
$V = \bigoplus_{i=0}^mV^{(i)}$.  If $P = B$, then we will often omit the subscript $B$ from our notation for these maps.
\end{sbpara}

\begin{sbpara} \label{decomp}
	We review the Iwasawa decomposition.
	For $v$ archimedean (resp., non-archimedean), let $A_v \le \PGL_d(F_v)$ be the subgroup of elements of 
	that lift to 
	diagonal matrices in $\GL_d(F_v)$ with positive real entries (resp., with entries that are powers of $\varpi_v$).  
	Let $K_v$ denote the standard maximal compact subgroup of $\PGL_d(F_v)$ given by 
	$$
		K_v = \begin{cases} \PO_d(\R) & \text{if } v \text{ is real}, \\ 
		\PU_d & \text{if } v \text{ is complex}, \\
		\PGL_d(O_v) & \text{otherwise}. \end{cases}
	$$
	Let $B_u$ denote the upper-triangular unipotent matrices in the standard Borel $B$.
	The Iwasawa decomposition is given by the equality
	$$
		\PGL_d(F_v) = B_u(F_v)A_vK_v.
	$$ 
\end{sbpara}	
	
\begin{sbpara} \label{Iwarch}
	If $v$ is archimedean, then the expression of a matrix in $\PGL_d(F_v)$ as a product in the Iwasawa 
	decomposition is unique.
\end{sbpara}

\begin{sbpara}	\label{Iwnon}
	If $v$ is non-archimedean, then 
	the Bruhat decomposition is $\PGL_d(k_v) = B(k_v)S_d B(k_v)$, where the symmetric group $S_d$
	of degree $d$ is viewed as a subgroup of $\PGL_d$ over any field 
	via the permutation representation on the standard basis.  This implies that
	$\PGL_d(O_v)= B(O_v)S_d \Iw(O_v)$, where $\Iw(O_v)$ is the Iwahori subgroup
	consisting of those matrices in with upper triangular image in $\PGL_d(k_v)$.   
	Combining this with the Iwasawa decomposition
	(in the notation of \ref{decomp}), 
	we have
	$$
		\PGL_d(F_v)= B_u(F_v)A_v S_d \Iw(O_v).
	$$
	This decomposition is not unique, since $B_u(F_v) \cap \Iw(O_v) = B_u(O_v)$.
\end{sbpara}

\begin{sbpara} \label{diag}
	If $v$ is archimedean, then there is a bijection $\R_{>0}^{d-1} \xrightarrow{\sim} A_v$ given by 
	$$
		t = (t_k)_{1 \le k \le d-1} \mapsto a = \begin{cases} \diag(r_1, \ldots, r_d)^{-1} & \text{if } v \text{ is real},\\
		\diag(r_1^{1/2}, \ldots, r_d^{1/2})^{-1} & \text{if } v \text{ is complex},
		\end{cases}
	$$
	where $r_i = \prod_{k=1}^{i-1} t_k^{-1}$ as in \ref{cover}.
\end{sbpara}

\begin{sbprop}\label{lst0} \
\begin{enumerate}
\item[(1)] 
Let $P$ be a parabolic subgroup of $\PGL_V$ as in \ref{Pstr1}.
Then the maps 
\begin{eqnarray*}
	\bar{\pi}_{P,v} \colon P(F_v) \times \R^{d-1}_{\geq 0}(P) \to \bar X_{F,v}(P) &\text{and}& 
	\bar{\pi}_{P,v}^{\sharp} \colon P(F_v) \times \R^{d-1}_{\geq 0}(P) \to \bar X_{F,v}^{\sharp}(P)
\end{eqnarray*} 
of \ref{cover2} are surjective. 
\item[(2)] For the Borel subgroup $B$ of \ref{Pstr1},
the maps 
\begin{eqnarray*}
	&\pi_v \colon B_u(F_v) \times \R^{d-1}_{>0}\to  X_v, \qquad
	\bar{\pi}_v \colon B_u(F_v)\times \R^{d-1}_{\geq 0}  \to  \bar X_{F,v}(B),& \\ 
	 &\text{and} \quad \bar{\pi}_v^{\sharp} \colon B_u(F_v)\times \R^{d-1}_{\geq 0}  \to  \bar X_{F,v}^{\sharp}(B).&
\end{eqnarray*}
of \ref{cover2} are all surjective.
\item[(3)] If $v$ is archimedean, then $\pi_v$ and $\bar{\pi}_v^{\sharp}$ are bijective.
\item[(4)] If $v$ is non-archimedean, then $\bar{\pi}_v$ induces a bijection
$$(B_u(F_v) \times \R^{d-1}_{\geq 0})/{\sim} \to \bar X_{F,v}(B)$$
where $(g, (t_i)_i) \sim (g', (t'_i)_i)$ if and only if
\begin{enumerate}
\item[(i)] $t_i=t'_i$ for all $i$ and
\item[(ii)] 
$|(g^{-1}g')_{ij}| \leq (\prod_{i\leq k<j} t_k)^{-1}$ for all $1 \le i < j \le d$,
considering any $c \in \R$ to be less than $0^{-1}=\infty$.
\end{enumerate}
\end{enumerate}

\end{sbprop}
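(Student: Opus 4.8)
The plan is to deduce all four parts from the surjectivity of the basic parametrization maps in \ref{cover}, together with the Iwasawa/Bruhat decompositions recalled in \ref{decomp}--\ref{Iwnon}. For part (1), I would argue as follows. An element of $\bar X_{F,v}(P)$ is a pair $(Q,\mu)$ with $Q \supseteq P$ and $Q$ an $F$-rational parabolic. Since $\PGL_V(F_v) \times \R^{d-1}_{\ge 0} \to \bar X_v$ is surjective by \ref{cover}, write $(Q,\mu) = g(P',\mu')$ for some $g \in \PGL_V(F_v)$ and some $(P',\mu')$ coming from a $t \in \R^{d-1}_{\ge 0}$ as in \ref{cover}; the parabolic $P'$ built there is the standard one attached to the vanishing set $I = \{j : t_j = 0\}$, so it contains $B$. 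The condition $Q \supseteq P$ translates, after applying $g$, into a condition that $I$ is compatible with $\Delta(P)$, i.e.\ $I \supseteq \{1,\dots,d-1\}\setminus \Delta'(P)$ up to the $\PGL_V(F_v)$-translation; equivalently $t \in \R^{d-1}_{\ge 0}(P)$ after absorbing a suitable element of $\PGL_V(F_v)$ into $g$. The key point is that $g$ can be taken in $P(F_v)$: writing $g = p w k$ (or $p a k$ in the archimedean case) via the Iwasawa/Bruhat decomposition relative to $B \subseteq P$ and noting $K_v$ and $A_v$ and the Weyl elements either fix the relevant point or can be absorbed into the $\R^{d-1}_{\ge 0}(P)$-coordinate, one reduces $g$ modulo the stabilizer to an element of $P(F_v)$. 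The same bookkeeping works verbatim for $\bar\pi^\sharp_{P,v}$ since the splitting $s$ is transported equivariantly. Part (2) is the special case $P = B$, where $\R^{d-1}_{\ge 0}(B) = \R^{d-1}_{\ge 0}$ and $\Delta'(B) = \emptyset$, except that we further replace $P(F_v) = B(F_v)$ by $B_u(F_v)$: this is possible because the torus part $A_v$ of $B(F_v)$ acts on the class $\mu$ associated to $(e_i)_i,(r_i)_i$ only by rescaling the $r_i$, hence by changing the $t_j$-coordinate, so a torus element can always be pushed into the $\R^{d-1}$-factor (and the compact/finite parts, as before, are absorbed or fix the base point). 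For $\pi_v$ one uses the upper row of the diagram in \ref{cover}.

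For part (3), with $v$ archimedean, injectivity of $\pi_v$ and $\bar\pi^\sharp_v$ follows from the uniqueness of the Iwasawa decomposition \ref{Iwarch}: if $g\mu_t = g'\mu_{t'}$ with $g,g' \in B_u(F_v)$, then $g^{-1}g' \in B_u(F_v)$ stabilizes the class of a norm, and combining with the bijection $\R^{d-1}_{>0}\xrightarrow{\sim} A_v$ of \ref{diag} and the fact that the stabilizer of $\mu_i$ in $\PGL_d(\R)$ is exactly $K_v$ forces $g = g'$ and $t = t'$; for $\bar\pi^\sharp_v$ the extra datum $s$ rigidifies things further, so the same argument (applied blockwise along the flag) gives injectivity. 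Part (4) is the genuinely non-archimedean phenomenon. Here $\bar\pi_v$ is no longer injective because $B_u(F_v) \cap \Iw(O_v) = B_u(O_v)$ is nontrivial and, more generally, a unipotent upper-triangular matrix with small enough entries does not move a given norm class. Concretely, for fixed $t = (t_i)_i$, the norm class $\mu_t$ is the class of $\max_i(r_i|a_i|)$ with $r_i = \prod_{j<i}t_j^{-1}$, and a direct computation shows $g\mu_t = \mu_t$ for $g \in B_u(F_v)$ exactly when $|g_{ij}| \le r_i/r_j = (\prod_{i \le k < j} t_k)^{-1}$ for all $i<j$; when some $t_k = 0$ this says $g_{ij}$ is unrestricted across the corresponding jump, which is consistent with $g$ acting trivially on the associated-graded pieces. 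Passing from $\mu_t$ to the boundary point $\bar\pi_v(g,t)$ and checking that two pairs give the same point of $\bar X_{F,v}(B)$ iff $t = t'$ and $g^{-1}g'$ satisfies exactly the inequality (ii) is then a matter of comparing the induced norm classes on each $V_i/V_{i-1}$, i.e.\ reducing to the open-cell case on each graded piece.

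I would organize the write-up so that surjectivity (parts (1)--(2)) comes first as a formal consequence of \ref{cover} plus the decomposition $\PGL_d(F_v) = B_u(F_v)A_vK_v$ (resp.\ $B_u(F_v)A_vS_d\Iw(O_v)$), emphasizing that $A_v$ is absorbed into the $\R^{d-1}_{\ge 0}$-coordinate via \ref{diag} and that $K_v$, $S_d$ fix or are harmless on the relevant point; then handle injectivity in the archimedean case (part (3)) via \ref{Iwarch}; then do the non-archimedean computation (part (4)) by explicitly describing the stabilizer in $B_u(F_v)$ of a norm class and its boundary degenerations. The main obstacle is part (4): one must be careful that the equivalence relation stated is exactly the right one at the boundary, where the flag $(V_i)$ genuinely changes and the graded pieces $V_i/V_{i-1}$ appear. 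The cleanest route is to observe that $\bar\pi_v(g,t)$ is determined by the tuple of induced norm classes $(\mu_{i})_{0 \le i \le m}$ on the graded pieces, that on each graded piece one is in the (open) building of a smaller $\PGL$, and that there the only relations among translates of a standard norm by upper-triangular unipotents are precisely the coordinatewise inequalities $|g_{ij}| \le (\prod_{i\le k<j}t_k)^{-1}$; reassembling across $i$ gives (ii), while condition (i) records that the $t$-coordinates are read off intrinsically from the norm classes. I expect the bookkeeping of which entries of $g$ are constrained, and which are free because they straddle a jump in the flag, to be the fiddly part, but no deep input beyond \ref{cover}, \ref{decomp}--\ref{diag} is needed.
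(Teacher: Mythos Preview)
Your overall strategy is right and close to the paper's, but two points need tightening.

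\textbf{Order of (1) and (2).} Your direct attack on (1) does not quite work as written. Starting from the surjection $\PGL_V(F_v)\times\R^{d-1}_{\ge 0}\to\bar X_v$ and trying to reduce a general $g$ to $P(F_v)$ via ``$g=pwk$ and $K_v$, $S_d$ fix or can be absorbed'' is not valid: $K_v$ does not fix a general $(P',\mu')$, and there is no mechanism to push a Weyl element into the $\R^{d-1}_{\ge0}(P)$-coordinate. The paper sidesteps this entirely by proving (2) first and then observing that the restriction of $\bar\pi_v^{\sharp}$ to $B_u(F_v)\times\R^{d-1}_{\ge 0}(P)$ already has image $\bar X_{F,v}^{\sharp}(P)$ (since the parabolic attached to $t$ contains $P$ iff $t\in\R^{d-1}_{\ge 0}(P)$), and $B_u(F_v)\subset P(F_v)$. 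You should reorganize accordingly.

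\textbf{Non-archimedean surjectivity in (2).} Your sketch ``$K_v$, $S_d$ fix or are harmless on the relevant point'' hides the actual content. The argument the paper gives is: first arrange $r$ to lie in the closed alcove $r_1\le\cdots\le r_d\le q_v r_1$; for such $r$ the class $\mu^{(r)}$ is fixed by the Iwahori $\Iw(O_v)$ (not by all of $K_v$); then $S_d$ merely permutes the $r_i$, so $S_d\Iw(O_v)\mu^{(r)}$ consists of classes of the form $\mu^{(r')}$; $A_v$ rescales. Only then does $\PGL_d(F_v)=B_u(F_v)A_vS_d\Iw(O_v)$ give that every norm class is $B_u(F_v)\cdot\mu^{(r')}$. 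This step is exactly where the Bruhat refinement \ref{Iwnon} is used, and it is not automatic.

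Parts (3) and (4) are essentially as in the paper. One slip in (4): you wrote $|g_{ij}|\le r_i/r_j$, but with $r_i=\prod_{k<i}t_k^{-1}$ the correct inequality is $|g_{ij}|\le r_j/r_i=(\prod_{i\le k<j}t_k)^{-1}$; the paper verifies this by evaluating the norm identity $g\mu^{(r)}=\mu^{(r')}$ at $x=e_1,e_i,g^{-1}e_i,e_j$, which is the cleanest way to pin down both $r=r'$ and the entrywise bound.
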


\begin{pf} 
If $\bar{\pi}_v^{\sharp}$ is surjective, then for any parabolic $P$ containing $B$, the restriction of $\bar{\pi}_v^{\sharp}$ to $B_u(F_v) \times \R_{\ge 0}^{d-1}(P)$ has image
$\bar{X}_{F,v}^{\sharp}(P)$.  Since $B_u(F_v) \subset P(F_v)$, this forces the surjectivity of $\bar{\pi}_{P,v}^{\sharp}$,
hence of $\bar{\pi}_{P,v}$ as well.  So, we turn our attention to (2)--(4).  
If $r \in \R_{> 0}^d$, we let $\mu^{(r)} \in X_v$ denote the class of the norm attached to the basis $(e_i)_i$ and $r$.

Suppose first that $v$ is archimedean.  
By the Iwasawa decomposition \ref{decomp}, and noting \ref{Iwarch} and \ref{XV}, we see that $B_u(F_v)A_v \to X_v$ given by $g \mapsto g\mu^{(1)}$ is bijective, where
$\mu^{(1)}$ denotes the class of the norm attached to $(e_i)_i$ and $1 = (1,\ldots,1) \in \R_{>0}^d$.  
For $t \in \R_{>0}^{d-1}$, let $a \in A_v$ be its image under the bijection in \ref{diag}.
Since $pa\mu^{(1)} = p\mu^{(r)}$, for $p \in B_u(F_v)$ and $r$ as in \ref{cover}, we have the bijectivity of $\pi_v$.

Consider the third map in (2).  For $t \in \R_{\ge 0}^{d-1}$, let $P$ be the parabolic that contains $B$ and is determined by
the set $\Delta(P)$ of $k \in \{1,\ldots,d-1\}$ for which $t_k = 0$.  Let $(V_i)_{-1 \le i \le m}$ be the corresponding flag.
Let $M$ denote the Levi subgroup of $P$.  (It is the quotient of $\prod_{i=0}^m \GL_{V^{(i)}} < \GL_V$ by scalars, where
$V=\bigoplus_{i=0}^m V^{(i)}$ as in \ref{cover2}, and $M \cap B_u$ is isomorphic to the product of the upper-triangular unipotent matrices in each $\PGL_{V^{(i)}}$.)  The product of the first maps in (2) for the blocks of $M$ is a bijection
$$
	(M \cap B_u)(F_v) \times \R_{>0}^{\Delta'(P)} \xrightarrow{\sim} 
	\prod_{i=0}^m X_{V_v^{(i)}}  \subset \bar{X}_{F,v}(P), 
$$
such that $(g,t')$ is sent to $(P,g\mu)$ in $\bar X_{F,v}$, where $\mu$ is the sequence of classes of norms determined by $t'$ and the standard basis.
The stabilizer of $\mu$ in $B_u$ is the unipotent radical $P_u$ of $P$, and this $P_u$ acts simply transitively on the set of splittings for the graded quotients $(V_i/V_{i-1})_v$.  Since
$B_u = (M \cap B_u)P_u$, and this decomposition is unique, we have the desired bijectivity of $\bar{\pi}_v^{\sharp}$, proving (3).

Suppose next that $v$ is non-archimedean.  We prove the surjectivity of the first map in (2). 
Using the natural actions of $A_v$ and the symmetric group $S_d$ on 
$\R^d_{>0}$, we see that any norm on $V_v$ can be written as 
$g\mu^{(r)}$, where $g \in \PGL_d(F_v)$ and $r = (r_i)_i \in \R_{>0}^d$, with $r$ satisfying
\begin{equation*}
r_1\leq r_2\leq \dots \leq r_d \leq q_vr_1. 
\end{equation*}
For such an $r$, the class $\mu^{(r)}$ is invariant under the action of $\Iw(O_v)$. Hence for such an $r$, 
any element of $S_d \Iw(O_v)\mu^{(r)}= S_d\mu^{(r)}$ is of the form $\mu^{(r')}$, where $r' = 
(r_{\sigma(i)})_i$ for some 
$\sigma \in S_d$.  Hence, any element of $A_v S_d \Iw(O_v)\mu^{(r)}$ for such an $r$
is of the form $\mu^{(r')}$ for some $r' = (r'_i)_i \in \R_{>0}^d$.  This proves the surjecivity of the first map of (2).
The surjectivity of the other maps in (2) is then shown using this, 
similarly to the archimedean case. 

Finally, we prove (4). It is easy to see that the map $\bar \pi_v$ factors through the quotient by the equivalence relation. We can deduce the bijectivity in question from the bijectivity of $(B_u(F_v) \times \R^{d-1}_{>0})/{\sim} \to X_v$, replacing $V$ by $V_i/V_{i-1}$ as in the above arguments for the archimedean case.  
Suppose that $\pi_v(g,t) = \pi_v(1,t')$ for $g \in B_u(F_v)$ and $t, t' \in \R_{> 0}^{d-1}$.
We must show that $(g,t) \sim (1,t')$.  
Write $\pi_v(g,t) = g\mu^{(r)}$ and $\pi_v(1,t') = \mu^{(r')}$ with $r = (r_i)_i$ and $r' = (r'_i)_i \in \R^d_{>0}$
such that $r_1 = 1$ and $r_j/r_i = (\prod_{i\le k < j} t_k)^{-1}$ for all $1 \le i < j \le d$, and similarly for $r'$ and $t'$. 
It then suffices to check that $r'=r$ and $r_i|g_{ij}|\leq r_j$ for all $i < j$.
Since $\mu^{(r)} = g^{-1}\mu^{(r')}$, there exists $c \in \R_{>0}$ such that
$$
	\max\{r_i|x_i| \mid  1\leq i \leq d\}= c\max\{ r'_i|(gx)_i| \mid 1\leq i\leq d\}
$$ 
for all $x=(x_i)_i\in F_v^d$.  
Taking $x = e_1$, we have $gx = e_1$ as well, so $c=1$.
Taking $x=e_i$, we obtain $r_i \geq r'_i$, and taking $x=g^{-1}e_i$, we obtain $r_i\leq r'_i$.  Thus $r = r'$, and
taking $x=e_j$ yields $r_j = \max\{r_i |g_{ij}| \mid 1\leq i\leq j\}$, which tells us that $r_j\geq r_i|g_{ij}|$ for $i<j$.  
\end{pf}

\begin{sbprop}\label{BStop}

There is a unique topology on $\bar X_{F,v}$ (resp., $\bar X_{F,v}^{\sharp}$) satisfying the following conditions (i) and (ii).
\begin{enumerate}
\item[(i)] For every parabolic subgroup $P$ of $\PGL_V$, the set $\bar X_{F,v}(P)$ (resp., $\bar X_{F,v}^{\sharp}(P)$) is open in $\bar X_{F,v}$ (resp., 
$\bar X_{F,v}^{\sharp}$). 
\item[(ii)] For every parabolic subgroup $P$ of $\PGL_V$ and basis $(e_i)_i$ of $V$ such that $P$ contains the Borel
subgroup with respect to $(e_i)_i$, the topology on $\bar X_{F,v}(P)$ (resp., $\bar X_{F,v}^{\sharp}(P)$) is the topology as a quotient of $P(F_v)\times \R_{\geq 0}^{d-1}(P)$ under the surjection of \ref{lst0}(1). 
\end{enumerate}
This topology is also characterized by (i) and the following (ii)'. 
\begin{enumerate}
\item[(ii)'] If $B$ is a Borel subgroup of $\PGL_V$ consisting of upper triangular matrices with respect to a basis $(e_i)_i$
of $V$, then the topology on $\bar X_{F,v}(B)$ (resp., $\bar X_{F,v}^{\sharp}(B)$) is the topology as a quotient of $B_u(F_v)\times \R_{\geq 0}^{d-1}$ under the surjection of \ref{lst0}(2).
\end{enumerate}
\end{sbprop}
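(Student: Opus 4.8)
The plan is to build the asserted topology by gluing, over the Borel subgroups $B$ of $\PGL_V$ over $F$, the quotient topologies that \ref{lst0} puts on the sets $\bar X_{F,v}(B)$; these cover $\bar X_{F,v}$, since $(Q,\mu)\in\bar X_{F,v}(B)$ for any Borel $B\subseteq Q$. Conditions (i), (ii), the uniqueness, and the equivalence with (i) and (ii)$'$ will then follow by soft arguments. The space $\bar X_{F,v}^{\sharp}$ is treated identically with $\bar\pi^{\sharp}$ in place of $\bar\pi$ throughout (and \ref{lst0}(3) in the archimedean case), so I describe only $\bar X_{F,v}$.

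The technical core is the following assertion, for parabolic subgroups $P\subseteq P'$ both adapted to a common basis $(e_i)_i$: the set $\bar X_{F,v}(P')$ is open in $\bar X_{F,v}(P)$ for the quotient topology of $\bar\pi_{P,v}$, and the subspace topology it inherits is the quotient topology of $\bar\pi_{P',v}$. The first half is a direct computation: since every $g\in P(F_v)$ lies in the parabolic $P_t\supseteq P$ attached to $t$, the parabolic of $\bar\pi_{P,v}(g,t)$ is $P_t$, and $P_t\supseteq P'$ exactly when $t\in\R^{d-1}_{\geq 0}(P')$, so
$$
\bar\pi_{P,v}^{-1}\bigl(\bar X_{F,v}(P')\bigr)=P(F_v)\times\R^{d-1}_{\geq 0}(P'),
$$
which is open in $P(F_v)\times\R^{d-1}_{\geq 0}(P)$; hence $\bar X_{F,v}(P')$ is open and carries the quotient topology of $\bar\pi_{P,v}$ restricted to $P(F_v)\times\R^{d-1}_{\geq 0}(P')$. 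It remains to identify this with the quotient topology of $\bar\pi_{P',v}\colon P'(F_v)\times\R^{d-1}_{\geq 0}(P')\to\bar X_{F,v}(P')$. One inclusion is immediate, as $\bar\pi_{P',v}$ restricts to $\bar\pi_{P,v}$ along the continuous inclusion $P(F_v)\hookrightarrow P'(F_v)$. For the other I would use the Iwasawa decomposition applied to the Levi $M'$ of $P'$, which gives $P'(F_v)=P(F_v)\,(M'(F_v)\cap K_v)$: writing $h=p\kappa$ with $p\in P(F_v)$ and $\kappa\in M'(F_v)\cap K_v$, the element $\kappa$ preserves $P_t$ and acts on its graded pieces, so applying \ref{lst0}(2) block by block and absorbing the diagonal torus $A_v$ into the $\R^{d-1}$-coordinates (via \ref{diag} when $v$ is archimedean, and directly from \ref{cover} in general) rewrites $\bar\pi_{P',v}(h,t)$ in the form $\bar\pi_{P,v}(pg',t')$ for suitable $g'\in B_u(F_v)$ and $t'\in\R^{d-1}_{\geq 0}(P')$.

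The main obstacle appears here. In the archimedean case all of the decompositions just used are unique (\ref{Iwarch}, \ref{lst0}(3)) and vary continuously, so $(h,t)\mapsto(pg',t')$ defines a continuous map $r$ with $\bar\pi_{P,v}\circ r=\bar\pi_{P',v}$, and then the preimage under $\bar\pi_{P',v}$ of any set open for the first quotient topology is open — finishing the identification. In the non-archimedean case the decomposition $h=p\kappa$ is no longer unique, and I expect the real work to lie in showing the two quotient topologies still coincide; the way I would do it is to factor $\bar\pi_{P',v}$ through the continuous surjection $((p,\kappa),t)\mapsto(p\kappa,t)$ from $P(F_v)\times(M'(F_v)\cap K_v)\times\R^{d-1}_{\geq 0}(P')$, and to exploit the compactness of $M'(F_v)\cap K_v$ together with the openness of the Iwasawa multiplication map (its fibres being cosets of the compact group $B_u(O_v)$) to conclude that this surjection, hence $\bar\pi_{P',v}$, is a quotient map onto $\bar X_{F,v}(P')$ realizing the topology already determined. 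The same type of argument also shows that the quotient topology on $\bar X_{F,v}(B)$ is unchanged when the source $B(F_v)$ of $\bar\pi_{B,v}$ is replaced by $B_u(F_v)$ (the torus $A_v$ being absorbed into the $\R^{d-1}$-coordinates and the remaining part $T(O_v)$ being compact), which is what is needed for the formulation (ii)$'$.

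Granting this core assertion, the rest is formal. It implies that the quotient topologies on the $\bar X_{F,v}(B)$ agree on overlaps, since $\bar X_{F,v}(B_1)\cap\bar X_{F,v}(B_2)=\bar X_{F,v}(Q)$ for the smallest parabolic $Q$ containing $B_1$ and $B_2$ (the stabilizer of the intersection of the two complete flags), which by the core assertion is open in each $\bar X_{F,v}(B_i)$ and carries the $\bar\pi_{Q,v}$-quotient topology from each. The gluing lemma for topologies then gives a unique topology on $\bar X_{F,v}$ making every $\bar X_{F,v}(B)$ open with its quotient topology; for an arbitrary parabolic $P$, picking a Borel $B\subseteq P$ and applying the core assertion to $B\subseteq P$ gives (i) and (ii), the independence of (ii) from the adapted basis being clear since two such bases differ by an element of $P(F)\subseteq P(F_v)$ acting by a homeomorphism of $P(F_v)\times\R^{d-1}_{\geq 0}(P)$. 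Uniqueness holds because any topology satisfying (i) and (ii) restricts to the prescribed quotient topology on each member of the open cover $\{\bar X_{F,v}(B)\}$, and a topology is determined by its restrictions to an open cover; since (ii)$'$ is (ii) for Borel subgroups, the same argument shows (i) together with (ii)$'$ pins down the same topology, which by the above satisfies (ii). The space $\bar X_{F,v}^{\sharp}$ is handled verbatim, using $(\bar\pi^{\sharp}_{P,v})^{-1}(\bar X^{\sharp}_{F,v}(P'))=P(F_v)\times\R^{d-1}_{\geq 0}(P')$ and, in the archimedean case, the bijectivity of $\bar\pi^{\sharp}_v$ from \ref{lst0}(3).
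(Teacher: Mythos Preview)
Your overall architecture is the same as the paper's: glue the quotient topologies on the $\bar X_{F,v}(B)$, and for this show that on $\bar X_{F,v}(P')$ the quotient topologies coming from the smaller and larger parabolics coincide. The paper phrases the latter as ``the action of $P(F_v)$ on $\bar X_{F,v}(P)$ (for the quotient-from-$B_u$ topology) is continuous''; your ``construct a continuous $r$ with $\bar\pi_{P,v}\circ r=\bar\pi_{P',v}$'' is the same statement in slightly different clothing. Your archimedean argument is correct and matches the paper's use of the unique, continuous Iwasawa decomposition. Your gluing bookkeeping (that $\bar X_{F,v}(B_1)\cap\bar X_{F,v}(B_2)=\bar X_{F,v}(Q)$ with $Q$ the parabolic of the common refinement flag, and the basis-independence of $\bar\pi_{Q,v}$ since two adapted bases differ by an element of $Q(F)$) is fine.

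There is, however, a real gap in the non-archimedean step. Openness of the multiplication $P(F_v)\times K'\to P'(F_v)$ (with $K'=M'(F_v)\cap K_v$) does make this map a quotient map, and hence reduces continuity of $\bar\pi_{P',v}$ (for the quotient-from-$B_u$ topology) to continuity of the composite
\[
P(F_v)\times K'\times\R^{d-1}_{\ge 0}(P')\longrightarrow \bar X_{F,v}(P'),\qquad (p,\kappa,t)\longmapsto p\kappa\,\bar\pi(1,t).
\]
But compactness of $K'$ alone does not give this continuity. The action of $P(F_v)$ on $(\bar X_{F,v}(P'),\tau_{B_u})$ is continuous for formal reasons, so the issue is the map $(\kappa,t)\mapsto\kappa\,\bar\pi(1,t)$. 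Here $\kappa$ is block-diagonal for the $P'$-blocks but genuinely mixes within each coarser $P_t$-block, and $P_t$ itself jumps as $t$ crosses the boundary walls of $\R^{d-1}_{\ge 0}(P')$; writing $\kappa\,\bar\pi(1,t)=\bar\pi(u,s)$ with $(u,s)$ continuous in $(\kappa,t)$ is exactly what fails without further input, since the non-archimedean Iwasawa decomposition is not unique. The paper supplies the missing ingredient via the simplicial structure of the apartment: the Iwahori subgroup $\prod_i\Iw_i$ fixes every point of the fundamental simplex $C\subset\R^{d-1}_{\ge 0}(P')$ under its embedding in $\bar X_{F,v}(P')$, and $\R^{d-1}_{\ge 0}(P')$ is locally a finite union of $A_v\prod_i S^{(i)}$-translates of $C$. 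This kills the ``continuous'' part of $K'$ on each simplex and reduces to handling a discrete group, for which continuous local sections of the Bruhat--Iwasawa decomposition suffice. Your sketch does not invoke this mechanism, and I do not see how to complete the argument without something of this kind.
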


\begin{pf} 

The uniqueness is clear if we have existence of a topology satisfying (i) and (ii).  Let $(e_i)_i$ be a basis of $V$, let $B$ be the Borel subgroup of $\PGL_V$ with respect to this basis, and let $P$ be a parabolic subgroup of $\PGL_V$  containing $B$.  It suffices to prove that for the topology on $\bar X_{F,v}(B)$ (resp., $\bar X_{F,v}^{\sharp}(B)$)  as a quotient of $B_u(F_v) \times \R^{d-1}_{\geq 0}(B)$, the subspace topology on $\bar X_{F,v}(P)$ (resp., $\bar X_{F,v}^{\sharp}(P)$)
coincides with the quotient topology from
$P(F_v) \times \R^{d-1}_{\geq 0}(P)$.  For this, it is enough to show that 
the action of the topological group $P(F_v)$ on $\bar X_{F,v}(P)$ (resp., $\bar X_{F,v}^{\sharp}(P)$) is continuous
with respect to the topology on $\bar X_{F,v}(P)$ (resp., $\bar X_{F,v}^{\sharp}(P)$) as a quotient of $B_u(F_v) \times \R^{d-1}_{\geq 0}(P)$.  We must demonstrate this continuity.

Let  $(V_i)_{-1 \le i \le m}$ 
be the flag corresponding to $P$, and let $c(i)=\dim(V_i)$. 
For $0\leq i\leq m$, we regard $\GL_{V^{(i)}}$ as a subgroup of $\GL_V$ via the decomposition $V=\bigoplus_{i=0}^m V^{(i)}$ of \ref{cover2}.  

Suppose first that $v$ is archimedean. For $0\leq i\leq m$, let $K_i$ be the compact subgroup of $\GL_{V^{(i)}}(F_v)$ 
that is the isotropy group of the norm on $V^{(i)}$ given by the basis $(e_j)_{c(i-1)<j\leq c(i)}$ and $(1,\dots,1) \in \prod_{c(i-1)<j\leq c(i)}\R_{>0}$.  We identify $\R_{>0}^{d-1}$ with $A_v$ as in \ref{diag}.
By the Iwasawa decomposition \ref{decomp} and its uniqueness in \ref{Iwarch}, the product on $P(F_v)$ induces a homeomorphism
$$
	(a,b,c) \colon P(F_v) \xrightarrow{\sim} B_u(F_v)\times \R_{>0}^{d-1} \times 
	\left(\prod_{i=0}^m K_i\right)/\{z\in F^\times_v \mid |z|=1\}.
$$ 
We also have a product map $\phi \colon P(F_v) \times B_u(F_v) \times \R_{>0}^{\Delta'(P)} \to P(F_v)$, where we identify $t' \in \R_{>0}^{\Delta'(P)}$ with 
the diagonal matrix $\diag(r_1, \ldots, r_d)^{-1}$ if $v$ is real and $\diag(r_1^{1/2}, \ldots, r_d^{1/2})^{-1}$ if $v$ is complex, with $r_j^{-1} = \prod_{c(i-1)<k<j} t'_k$ for $c(i-1) < j \le c(i)$ as in \ref{cover}.
These maps fit in a commutative diagram
$$ \begin{tikzcd}[column sep=large] 
P(F_v) \times \R_{\geq 0}^{\Delta(P)} \arrow{d}
& P(F_v) \times B_u(F_v)\times \R_{>0}^{\Delta'(P)} \times \R_{\geq 0}^{\Delta(P)} \arrow[two heads]{r}{(\id,\bar{\pi}_{P,v})}   
\arrow[two heads]{l}[swap]{(\phi,\id)}
& P(F_v) \times \bar X_{F,v}(P) \arrow{d} \\
B_u(F_v) \times \R_{\geq 0}^{d-1}(P) \arrow[two heads]{rr}{\bar{\pi}_{P,v}} && \bar X_{F,v}(P)
\end{tikzcd}$$
in which the right vertical arrow is the action of $P(F_v)$ on $\bar X_{F,v}(P)$, and the left vertical arrow is the 
continuous map
$$
	(u,t)\mapsto (a(u), b(u)\cdot(1,t)), \quad (u,t) \in P(F_v) \times \R_{\ge 0}^{\Delta(P)}
$$ 
for $(1, t)$ the element of $\R_{\geq 0}^{d-1}(P)$ with $\R_{>0}^{\Delta'(P)}$-component $1$ and $\R_{\ge 0}^{\Delta(P)}$-component $t$.  (To see the commutativity, note that $c(u)$ commutes with the block-scalar matrix determined by $(1,t)$.)
We also have a commutative diagram of the same form for $\bar X_{F,v}^{\sharp}$. 
Since the surjective horizontal arrows are quotient maps, we have the continuity of the action of $P(F_v)$.

Next, we consider the case that $v$ is non-archimedean. For $0\leq i\leq m$, let $S^{(i)}$ be the group of permutations of the set 
$$
	I_i = \{j\in \Z \mid c(i-1)<j \le c(i)\},
$$ 
and regard it as a subgroup of $\GL_{V^{(i)}}(F)$.  Let $A_v$ be the subgroup of the diagonal
torus of $\PGL_V(F_v)$ with respect to the basis $(e_i)_i$ with entries powers of a fixed uniformizer, as in \ref{decomp}.

Consider the action of $A_v \prod_{i=0}^m S^{(i)}\subset P(F_v)$ on $\R^{d-1}_{\geq 0}(P)$ that is compatible with the action of $P(F_v)$ on $\bar X_{F,v}(P)$ via the embedding $\R^{d-1}_{\geq 0}(P) \to \bar X_{F,v}(P)$. 
This action is described as follows.  Any matrix $a= \diag(a_1, \ldots, a_d) \in A_v$ sends $t\in \R_{\geq 0}^{d-1}(P)$ to 
$(t_j |a_{j+1}||a_j|^{-1})_j \in \R_{\ge 0}^{d-1}(P)$. 
The action of $\prod_{i=0}^m S^{(i)}$
on $\R_{\geq 0}^{d-1}(P)$ is the unique continuous action which is
compatible with the  evident action of $\prod_{i=0}^m S^{(i)}$ on
$\R_{>0}^d$ via the map $\R_{>0}^d \to \R^{d-1}_{\ge 0}(P)$ that sends
$(r_i)_i$ to $(t_j)_j$, where
$t_j=r_j/r_{j+1}$.  That is, for
$$
	\sigma = (\sig_i)_{0\leq i\leq m} \in \prod_{i=0}^m S^{(i)},
$$ 
let $f \in S_d$ be the unique permutation with $f|_{I_i} = \sig_i^{-1}$ for all $i$. Then $\sigma$ sends $t \in \R_{\geq 0}^{d-1}(P)$ to the element $t' = (t'_j)_j$ given by
 $$
 	t'_j=\begin{cases} \prod_{f(j)\leq k < f(j+1)} t_k & \text{if } f(j)<f(j+1),\\
	\prod_{f(j+1)\leq k < f(j)} t_k^{-1} & \text{if } f(j+1)<f(j).
	\end{cases}
$$ 

Let $C$ be the compact subset of $\R^{d-1}_{\geq 0}(P)$ given by
$$
	C = \Bigg\{ t = (t_j)_j \in \R^{d-1}_{\ge 0}(P) \cap [0,1]^{d-1} \mid \prod_{c(i-1)<j<c(i)} t_j\geq q_v^{-1}
	\text{ for all } 0 \le i \le m \Bigg\}.
$$
We claim that for each $x\in \R^{d-1}_{\geq 0}(P)$, 
there is a finite family $(h_k)_k$ of elements of $A_v \prod_{i=0}^m S^{(i)}$ such that the union  $\bigcup_k h_k C$ is a neighborhood of $x$.  This is quickly reduced to the following claim.

\medskip

{\bf Claim.}   
Consider the natural action of $H = A_vS_d \subset \PGL_V$ on the quotient space $\R^d_{>0}/\R_{>0}$, with the class of $(a_j)_j$ in $A_v$ acting as multiplication by $(|a_j|)_j$. Let $C$ be the image of 
$$
	\{r\in \R_{>0}^d \mid  r_1\leq r_2\leq \dots \leq r_d\leq q_v r_1\}
$$ 
in $\R_{>0}^d/\R_{>0}$. Then for each $x\in \R^d_{>0}/\R_{>0}$, there is a finite family $(h_k)_k$ of elements of $H$ such that $\bigcup_k h_kC$ is a neighborhood of $x$.

\begin{proof}[Proof of Claim] \renewcommand{\qedsymbol}{} 
	This is a well-known statement in the theory of Bruhat-Tits buildings: the quotient $\R^d_{>0}/\R_{>0}$ is called the apartment of the Bruhat-Tits building $X_v$ of $\PGL_V$, and the set $C$ is a ($d-1$)-simplex in this apartment.  Any ($d-1$)-simplex in this apartment has the form $hC$ for some $h\in H$, for any $x\in \R^d_{>0}/\R_{>0}$ there are only finitely many ($d-1$)-simplices in this apartment which contain $x$, and the union of these is a neighborhood of $x$ in $\R^d_{>0}/\R_{>0}$. 
\end{proof}

By compactness of $C$, the topology on the neighborhood $\bigcup_k h_kC$ of $x$ is the quotient
topology from $\coprod_k h_kC$.  Thus, it is enough to show that for each $h\in A_v \prod_{i=0}^m S^{(i)}$, the composition 
$$
	P(F_v) \times B_u(F_v) \times h C \xrightarrow{(\id,\pi_{P,v})} P(F_v) \times \bar X_{F,v}(P) \to \bar X_{F,v}(P)
$$ 
(where the second map is the action) and its analogue for $\bar X_{F,v}^{\sharp}$ are continuous. 

For $0\leq i\leq m$, let $\Iw_i$ be the Iwahori subgroup of $\GL_{V^{(i)}}(F_v)$
 for the basis $(e_j)_{c(i-1)<j\leq c(i)}$. By the the Iwasawa and Bruhat decompositions as in
 \ref{Iwnon}, 
the product on $P(F_v)$ induces a continuous surjection
 $$
	B_u(F_v) \times A_v\prod_{i=0}^m S^{(i)} \times  \prod_{i=0}^m  \Iw_i\to P(F_v),
$$ 
and it admits continuous sections locally on $P(F_v)$.  (Here, the middle group $A_v\prod_{i=0}^m S^{(i)}$ 
has the discrete topology.)
Therefore, there exist an open covering $(U_{\lam})_{\lam}$ of $P(F_v)$ and, for each $\lambda$, 
a subset $\mathcal{U}_{\lam}$ of the above product mapping homeomorphically to $U_{\lambda}$, together with a continuous 
map
 $$
 	(a_{\lam}, b_{\lam}, c_{\lam}) \colon \mathcal{U}_{\lam} \to B_u(F_v) \times A_v
 	\prod_{i=0}^m S^{(i)} \times  \prod_{i=0}^m  \Iw_i
$$  
such that its composition with the above product map is the map $\mathcal{U}_{\lam} \xrightarrow{\sim} U_{\lam}$. 
Let $U'_{\lam}$ denote the inverse image of $U_{\lam}$ under 
$$
	P(F_v) \times B_u(F_v)\to P(F_v), \quad (g, g')\mapsto gg'h,
$$ 
so that $(U'_{\lam})_{\lam}$ is an open covering of $P(F_v) \times B_u(F_v)$.
For any $\gamma$ in the indexing set of the cover, let $\mathcal{U}'_{\lam,\gamma}$ be the inverse image of $U'_{\lam}$ in 
$\mathcal{U}_{\gamma} \times B_u(F_v)$.  Then the images of the $\mathcal{U}'_{\lam,\gamma}$ form an open cover of $P(F_v)
\times B_u(F_v)$ as well.
Let $(a'_{\lam,\gamma},b'_{\lam,\gamma})$ be the composition 
$$
	\mathcal{U}'_{\lam,\gamma} \to \mathcal{U}_{\lambda} \xrightarrow{(a_{\lam},b_{\lam})} B_u(F_v) \times A_v\prod_{i=0}^mS^{(i)}.
$$
As $\prod_{i=0}^m \Iw_i$ fixes every element of $C$ under its embedding in $\bar X_{F,v}(P)$, we have a commutative diagram
$$
 	\begin{tikzcd} \mathcal{U}'_{\lam,\gamma} \times hC \arrow[hook]{r} \arrow{d} & P(F_v) \times B_u(F_v) \times hC \arrow{d} \\
	B_u(F_v) \times \R_{\geq 0}^{d-1}(P) \arrow[two heads]{r} & \bar X_{F,v}(P)
	\end{tikzcd}
$$
in which the left vertical arrow is 
$$
	(u, hx) \mapsto (a'_{\lam,\gamma}(u), b'_{\lam,\gamma}(u)x)
$$ 
for $x\in C$.  We also have a commutative diagram of the same form for $\bar X_{F,v}^{\sharp}$. 
This proves the continuity of the action of $P(F_v)$.
\end{pf}

\begin{sbpara} \label{BStop2}

We call the topology on $\bar X_{F,v}$ (resp., $\bar X_{F,v}^{\sharp}$) in \ref{BStop} the Borel-Serre topology.   The Borel-Serre topology on $\bar X_{F,v}$ coincides with the quotient topology of  the Borel-Serre topology on $\bar X_{F,v}^{\sharp}$.  This topology on $\bar X_{F,v}$ is finer than the subspace topology from $\bar X_v$.
 
We define the Borel-Serre topology on $\bar X_{F,v}^{\flat}$ as the quotient topology of the Borel-Serre topology of $\bar X_{F,v}$.  This topology on $\bar X_{F,v}^{\flat}$ is finer than the subspace topology from $\bar X_v^{\flat}$. 

For a nonempty finite set $S$ of places of $F$, we define the Borel-Serre topology on $\bar X_{F,S}$ (resp., $\bar X_{F,S}^{\flat}$) as the subspace topology for the product topology on $\prod_{v \in S} \bar X_{F,v}$ 
(resp., $\prod_{v \in S} \bar X_{F,v}^{\flat}$) for the Borel-Serre topology on each $\bar X_{F,v}$ (resp., $\bar X_{F,v}^{\flat}$).
 
\end{sbpara}

\subsection{Satake topology} \label{Satake}

\begin{sbpara} For a nonempty finite set of places $S$ of $F$, 
we define the Satake topology on $\bar X_{F,S}$ and, under the assumption $S$ contains all archimedean places,  on $\bar X_{F,S}^{\flat}$.  

The Satake topology is coarser than the Borel-Serre topology of \ref{BStop2}.  On the other hand, the Satake topology and the Borel-Serre topology induce the same topology on the quotient space by an arithmetic group (\ref{same}). Thus, the Hausdorff compactness of this quotient space can be formulated without using the Satake topology (i.e., using only the Borel-Serre topology). However, arguments involving the Satake topology appear naturally in the proof of this property. 
One nice aspect of the Satake topology is that each point has an explicit base of neighborhoods  (\ref{HBS}, \ref{Sat5}, \ref{flat4}). 
\end{sbpara}

\begin{sbpara}\label{subq} Let $H$ be a finite-dimensional vector space over a local field $E$. Let $H'$ and $H''$ be $E$-subspaces of $H$ such that $H'\supset H''$. Then a norm $\mu$ on $H$ induces a norm $\nu$ on $H'/H''$ as follows. Let $\mu'$ be the restriction of $\mu$ to $H'$. Let $(\mu')^*$ be the norm on $(H')^*$ dual to $\mu'$. Let $\nu^*$ be the restriction of $(\mu')^*$ to the subspace $(H'/H'')^*$ of $(H')^*$. Let $\nu$ be the dual of $\nu^*$. 
This norm $\nu$ is given on $x \in H'/H''$ by 
$$
	\nu(x) = \inf\,\{\mu(\tilde{x}) \mid \tilde{x} \in H' \text{ such that } \tilde{x} + H'' = x\}.
$$
\end{sbpara}

\begin{sbpara}\label{togr} For a parabolic subgroup $P$ of $\PGL_V$, let $(V_i)_{-1 \le i \le m}$ be the corresponding flag.  Set
$$
	\bar X_{F,S}(P)=\{(P', \mu)\in \bar X_{F, S} \mid P'\supset P\}.
$$ 
For a place $v$ of $F$, let us set
\begin{eqnarray*}
	\frak{Z}_{F,v}(P) = \prod_{i=0}^m X_{(V_i/V_{i-1})_v} &\text{and}&
	\frak{Z}_{F,S}(P) = \prod_{v \in S} \frak{Z}_{F,v}(P).
\end{eqnarray*}
We let $P(F_v)$ act on $\frak{Z}_{F,v}(P)$ through $P(F_v)/P_u(F_v)$, using the 
$\PGL_{(V_i/V_{i-1})_v}(F_v)$-action on $X_{(V_i/V_{i-1})_v}$ for $0 \le i \le m$.
We define a $P(F_v)$-equivariant map 
$$
	\phi_{P,v} \colon \bar X_{F,v}(P)\to \frak{Z}_{F,v}(P)
$$ 
with the product of these over $v \in S$ giving rise to a map $\phi_{P,S} \colon \bar{X}_{F,S}(P)\to \frak{Z}_{F,S}(P)$. 

Let $(P', \mu) \in \bar X_{F,v}(P)$. Then the spaces in the flag 
$0=V'_{-1}\subsetneq V'_0\subsetneq \dots \subsetneq V'_{m'}=V$ corresponding to $P'$ form a 
subset of $\{V_i \mid -1 \le i \le m\}$. The image $\nu=(\nu_i)_{0\leq i\leq m}$ of $(P',\mu)$ under $\phi_{P,v}$ is as follows: there is a unique $j$ with $0\leq j\leq m'$ such that 
$$
	V'_j\supset V_i \supsetneq V_{i-1}\supset V'_{j-1},
$$ 
and $\nu_i$ is the norm 
induced from $\mu_j$ on the subquotient $(V_i/V_{i-1})_v$ of $(V'_j/V'_{j-1})_v$, in the sense of \ref{subq}.
The $P(F_v)$-equivariance of $\phi_{P,v}$ is easily seen using the actions on norms of \ref{actn} and \ref{actdual}. 

\end{sbpara}

Though the following map is not used in this subsection, we introduce it here by way of comparison between $\bar X_{F,S}$ and $\bar X_{F,S}^{\flat}$.

\begin{sbpara}\label{toW} 
Let $W$ be a nonzero $F$-subspace of $V$, and set
$$
	\bar X_{F,S}^{\flat}(W)=\{(W', \mu)\in \bar X_{F, S}^{\flat} \mid W'\supset W\}.
$$
For a place $v$ of $F$, we have a map
$$ \phi_{W,v}^{\flat} \colon \bar X_{F,v}^{\flat}(W) \to X_{W_v}$$
which sends $(W', \mu)\in \bar X_{F,v}^{\flat}(W)$ to the restriction of $\mu$ to $W_v$. 
The map $\phi_{W,v}^{\flat}$ is $P(F_v)$-equivariant, for $P$ the parabolic subgroup of $\PGL_V$ consisting of all elements that preserve $W$.   Setting $\frak{Z}_{F,S}^{\flat}(W) = \prod_{v \in S} X_{W_v}$, the product of these maps over $v \in S$ provides a map $\phi_{W,S}^{\flat} \colon \bar X_{F,S}^{\flat}(W) \to \frak{Z}_{F,S}^{\flat}(W)$.  

\end{sbpara}

\begin{sbpara} \label{ndef} For a finite-dimensional vector space $H$ over a local field $E$, a basis $e = (e_i)_{1 \le i \le d}$ of $H$, and a norm $\mu$ on $H$, we define the absolute value $|\mu: e|\in \R_{>0}$ of $\mu$ relative to $e$ as follows.  Suppose that $\mu$ is defined by a basis $e' = (e'_i)_{1\leq i\leq d}$ and a tuple $(r_i)_{1\leq i\leq d}\in \R_{>0}^d$.  Let $h \in \GL_H(E)$ be the element such that $e'=he$. We then define 
$$
	|\mu:e| = |\det(h)|^{-1} \prod_{i=1}^d r_i.
$$ 
This is independent of the choice of $e'$ and $(r_i)_i$.  Note that we have 
$$
	|g\mu:e|= |\det(g)|^{-1}|\mu: e|
$$ 
for all $g\in  \GL_H(E)$.

\end{sbpara}

\begin{sbpara}\label{togr2} Let $P$ and $(V_i)_i$  be as in \ref{togr}, and let $v$ be a place of $F$. 
Fix a basis $e^{(i)}$ of $(V_i/V_{i-1})_v$ for each $0\leq i\leq m$. 
Then we have a map
$$ \phi'_{P,v} \colon \bar X_{F,v}(P)\to \R_{\geq 0}^m, \qquad (P',\mu) \mapsto (t_i)_{1 \leq i \leq m}$$
where $(t_i)_{1\leq i \leq m}$ is defined as follows. Let $(V'_j)_{-1\leq j\leq m'}$ be the flag associated to $P'$. 
Let $1\leq i\leq m$. If $V_{i-1}$ belongs to $(V_j')_j$, let $t_i=0$. If $V_{i-1}$ does not belong to the last flag, then there is a unique $j$ such that $V'_j \supset V_i \supset V_{i-2}\supset V'_{j-1}$. Let $\tilde \mu_j$ be a norm on $(V'_j/V'_{j-1})_v$ which belongs to the class $\mu_j$, and let $\tilde \mu_{j,i}$ and $\tilde\mu_{j,i-1}$ be the norms induced by $\mu_j$ on the subquotients $(V_i/V_{i-1})_v$ and $(V_{i-1}/V_{i-2})_v$, respectively.  We then let
$$t_i = |\tilde \mu_{j,i-1}:e^{(i-1)}|^{1/d_{i-1}}\cdot  |\tilde \mu_{j,i}:e^{(i)}|^{-1/d_i},$$
where $d_i:=\dim(V_i/V_{i-1})$.

The map $\phi'_{P,v}$ is $P(F_v)$-equivariant for the following action of $P(F_v)$ on $\R_{\geq 0}^m$.  For $g\in P(F_v)$, let $\tilde g \in \GL_V(F_v)$ be a lift of $g$, and for $0\leq i\leq m$, let $g_i\in \GL_{V_i/V_{i-1}}(F_v)$ be the element induced by $\tilde g$. Then $g \in P(F_v)$ sends $t\in \R^m_{\geq 0}$ to $t'\in \R_{\geq 0}^m$ where
$$
	t'_i = |\det(g_i)|^{1/d_i}\cdot |\det(g_{i-1})|^{-1/d_{i-1}}\cdot t_i.
$$

If we have two families $e=(e^{(i)})_i$ and $f=(f^{(i)})_i$ of bases $e^{(i)}$ and $f^{(i)}$ of $(V_i/V_{i-1})_v$, and if 
the map $\phi'_{P,v}$ defined by $e$ (resp., $f$) sends an element to $t$ (resp., $t'$), then the same formula also describes the relationship between $t$ and $t'$, in this case taking $g_i$ to be the element of $\GL_{V_i/V_{i-1}}$ such that $e^{(i)}=g_if^{(i)}$. 
\end{sbpara}

\begin{sbpara}\label{togr3}  
Fix a basis $e^{(i)}$ of $V_i/V_{i-1}$ for each $0\leq i\leq m$. 
Then we have a map
$$ \phi'_{P,S} \colon \bar X_{F,S}(P)\to \R_{\geq 0}^m, \qquad (P', \mu) \mapsto (t_i)_{1\leq i\leq m}$$
where $t_i=\prod_{v\in S} t_{v,i}$, with $(t_{v,i})_i$ the image of $(P',\mu_v)$ under the map $\phi'_{P,v}$ of \ref{togr2}.

\end{sbpara}

\begin{sbpara}\label{Sat1} We define the Satake topology on $\bar X_{F,S}$ as follows. 

For a parabolic subgroup $P$ of $\PGL_V$, consider the map 
$$  \psi_{P,S} := (\phi_{P,S},\phi'_{P,S}) \colon \bar X_{F,S}(P) \to \frak{Z}_{F,S}(P) \times \R_{\geq 0}^m $$ 
from \ref{togr} and \ref{togr3}, which we recall depends on a choice of bases of the $V_i/V_{i-1}$.
We say that a subset of $\bar X_{F,S}(P)$ is $P$-open if it is the  
 inverse image of an open subset of 
$\frak{Z}_{F,S}(P) \times \R_{\geq 0}^m$.  By \ref{togr2}, the property of being $P$-open is independent
of the choice of bases.

We define the Satake topology on $\bar X_{F,S}$ to be the coarsest topology for which every $P$-open set for each parabolic subgroup $P$ of $\PGL_V$ is open. 

\end{sbpara}

By this definition, we have:
\begin{sbpara}\label{Sat5}  
Let $a \in \bar X_{F,S}$ be of the form $(P,\mu)$ for some $\mu$.  As $U$ ranges over neighborhoods of the image $(\mu,0)$ of $a$ in $\frak{Z}_{F,S}(P) \times \R^m_{\geq 0}$, the inverse images of the $U$ in $\bar X_{F,S}(P)$ under $\psi_{P,S}$ form a base of neighborhoods of $a$ in $\bar X_{F,S}$.  

\end{sbpara}

\begin{sbpara}
In \S\ref{propX} and \S\ref{comptop}, we explain that the Satake topology on $\bar X_{F,S}$ is strictly coarser than the Borel-Serre topology for $d \ge 2$. 
\end{sbpara}

\begin{sbpara} The Satake topology on $\bar X_{F,S}$ can differ from the subspace topology of the product topology for the Satake topology on each $\bar X_{F,v}$ with $v \in S$.

\begin{example} Let $F$ be a real quadratic field, let $V=F^2$, and let $S=\{v_1,v_2\}$ be  the set of real places of $F$. 
Consider the point $(\infty, \infty) \in (\frak H\cup \{\infty\})\times (\frak H\cup\{\infty\})\subset \bar X_{v_1}\times \bar X_{v_2}$ (see \S\ref{uphp}), which we regard as an element of 
$\bar X_{F,S}$. Then the sets 
$$
	U_c:=\{(x_1+y_1i, x_2+y_2i)\in \frak H \times \frak H \mid y_1y_2 \ge c\}\cup\{(\infty,\infty)\}
$$ 
with $c\in \R_{>0}$ form a base of neighborhoods of $(\infty,\infty)$ in $\bar X_{F,S}$ for the Satake topology, whereas the sets 
$$
	U'_c:=\{(x_1+y_1i, x_2+y_2i)\in \frak H \times \frak H \mid y_1 \ge c, y_2 \ge c\}\cup\{(\infty,\infty)\}
$$ 
for $c\in \R_{>0}$ form a base of neighborhoods of $(\infty, \infty)$ in $\bar X_{F,S}$ for the topology induced by the product of Satake topologies on $\bar X_{F,v_1}$ and $\bar X_{F,v_2}$.
\end{example}
\end{sbpara}

\begin{sbpara}\label{top7} 
Let $G=\PGL_V$, and let $\Gamma$ be a subgroup of $G(F)$.
\begin{itemize}
\item For a parabolic subgroup $P$ of $G$, let $\Gamma_{(P)}$ be the subgroup of $\Gamma \cap P(F)$ consisting of all elements with image in the center of $(P/P_u)(F)$. 
\item For a nonzero $F$-subspace $W$ of $V$, let $\Gamma_{(W)}$ denote the subgroup of elements of $\Gamma$ that can be lifted to elements of $\GL_V(F)$ which fix every element of $W$.
\end{itemize}
\end{sbpara}

\begin{sbpara}\label{top8} 
We let $\mb{A}_F$ denote the adeles of $F$, let $\mb{A}_F^S$ denote the adeles of $F$ outside of $S$, and let $\mb{A}_{F,S} = \prod_{v \in S} F_v$ so that
$\mb{A}_F = \mb{A}_F^S \times \mb{A}_{F,S}$.
Assume that $S$ contains all archimedean places of $F$. Let $G=\PGL_V$, 
let $K$ be a compact open subgroup of $G(\mb{A}^S_F)$, and let $\Gamma_K < G(F)$ be the inverse image of $K$ under $G(F)\to G(\mb{A}^S_F)$. 

\end{sbpara}

The following proposition will be proved in \ref{top3}.
\begin{sbprop}\label{top1} 
For $S$, $G$, $K$ and $\Gamma_K$ as in \ref{top8},  
the Satake topology on $\bar X_{F,S}$ is the coarsest topology such that for every parabolic subgroup $P$ of $G$,
a subset  $U$ of $\bar X_{F,S}(P)$ is open if
\begin{enumerate}
\item[(i)] it is open for  Borel-Serre topology, and
\item[(ii)] it is stable under the action of $\Gamma_{K, (P)}$ (see \ref{top7}).  
\end{enumerate}
\end{sbprop}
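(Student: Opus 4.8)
The plan is to identify the Satake topology of \ref{Sat1} with the topology $\cT$ of the statement --- the coarsest one for which, for every parabolic subgroup $P$ of $G$, each Borel--Serre open, $\Gamma_{K,(P)}$-stable subset of $\bar X_{F,S}(P)$ is open --- by establishing the two inclusions of topologies: (a) every $P$-open subset of $\bar X_{F,S}(P)$ is Borel--Serre open and $\Gamma_{K,(P)}$-stable, which gives that $\cT$ is finer than or equal to the Satake topology; and (b) every Borel--Serre open, $\Gamma_{K,(P)}$-stable subset $U$ of $\bar X_{F,S}(P)$ is Satake open, which gives the reverse. Here, for a parabolic $P$ with Levi quotient $L_P=P/P_u$, I write $\psi_{P,S}=(\phi_{P,S},\phi'_{P,S})\colon \bar X_{F,S}(P)\to \frak Z_{F,S}(P)\times \R^m_{\geq 0}$ as in \ref{Sat1}.

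\emph{Step (a).} The heart of the matter is that $\psi_{P,S}$ is invariant under $\Gamma_{K,(P)}$, the action on the target being trivial. Both $\phi_{P,S}$ and $\phi'_{P,S}$ are equivariant for actions of $\prod_{v\in S}P(F_v)$ on the target that factor through $\prod_{v\in S}L_P(F_v)$: for $\phi_{P,S}$ this is built into \ref{togr}; for $\phi'_{P,S}$ it follows from the formula of \ref{togr2}--\ref{togr3}, in which only the determinants of the induced maps on the graded pieces $V_i/V_{i-1}$ appear, and these are unchanged upon multiplying by an element of $P_u$. By the definition of $\Gamma_{(P)}$ in \ref{top7}, the image of $\Gamma_{K,(P)}$ in $L_P(F)$ lies in the center $Z(L_P)(F)$, and a central element acts on each $V_i/V_{i-1}$ by a scalar $\lambda_i\in F^\times$, hence trivially on the homothety classes making up $\frak Z_{F,S}(P)$ and by multiplication by $\prod_{v\in S}|\lambda_i/\lambda_{i-1}|_v$ on the $i$-th coordinate of $\R^m_{\geq 0}$. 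Now for $\gamma\in\Gamma_{K,(P)}$ the image of $\gamma$ in $\PGL_V(F_v)$ lies in the compact group $K_v$ for every $v\notin S$, which forces the eigenvalue ratios $\lambda_i/\lambda_{i-1}$ to be units at all such $v$, i.e. to lie in $O_S^\times$; by the product formula, $\prod_{v\in S}|\lambda_i/\lambda_{i-1}|_v=1$. Thus $\psi_{P,S}$ is $\Gamma_{K,(P)}$-invariant. Since the Satake topology is coarser than the Borel--Serre topology (\S\ref{Satake}), a $P$-open set $\psi_{P,S}^{-1}(\text{open})$ is Borel--Serre open, and it is $\Gamma_{K,(P)}$-stable by the invariance just established. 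This proves (a).

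\emph{Step (b).} Let $U\subseteq\bar X_{F,S}(P)$ be Borel--Serre open and $\Gamma_{K,(P)}$-stable, and let $a\in U$ have associated parabolic $Q\supseteq P$. One has $\Gamma_{K,(Q)}\subseteq\Gamma_{K,(P)}$ (which follows from $Q_u\subseteq P_u$ and $Z(L_Q)\subseteq Z(L_P)$, both valid since $P\subseteq Q$), so $U$ is $\Gamma_{K,(Q)}$-stable. By \ref{Sat5} it is enough to find an open neighborhood $W$ of $\psi_{Q,S}(a)$ in $\frak Z_{F,S}(Q)\times\R^m_{\geq 0}$ with $\psi_{Q,S}^{-1}(W)\subseteq U$. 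I argue by contradiction. If no such $W$ exists then, since $\frak Z_{F,S}(Q)\times\R^m_{\geq 0}$ is first countable (indeed metrizable), there is a sequence $y_n\in\bar X_{F,S}(Q)\setminus U$ with $\psi_{Q,S}(y_n)\to\psi_{Q,S}(a)$. Here I invoke a reduction-theoretic statement: the $\Gamma_{K,(Q)}$-action on the $\psi_{Q,S}$-fibres over a neighborhood of $\psi_{Q,S}(a)$ is cocompact, in the sense that there exist a neighborhood $W_0$ of $\psi_{Q,S}(a)$ and a relatively compact subset $C$ of $\bar X_{F,S}(Q)$ with $\Gamma_{K,(Q)}\cdot C\supseteq\psi_{Q,S}^{-1}(W_0)$. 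For $n$ large we may then write $y_n=\gamma_n^{-1}z_n$ with $\gamma_n\in\Gamma_{K,(Q)}$ and $z_n\in C$; passing to a subsequence, $z_n\to z$ for some $z$ in the closure of $C$, and the invariance of $\psi_{Q,S}$ gives $\psi_{Q,S}(z)=\lim_n\psi_{Q,S}(y_n)=\psi_{Q,S}(a)$. But the $\psi_{Q,S}$-fibre over $\psi_{Q,S}(a)$ is the single point $a$: indeed $\phi'_{Q,S}$ vanishes at a point of $\bar X_{F,S}(Q)$ precisely when that point lies on the stratum with associated parabolic $Q$, and $\phi_{Q,S}$ is the identity on that stratum. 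Hence $z=a$, so $z_n\to a$; as $U$ is a Borel--Serre neighborhood of $a$, we get $z_n\in U$ and therefore $y_n=\gamma_n^{-1}z_n\in U$ for $n$ large, a contradiction. This proves (b), and (a) and (b) together give the proposition.

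\emph{Main obstacle.} The one nontrivial ingredient is the cocompactness statement used in (b). To prove it I would use the Iwasawa/Bruhat coordinate descriptions of \ref{lst0} and \ref{BStop} to describe a $\psi_{Q,S}$-fibre over a stratum with associated parabolic $Q'\supseteq Q$ as a product, over $v\in S$, of a torsor under a unipotent $F_v$-group (the ``off-diagonal'' directions of a norm, built from $Q_u$ and $Q'$), together with the ``diagonal'' directions $\{(s_v)_{v\in S}\in\R_{>0}^S : \prod_{v}s_v=1\}$ attached to each coordinate where $\phi'_{Q,S}$ only records the product over $S$ of the local invariants of \ref{togr2}. On the first kind of factor, $\Gamma_{K,(Q)}\cap Q_u(F)$ acts cocompactly because $O_S$ is a cocompact lattice in $\mb A_{F,S}$ (this uses that $S$ contains all archimedean places); on the second, the image of $\Gamma_{K,(Q)}$ in $Z(L_Q)$ acts through the logarithmic embedding of $O_S^\times$, whose image is a cocompact lattice in the relevant hyperplane by Dirichlet's $S$-unit theorem. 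Assembling these and verifying that a single relatively compact set $C$ works uniformly over a whole neighborhood of $\psi_{Q,S}(a)$ is the technical core of the proof.
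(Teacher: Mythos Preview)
Your proof is correct and takes essentially the same route as the paper: both reduce to the cocompactness of $\Gamma_{K,(P)}$ (or a finite-index subgroup) on the fibres of $\psi_{P,S}$, followed by a compactness argument. The paper packages this via an explicit surjection $f\colon \Phi\times\Psi\to\bar X_{F,S}(P)$ with $\Phi=P_u(\mb A_{F,S})\times Y_1$, $\Psi=\frak Z_{F,S}(P)\times\R^m_{\ge 0}$, and $\psi_{P,S}\circ f=\mathrm{pr}_\Psi$, proves $\Phi=\Gamma_1\Gamma_0 C$ for a compact $C$ (exactly your ``main obstacle'', established via the same $O_S$-lattice and $S$-unit arguments you sketch), and then runs a direct tube lemma rather than your contradiction-with-sequences variant---the content is the same.
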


The following proposition follows easily from the fact that for any two compact open subgroups $K$ and $K'$ of $G(\mb{A}_F^S)$, the intersection $\Gamma_K\cap \Gamma_{K'}$ is of finite index in both $\Gamma_K$ and $\Gamma_{K'}$.

\begin{sbprop}\label{top2} 
For $S$, $K$ and $\Gamma_K$ as in \ref{top8},
consider the coarsest topology on $\bar X_{F,S}^{\flat}$ such that for every nonzero $F$-subspace $W$,
a subset $U$ of $\bar X_{F,S}^{\flat}(W)$ is open if
\begin{enumerate}
\item[(i)] it is open for  Borel-Serre topology, and
\item[(ii)] it is stable under the action of $\Gamma_{K,(W)}$ (see \ref{top7}). 
\end{enumerate}
Then this topology is independent of the choice of $K$. 
\end{sbprop}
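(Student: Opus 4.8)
The plan is to compare $\mathcal{T}_K$, the topology on $\bar X_{F,S}^{\flat}$ associated with $K$ as in the statement, with $\mathcal{T}_{K'}$ for a second compact open subgroup $K'$ of $G(\mb{A}_F^S)$. Since $K\cap K'$ is again compact open and $\Gamma_{K\cap K'}=\Gamma_K\cap\Gamma_{K'}$ has finite index in each of $\Gamma_K$ and $\Gamma_{K'}$, it suffices by symmetry to prove $\mathcal{T}_K=\mathcal{T}_{K'}$ in the case $K'\subset K$; then $\Gamma_{K'}$ has finite index in $\Gamma_K$, and consequently $\Gamma_{K',(W)}=\Gamma_{K'}\cap\Gamma_{K,(W)}$ has finite index in $\Gamma_{K,(W)}$ for every nonzero $F$-subspace $W$ of $V$. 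One inclusion is formal: because $\Gamma_{K',(W)}\subset\Gamma_{K,(W)}$, every Borel--Serre open and $\Gamma_{K,(W)}$-stable subset of $\bar X_{F,S}^{\flat}(W)$ is a fortiori $\Gamma_{K',(W)}$-stable, so the subbasis defining $\mathcal{T}_K$ is contained in the subbasis defining $\mathcal{T}_{K'}$, whence $\mathcal{T}_K\subset\mathcal{T}_{K'}$. The substance of the proof is the reverse inclusion: I must show that every Borel--Serre open, $\Gamma_{K',(W)}$-stable subset $U\subset\bar X_{F,S}^{\flat}(W)$ is $\mathcal{T}_K$-open.

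The obstruction is that such a $U$ need not be $\Gamma_{K,(W)}$-stable, so it is not itself a subbasic open set for $\mathcal{T}_K$. To get around this I would argue pointwise: for each $x\in U$ I would produce a subbasic $\mathcal{T}_K$-open set $V$ with $x\in V\subset U$. The idea is to replace $W$ by the $F$-subspace $W_x$ attached to $x$; since $x\in\bar X_{F,S}^{\flat}(W)$ we have $W\subset W_x$, hence $\bar X_{F,S}^{\flat}(W_x)\subset\bar X_{F,S}^{\flat}(W)$, $\Gamma_{K,(W_x)}\subset\Gamma_{K,(W)}$, and $\Gamma_{K',(W_x)}\subset\Gamma_{K',(W)}$. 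The key observation is that $x=(W_x,\mu_x)$ lies in the stratum $\{(W_x,\mu)\}$ of $\bar X_{F,S}^{\flat}(W_x)$ on which $\Gamma_{K,(W_x)}$ acts trivially: an element of $\Gamma_{K,(W_x)}$ lifts to an element of $\GL_V(F)$ fixing $W_x$ pointwise, and such an element fixes $W_x$ as a subspace and acts as the identity on classes of norms on $W_x$, hence fixes $x$.

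With this in hand I would set $U'=U\cap\bar X_{F,S}^{\flat}(W_x)$: it is Borel--Serre open, it contains $x$, it is contained in $\bar X_{F,S}^{\flat}(W_x)$, and it is $\Gamma_{K',(W_x)}$-stable, because $\Gamma_{K',(W_x)}$ preserves $U$ (being contained in $\Gamma_{K',(W)}$) and preserves $\bar X_{F,S}^{\flat}(W_x)$ (fixing $W_x$ pointwise). Choosing coset representatives $\gamma_1=1,\gamma_2,\dots,\gamma_n$ for $\Gamma_{K',(W_x)}\backslash\Gamma_{K,(W_x)}$, which is finite, I would take $V=\bigcap_{j=1}^{n}\gamma_j^{-1}U'$. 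Then $V\subset\gamma_1^{-1}U'=U'\subset U$ and $V\subset\bar X_{F,S}^{\flat}(W_x)$; each $\gamma_j^{-1}U'$ is Borel--Serre open because $\gamma_j$ acts on $\bar X_{F,S}^{\flat}$ by a Borel--Serre homeomorphism, and the intersection is finite, so $V$ is Borel--Serre open; $V$ is $\Gamma_{K,(W_x)}$-stable because right translation by an element of $\Gamma_{K,(W_x)}$ permutes the cosets $\Gamma_{K',(W_x)}\gamma_j$ and $U'$ is $\Gamma_{K',(W_x)}$-stable; and $x\in V$ because $\gamma_jx=x$ for all $j$ by the key observation while $x\in U'$. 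Thus $V$ is Borel--Serre open, $\Gamma_{K,(W_x)}$-stable, and contained in $\bar X_{F,S}^{\flat}(W_x)$, i.e.\ a subbasic $\mathcal{T}_K$-open set with $x\in V\subset U$. Letting $x$ vary exhibits $U=\bigcup_{x\in U}V$ as $\mathcal{T}_K$-open, completing the argument.

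I expect the only real obstacle to be the conceptual one isolated above -- a subbasic set for $\mathcal{T}_{K'}$ is in general not subbasic for $\mathcal{T}_K$ -- and the way past it is the localization that passes from $W$ to $W_x$, so that the finite-index group $\Gamma_{K,(W_x)}$ fixes $x$ and the symmetrized intersection $\bigcap_j\gamma_j^{-1}U'$ still contains $x$ rather than merely lying inside $U$. The remaining ingredients are routine but should be checked: that each $\bar X_{F,S}^{\flat}(W)$ is open for the Borel--Serre topology (so that ``Borel--Serre open'' is unambiguous and is preserved by intersection with $\bar X_{F,S}^{\flat}(W_x)$), and that the discrete group $\PGL_V(F)$ acts on $\bar X_{F,S}^{\flat}$ by Borel--Serre homeomorphisms; both follow from the construction of the Borel--Serre topology in \S\ref{BSsec} together with the results of \S\ref{propX}.
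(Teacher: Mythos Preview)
Your proof is correct and supplies the details that the paper omits. The paper merely remarks that the proposition ``follows easily from the fact that for any two compact open subgroups $K$ and $K'$ of $G(\mb{A}_F^S)$, the intersection $\Gamma_K\cap\Gamma_{K'}$ is of finite index in both $\Gamma_K$ and $\Gamma_{K'}$,'' without writing out an argument. Your reduction to $K'\subset K$ and the easy inclusion $\mathcal{T}_K\subset\mathcal{T}_{K'}$ are exactly what one expects; the substantive point is the reverse inclusion, where your localization from $W$ to $W_x$ is the right idea. A $\Gamma_{K',(W)}$-stable set need not be $\Gamma_{K,(W)}$-stable, and the naive symmetrization $\bigcap_j\gamma_j^{-1}U$ over cosets of $\Gamma_{K',(W)}\backslash\Gamma_{K,(W)}$ can lose the point $x$; passing to $W_x$ so that the whole group $\Gamma_{K,(W_x)}$ fixes $x$ is precisely what repairs this. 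Two minor remarks on your references: the facts you invoke at the end, that $\bar X_{F,S}^{\flat}(W)$ is Borel--Serre open and that $\PGL_V(F)$ acts by Borel--Serre homeomorphisms, are Propositions~\ref{top6} and~\ref{top5}(2) (for the Borel--Serre topology), stated just after the present proposition but logically independent of it; there is no circularity.
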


\begin{sbpara}\label{assu5} 
We call the topology in \ref{top2} the Satake topology on $\bar X_{F,S}^{\flat}$. 
\end{sbpara}

\begin{sbprop}\label{top5} \ 
\begin{enumerate}
\item[(1)] Let $P$ be a parabolic subgroup of $\PGL_V$.  For both the Borel-Serre and Satake topologies on $\bar X_{F,S}$, the set $\bar X_{F,S}(P)$ is open in 
$\bar X_{F,S}$, and the action of the topological group $P(\mb{A}_{F,S})$ on $\bar X_{F,S}(P)$ is continuous. 
\item[(2)] The actions of the discrete group $\PGL_V(F)$ on the following spaces are continuous: $\bar X_{F,S}$ and $\bar X_{F,S}^{\flat}$ with their Borel-Serre topologies, $\bar X_{F,S}$ with the Satake topology, and assuming $S$ contains all archimedean places, $\bar X_{F,S}^{\flat}$ with the Satake topology.
\end{enumerate}
\end{sbprop}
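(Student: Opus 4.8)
The plan is to treat the two parts separately, reducing in each case to constructions and results already available: for Borel--Serre topologies, to \ref{BStop} and the continuity of the $P(F_v)$-action on $\bar X_{F,v}(P)$ established inside its proof; for the Satake topology on $\bar X_{F,S}$, to the explicit neighborhood base of \ref{Sat5} together with the equivariance of $\phi_{P,S}$ and $\phi'_{P,S}$ recorded in \ref{togr}, \ref{togr2}, and \ref{togr3}; and for the Satake topology on $\bar X_{F,S}^{\flat}$, to the $K$-independent description in \ref{top2}. Two elementary observations will be used repeatedly: a point of $\bar X_{F,S}$ written $(P',\mu)$ has $P'$ as its associated parabolic, and an element of $P(\mb A_{F,S})$ fixes by conjugation every parabolic containing $P$, so it sends a point of $\bar X_{F,S}(P)$ to a point with the same associated parabolic.

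For (1), openness of $\bar X_{F,S}(P)$ is formal. In the Borel--Serre topology, $\bar X_{F,S}(P)=\bar X_{F,S}\cap\prod_{v\in S}\bar X_{F,v}(P)$ and each $\bar X_{F,v}(P)$ is open by \ref{BStop}(i). In the Satake topology, $\bar X_{F,S}(P)$ is the entire source of $\psi_{P,S}$, hence equals $\psi_{P,S}^{-1}\bigl(\frak Z_{F,S}(P)\times\R_{\geq 0}^m\bigr)$, which is $P$-open and therefore open by the definition of the Satake topology. For the continuity of the $P(\mb A_{F,S})$-action in the Borel--Serre case, one takes the product over $v\in S$ of the continuity of the $P(F_v)$-action on $\bar X_{F,v}(P)$ proved inside \ref{BStop} and restricts it to the $\prod_v P(F_v)$-invariant subspace $\bar X_{F,S}(P)$. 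In the Satake case I would argue at a point $(g_0,x_0)$: if $R$ denotes the parabolic attached to $x_0$, then $R\supseteq P$, so $g_0\in R(\mb A_{F,S})$ and $y_0:=g_0x_0$ also has parabolic $R$. By \ref{Sat5}, neighborhoods of both $x_0$ and $y_0$ are of the form $\psi_{R,S}^{-1}(\text{open})$, and $\psi_{R,S}$ is $R(\mb A_{F,S})$-equivariant for an action on $\frak Z_{F,S}(R)\times\R_{\geq 0}^m$ which is continuous: the $\PGL$-action on each $X_{(V_i/V_{i-1})_v}$ is continuous by the quotient presentation of \ref{cover}, and the action on $\R_{\geq 0}^m$ multiplies the $i$-th coordinate by a continuous function of $g$. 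Since every point near $x_0$ again lies in $\bar X_{F,S}(R)$, one pulls a basic neighborhood of $y_0$ back through $\psi_{R,S}$ and uses continuity of the downstairs action to produce neighborhoods $N_1\ni g_0$ and $N_2\ni x_0$ with $\psi_{R,S}(gx)=g\cdot\psi_{R,S}(x)$ landing in it for all $(g,x)\in N_1\times N_2$.

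For (2), since $\PGL_V(F)$ is discrete it suffices to show that each $\gamma$ acts as a homeomorphism. For the Borel--Serre topology on $\bar X_{F,S}$: a lift of $\gamma$ carries a basis $(e_i)_i$ with $P\supseteq B_{(e_i)_i}$ to the basis $(\gamma e_i)_i$ with $\gamma P\gamma^{-1}\supseteq B_{(\gamma e_i)_i}$ and intertwines the quotient presentation of $\bar X_{F,v}(P)$ in \ref{BStop}(ii) with that of $\bar X_{F,v}(\gamma P\gamma^{-1})$ (conjugation by $\gamma$ corresponding to acting on the basis), so $\gamma$ is continuous on each $\bar X_{F,v}(P)$; as these form an open cover, $\gamma$ is a homeomorphism of $\bar X_{F,v}$, hence of $\prod_v\bar X_{F,v}$, hence of $\bar X_{F,S}$. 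The Borel--Serre case of $\bar X_{F,S}^{\flat}$ follows because $\bar X_{F,v}^{\flat}$ carries the quotient topology from the $\gamma$-equivariant surjection $\bar X_{F,v}\to\bar X_{F,v}^{\flat}$. For the Satake topology on $\bar X_{F,S}$: $\gamma$ maps $\bar X_{F,S}(P)$ bijectively onto $\bar X_{F,S}(\gamma P\gamma^{-1})$ and, computing $\psi_{\gamma P\gamma^{-1},S}$ with the bases obtained by applying $\gamma$ to those used for $\psi_{P,S}$ (the notion of $P$-open set being independent of this choice by \ref{Sat1}), it intertwines $\psi_{P,S}$ with $\psi_{\gamma P\gamma^{-1},S}$ up to the homeomorphism $\frak Z_{F,S}(P)\xrightarrow{\sim}\frak Z_{F,S}(\gamma P\gamma^{-1})$ induced by $\gamma$ together with the identity on $\R_{\geq 0}^m$ (using the $\GL_V$-equivariance of the quantities $|\mu:e|$ from \ref{ndef}); hence $\gamma$ sends $P$-open sets to $(\gamma P\gamma^{-1})$-open sets, and being a bijection it sends Satake-open sets to Satake-open sets, and likewise for $\gamma^{-1}$. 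For the Satake topology on $\bar X_{F,S}^{\flat}$: $\gamma$ is already a Borel--Serre homeomorphism, and $\gamma\,\Gamma_{K,(W)}\,\gamma^{-1}=\Gamma_{\gamma K\gamma^{-1},(\gamma W)}$ with $\gamma K\gamma^{-1}$ again compact open in $G(\mb A_F^S)$, so $\gamma$ carries the generating open sets of \ref{top2} attached to $(W,K)$ onto those attached to $(\gamma W,\gamma K\gamma^{-1})$, and these are open by the $K$-independence established in \ref{top2}.

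The main obstacle is the Satake-continuity of the $P(\mb A_{F,S})$-action in (1): this is the only assertion that is not essentially formal, and it depends on combining the neighborhood base of \ref{Sat5}, the equivariance of $\psi_{P,S}$, and the observation that points near $x_0$ have parabolic containing that of $x_0$ --- which is exactly what permits one to work inside a single stratum $\bar X_{F,S}(R)$ and transfer the problem to the manifestly continuous action of $R(\mb A_{F,S})$ on $\frak Z_{F,S}(R)\times\R_{\geq 0}^m$. Everything else reduces to bookkeeping with the definitions or to a direct appeal to \ref{BStop}.
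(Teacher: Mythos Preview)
Your argument is correct and aligns with the paper's own treatment, which is extremely terse: the paper simply remarks that part~(1) for the Borel--Serre topology was established in \S\ref{BSsec}, that the Satake case follows from it, and that the rest is ``easily proven.'' Your write-up supplies the details that the paper omits, in particular the pointwise verification of Satake-continuity in~(1) via the neighborhood base of \ref{Sat5} and the $R(\mb A_{F,S})$-equivariance of $\psi_{R,S}$; this is precisely the mechanism that makes the paper's one-line reduction valid, so there is no genuine difference of method.
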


\begin{sbprop}\label{top6}  Let $W$ be a nonzero $F$-subspace of $V$. Then for the Borel-Serre topology, and for the Satake topology if $S$ contains all archimedean places of $F$, the subset $\bar X_{F,S}^{\flat}(W)$ is open in 
$\bar X_{F,S}^{\flat}$.
\end{sbprop}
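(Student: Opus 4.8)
The plan is to reduce the statement to a single place, transfer it into $\bar X_{F,v}$ via the quotient map of \ref{bs7}, and then treat the Borel-Serre and Satake topologies in turn. Writing $\bar X_{F,v}^{\flat}(W)=\{(W',\mu)\in\bar X_{F,v}^{\flat}\mid W'\supset W\}$ for each $v\in S$, one checks immediately from the definitions that
$$\bar X_{F,S}^{\flat}(W)=\bar X_{F,S}^{\flat}\cap\prod_{v\in S}\bar X_{F,v}^{\flat}(W)$$
inside $\prod_{v\in S}\bar X_{F,v}^{\flat}$, since membership on either side imposes exactly the condition $W'\supset W$ on the common underlying $F$-subspace $W'$. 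Because the Borel-Serre topology on $\bar X_{F,S}^{\flat}$ is the subspace topology from the finite product of Borel-Serre topologies (\ref{BStop2}), it then suffices, for the Borel-Serre case, to show that each $\bar X_{F,v}^{\flat}(W)$ is open in $\bar X_{F,v}^{\flat}$.

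For the single-place Borel-Serre statement, recall from \ref{BStop2} that the Borel-Serre topology on $\bar X_{F,v}^{\flat}$ is the quotient topology for $\pi\colon\bar X_{F,v}\to\bar X_{F,v}^{\flat}$, $(P,\mu)\mapsto(V_0,\mu_0)$, so it is enough to prove $\pi^{-1}(\bar X_{F,v}^{\flat}(W))$ is open in $\bar X_{F,v}$. By definition of $\pi$ this preimage is the set of all $(P,\mu)\in\bar X_{F,v}$ whose flag has smallest nonzero term $V_0$ containing $W$. I would then verify the identity
$$\pi^{-1}(\bar X_{F,v}^{\flat}(W))=\bigcup_{P_0}\bar X_{F,v}(P_0),$$
the union being taken over all parabolic subgroups $P_0$ of $\PGL_V$ whose associated flag has smallest nonzero term containing $W$. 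The inclusion ``$\subseteq$'' is trivial since $(P,\mu)\in\bar X_{F,v}(P)$ always; for ``$\supseteq$'', if $Q\supset P_0$ then by \ref{par} the flag of $Q$ is contained in that of $P_0$, so the smallest nonzero term of the flag of $Q$ contains that of $P_0$ and hence contains $W$. Since each $\bar X_{F,v}(P_0)$ is open by \ref{BStop}(i), the union is open, which settles the Borel-Serre case.

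For the Satake topology (so now $S$ contains all archimedean places of $F$), I would argue straight from the defining property in \ref{top2}: fixing a compact open $K\subset\PGL_V(\mb{A}_F^S)$, it is enough to exhibit $\bar X_{F,S}^{\flat}(W)$, regarded as a subset of itself, as Borel-Serre open — already proved — and stable under $\Gamma_{K,(W)}$. The latter holds because any $\gamma\in\Gamma_{K,(W)}$ lifts to an element of $\GL_V(F)$ fixing $W$ pointwise, hence mapping $W$ onto $W$, so that for $(W',\mu)\in\bar X_{F,S}^{\flat}(W)$ the image $\gamma\cdot(W',\mu)$ has underlying subspace containing $W$. Thus $\bar X_{F,S}^{\flat}(W)$ is one of the sets declared open in the Satake topology, and hence is open.

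I do not anticipate a genuine obstacle here; the work is essentially bookkeeping with the various topologies. The point that most needs care is the set-theoretic identity for $\pi^{-1}(\bar X_{F,v}^{\flat}(W))$, which rests on the order-reversing correspondence between parabolics and flags of \ref{par} — so that ``$Q$ larger'' means ``flag of $Q$ smaller'', hence ``$V_0$ larger'' — together with keeping in mind that the Borel-Serre topology on $\bar X_{F,S}^{\flat}$ is a subspace topology from a product of quotient topologies rather than itself a quotient of $\bar X_{F,S}$.
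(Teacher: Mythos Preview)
Your argument is correct. The paper itself does not give a proof of this proposition; after the statements of \ref{top5} and \ref{top6} it simply remarks that ``the rest of \ref{top5} and \ref{top6} is easily proven.'' Your write-up is exactly the natural unpacking of that remark: reduce to a single place via the subspace-of-product description in \ref{BStop2}, pull back along the quotient map to $\bar X_{F,v}$, and express the preimage as a union of the open sets $\bar X_{F,v}(P_0)$ from \ref{BStop}(i); for the Satake topology, observe that $\bar X_{F,S}^{\flat}(W)$ is itself one of the generating open sets in \ref{top2}. The one step worth flagging as the crux --- that $Q\supset P_0$ forces the smallest nonzero member of the flag of $Q$ to contain that of $P_0$ --- you handle correctly via \ref{par}.
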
 

Part (1) of \ref{top5} was shown in \S\ref{BSsec} for the Borel-Serre topology, and the result for the Satake topology on $\bar X_{F,S}$ follows from it. The rest of \ref{top5} and \ref{top6} is easily proven. 

\subsection{Properties of $\bar X_{F,S}$} \label{propX}

Let $S$ be a nonempty finite set of places of $F$. 

\begin{sbpara}\label{togr9} Let $P$ and $(V_i)_{-1\leq i \leq m}$ be as before. Fix a basis $e^{(i)}$ of $V_i/V_{i-1}$ for each $i$. 
Set
$$
	Y_0 = ( \R_{>0}^S \cup \{(0)_{v \in S}\} )^m \subset (\R_{\geq 0}^S)^m.
$$
The maps $\psi_{P,v} := (\phi_{P,v},\phi'_{P,v}) \colon \bar X_{F,v}(P) \to \frak{Z}_{F,v}(P) \times \R^m_{\geq 0}$ of \ref{togr} and \ref{togr2} 
for $v \in S$ combine to give the map
$$ \psi_{P,S} \colon \bar X_{F,S}(P)\to \frak{Z}_{F,S}(P) \times Y_0.$$

\end{sbpara}

\begin{sbpara} \label{weaktopY}
In addition to the usual topology on $Y_0$, we consider the weak topology on $Y_0$ that is the product
topology for the  topology on
$\R_{>0}^S\cup \{(0)_{v \in S}\}$ which extends the usual topology on
$\R_{>0}^S$ by taking the sets 
$$
	\Bigg\{(t_v)_{v\in S}\in \R_{>0}^S \mid \prod_{v\in S} t_v \le c\Bigg\}\cup \{{(0)_{v \in S}}\}
$$ 
for $c\in \R_{>0}$
as a base of neighborhoods of $(0)_{v \in S}$.
In the case that $S$ consists of a single place, we have $Y_0=\R_{\geq 0}^m$, and the natural topology and the weak topology on $Y_0$ coincide. 
\end{sbpara}

\begin{sbprop}\label{keyc1} The map $\psi_{P,S}$ of \ref{togr9} induces a homeomorphism 
$$
	P_u(\mb{A}_{F,S})\bs \bar X_{F,S}(P) \xrightarrow{\sim} \frak{Z}_{F,S}(P) \times Y_0
$$
for the Borel-Serre topology (resp., Satake topology) on $\bar X_{F,S}$ and the usual (resp., weak) topology on $Y_0$. 
This homeomorphism is equivariant for the action of $P(\mb{A}_{F,S})$,  with the action of $P(\mb{A}_{F,S})$ on $Y_0$ being that of  \ref{togr2}. 
\end{sbprop}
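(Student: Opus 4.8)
\emph{Plan of proof.} The plan is to first produce a $P(\mb{A}_{F,S})$-equivariant bijection $\bar\psi\colon P_u(\mb{A}_{F,S})\bs \bar X_{F,S}(P)\xrightarrow{\sim}\frak{Z}_{F,S}(P)\times Y_0$ out of $\psi_{P,S}$, then treat the Borel--Serre topology, and finally the Satake topology. For the bijectivity I would reduce to a local statement at each $v\in S$. The key input is a lemma on norms over a local field $E$: if $H_\bullet$ is a flag in a finite-dimensional $E$-vector space $H$ and $\bar P\subset\PGL_H$ is the associated parabolic with unipotent radical $\bar P_u$, then a norm on $H$ is determined, up to the $\bar P_u(E)$-action, by the classes of the norms it induces on the graded pieces together with the relative scales recorded by $\phi'$ (see \ref{ndef}, \ref{togr2}); and conversely every such tuple of data arises, by assembling the graded norms along the standard splitting after rescaling them to the prescribed relative normalizations, exactly as in the recipe of \ref{cover}. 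Applying this lemma blockwise to the parabolic attached to a point of $\bar X_{F,v}(P)$, the fiber of $\psi_{P,v}$ over a point of $\frak{Z}_{F,v}(P)\times\R_{\geq 0}^m$ is exactly one $P_u(F_v)$-orbit (here one uses that the image of $P_u$ in the Levi quotient of the larger parabolic is the product of the unipotent radicals of the induced parabolics), and $\psi_{P,v}$ is surjective. Taking the product over $v\in S$ and noting that the condition cutting out $\bar X_{F,S}(P)$ inside $\prod_v\bar X_{F,v}(P)$ --- that the parabolic is independent of $v$ --- corresponds under $\prod_v\psi_{P,v}$ precisely to the condition cutting out $Y_0$ inside $\prod_v\R_{\geq 0}^m$ --- that the pattern of vanishing coordinates is independent of $v$ --- one gets that the fibers of $\psi_{P,S}$ are the $P_u(\mb{A}_{F,S})$-orbits and the image is all of $\frak{Z}_{F,S}(P)\times Y_0$. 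Equivariance of $\bar\psi$ with the stated action is then immediate from \ref{togr} and \ref{togr2}.

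\emph{The Borel--Serre topology.} Since $\psi_{P,S}=\prod_{v\in S}\psi_{P,v}$ and the Borel--Serre topology on each $\bar X_{F,v}(P)$ is a quotient of $P(F_v)\times\R^{d-1}_{\geq 0}(P)$ (\ref{lst0}, \ref{BStop}), continuity of $\psi_{P,S}$ follows from continuity of each composite $\psi_{P,v}\circ\bar\pi_{P,v}$, which is a direct computation with the formulas of \ref{cover} and \ref{togr2}. For the inverse I would construct continuous local sections of $\psi_{P,S}$: over a small open set of $\frak{Z}_{F,S}(P)\times Y_0$, choose at each place a representative norm in each graded-piece class via a continuous local section of the projection from norms to $X$ (e.g.\ the normalization $|\mu:e^{(i)}|=1$ of \ref{ndef}), assemble these into a block-diagonal element of the Levi of $P$, and feed it together with the scale data into $\bar\pi_{P,v}$; the product over $v$ lands in $\bar X_{F,S}(P)$ precisely because the vanishing pattern in $Y_0$ is uniform in $v$, so the parabolic produced is the same at every place, and it is continuous because $\bar\pi_{P,v}$ is the defining quotient map. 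Composing such a section with the quotient map shows $\bar\psi^{-1}$ is continuous, so $\bar\psi$ is a homeomorphism for the Borel--Serre topology on $\bar X_{F,S}$ and the usual topology on $Y_0$.

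\emph{The Satake topology.} Here the product map $\tau\colon Y_0\to\R_{\geq 0}^m$, $(s_{v,i})_v\mapsto(\prod_{v\in S}s_{v,i})_i$, satisfies $(\phi_{P,S},\phi'_{P,S})=(\mathrm{id}\times\tau)\circ\psi_{P,S}$, and $\tau$ is continuous for the weak topology by the design of \ref{weaktopY}. Continuity of $\bar\psi$ for the Satake and weak topologies then follows by checking that the generating open sets of $\frak{Z}_{F,S}(P)\times Y_0$ with the weak topology pull back to Satake-open sets: the $\frak{Z}_{F,S}(P)$-factor and the neighborhoods $\{(s_v)_v:\prod_v s_v<c\}\cup\{(0)_v\}$ of the degenerate points pull back to $P$-open sets (being expressed through $(\phi_{P,S},\phi'_{P,S})$), while an open set in an $\R_{>0}^S$-direction pulls back to a set that near each of its points is a $\psi_{P',S}$-preimage of an open set, the relevant scale being a continuous function of the graded norm classes recorded in $\frak{Z}_{F,S}(P')$, hence is Satake-open by \ref{Sat5}. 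The remaining, and \emph{main}, point is that $\bar\psi$ is \emph{open}, i.e.\ that the orbit map $\bar X_{F,S}(P)\to\frak{Z}_{F,S}(P)\times Y_0$ is a quotient map for these topologies. For this I would use the explicit neighborhood base of \ref{Sat5}: a Satake-neighborhood of a point $x=(P',\mu)$ contains the $\psi_{P',S}$-preimage of a neighborhood of $(\mu,0)$ in $\frak{Z}_{F,S}(P')\times\R^{m'}_{\geq 0}$. Given a $P_u(\mb{A}_{F,S})$-invariant Satake-open $\Omega$ and $x\in\Omega$, replace such a basic neighborhood inside $\Omega$ by its $P_u(\mb{A}_{F,S})$-saturation (still inside $\Omega$); using that the orbit map factors through $\psi_{P',S}$ and the equivariance, the image of this saturation is computed from an \emph{upgraded} form of the norm lemma --- namely that the quotient of $X_{(V'_j/V'_{j-1})_v}$ by the unipotent radical of the parabolic induced by $P$ is \emph{homeomorphic} to the product of the $P$-graded-piece norm spaces with the space of positive reals recording the interior scales (the ``interior'' case of the present proposition, which I would prove first, essentially as in \ref{lst0}). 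This exhibits the image as a weak-neighborhood of the class of $x$, giving openness of $\bar\psi$.

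The hardest step is this last reconciliation, for the Satake topology, of the indirectly defined Satake topology on $\bar X_{F,S}(P)$ (assembled from $P$-open sets supplying the $\{\prod_v t_v\le c\}$-type neighborhoods of the boundary strata and from the larger parabolics supplying the usual topology on the $\R_{>0}^S$-directions) with the $P_u(\mb{A}_{F,S})$-quotient and the weak topology on $Y_0$; the blockwise homeomorphism statement on the $X_{(V'_j/V'_{j-1})_v}$ is the crucial ingredient there.
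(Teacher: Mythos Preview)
Your plan is essentially the paper's, reorganized. The paper packages your ``norm lemma'' and ``local sections'' into two explicit maps
\[
\xi,\ \xi^\star\colon P_u(F_v)\times \frak Z_{F,v}(P)\times\R_{\ge 0}^m\longrightarrow \bar X_{F,v}(P),
\]
where $\xi$ comes from the Iwasawa-type parametrization (your implicit use of \ref{lst0}) and $\xi^\star$ is the section you describe (normalize each graded norm by $|\tilde\mu_i:e^{(i)}|=1$ and reassemble along the standard splitting). It then proves three lemmas: $\xi$ induces the Borel--Serre homeomorphism on the $P_u$-quotient (\ref{keyp}); $\psi_{P,v}\circ\xi^\star$ is the projection (\ref{xi2}); and $\xi=\xi^\star\circ h$ for an explicit self-homeomorphism $h$ of the source that rescales the $t_i$ by positive continuous functions of $\mu$ (\ref{xixi}). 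From these three facts the single-place case (\ref{keycor}) is immediate for the Borel--Serre topology, and the Satake case at a single place then follows by squeezing (Satake is coarser than Borel--Serre, and $\psi_{P,v}$ is Satake-continuous by the very definition of $P$-open sets). The passage from the single-place statement to $S$ is the product argument you indicate.

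Two points deserve tightening in your Satake argument. First, be aware that the symbol $\psi_{P,S}$ is used in two senses: in \ref{Sat1} it lands in $\frak Z_{F,S}(P)\times\R_{\ge 0}^m$ (via the product $\phi'_{P,S}$), while in \ref{togr9} it lands in $\frak Z_{F,S}(P)\times Y_0$ (keeping the individual $\phi'_{P,v}$); your map $\tau$ correctly relates them, but the basic Satake neighborhoods at a point of type $P'$ are built from the \emph{first} version for $P'$, which only constrains the products $\prod_v t_{v,k}$ at the $P'$-boundary indices. Second, your phrase ``the orbit map factors through $\psi_{P',S}$'' is imprecise: what you need is that $\psi_{P,S}$ (the \ref{togr9} version), restricted to $\bar X_{F,S}(P')$, factors through the $P'_u(\mb A_{F,S})$-quotient (true since $P'_u\subset P_u$), i.e.\ through the \ref{togr9} version of $\psi_{P',S}$; the residual $P_u/P'_u=(P_u\cap M')$-quotient on $\frak Z_{F,S}(P')$ is then exactly your ``interior case''. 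Once this is set up, the boundary $Y_0$-coordinates of $\psi_{P,S}$ differ from the $Y'_0$-coordinates of $\psi_{P',S}$ only by the positive continuous scale factors encoded in \ref{xixi}, which is precisely what lets you turn a Satake-basic neighborhood $\psi^{\mathrm{Sat}}_{P',S}{}^{-1}(W_1\times W_2)$ into a weak-neighborhood after saturation. This scale-correction step is the content of \ref{xixi} and should be made explicit rather than absorbed into the phrase ``upgraded norm lemma''.
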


This has the following corollary, which is also the main step in the proof.

\begin{sbcor} \label{keycor}
	For any place $v$ of $F$, the map 
	$$P_u(F_v)\bs \bar X_{F,v}(P) \to \frak{Z}_{F,v}(P) \times \R_{\geq 0}^m$$
	is a homeomorphism for both the Borel-Serre and Satake topologies on $\bar X_{F,v}$.
\end{sbcor}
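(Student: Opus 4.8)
Since \ref{keycor} is the case $S=\{v\}$ of Proposition~\ref{keyc1} (here $\mb{A}_{F,S}=F_v$, $\frak{Z}_{F,S}(P)=\frak{Z}_{F,v}(P)$, $Y_0=\R^{m}_{\ge 0}$, and by \ref{weaktopY} the weak topology on $Y_0$ agrees with the usual one when $|S|=1$) but is at the same time the main step toward \ref{keyc1}, the plan is to prove it directly using the local results of \S\ref{BSsec}. Fix a basis $(e_i)_i$ of $V$ adapted to the flag $(V_i)_{-1\le i\le m}$ of $P$, so that $P$ contains the Borel subgroup $B$ of upper-triangular matrices, let $M$ be the Levi of $P$, and write $\psi_{P,v}=(\phi_{P,v},\phi'_{P,v})$. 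First I would observe that $\psi_{P,v}\circ\bar\pi_{P,v}$ is continuous, being given by explicit formulas on $\R^{d-1}_{\ge 0}(P)$ in the notation of \ref{cover}, while $\bar\pi_{P,v}$ is a quotient map for the Borel--Serre topology (\ref{BStop}); hence $\psi_{P,v}$ is continuous, and the same for all parabolics shows that every $P$-open set is Borel--Serre open, so the Satake topology is coarser than the Borel--Serre topology. Moreover $\psi_{P,v}$ factors through $P_u(F_v)\bs\bar X_{F,v}(P)$: it is $P(F_v)$-equivariant by \ref{togr} and \ref{togr2}, and $P_u(F_v)$ acts trivially on the target (it acts on $\frak{Z}_{F,v}(P)$ through $P/P_u$, and the factors $|\det(g_i)|^{\pm 1/d_i}$ in its action on $\R^{m}_{\ge 0}$ are trivial). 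Write $\bar\psi_{P,v}$ for the induced map.

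Next I would prove that $\bar\psi_{P,v}$ is bijective. For surjectivity, a point $((\nu_i)_i,(t_i)_i)$ of $\frak{Z}_{F,v}(P)\times\R^{m}_{\ge 0}$ determines the parabolic $P'\supseteq P$ whose flag consists of $V_{-1}$, $V_m$, and the $V_{i-1}$ with $t_i=0$; on each graded piece $V'_j/V'_{j-1}$ of $P'$, using the splitting of the induced sub-flag coming from $(e_i)_i$, one builds a norm $\mu_j$ as an orthogonal direct sum (archimedean) or pointwise maximum (non-archimedean) of suitably rescaled lifts of the relevant $\nu_i$, the rescalings chosen so that $\phi'_{P,v}$ returns the prescribed $t_i$; then $(P',\mu)$ lies over the given point. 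For injectivity, equality of the zero-patterns of $\phi'_{P,v}$ forces the two parabolics to coincide, say both equal $P'$, and on each block one must see that two norms on $V'_j/V'_{j-1}$ that induce the same norms on all subquotients $V_i/V_{i-1}$ and have the same ratios differ by an element of $(Q_j)_u(F_v)$, for $Q_j\subseteq\PGL_{V'_j/V'_{j-1}}$ the parabolic cut out by those $V_i$. This can be extracted from the explicit fibres of $\bar\pi_v$ in \ref{lst0}(3)--(4); conceptually, one uses that after applying a unique element of the unipotent radical of a flag stabilizer a norm becomes adapted to a splitting, hence is determined by its graded pieces and their ratios (Gram--Schmidt archimedean, its ultrametric analogue otherwise), together with the fact that $\prod_j(Q_j)_u(F_v)$ is the image of $P_u(F_v)$ in the Levi of $P'$.

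To upgrade $\bar\psi_{P,v}$ to a homeomorphism for the Borel--Serre topology, I would compute the composite of the surjection $B_u(F_v)\times\R^{d-1}_{\ge 0}(P)\to\bar X_{F,v}(P)$ of \ref{lst0}(2) with $\psi_{P,v}$. Using $B_u(F_v)\cong(M\cap B_u)(F_v)\times P_u(F_v)$ and $\R^{d-1}_{\ge 0}(P)\cong\R^{\Delta'(P)}_{>0}\times\R^{\Delta(P)}_{\ge 0}$, one checks that the $P_u(F_v)$-factor maps to a point, that the $\phi_{P,v}$-component depends only on the $(M\cap B_u)(F_v)\times\R^{\Delta'(P)}_{>0}$-variables and there realizes the blockwise map onto $\frak{Z}_{F,v}(P)$, and that the $\phi'_{P,v}$-component depends only on the $\R^{\Delta(P)}_{\ge 0}$-variable and realizes a homeomorphism $\R^{\Delta(P)}_{\ge 0}\xrightarrow{\sim}\R^{m}_{\ge 0}$. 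The blockwise map is a finite product of the quotient maps $B_u^{(i)}(F_v)\times\R^{d_i-1}_{>0}\to X_{V^{(i)}_v}$ (\ref{cover}, \ref{BStop}), and since all spaces in sight are locally compact this product, its composition with the homeomorphism on the $\R^{m}_{\ge 0}$-factor, and the projection off $P_u(F_v)$ are all quotient maps; hence the composite is a quotient map onto $\frak{Z}_{F,v}(P)\times\R^{m}_{\ge 0}$. As $\bar\pi_{P,v}$ and $\bar X_{F,v}(P)\to P_u(F_v)\bs\bar X_{F,v}(P)$ are quotient maps for the Borel--Serre topology, the bijectivity of $\bar\psi_{P,v}$ shows the composite and $B_u(F_v)\times\R^{d-1}_{\ge 0}(P)\to P_u(F_v)\bs\bar X_{F,v}(P)$ are quotient maps with the same fibres, so $\bar\psi_{P,v}$ is a homeomorphism for the Borel--Serre topology. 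For the Satake topology, $\bar\psi_{P,v}$ is continuous by the definition \ref{Sat1} and bijective by the above; since the Satake topology is coarser, the identity map from the Borel--Serre quotient to the Satake quotient is continuous, and composing it with the inverse of the Borel--Serre homeomorphism shows $\bar\psi_{P,v}^{-1}$ is continuous for the Satake topology as well. Hence $\bar\psi_{P,v}$ is a homeomorphism for both topologies.

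The step I expect to be the main obstacle is the non-archimedean half of the injectivity above --- supplying the ultrametric substitute for the orthogonal splitting and reconciling it with the equivalence relation of \ref{lst0}(4) --- together with the explicit verification in the last paragraph that $\phi'_{P,v}\circ\bar\pi_{P,v}$ recovers the $t_i$ up to a homeomorphism of $\R^{m}_{\ge 0}$.
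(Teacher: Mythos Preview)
Your approach is essentially the paper's: factor $\bar\pi_{P,v}|_{B_u}$ through the blockwise quotient $P_u(F_v)\times\frak{Z}_{F,v}(P)\times\R^m_{\ge 0}$ (this is the map $\xi$ of \ref{keyp}), compare with $\psi_{P,v}$, and deduce the Satake statement by sandwiching between the Borel--Serre quotient and the target.

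There is, however, one concrete mis-statement. You assert that the $\phi'_{P,v}$-component of $\psi_{P,v}\circ\bar\pi_{P,v}$ depends only on the $\R^{\Delta(P)}_{\ge 0}$-variable and realizes a homeomorphism $\R^{\Delta(P)}_{\ge 0}\to\R^m_{\ge 0}$. This is false. For the norm $\mu^{(r)}$ attached to $(e_j)_j$ and $r_j=\prod_{k<j}t_k^{-1}$, the definitions in \ref{ndef} and \ref{togr2} give $\phi'_{P,v}(\mu^{(r)})_i$ as the ratio of the geometric mean of $(r_j)_{c(i-2)<j\le c(i-1)}$ to that of $(r_j)_{c(i-1)<j\le c(i)}$, and this involves the $t_k$ with $k\in\Delta_{i-1}\cup\Delta_i$, not only $t_{c(i-1)}$. (Already for $d=3$ with $\Delta(P)=\{1\}$ one gets $t_1t_2^{1/2}$, not $t_1$.) In the paper's language this is exactly Lemma~\ref{xixi}: the composite $\psi_{P,v}\circ\xi$ is not the projection but the map $(\mu,t)\mapsto(\mu,t')$ with $t'_i=t_i\cdot g_i(\mu)$ for certain continuous $g_i\colon\frak{Z}_{F,v}(P)\to\R_{>0}$. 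The repair is immediate and is precisely how the paper proceeds: this map is a self-homeomorphism of $\frak{Z}_{F,v}(P)\times\R^m_{\ge 0}$ (with inverse $(\mu,s)\mapsto(\mu,(s_i/g_i(\mu))_i)$), so the composite is still a quotient map with the same fibers as the map to $P_u(F_v)\bs\bar X_{F,v}(P)$, and your argument goes through unchanged from that point.

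A secondary remark: your direct injectivity argument, particularly the non-archimedean case you flag, is more labor than needed. The paper sidesteps it via \ref{keyp}: since the two surjections of \ref{xidef} have the same fibers (this is \ref{lst0}(3)--(4) applied blockwise), $\xi$ already induces a homeomorphism $\frak{Z}_{F,v}(P)\times\R^m_{\ge 0}\xrightarrow{\sim}P_u(F_v)\bs\bar X_{F,v}(P)$ for the Borel--Serre topology, and bijectivity of $\bar\psi_{P,v}$ follows by composing with the self-homeomorphism above, with no separate injectivity check.
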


We state and prove preliminary results towards the proof of \ref{keyc1}. 

\begin{sbpara} Fix a basis $(e_i)_i$ of $V$
and a parabolic subgroup $P$ of $\PGL_V$ which satisfies the condition in \ref{Pstr1} for this basis.
Let $(V_i)_{-1 \le i \le m}$ be the flag corresponding to $P$, and for each $i$, set $c(i)= \dim(V_i)$. 
We define two maps
$$\xi, \xi^{\star} \colon P_u(F_v) \times \frak{Z}_{F,v}(P) \times \R^m_{\geq 0} \to \bar X_{F,v}(P).$$

\end{sbpara}

\begin{sbpara} \label{xidef}
First, we define the map $\xi$.

Set $\Delta(P)= \{c(0), \dots, c(m-1)\}$. Let $\Delta_i=\{j\in \Z\mid c(i-1)<j<c(i)\}$ for $0 \le i \le m$.
We then clearly have 
$$\{1,\dots,d-1\}= \Delta(P) \amalg \Bigg(\coprod_{i=0}^m \Delta_i\Bigg).$$
For $0\leq i\leq m$, let $V^{(i)}=\sum_{c(i-1)<j\leq c(i)} F e_j$, so $V_i = V_{i-1}\oplus V^{(i)}$. 
We have 
$$\R^{d-1}_{\geq 0}(P) = \R_{\geq 0}^{\Delta(P)} \times \prod_{i=0}^m \R^{\Delta_i}_{> 0} \cong \R_{\geq 0}^m \times \prod_{i=0}^m \R^{\Delta_i}_{> 0}.$$

Let $B$ be the Borel subgroup of $\PGL_V$ consisting of all upper triangular matrices for the basis $(e_i)_i$. Fix a place $v$ of $F$. We consider two surjections
\begin{eqnarray*} 
B_u(F_v) \times \R_{\geq 0}^{d-1}(P) \twoheadrightarrow \bar X_{F,v}(P) &\text{and}&
B_u(F_v) \times \R_{\geq 0}^{d-1}(P) \twoheadrightarrow P_u(F_v) \times \frak{Z}_{F,v}(P) \times \R^m_{\geq 0}.
\end{eqnarray*}
The first is induced by the surjection $\bar{\pi}_v$ of \ref{lst0}. 

The second map is obtained as follows.  For $0\leq i\leq m$, let $B_i$ be the image of $B$ in $\PGL_{V^{(i)}}$ under $P\to \PGL_{V_i/V_{i-1}}\cong \PGL_{V^{(i)}}$.  Then $B_i$ is a Borel subgroup of $\PGL_{V^{(i)}}$, and we have a canonical
bijection
$$
	P_u(F_v) \times \prod_{i=0}^m B_{i,u}(F_v) \xrightarrow{\sim} B_u(F_v).
$$ 
By \ref{lst0}, we have surjections $B_{i,u}(F_v) \times \R^{\Delta_i}_{> 0} \twoheadrightarrow X_{(V_i/V_{i-1})_v}$ for $0\leq i\leq m$.  The second (continuous) surjection is then the composite
\begin{align*}
	B_u(F_v) \times \R_{\ge 0}^{d-1}(P) &\xrightarrow{\sim} \Bigg(P_u(F_v) \times \prod_{i=0}^m B_{i,u}(F_v)\Bigg)
	\times \Bigg(\R_{\geq 0}^m \times  \prod_{i=0}^m \R_{>0}^{\Delta_i} \Bigg) \\
	&\twoheadrightarrow P_u(F_v) \times \Bigg( \prod_{i=0}^m X_{(V_i/V_{i-1})_v} \Bigg) \times \R_{\geq 0}^m
	= P_u(F_v) \times \frak{Z}_{F,v}(P) \times \R_{\geq 0}^m.
\end{align*}
\end{sbpara}

\begin{sbprop}\label{keyp}  
There is a unique surjective continuous map 
$$\xi \colon P_u(F_v) \times \frak{Z}_{F,v}(P) \times \R_{\geq 0}^m \twoheadrightarrow \bar X_{F,v}(P)$$
for the Borel-Serre topology on $\bar X_{F,v}(P)$
that is compatible with the surjections from $B_u(F_v) \times \R_{\geq 0}^{d-1}(P)$ to these sets. 
This map induces a homeomorphism
$$
	\frak{Z}_{F,v}(P) \times \R_{\geq 0}^m \xrightarrow{\sim} P_u(F_v)\bs \bar X_{F,v}(P)
$$
that restricts to a homeomorphism of $\frak{Z}_{F,v}(P) \times \R_{>0}^m$ with
$P_u(F_v)\bs  X_v$.
\end{sbprop}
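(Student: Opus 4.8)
The plan is to play off two presentations of $B_u(F_v)\times\R^{d-1}_{\geq 0}(P)$ against each other: the surjection $p:=\bar\pi_v$ onto $\bar X_{F,v}(P)$, which by \ref{BStop} is a topological quotient map for the Borel--Serre topology, and the surjection $q$ onto $P_u(F_v)\times\frak{Z}_{F,v}(P)\times\R^m_{\geq 0}$ built in \ref{xidef}. The first thing I would establish is that $q$ too is a quotient map. After applying the homeomorphisms $B_u(F_v)\cong P_u(F_v)\times\prod_i B_{i,u}(F_v)$ and $\R^{d-1}_{\geq 0}(P)\cong\R^m_{\geq 0}\times\prod_i\R^{\Delta_i}_{>0}$ of \ref{xidef}, the map $q$ becomes $\mathrm{id}\times\mathrm{id}\times\prod_i\pi_v$, where $\pi_v\colon B_{i,u}(F_v)\times\R^{\Delta_i}_{>0}\to X_{(V_i/V_{i-1})_v}$ is the restriction of the quotient map $\bar\pi_v$ of \ref{BStop} to the open saturated subset lying over $\R^{\Delta_i}_{>0}$ and so is itself a quotient map; since all the spaces in sight are locally compact Hausdorff, this finite product of quotient maps is again a quotient map.

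Next I would produce $\xi$. As $q$ is surjective there is at most one $\xi$ with $\xi\circ q=p$, and it exists precisely when each fibre of $q$ lies in a fibre of $p$; continuity is then automatic because $q$ is a quotient map, and surjectivity follows from that of $p$ (\ref{lst0}(1)). If $v$ is archimedean, $\pi_v$ is bijective by \ref{lst0}(3), so $q$ is bijective and there is nothing to prove. If $v$ is non-archimedean, take two points of the source with equal $q$-image and write them, in the product decomposition above, as $(u,(g_i)_i;t,(s_i)_i)$ and $(u,(g'_i)_i;t,(s'_i)_i)$. Equality of $q$-images forces $\pi_v(g_i,s_i)=\pi_v(g'_i,s'_i)$, so by \ref{lst0}(4) applied in each block we get $s_i=s'_i$ and $|(g_i^{-1}g'_i)_{ab}|\leq(\prod_{a\leq k<b}(s_i)_k)^{-1}$ within block $i$. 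Since the $g_i$ lie in commuting block-diagonal subgroups, $(u\prod_i g_i)^{-1}(u\prod_i g'_i)=\prod_i(g_i^{-1}g'_i)$ is block-diagonal and unipotent; its entries across distinct blocks vanish, and the $\Delta(P)$-coordinates separating blocks contribute factors that may be $0$, so those constraints are vacuous under the convention $0^{-1}=\infty$. Thus the two points are $\sim$-equivalent in the sense of \ref{lst0}(4), and since $\bar\pi_v$ factors through $\sim$ they have equal $p$-image. This yields $\xi$.

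Finally I would pass to the $P_u(F_v)$-quotient. The map $\bar\pi_v$ is equivariant for left translation, so $p$, and hence $\xi$, intertwines left translation by $P_u(F_v)$ on the first factor with the $P_u(F_v)$-action on $\bar X_{F,v}(P)$; consequently $\xi$ descends to a continuous surjection $\bar\xi\colon\frak{Z}_{F,v}(P)\times\R^m_{\geq 0}\to P_u(F_v)\bs\bar X_{F,v}(P)$, and the equivariance gives $\xi(u,z)=u\cdot\xi(1,z)$. Now consider $\psi_{P,v}=(\phi_{P,v},\phi'_{P,v})$ of \ref{togr} and \ref{togr2}: it is $P_u(F_v)$-invariant because $P(F_v)$ acts on $\frak{Z}_{F,v}(P)$ and on $\R^m$ through $P(F_v)/P_u(F_v)$, and — taking the bases of $(V_i/V_{i-1})_v$ induced by $(e_i)_i$ — a direct comparison of the definitions shows $\psi_{P,v}\circ p=\mathrm{pr}\circ q$; since $p$ is a quotient map this shows $\psi_{P,v}$ is continuous for the Borel--Serre topology, and together with $p=\xi\circ q$ and surjectivity of $q$ it gives $\psi_{P,v}\circ\xi=\mathrm{pr}$. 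Hence $\psi_{P,v}$ descends to a continuous $\bar\psi_{P,v}$ with $\bar\psi_{P,v}\circ\bar\xi=\mathrm{id}$; writing any point of $\bar X_{F,v}(P)$ as $\xi(u,z)=u\cdot\xi(1,z)$ and applying $\psi_{P,v}$ shows also $\bar\xi\circ\bar\psi_{P,v}=\mathrm{id}$, so $\bar\xi$ is a homeomorphism. For the last assertion one checks that $\phi'_{P,v}(P',\mu)$ has a zero coordinate exactly when $P'\neq\PGL_V$, so $(\phi'_{P,v})^{-1}(\R^m_{>0})=X_v$ and the homeomorphism restricts to $\frak{Z}_{F,v}(P)\times\R^m_{>0}\xrightarrow{\sim}P_u(F_v)\bs X_v$. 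I expect the main technical point to be the non-archimedean fibre comparison in the second paragraph, where the vanishing across blocks and the $0^{-1}=\infty$ bookkeeping at the block boundaries must be handled with care.
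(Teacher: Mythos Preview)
Your first two paragraphs are correct and match the paper's intended route (the paper simply says ``This follows from \ref{lst0}'', and what you wrote is a reasonable unpacking of that). The gap is in the third paragraph: the identity $\psi_{P,v}\circ p=\mathrm{pr}\circ q$ is \emph{false}. Take $d=3$, $P$ with flag $0\subset Fe_1\subset V$, and $t=(t_1,t_2)\in\R_{>0}^2$. Then $p(1,t)=\mu^{(r)}$ with $r=(1,t_1^{-1},t_1^{-1}t_2^{-1})$, and a direct computation from \ref{togr2} gives $\phi'_{P,v}(\mu^{(r)})=|r_1|^{1}\cdot|r_2r_3|^{-1/2}=t_1t_2^{1/2}$, whereas the $\R^m$-component of $\mathrm{pr}\circ q(1,t)$ is $t_1$. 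So $\psi_{P,v}$ is \emph{not} the inverse of $\bar\xi$; this is precisely why the paper needs the separate Lemmas \ref{xi2} and \ref{xixi} to prove Corollary \ref{keycor} (where $\psi_{P,v}$ appears), while \ref{keyp} itself is proved from \ref{lst0} alone.

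To finish \ref{keyp} in your framework, drop $\psi_{P,v}$ and instead run the fibre argument of your second paragraph in the reverse direction: show that fibres of $p$ are contained in fibres of $r:=\mathrm{pr}\circ q$. Suppose $p(g,t)=p(g',t')$; by \ref{lst0}(4) (or bijectivity of $\bar\pi_v^\sharp$ and a direct check in the archimedean case) we get $t=t'$ and the entry bounds on $g^{-1}g'$. Write $g=um$, $g'=u'm'$ in the Levi decomposition; since any element of $P_u$ has identity diagonal blocks, the $(a,b)$-entry of $g^{-1}g'=u''\cdot(m^{-1}m')$ for $a,b$ in the same block equals the $(a,b)$-entry of $m^{-1}m'=\prod_i g_i^{-1}g_i'$. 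Thus the bounds restrict block-wise and \ref{lst0}(4) gives $\pi_v(g_i,s_i)=\pi_v(g_i',s_i)$, hence $r(g,t)=r(g',t')$. Now $r$ and $s:=(\text{mod }P_u)\circ p$ are quotient maps from the same source with the same fibres and $\bar\xi\circ r=s$, so $\bar\xi$ is a homeomorphism.
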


This follows from \ref{lst0}. 

\begin{sbpara}\label{xistar1} Next, we define the map $\xi^{\star}$. 

For $g\in P_u(F_v)$, $(\mu_i)_i \in (X_{(V_i/V_{i-1})_v})_{0 \le i \le m}$, and $(t_i)_{1\leq i\leq m}\in \R_{\ge 0}^m$,
we let
$$
	\xi^{\star}(g, (\mu_i)_i, (t_i)_i)= g(P', \nu),
$$ 
where $P'$ and $\nu$ are as in (1) and (2) below, respectively. 
\begin{enumerate}
\item[(1)] Let $J=\{c(i-1) \mid 1\leq i\leq m,\ t_i=0\}$. 
Write $J=\{c'(0), \dots, c'(m'-1)\}$ with $c'(0)<\dots < c'(m'-1)$. Let $c'(-1)=0$ and $c'(m')=d$. For $-1\leq i\leq m'$, let 
$$
	V'_i= \sum_{j=1}^{c'(i)} F e_j\subset V.
$$ 
Let $P'\supset P$ be the parabolic subgroup of $\PGL_V$ corresponding to the flag $(V'_i)_i$. 
\item[(2)]  
For $0\leq i\leq m'$, set 
$$
	J_i = \{ j \mid c'(i-1) < c(j) \le c'(i) \} \subset \{1,\dots, m\}. 
$$
We identify $V'_i/V'_{i-1}$ with $\bigoplus_{j \in J_i} V^{(j)}$ via the basis $(e_k)_{c'(i-1)<k\le c'(i)}$.   
We define  a norm $\tilde \nu_i$ on $V_i'/V'_{i-1}$ as follows. 
Let $\tilde \mu_j$ be the unique norm on $V^{(j)}$ which belongs to $\mu_j$ and satisfies $|\tilde \mu_j:(e_k)_{c(j-1)<k\leq c(j)}|=1$. For 
$x=\sum_{j \in J_i} x_j$ 
with $x_j\in V^{(j)}$, set
$$
	\tilde{\nu}_i(x) = \begin{cases} \sum_{j \in J_i} (r_j^2\tilde \mu_j(x_j)^2)^{1/2} & \text{if } v \text{ is real},\\
	\sum_{j \in J_i} r_j\tilde \mu_j(x_j) &\text{if } v \text{ is complex}, \\
	\max_{j \in J_i}(r_j\tilde \mu_j(x_j)) &\text{if } v \text{ is non-archimedean}, \end{cases}
$$
where for $j \in J_i$, we set
$$
	r_j= \prod_{\substack{\ell \in J_i \\ \ell < j}} t_{\ell}^{-1}.
$$ 
Let $\nu_i\in X_{(V'_i/V'_{i-1})_v}$ be the class of the norm $\tilde \nu_i$.
\end{enumerate}
\end{sbpara}

We omit the proofs of the following two lemmas.

\begin{sblem}\label{xi2}  The composition 
$$P_u(F_v) \times \frak{Z}_{F,v}(P) \times \R^m_{\geq 0} \xrightarrow{\xi^{\star}}  \bar X_{F,v}(P) 
\xrightarrow{\psi_{P,v}} \frak{Z}_{F,v}(P) \times \R^m_{\geq 0}$$
coincides with the canonical projection. Here, the definition of the second arrow uses the basis $(e_j\bmod V_{i-1})_{c(i-1)<j\leq c(i)}$  of $V_i/V_{i-1}$. 

\end{sblem}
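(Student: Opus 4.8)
The plan is to treat the two components of $\psi_{P,v}=(\phi_{P,v},\phi'_{P,v})$ separately, showing that $\phi_{P,v}\circ\xi^{\star}$ is the projection onto $\frak{Z}_{F,v}(P)$ and that $\phi'_{P,v}\circ\xi^{\star}$ is the projection onto $\R^m_{\geq 0}$. A first reduction lets us take the $P_u(F_v)$-coordinate to be $1$: a lift to $\GL_V(F_v)$ of an element $g\in P_u(F_v)$ may be chosen unipotent, so the induced automorphisms of the graded pieces $(V_i/V_{i-1})_v$ have trivial determinant; hence $P_u(F_v)$ acts trivially on $\R^m_{\geq 0}$ for the action of \ref{togr2}, and it acts trivially on $\frak{Z}_{F,v}(P)$ because the $P(F_v)$-action there factors through $P(F_v)/P_u(F_v)$. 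Combined with the $P(F_v)$-equivariance of $\phi_{P,v}$ and $\phi'_{P,v}$ from \ref{togr} and \ref{togr2}, this shows that neither composite is changed by replacing $g$ with $1$, so it suffices to compute $\psi_{P,v}(P',\nu)$ for the pair $(P',\nu)$ assembled from $(\mu_i)_i$ and $(t_i)_i$ as in \ref{xistar1}.

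Next I would pin down the combinatorics of the flag of $P'$ relative to that of $P$. With $J$, the $c'(i)$, and the $V'_i$ as in \ref{xistar1}, the members of the flag of $P'$ have dimensions exactly $J\cup\{0,d\}$, and no element of $\Delta(P)$ lies strictly between two consecutive values $c(k-1)$, $c(k)$; it follows that, for $1\le i\le m$, the space $V_{i-1}$ is a member of the flag of $P'$ precisely when $c(i-1)\in J$, i.e.\ precisely when $t_i=0$. When $t_i\neq 0$, the unique $j$ with $V'_j\supseteq V_i\supseteq V_{i-2}\supseteq V'_{j-1}$ occurring in the definition of $\phi'_{P,v}$ is the block for whose graded quotient $V'_j/V'_{j-1}\cong\bigoplus_k V^{(k)}$ the summands $V^{(i-1)}$ and $V^{(i)}$ both occur and are adjacent; then $V^{(i)}$ is exactly the subquotient $V_i/V_{i-1}$ of $V'_j/V'_{j-1}$, and $V^{(i-1)}$ is exactly $V_{i-1}/V_{i-2}$.

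With these identifications both computations are direct. For $\phi_{P,v}$: applying the explicit formula of \ref{subq} to a representative $\tilde\nu_j$ of $\nu_j$ written in the direct-sum form of \ref{xistar1}(2), the infimum defining the induced subquotient norm is attained at the lift lying in the summand $V^{(i)}$, so the norm induced on $V_i/V_{i-1}$ is a positive real multiple of $\tilde\mu_i$, whose class is $\mu_i$; hence $\phi_{P,v}(P',\nu)=(\mu_i)_i$. For $\phi'_{P,v}$: using the same description of the norms induced on $V_i/V_{i-1}$ and on $V_{i-1}/V_{i-2}$, together with the normalization $|\tilde\mu_k:e^{(k)}|=1$ built into \ref{xistar1}(2) — where $e^{(k)}=(e_\ell\bmod V_{k-1})_{c(k-1)<\ell\le c(k)}$ is exactly the basis prescribed in the statement — and the scaling identity $|c\mu:e|=c^{\dim}|\mu:e|$ implied by \ref{ndef}, the ratio defining the $i$-th coordinate of $\phi'_{P,v}(P',\nu)$ collapses to $t_i$. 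Combining the two parts gives $\psi_{P,v}\circ\xi^{\star}=$ the canonical projection.

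I expect the only real obstacle to be the index bookkeeping in the middle step — keeping straight the interplay of the dimension functions $c$ and $c'$, the index set $J$, and the blockwise scaling factors from \ref{xistar1}(2) — together with the small but genuine point that the infimum in \ref{subq} is realized on the relevant direct summand, so that the induced norm is exactly the claimed multiple of $\tilde\mu_i$ rather than merely equivalent to it. Everything else is formula-chasing of the kind already carried out in the proof of \ref{lst0}.
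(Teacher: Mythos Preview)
The paper does not actually supply a proof of this lemma: immediately before Lemma~\ref{xi2} the authors write ``We omit the proofs of the following two lemmas,'' referring to \ref{xi2} and \ref{xixi}. So there is no argument in the paper to compare yours against.

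That said, your proposal is correct and is the natural verification. The reduction to $g=1$ via the $P(F_v)$-equivariance of $\phi_{P,v}$ and $\phi'_{P,v}$, together with the observation that $P_u(F_v)$ acts trivially on both $\frak{Z}_{F,v}(P)$ and (by unipotence of the graded action) on $\R^m_{\ge 0}$, is exactly right. Your identification of the subquotient $V_i/V_{i-1}$ inside $V'_j/V'_{j-1}=\bigoplus_{k\in J_j}V^{(k)}$ with the summand $V^{(i)}$, and the verification that the infimum in \ref{subq} is attained on that summand (so the induced norm is precisely $r_i\tilde\mu_i$, not merely equivalent to it), are the substantive points, and you handle them correctly in all three local cases. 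The computation of $\phi'_{P,v}$ then follows from the normalization $|\tilde\mu_k:e^{(k)}|=1$ and the homogeneity $|c\mu:e|=c^{\dim}|\mu:e|$, giving $r_{i-1}/r_i=t_i$ as needed. One cosmetic remark: your sentence ``no element of $\Delta(P)$ lies strictly between two consecutive values $c(k-1)$, $c(k)$'' is a tautology as written; what you use (and what is true by construction) is that each $c'(j)$ equals some $c(k)$, so the $P'$-flag refines to the $P$-flag.
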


\begin{sblem}\label{xixi} We have a commutative diagram
$$\begin{tikzcd}[column sep = large, row sep = small] P_u(F_v) \times \frak{Z}_{F,v}(P) \times \R^m_{\geq 0} \rar{\xi} \dar & \bar X_{F,v}(P) \arrow[equals]{d} \\
P_u(F_v) \times \frak{Z}_{F,v}(P) \times \R^m_{\geq 0} \rar{\xi^{\star}} & \bar X_{F,v}(P) 
\end{tikzcd}
$$
in which the left vertical arrow is 
$(u, \mu, t) \mapsto (u, \mu, t')$, for $t'$ defined as follows. 
 Let 
$I_i \colon X_{(V_i/V_{i-1})_v} \to \R_{>0}^{\Delta_i}$ be the unique continuous map for which the composition  
$$
	B_{i,u}(F_v) \times \R^{\Delta_i}_{>0}\to X_{(V_i/V_{i-1})_v} \xrightarrow{I_i} \R_{>0}^{\Delta_i}
$$ 
is projection onto the second factor, and for $j \in \Delta_i$, let $I_{i,j} \colon X_{(V_i/V_{i-1})_v} \to \R_{>0}$
denote the composition of $I_i$ with projection onto the factor of $\R_{>0}^{\Delta_i}$ corresponding to $j$. Then 
$$t'_i = t_i \cdot \prod_{j\in \Delta_{i-1}}  I_{i-1,j}(\mu_i)^{\frac{j-c(i-2)}{c(i-1)-c(i-2)}}\cdot \prod_{j\in \Delta_i} I_{i,j}(\mu_i)^{\frac{c(i)-j}{c(i)-c(i-1)}}$$ 
for $1\leq i\leq m$.

\end{sblem}

\begin{sbpara} Proposition \ref{keyc1} is quickly reduced to Corollary \ref{keycor}, which now follows from \ref{keyp}, \ref{xi2} and \ref{xixi}.

\end{sbpara}

\begin{sbpara}\label{BSs} 
For two topologies $T_1$, $T_2$ on a set $Z$, we use $T_1\geq T_2$ to denote that the identity map of $Z$ is a continuous map from $Z$ with $T_1$ to $Z$ with $T_2$, and $T_1>T_2$ to denote that $T_1\geq T_2$ and $T_1\neq T_2$.  In other words, $T_1 \ge T_2$ if $T_1$ is finer than $T_2$ and $T_1 > T_2$ if $T_1$ is strictly finer than $T_2$.

By \ref{keyc1}, the map $\psi_{P,S} \colon \bar X_{F,S}(P)\to \frak{Z}_{F,S}(P) \times Y_0$ is continuous for the Borel-Serre topology on $\bar{X}_{F,S}$ and usual topology on $Y_0$.  On $\bar{X}_{F,S}$, we therefore have 
$$
\text{Borel-Serre topology} \geq \text{Satake topology}.
$$

\end{sbpara}
\begin{sbcor}\label{togrW}  For any nonempty finite set $S$ of places of $F$, the 
map $\phi_{W,S}^{\flat} \colon \bar X_{F,S}^{\flat}(W) \to \frak{Z}_{F,S}^{\flat}(W)$ of \ref{toW} is continuous for the Borel-Serre  topology on $\bar X_{F,S}^{\flat}$. If $S$ contains all archimedean places of $F$, it is continuous for the Satake topology. 
\end{sbcor}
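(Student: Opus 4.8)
The plan is to establish the statement first for the Borel--Serre topology, by reducing to a single place and then to an explicit computation in a Borel--Serre chart on $\bar X_{F,v}$, and afterwards to deduce the Satake case from the description of the Satake topology in \ref{top2}. For the Borel--Serre case, observe that $\bar X_{F,S}^{\flat}(W)$ carries the subspace topology from $\prod_{v\in S}\bar X_{F,v}^{\flat}(W)$ (with the Borel--Serre topology on each factor) and that $\phi_{W,S}^{\flat}$ is the restriction of $\prod_{v\in S}\phi_{W,v}^{\flat}$; hence it is enough to treat each $v\in S$ separately, i.e.\ to show that $\phi_{W,v}^{\flat}\colon\bar X_{F,v}^{\flat}(W)\to X_{W_v}$ is continuous. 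By \ref{BStop2}, the Borel--Serre topology on $\bar X_{F,v}^{\flat}$ is the quotient of that on $\bar X_{F,v}$ under $q\colon(P,\mu)\mapsto(V_0,\mu_0)$ (notation of \ref{bs2} and \ref{cansur}), and $\tilde U:=q^{-1}(\bar X_{F,v}^{\flat}(W))=\{(P,\mu)\mid V_0\supseteq W\}$ is a saturated open subset of $\bar X_{F,v}$. So it suffices to prove that the composite $\tilde U\to X_{W_v}$, $(P,\mu)\mapsto\mu_0|_{W_v}$, is continuous for the Borel--Serre topology on $\bar X_{F,v}$.

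Put $k=\dim W$. For a basis $(e_i)_i$ of $V$ with $W=\sum_{j=1}^k Fe_j$, let $B$ be the associated Borel; the sets $\tilde U\cap\bar X_{F,v}(B)$ so obtained are open and cover $\tilde U$ (given $(P,\mu)\in\tilde U$, refine the flag of $P$ by inserting $W$ and choose an adapted basis), so continuity need only be checked on each of them. On $\bar X_{F,v}(B)$ we have the quotient surjection $\bar\pi_v\colon B_u(F_v)\times\R^{d-1}_{\geq0}\to\bar X_{F,v}(B)$ of \ref{lst0}(2), and $\bar\pi_v^{-1}(\tilde U)$ is the saturated open set $\Omega=\{(g,t)\mid t_1,\dots,t_{k-1}>0\}$. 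Unwinding the definitions in \ref{cover}, for $(g,t)\in\Omega$ one has $\bar\pi_v(g,t)=(P(t),(g_i\mu_i)_i)$ and hence $q(\bar\pi_v(g,t))=(V_0(t),g_0\mu_0)$, where $V_0(t)=\sum_{j\le c(0)}Fe_j$ with $c(0)\ge k$, where $\mu_0$ is the class of the norm on $V_0(t)_v$ attached to $(e_j)_{j\le c(0)}$ and $(r_j)_{j\le c(0)}$, $r_j=\prod_{l<j}t_l^{-1}$, and where $g_0=g|_{V_0(t)_v}$. The key point is that $g\in B_u(F_v)$ is upper triangular, hence preserves $W_v=\sum_{j\le k}F_ve_j$, so that restriction to $W_v$ gives
$$
(g_0\mu_0)|_{W_v}=(g|_{W_v})\cdot\nu(t),
$$
where $g|_{W_v}\in\GL_{W_v}(F_v)$ is the upper-left $k\times k$ block of $g$, which does not depend on $t$, and $\nu(t)\in X_{W_v}$ is the class of the norm on $W_v$ attached to $(e_j)_{j\le k}$ and $(r_1(t),\dots,r_k(t))$, depending continuously on $(t_1,\dots,t_{k-1})\in\R^{k-1}_{>0}$ by \ref{cover}. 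Since $g\mapsto g|_{W_v}$ is continuous and the action of $\GL_{W_v}(F_v)$ on $X_{W_v}$ is continuous (by the definition of the topology on $X_{W_v}$ in \ref{bs1}), the resulting map $\Omega\to X_{W_v}$ is continuous; this proves the Borel--Serre case.

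Now assume $S$ contains all archimedean places of $F$, and fix $K$ and $\Gamma_K$ as in \ref{top8}. Given an open set $U'\subseteq\frak{Z}_{F,S}^{\flat}(W)$, its preimage $(\phi_{W,S}^{\flat})^{-1}(U')\subseteq\bar X_{F,S}^{\flat}(W)$ is open for the Borel--Serre topology by the case already proved, and it is stable under $\Gamma_{K,(W)}$: indeed, any $\gamma\in\Gamma_{K,(W)}$ lifts to an element of $\GL_V(F)$ fixing $W$ pointwise, hence acts as the identity on every $W_v$, so $\phi_{W,S}^{\flat}$ is constant on $\Gamma_{K,(W)}$-orbits. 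By the description of the Satake topology in \ref{top2} and \ref{assu5} (taking there the subspace to be $W$ itself), this preimage is therefore open for the Satake topology on $\bar X_{F,S}^{\flat}$; since $\bar X_{F,S}^{\flat}(W)$ is Satake-open by \ref{top6}, it is open in the subspace topology. Hence $\phi_{W,S}^{\flat}$ is continuous for the Satake topology.

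The main obstacle is the local computation of the second paragraph: one has to see that restriction of norms to $W_v$ intertwines the $B_u(F_v)$-action on $\bar X_{F,v}(B)$ with the $\GL_{W_v}(F_v)$-action on $X_{W_v}$ through the block projection, and that it turns the moving data $(V_0(t),\mu_0)$ into the continuous family $\nu(t)$ on the \emph{fixed} space $W_v$ — this works precisely because the unipotent part acts by upper-triangular matrices preserving $W_v$. The remaining ingredients (reduction to one place, the quotient-topology bookkeeping, and the $\Gamma_K$-invariance in the Satake case) are formal.
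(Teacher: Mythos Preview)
Your argument is correct and follows essentially the same route as the paper. For the Borel--Serre case, the paper's proof is a one-liner: it invokes the already-established continuity of $\psi_{P,S}$ (equivalently of $\phi_{P,S}$, from \ref{keyc1} and \ref{BSs}) together with the fact that the Borel--Serre topology on $\bar X_{F,S}^{\flat}$ is the quotient of that on $\bar X_{F,S}$. Your chart computation on $\bar X_{F,v}(B)$ is really just an unrolling of the relevant special case of that continuity result (projection of $\phi_{P,v}$ onto the bottom factor $X_{W_v}$), so the content is the same---you have simply made it explicit rather than citing \ref{keyc1}. For the Satake case your argument is identical to the paper's: $\Gamma_{K,(W)}$-invariance of $\phi_{W,S}^{\flat}$ combined with the description of the Satake topology in \ref{top2}.
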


\begin{pf} The continuity for the Borel-Serre topology follows from the continuity of $\psi_{P,S}$, noting that the Borel-Serre topology on $\bar X_{F,S}^{\flat}$ is the quotient topology of the Borel-Serre topology on $\bar X_{F,S}$.  Suppose that $S$ contains all archimedean places.  As $\phi_{W,S}^{\flat}$ is $\Gamma_{K, (W)}$-equivariant, and $\Gamma_{K,(W)}$ acts trivially on $\frak{Z}_{F,S}^{\flat}(W)$, the continuity for the Satake topology is reduced to the continuity for the Borel-Serre topology.   
\end{pf}
\begin{sbrem}

We remark that the map $\phi_{P,v} \colon \bar X_{F,v}(P)\to \frak{Z}_{F,v}(P)$ of \ref{togr} 
need not be continuous for the  topology on $\bar X_{F,v}$ as a subspace of $\bar X_v$.   Similarly, the map $\phi_{W,v}^{\flat} \colon \bar X_{F,v}^{\flat}(W) \to X_{W_v}$ of \ref{toW} need not be continuous for the subspace
topology on $\bar X_{F,v}^{\flat} \subset \bar X_v^{\flat}$.
See \ref{toW2} and \ref{phiweak}. 

\end{sbrem}

\begin{sbpara}\label{top3} We prove Proposition \ref{top1}. 
\begin{proof}

Let $\alpha=(P, \mu)\in \bar X_{F,S}$. Let $U$ be a  neighborhood of $\alpha$ for the Borel-Serre topology which is stable under the action of $\Gamma_{K,(P)}$.  By \ref{BSs}, it is sufficient to prove that there is a neighborhood $W$ of $\alpha$ for the Satake topology such that $W \subset U$. 

Let $(V_i)_{-1 \le i \le m}$ be the flag corresponding to $P$, and let $V^{(i)}$ be as before.
Let $\Gamma_1= \Gamma_K \cap P_u(F)$, and let $\Gamma_0$ be the subgroup of $\Gamma_K$ consisting of the elements that preserve $V^{(i)}$ and act on $V^{(i)}$ as a scalar for all $i$. Then $\Gamma_1$ is a normal subgroup of $\Gamma_{K,(P)}$ and $\Gamma_1\Gamma_0$  is a subgroup of $\Gamma_{K, (P)}$ of finite index.

Let 
$$
	Y_1 = \left\{(a_v)_{v\in S}\in \R^S_{>0} \mid \prod_{v\in S} a_v=1\right\}^m,
$$ 
and set $s = \sharp S$.  We have a surjective continuous map
\begin{eqnarray*}
	\R_{\geq 0}^m \times Y_1 \twoheadrightarrow Y_0, \qquad (t,t') \mapsto (t_i^{1/s}t'_{v,i})_{v,i}.
\end{eqnarray*}
The composition $\R_{\geq 0}^m \times Y_1 \to Y_0 \to \R_{\geq 0}^m$, where the second arrow is $(t_{v,i})_{v,i} \mapsto (\prod_{v\in S} t_{v,i})_i$, coincides with projection onto the first coordinate.

Let 
\begin{eqnarray*}
	\Phi =  P_u(\mb{A}_{F,S}) \times Y_1
	&\text{and}&
	\Psi= \frak{Z}_{F,S}(P) \times \R^m_{\geq 0}.
\end{eqnarray*}
Consider the composite map
$$f \colon \Phi \times \Psi \to P_u(\mb{A}_{F,S}) \times \frak{Z}_{F,S}(P) \times Y_0 \xrightarrow{(\xi_v^{\star})_{v \in S}} \bar X_{F,S}(P).$$
The map $f$ is $\Gamma_1\Gamma_0$-equivariant for the trivial action on $\Psi$ and the following action on $\Phi$: for $(g, t)\in \Phi$, $\gamma_1\in \Gamma_1$ and $\gamma_0\in \Gamma_0$, we have
$$
	\gamma_1\gamma_0 \cdot (g,t) = (\gamma_1\gamma_0 g \gamma_0^{-1}, \gamma_0t),
$$ 
where $\gamma_0$ acts on $Y_1$ via the embedding $\Gamma_K \to P(\mb{A}_{F,S})$ and
the actions of the $P(F_v)$ described in \ref{togr2}. 
The composition 
$$
	\Phi\times \Psi \xrightarrow{f} \bar X_{F,S}(P) \xrightarrow{\psi_{P,S}} \Psi
$$ 
coincides with the canonical projection. 

There exists a compact subset $C$ of $\Phi$ such that $\Phi= \Gamma_1\Gamma_0C$ for the above action of $\Gamma_1\Gamma_0$ on $\Phi$. 
Let $\beta = (\mu, {0})\in \Psi$ be the image of $\alpha$ under $\psi_{P,S}$.
For $x\in \Phi$, we have $f(x, \beta)=\alpha$.  Hence, there 
 is an open neighborhood $U'(x)$ of $x$ in $\Phi$ and an open neighborhood $U''(x)$ of $\beta$ in $\Psi$ 
such that $U'(x) \times U''(x)\subset f^{-1}(U)$. Since $C$ is compact, 
 there is a finite subset $R$ of $C$ such that $C \subset \bigcup_{x\in R} U'(x)$. 
 Let $U''$ be the open subset $\bigcap_{x\in R} U''(x)$ of $\Psi$, which contains $\beta$. The $P$-open set $W = \psi_{P,S}^{-1}(U'') \subset 
 \bar X_{F,S}(P)$ is by definition an open neighborhood of $\alpha$ in the Satake topology on $\bar X_{F,S}$. 
 We show that $W \subset f^{-1}(U)$. Since the map $\Phi \times \Psi\to \bar X_{F,S}(P)$ is surjective, 
 it is sufficient to prove that the inverse image $\Phi \times U''$ of $W$ in $\Phi \times \Psi$ 
 is contained $f^{-1}(U)$.  For this, we note that
 $$
 	\Phi \times U'' = \Gamma_1\Gamma_0 C \times U'' = \Gamma_1 \Gamma_0\left( \bigcup_{x\in R} U'(x) \times U''\right)\subset \Gamma_1\Gamma_0 f^{-1}(U) = f^{-1}(U),
$$ 
the last equality by the stability of $U$ under the action of $\Gamma_{K,(P)}\supset \Gamma_1\Gamma_0$ and
the $\Gamma_1\Gamma_0$-equivariance of $f$. 
\end{proof}

\end{sbpara}

\begin{sbpara}\label{d=2} In the case $d=2$, the canonical surjection $\bar X_{F,S}\to \bar X_{F,S}^{\flat}$ is bijective. It is a homeomorphism for the Borel-Serre topology. If $S$ contains all archimedean places of $F$, it is a homeomorphism for the Satake topology by \ref{top1}. 

\end{sbpara}

\subsection{Comparison of the topologies} \label{comptop}

When considering $\bar X_{F,v}^{\flat}$, we assume that all places of $F$ other than $v$ are non-archimedean. 

\begin{sbpara} For $\bar X_{F,v}$ (resp., $\bar X_{F,v}^{\flat}$), we have introduced several topologies:
the Borel-Serre topology, the Satake topology, and the subspace topology from $\bar X_v$ (resp., $\bar X_v^{\flat}$), which we call the weak topology.  We compare these topologies below; note that we clearly have
\begin{eqnarray*} 
	\text{Borel-Serre topology} \geq \text{Satake topology} &\text{and}& 
	\text{Borel-Serre topology}\geq \text{weak topology}.
\end{eqnarray*}
\end{sbpara}

\begin{sbpara}\label{compa} For both $\bar X_{F,v}$ and $\bar X_{F,v}^{\flat}$, the following hold:
\begin{enumerate}
\item[(1)] Borel-Serre topology $>$ Satake topology if $d \ge 2$,
\item[(2)] Satake topology $>$ weak topology if $d=2$,
\item[(3)] Satake topology $\not\geq$ weak topology if $d>2$. 
\end{enumerate}

We do not give full proofs of these statements.  Instead, we 
 describe some special cases that give clear pictures of the differences between these topologies. The general cases can be proven in a similar manner to these special cases.

Recall from \ref{d=2} that 
in the case $d=2$, the sets $\bar X_{F,v}$ and $\bar X_{F,v}^{\flat}$ are equal, their Borel-Serre topologies coincide, and their Satake topologies coincide. 

\end{sbpara}

\begin{sbpara}\label{BSSa} We describe the case $d=2$ of  \ref{compa}(1).

Take a basis $(e_i)_{i=1,2}$ of $V$. Consider the point $\alpha=(B, \mu)$ of $\bar X_{F,v}$, where $B$ is the Borel subgroup of upper triangular matrices with respect to $(e_i)_i$, and $\mu$ is the unique element of $\frak{Z}_{F,v}(B) = X_{F_ve_1} \times X_{V_v/F_ve_1}$.

Let $\bar{\pi}_v$ be the 
surjection of \ref{lst0}(2), and identify $B_u(F_v)$ with $F_v$ in the canonical manner.  
The images of the sets
$$
	\{(x, t) \in F_v\times \R_{\geq 0}\mid t \le c\}\subset B_u(F_v) \times \R_{\geq 0}
$$ 
in $\bar X_{F,v}(B)$ for $c \in \R_{>0}$ form a base of neighborhoods of $\alpha$ for the Satake topology. 
Thus, while the image of the set 
$$
	\{(x, t) \in F_v\times \R_{\geq 0} \mid t < |x|^{-1}\}
$$ 
is a neighborhood of $\alpha$ for  Borel-Serre topology, it is not a neighborhood of $\alpha$ for the Satake topology.  
\end{sbpara}

\begin{sbpara}\label{BSSb} We prove \ref{compa}(2) in the case that $v$ is non-archimedean. The proof in the archimedean case is similar.  
Since all boundary points of $\bar X_{F,v} = \bar X_{F,v}^{\flat}$ are $\PGL_V(F)$-conjugate, to show \ref{compa}(2), it is sufficient to consider any one boundary point.  We consider $\alpha$ of \ref{BSSa} for a fixed basis  $(e_i)_{i=1,2}$ of $V$.

For $x\in F_v$ and $y\in \R_{>0}$, let $\mu_{y,x}$ be the norm on $V_v$ defined by 
$$
	\mu_{y,x}(ae_1+be_2)= \max(|a-xb|, y|b|).
$$ 
The class of $\mu_{y,x}$ is the image of $(x,y^{-1}) \in B_u(F_v) \times \R_{> 0}$. Any element of $X_v$ is the class of the norm $\mu_{y,x}$ for some $x,y$. 
If we vary $x\in F_{\infty}$ and $y\in \R_{>0}$, the classes of $\mu_{y,x}$ in $\bar X_{F,v}$  converge 
under the Satake topology to the point $\alpha$ if and only if $y$ approaches $\infty$.
 In $\bar X_v$, the point
$\alpha$ is the class of the semi-norm $\nu$ on $V_v^*$ defined by $\nu(ae_1^*+be_2^*)= |a|$. 
By \ref{actdual}, 
 $$\mu_{y,x}^*= \left(\mu_{y,0}\circ \begin{pmatrix} 1 &-x \\ 0 & 1\end{pmatrix}\right)^*
 = \mu_{y,0}^* \circ \begin{pmatrix} 1 & 0\\ x & 1\end{pmatrix}, $$
from which we see that
$$
	\mu_{y, x}^*(ae_1^*+be_2^*)= \max(|a|, y^{-1}|xa+b|).
$$ 
Then $\mu_{y,x}^*$ is equivalent to the norm $\nu_{y,x}$
 on $V_v^*$ defined by 
 $$
 	\nu_{y,x}(ae_1^*+be_2^*) = \min(1, y|x|^{-1})
	\max(|a|, y^{-1}|xa+b|),
$$ 
and the classes of the $\nu_{y,x}$ converge in $\bar X_v$ to the class of the semi-norm $\nu$ as $y \to \infty$.  Therefore, the Satake topology is finer than the weak topology.

Now, the norm $\mu_{1,x}^*$ is equivalent to the norm $\nu_{1,x}$ on $V_v^*$ defined above, which for sufficiently
large $x$ satisfies
 $$
 	\nu_{1,x}(ae_1+be_2) = \max(|a/x|, |a+(b/x)|).
$$ 
Thus, as $|x|\to \infty$, the sequence $\mu_{1,x}$ converges in $\bar X_v= \bar X_v^{\flat}$ to the class of the semi-norm $\nu$.  However, the sequence of classes of the norms $\mu_{1,x}$ does not converge to $\alpha$ in $\bar X_{F,v}=\bar X_{F,v}^{\flat}$ for the Satake topology, so the Satake topology is strictly finer than the weak topology.  
\end{sbpara}

\begin{sbpara} We explain the case $d=3$ of \ref{compa}(3) in the non-archimedean case.

Take a basis $(e_i)_{1\leq i\leq 3}$ of $V$. For $y\in \R_{>0}$, let $\mu_y$ be the norm on $V_v$ defined by 
$$
	\mu_y(ae_1+be_2+ce_3)= \max(|a|, y|b|, y^2|c|).
$$ 
For $x\in F_v$,  consider the norm 
$\mu_y \circ g_x$, where 
$$g_x= \begin{pmatrix}  1 &0&0\\ 0&1&x\\ 0&0&1\end{pmatrix}.$$
If we vary $x\in F_{\infty}$ and let $y\in \R_{>0}$ approach $\infty$, then the class of $\mu_y \circ g_x$ in $X_v$ converges under the Satake topology to the class $\alpha \in \bar{X}_{F,v}$ of the pair that is the Borel subgroup of upper triangular matrices and  the unique element of $\prod_{i=0}^2 X_{(V_i/V_{i-1})_v}$, where $(V_i)_{-1 \le i \le 2}$
is the corresponding flag.
The quotient topology on $\bar X_{F,v}^{\flat}$ of the Satake topology on $\bar X_{F,v}$ is finer than the Satake topology on $\bar X_{F,v}^{\flat}$ by \ref{top1} and \ref{top2}.
Thus, if the Satake topology is finer than the weak topology on $\bar X_{F,v}$ or $\bar{X}_{F,v}^{\flat}$, then the composite $\mu_y \circ g_x$ should converge in $\bar X_v^{\flat}$ to the class of the semi-norm $\nu$ on $V_v^*$ that satisfies $\nu(ae_1^*+be_2^*+ce_3^*) = |a|$.  
However, if $y\to \infty$ and $y^{-2}|x|\to \infty$, then the class of $\mu_y \circ g_x$ in $X_v$ converges in $\bar X_v^{\flat}$ to the class of the semi-norm $ae_1^*+be_2^*+ce_3^*\mapsto |b|$. In fact, by \ref{actdual} we have
\begin{align*} (\mu_y \circ g_x)^*(ae_1^*+be_2^*+ce_3^*) &= \mu_y^*\circ (g_x^*)^{-1}(ae_1^*+be_2^*+ce_3^*) \\
&= \max(|a|, \; y^{-1}|b|, \; y^{-2}|{-bx+c}|)= y^{-2}|x|\nu_{y,x}
\end{align*}
where $\nu_{y,x}$ is the norm 
$$ae_1^*+be_2^*+ce_3^*\mapsto \max(y^2|x|^{-1}|a|, \; y|x|^{-1}|b|, \;  |{-b + x^{-1}c}|)$$
 on $V^*_v$. The norms $\nu_{y,x}$ converge to the semi-norm $ae_1^*+be_2^*+ce_3^*\mapsto |b|$.

\end{sbpara}

\begin{sbpara}\label{toW2}  Let $W$ be a nonzero subspace of $V$.
We demonstrate that the map
$ \phi_{W,v}^{\flat} \colon \bar X_{F,v}^{\flat}(W) \to X_{W_v}$ 
of \ref{toW} given by restriction to $W_v$ need not be continuous for the weak topology, even though by \ref{togrW}, it is continuous for the Borel-Serre topology and (if all places other than $v$ are non-archimedean) for the Satake topology. 

For example, suppose that $v$ is non-archimedean and $d = 3$.  Fix a basis $(e_i)_{1\leq i\leq 3}$ of $V$, and let $W=Fe_1+Fe_2$.  Let $\mu$ be the class of the norm 
$$
	ae_1+be_2\mapsto \max(|a|, |b|)
$$ 
on $W_v$, and consider the element $(W,\mu)\in \bar X_{F,v}^{\flat}$.  For $x\in F_v$ and $\epsilon\in \R_{>0}$, let $\mu_{x,\epsilon}\in X_v$ be the class of the norm 
$$
	ae_1+be_2+ce_3\mapsto \max(|a|, |b|,\epsilon^{-1}|c+bx|)
$$ 
on $V_v$. Then $\mu^*_{x,\epsilon}$ is the class of the norm $$
	ae_1^*+be_2^*+ce_3^*\mapsto \max(|a|, |b-xc|, \epsilon|c|)
$$ 
on $V_v^*$. 
When $x\to 0$ and $\epsilon\to 0$, the last norm converges to the semi-norm 
$$
	ae_1^*+be_2^*+ ce_3^*\mapsto \max(|a|, |b|)
$$ 
on $V_v^*$, 
and this implies that $\mu_{x,\epsilon}$ converges to $(W,\mu)$ for the weak topology. However, the restriction of $\mu_{x,\epsilon}$ to $W_v$ is the class of the norm 
$$
	ae_1+be_2\mapsto \max(|a|, |b|, \epsilon^{-1}|x||b|).
$$ 
If $x\to 0$ and $\epsilon= r^{-1}|x|\to 0$ for a fixed $r >1$, then the latter norms converge to the norm $ae_1+be_2\mapsto \max(|a|, r|b|)$, the class of which does not coincide with $\mu$.

\end{sbpara}

\begin{sbpara} \label{phiweak} Let $P$ be a parabolic subgroup of $\PGL_V(F)$.
We demonstrate that the map $\phi_{P,v} \colon \bar X_{F,v}(P)\to \frak{Z}_{F,v}(P)$ of \ref{togr} is not necessarily continuous for the weak topology, though by \ref{keycor}, it is continuous for the Borel-Serre topology and for the Satake topology. 

Let $d = 3$ and $W$ be as in \ref{toW2}, and let $P$ be the parabolic subgroup of $\PGL_V$ corresponding to the flag
$$
	0=V_{-1}\subset V_0=W \subset V_1=V.
$$ 
In this case, the canonical map $\bar X_{F,v}(P)\to \bar X_{F,v}^{\flat}(W)$ is a homeomorphism for the weak topology on both spaces.  It is also a homeomorphism for the Borel-Serre topology, and for the Satake topology if all places other than $v$ are non-archimedean.  Since $\frak{Z}_{F,v}(P) = X_{(V_0)_v} \times X_{(V/V_0)_v} \cong X_{W_v}$, the argument of \ref{toW2} shows that $\phi_{P,v}$ is not continuous for the weak topology.

\end{sbpara}

\begin{sbpara} For $d\geq 3$, the Satake topology on $\bar X_{F,v}^{\flat}$ does not coincide with the quotient topology for the Satake topology on $\bar X_{F,v}$, which is strictly finer.  This is explained in
\ref{flat8}. 
\end{sbpara}

\subsection{Relations with Borel-Serre spaces and reductive Borel-Serre spaces} \label{relations}

\begin{sbpara}\label{BS6}
In this subsection, we describe the relationship between our work and the theory of Borel-Serre and reductive Borel-Serre spaces (see Proposition \ref{Qcase}). 
We also show that $\bar X_{F,v}^{\sharp}$ is not Hausdorff if $v$ is a non-archimedean place. 
\end{sbpara}
 \begin{sbpara}\label{BS7} 
 
 Let $G$ be a semisimple algebraic group over $\Q$. We recall the definitions of the Borel-Serre and reductive Borel-Serre spaces associated to $G$ from \cite{BS} and \cite[p.~190]{Z}, respectively.

Let $\mathcal{Y}$ be the space of all maximal compact subgroups of $G(\R)$.
Recall from \cite[Proposition 1.6]{BS} that for $K\in \mathcal{Y}$, the Cartan involution $\theta_K$ 
of $G_\R:=\R\otimes_{\Q} G$ corresponding to $K$ is the unique homomorphism $G_\R\to G_\R$ such that 
$$
	K=\{g\in G(\R) \mid \theta_K(g)=g\}.
$$ 

Let $P$ be a parabolic subgroup of  $G$, let $S_P$ be the largest $\Q$-split torus in the center of $P/P_u$, and let $A_P$ be the connected component of the topological group $S_P(\R)$ containing the origin.   We have
$$
 	A_P\cong \R_{>0}^r \subset S_P(\R) \cong (\R^\times)^r
$$ 
for some integer $r$. We define an action of $A_P$ on $\mathcal{Y}$ as follows (see \cite[Section 3]{BS}).
For $K\in \mathcal{Y}$, 
we have a unique subtorus $S_{P,K}$ of $P_\R=\R\otimes_\Q P$ over $\R$ 
such that the projection $P\to P/P_u$ induces an isomorphism 
$$
	S_{P,K} \xrightarrow{\sim} (S_P)_\R:=\R \otimes_\Q S_P
$$
and such that the Cartan involution $\theta_K \colon G_\R \to G_\R$ of $K$ 
satisfies $\theta_K(t) = t^{-1}$ for all $t \in S_{P,K}(\R)$.  
For $t\in A_P$, let $t_K\in S_{P,K}(\R)$ be the inverse image of $t$.  
Then $A_P$ acts on $\mathcal{Y}$ by
$$
 	A_P\times \mathcal{Y} \to \mathcal{Y}, \qquad (t,K) \mapsto t_KKt_K^{-1}.
$$

The Borel-Serre space is the set of pairs $(P, Z)$ 
such that $P$ is a parabolic subgroup of $G$ and $Z$ is an $A_P$-orbit in $\mathcal{Y}$.  
The reductive Borel-Serre space is the quotient of the Borel-Serre space by the equivalence relation under which two elements $(P, Z)$ and $(P', Z')$ are equivalent if $(P', Z')= g(P, Z)$ (that is, $P=P'$ and $Z'=gZ$) for some $g\in P_u(\R)$.
\end{sbpara} 
  
  \begin{sbpara}\label{BS8} Now assume that $F=\Q$ and $G=\PGL_V$. Let $v$ be the archimedean place of $\Q$.
  
  We have a bijection between $X_v$ and the set $\mathcal{Y}$ of all maximal compact subgroups of $G(\R)$, 
whereby an element of $X_v$ corresponds to its isotropy group in $G(\R)$, 
which is a maximal compact subgroup. 

Suppose that $K\in \mathcal{Y}$ corresponds to $\mu\in  X_v$, with $\mu$ the class of a norm that in turn corresponds to a positive definite symmetric bilinear form $(\;\,,\;)$ on $V_v$.  The Cartan involution  $\theta_K : G_\R\to G_\R$ is induced by the unique homomorphism $\theta_K \colon \GL_{V_v}\to \GL_{V_v}$ satisfying
$$
	(gx, \theta_K(g)y)=(x,y) \quad \text{for all } g\in \GL_V(\R) \text{ and } x, y\in V_v. 
$$

For a parabolic subgroup $P$ of  $G$
corresponding to a flag $(V_i)_{-1 \le i \le m}$,  we have 
$$S_P= \Bigg(\prod_{i=0}^m \mb{G}_{\mathrm{m},\Q}\Bigg)/\mb{G}_{\mathrm{m},\Q},$$ where the $i$th term in the product is the group of scalars in $\GL_{V_i/V_{i-1}}$, and where the last $\mb{G}_{\mathrm{m},\Q}$ is embedded diagonally in the product.  The above description of $\theta_K$ shows that $S_{P,K}$ is the lifting of $(S_P)_{\R}$ to $P_\R$ obtained through the orthogonal direct sum decomposition
$$V_v \cong \bigoplus_{i=0}^m \;  (V_i/V_{i-1})_v$$ with respect to $(\;\,,\;)$.

\end{sbpara}

\begin{sbprop} \label{Qcase}
	If $v$ is the archimedean place of $\Q$, then $\bar X_{\Q,v}^{\sharp}$ (resp., $\bar X_{\Q,v}$) is the 
	Borel-Serre space (resp., reductive Borel-Serre space) associated to $\PGL_V$. 
\end{sbprop}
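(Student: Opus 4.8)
The plan is to construct an explicit $\PGL_V(\Q)$-equivariant bijection from $\bar X_{\Q,v}^{\sharp}$ onto the Borel-Serre space $\bar X^{\BS}$ of $G=\PGL_V$, to show it is a homeomorphism by comparing the two spaces over one standard chart, and then to descend to the reductive quotients. Throughout, we use the identification $X_v\cong\mathcal Y$ of \ref{BS8}, sending a class of norms to its isotropy subgroup in $G(\R)$. Given $(P,\mu,s)\in\bar X_{\Q,v}^{\sharp}$ with $\mu=(\mu_i)_{0\le i\le m}$, choose representative norms $\tilde\mu_i$ of the classes $\mu_i$, form the orthogonal direct sum norm on $V_v$ after transporting the $\tilde\mu_i$ via $s$, and let $\nu$ be its class; set $\Theta^{\sharp}(P,\mu,s)=(P,\,A_P\cdot K_\nu)$, where $K_\nu\in\mathcal Y$ is the isotropy subgroup of $\nu$. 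For the inverse, given $(P,Z)$ choose $K\in Z$, let $s$ be the splitting of the flag of $P$ determined by the orthogonal decomposition of $V_v$ with respect to a norm in the class corresponding to $K$, and let $\mu_i$ be the class of the restriction of that norm to the $i$th orthogonal summand.

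These two assignments are well defined and mutually inverse by the analysis of \ref{BS8}: the description there of $S_{P,K}$ as the lift of $(S_P)_{\R}$ through the orthogonal decomposition shows that rescaling the $\tilde\mu_i$ moves $K_\nu$ within $A_P\cdot K_\nu$, and that along an orbit $A_P\cdot K$ both the orthogonal decomposition of the flag and the classes of the restrictions are unchanged; unwinding the constructions then shows they are inverse to one another. Equivariance of $\Theta^{\sharp}$ for $\PGL_V(\Q)$ follows from the $G(\R)$-equivariance of $X_v\cong\mathcal Y$ together with the compatibility of the norm operations of \ref{actn}--\ref{actdual} with the $G(\R)$-action.

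For the topology, fix a basis $(e_i)_i$ of $V$ and let $B$ be the associated Borel subgroup. By \ref{lst0}(3) the map $\bar\pi_v^{\sharp}$ identifies $\bar X_{\Q,v}^{\sharp}(B)$ with $B_u(F_v)\times\R_{\ge 0}^{d-1}$, while the Borel-Serre construction identifies the corresponding open set $\bar X^{\BS}(B)$ with $N_B(\R)\times\overline{A_B}$ (here the boundary symmetric space attached to the minimal parabolic is a point, since the relevant Levi is an anisotropic torus contained in the standard maximal compact). Using the Iwasawa decomposition \ref{decomp}, its uniqueness \ref{Iwarch}, and the bijection $\R_{>0}^{d-1}\cong A_v$ of \ref{diag}, one checks that $\Theta^{\sharp}$ carries $X_v\subset\bar X_{\Q,v}^{\sharp}(B)$ onto $X\subset\bar X^{\BS}(B)$ compatibly with these coordinates, the parameter $t=(t_j)_j$ corresponding through $r_i=\prod_{j<i}t_j^{-1}$ to the geodesic-action coordinate on $A_B$; moreover $t_j=0$ corresponds to the $j$th coordinate of $\overline{A_B}$ being $\infty$, and the usual topology on $\R_{\ge 0}^{d-1}$ matches the corner topology on $\overline{A_B}$. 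Hence $\Theta^{\sharp}$ restricts to a homeomorphism on $\bar X_{\Q,v}^{\sharp}(B)$. Since $\PGL_V$ is split over $\Q$, every parabolic contains a $\PGL_V(\Q)$-conjugate of $B$, so the $\PGL_V(\Q)$-translates of $\bar X_{\Q,v}^{\sharp}(B)$ (resp.\ of $\bar X^{\BS}(B)$) form an open cover of $\bar X_{\Q,v}^{\sharp}$ (resp.\ of $\bar X^{\BS}$); by equivariance, $\Theta^{\sharp}$ is a homeomorphism.

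Finally, the canonical surjection $\bar X_{\Q,v}^{\sharp}\to\bar X_{\Q,v}$ forgets the splitting, and on the stratum indexed by a parabolic $P$ it is the projection $P_u(F_v)\times\frak Z_{F,v}(P)\to\frak Z_{F,v}(P)$; the canonical surjection $\bar X^{\BS}\to\bar X^{\mathrm{RBS}}$ restricts on the stratum $e(P)$ to the projection forgetting the $N_P(\R)$-factor. Since $\Theta^{\sharp}$ preserves the stratifications and intertwines these projections, it descends to a continuous bijection $\Theta\colon\bar X_{\Q,v}\to\bar X^{\mathrm{RBS}}$ lying over the quotient maps; as both targets carry the quotient topologies and $\Theta^{\sharp}$ is a homeomorphism, so is $\Theta$. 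I expect the main obstacle to be the homeomorphism claim over $\bar X_{\Q,v}^{\sharp}(B)$: one must unwind the Borel-Serre geodesic action and its corner bordification and match them coordinate by coordinate with the parametrization of \ref{cover}--\ref{cover2}, keeping careful track of the fact that the splitting $s$ is precisely the datum recording which lift $S_{P,K}$ of $(S_P)_{\R}$ (equivalently, which orthogonal decomposition of $V_v$) is in play. In particular one should verify that the $P_u(\R)$-action on splittings used in \S\ref{BSsec} corresponds under $\Theta^{\sharp}$ to the $N_P(\R)$-action defining the reductive Borel-Serre quotient, which legitimizes the descent in the last step.
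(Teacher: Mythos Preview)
Your argument is correct and follows essentially the same route as the paper: construct the map $(P,\mu,s)\mapsto(P,Z)$ via the orthogonal-sum norm transported by $s$, verify it is a homeomorphism on the $B$-chart $B_u(\R)\times\R_{\ge 0}^{d-1}$ (the paper does this by invoking \cite[5.4]{BS} directly rather than unwinding the geodesic-action coordinates as you do), and then descend to the reductive quotient. One small terminological slip in your parenthetical: the Levi of $B$ in $\PGL_d$ is the split diagonal torus, not an anisotropic one --- what makes the boundary symmetric space $X_B$ a point is that ${}^0M_B$ is finite, which is the correct conclusion.
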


\begin{proof}
	Denote the Borel-Serre space by $(\bar X_{\Q,v}^{\sharp})'$ in this proof.  We define a canonical map
	$$
		\bar X_{\Q,v}^{\sharp}\to (\bar X_{\Q,v}^{\sharp})', \qquad (P, \mu, s) \mapsto (P,Z),
	$$
        where $Z$ is the subset of $\mathcal{Y}$ corresponding to the following subset $Z'$ of $X_v$. Let $(V_i)_{-1 \le i \le m}$ 
        be the flag corresponding to $P$.  Recall that $s$ is an isomorphism
        $$
         	s \colon \bigoplus_{i=0}^m \;  (V_i/V_{i-1})_v \xrightarrow{\sim} V_v.
        $$ 
        Then $Z'$ is the subset of $X_v$ consisting of classes of the norms
        $$
       		\tilde \mu^{(s)} \colon x \mapsto \left(\sum_{i=0}^m {\tilde \mu_i}(s^{-1}(x)_i)^2\right)^{1/2}
        $$ 
        on $V_v$, where $s^{-1}(x)_i\in (V_i/V_{i-1})_v$ denotes the $i$th component of $s^{-1}(x)$ for $x \in V_v$,
        and $\tilde \mu =(\tilde \mu_i)_{0\leq i\leq m}$ ranges over all families of norms $\tilde \mu_i$ on 
        $(V_i/V_{i-1})_v$ with class equal to $\mu_i$. It follows from the description of $S_{P,K}$ in \ref{BS8} that 
        $Z$ is an $A_P$-orbit. 
             
    For a parabolic subgroup $P$ of $G$, let 
    $$
    	(\bar X_{\Q,v}^{\sharp})'(P) = \{ (Q,Z) \in (\bar X_{\Q,v}^{\sharp})' \mid Q\supset P \}.
    $$ 
       By \cite[7.1]{BS}, the subset $(\bar X_{\Q,v}^{\sharp})'(P)$ is open in $(X_{\Q,v}^{\sharp})'$.

    Take a basis of $V$, and let $B$ denote the Borel subgroup of $\PGL_V$ of upper-triangular 
    matrices for this basis. 
    By \ref{lst0}(3), we have a homeomorphism
    $$B_u(\R) \times
    \R_{\geq 0}^{d-1} \xrightarrow{\sim} \bar X_{\Q,v}^{\sharp}(B).$$
     It follows from \cite[5.4]{BS} that the composition 
        $$B_u(\R) \times \R_{\geq 0}^{d-1}\to  (\bar X_{\Q,v}^{\sharp})'(B)$$
    induced by the above map  
    $$
    	\bar X_{\Q,v}^{\sharp}(B) \to (\bar X_{\Q,v}^{\sharp})'(B), \qquad (P, \mu, s)\mapsto (P,Z)
    $$ 
    is also a homeomorphism. 
        This proves that the map $\bar X_{\Q,v}^{\sharp}\to( \bar X_{\Q,v}^{\sharp})'$ restricts to a homeomorphism 
    $\bar X_{\Q,v}^{\sharp}(B) \xrightarrow{\sim} (\bar X_{\Q,v}^{\sharp})'(B)$.  Therefore,
    $\bar X_{\Q,v}^{\sharp}\to  (\bar X_{\Q,v}^{\sharp})'$ is a homeomorphism as well. 
    It then follows directly from the definitions that the reductive Borel-Serre space is identified with $\bar X_{\Q,v}$. 
\end{proof}

\begin{sbpara} \label{BSF}
Suppose that $F$ is a number field, let $S$ be the set of all archimedean places of $F$, and let $G$ be the Weil restriction $\mathrm{Res}_{F/\Q}\PGL_V$, which is a semisimple algebraic group over $\Q$. Then $\mathcal{Y}$ is identified with $X_{F,S}$, and $\bar X_{F,S}$ is related to the reductive Borel-Serre space associated to $G$ but does not always coincide with it.  We explain this below.

Let $(\bar X_{F,S}^{\sharp})'$  and $\bar X_{F,S}'$ be the Borel-Serre space and the reductive Borel-Serre space associated to $G$, respectively. Let $\bar X_{F,S}^{\sharp}$ be the subspace of $\prod_{v\in S} \bar X_{F,v}^{\sharp}$ consisting of all elements $(x_v)_{v\in S}$ such that the parabolic subgroup of $G$ associated to $x_v$ is independent of $v$. Then by similar arguments to the case $F=\Q$, we see that $\cal Y$ is canonically homeomorphic to $X_{F,S}$ and this homeomorphism extends uniquely to surjective continuous maps 
$$(\bar X_{F,S}^{\sharp})' \to \bar X_{F,S}^{\sharp}, \qquad \bar X_{F,S}' \to \bar X_{F,S}.$$
However, these maps are not bijective unless $F$ is $\Q$ or imaginary quadratic.  We illustrate the differences between the spaces in the case that $F$ is a real quadratic field and $d=2$. 

Fix a basis $(e_i)_{i=1,2}$ of $V$. Let $\tilde P$ be the Borel subgroup of upper triangular matrices in $\PGL_V$ for this basis, and let $P$ be the Borel subgroup 
$\text{Res}_{F/\Q}\tilde P$ of $G$. Then $P/P_u \cong \text{Res}_{F/\Q}\mb{G}_{m,F}$ 
and $S_P= \mb{G}_{m,\Q} \subset P/P_u$. We have the natural identifications 
$\mathcal{Y}=X_{F,S}= \frak H \times \frak H$.  For $a\in \R_{>0}$, the set 
$$
	Z_a:=\{(yi, ayi)\in \frak H \times \frak H \mid y\in \R_{>0}\}
$$
is an $A_P$-orbit. If $a\neq b$, the images of $(P, Z_a)$ and $(P, Z_b)$ in $(\bar X_{F,S})'$ 
do not coincide. On the other hand, both the images of $(P, Z_a)$ and $(P, Z_b)$ in $\bar X_{F,S}^{\sharp}$ 
coincide with $(x_v)_{v\in S}$, where $x_v=(P, \mu_v, s_v)$ with $\mu_v$ 
the unique element of $X_{F_v e_1}\times X_{V_v/F_ve_1}$ and $s_v$ the splitting given by $e_2$. 
\end{sbpara}


 In the case that $v$ is non-archimedean, the space $\bar X_{F,v}^{\sharp}$ is not good in the following sense.

\begin{sbprop}\label{nonH}
If $v$ is non-archimedean, then $\bar X_{F,v}^{\sharp}$ is not Hausdorff.
\end{sbprop}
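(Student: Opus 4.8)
The plan is to exhibit a Borel subgroup $B$ of $\PGL_V$ and a single sequence lying in the open subset $\bar X_{F,v}^{\sharp}(B)$ (which is open in $\bar X_{F,v}^{\sharp}$ by \ref{BStop}) that converges to two distinct points; since a sequence in an open subspace that converges there also converges in the ambient space, this shows $\bar X_{F,v}^{\sharp}$ is not Hausdorff. We may assume $d\ge 2$. I would fix a basis $(e_i)_i$ of $V$, let $B$ be the corresponding Borel subgroup of upper triangular matrices, and recall from \ref{cover2} and \ref{BStop} that $\bar X_{F,v}^{\sharp}(B)$ carries the quotient topology under the continuous surjection $\bar\pi_v^{\sharp}\colon B_u(F_v)\times \R_{\ge 0}^{d-1}\to \bar X_{F,v}^{\sharp}(B)$ of \ref{lst0}(2).

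The first step is to pin down the two limit points. Fix $x\in F_v\setminus\{0\}$ and let $g\in B_u(F_v)$ be the unipotent matrix whose $(1,2)$-entry is $x$ and whose other off-diagonal entries vanish. Writing $0=(0,\dots,0)\in\R_{\ge 0}^{d-1}$, the definition of $\bar\pi_v^{\sharp}$ (see \ref{cover}, \ref{cover2}) gives $\bar\pi_v^{\sharp}(1,0)=(B,\mu,s_0)$, where $(V_i)_{-1\le i\le m}$ is the full flag attached to $(e_i)_i$, the family $\mu$ consists of the unique norm classes on the lines $V_i/V_{i-1}$, and $s_0$ is the splitting of $(V_i)_i$ coming from the basis. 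Since $g$ lies in the unipotent radical $P_u(F_v)=B_u(F_v)$ it acts trivially on every graded piece $V_i/V_{i-1}$, so $\bar\pi_v^{\sharp}(g,0)=g\cdot(B,\mu,s_0)=(B,\mu,g\circ s_0\circ g^{-1})=(B,\mu,g\circ s_0)$, and $g\circ s_0$ is the splitting $s_x$ that agrees with $s_0$ except that it sends the class of $e_2$ to $e_2+xe_1$. As $x\ne 0$ we have $s_x\ne s_0$, so $\bar\pi_v^{\sharp}(1,0)$ and $\bar\pi_v^{\sharp}(g,0)$ are distinct points of $\bar X_{F,v}^{\sharp}(B)$.

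Next I would produce the sequence. Put $t^{(n)}=(n^{-1},\dots,n^{-1})\in\R_{>0}^{d-1}$ and $p_n=\bar\pi_v^{\sharp}(1,t^{(n)})$. Since $(1,t^{(n)})\to(1,0)$ and $\bar\pi_v^{\sharp}$ is continuous, $p_n\to\bar\pi_v^{\sharp}(1,0)$. The crux is that $p_n=\bar\pi_v^{\sharp}(g,t^{(n)})$ for all large $n$. Indeed, because every coordinate of $t^{(n)}$ is positive, the flag attached to either pair is trivial and the splitting part is forced to be the identity, so this reduces to $\bar\pi_v(1,t^{(n)})=\bar\pi_v(g,t^{(n)})$ in $\bar X_{F,v}(B)$; by the description of the fibers of $\bar\pi_v$ in \ref{lst0}(4), this holds precisely when $|x|\le(\prod_{1\le k<2}t^{(n)}_k)^{-1}=n$, hence for all $n\ge|x|$ (a direct ultrametric computation with the norms also yields this). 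Thus $p_n=\bar\pi_v^{\sharp}(g,t^{(n)})$ eventually, and since $(g,t^{(n)})\to(g,0)$, continuity of $\bar\pi_v^{\sharp}$ gives $p_n\to\bar\pi_v^{\sharp}(g,0)$ as well. So $(p_n)$ has two distinct limits in $\bar X_{F,v}^{\sharp}(B)$, hence in $\bar X_{F,v}^{\sharp}$, which is therefore not Hausdorff.

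The step that uses non-archimedean-ness, and the one I would check most carefully, is the collapsing identity $\bar\pi_v^{\sharp}(1,t^{(n)})=\bar\pi_v^{\sharp}(g,t^{(n)})$: in the archimedean case $\bar\pi_v^{\sharp}$ is bijective (\ref{lst0}(3)) and no such coincidence can occur, whereas non-archimedeanly the non-injectivity of $\bar\pi_v$ in \ref{lst0}(4) is exactly what lets a point of $X_v$ be approached inside $\bar X_{F,v}^{\sharp}$ through boundary charts attached to different splittings. The only delicate point is that \ref{lst0}(4) concerns $\bar X_{F,v}$ rather than $\bar X_{F,v}^{\sharp}$, which is why I restrict to the locus $t^{(n)}\in\R_{>0}^{d-1}$, where the splitting datum is trivial and the two maps $\bar\pi_v$ and $\bar\pi_v^{\sharp}$ have the same fibers; everything else is routine.
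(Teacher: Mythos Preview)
Your proof is correct and follows essentially the same approach as the paper's: both exhibit two distinct boundary points $\bar\pi_v^{\sharp}(a,0)$ and $\bar\pi_v^{\sharp}(b,0)$ with $a\neq b$ in $B_u(F_v)$, and use the non-injectivity of $\bar\pi_v$ from \ref{lst0}(4) to show that for $t$ close to $0$ in $\R_{>0}^{d-1}$ the common image $\bar\pi_v^{\sharp}(a,t)=\bar\pi_v^{\sharp}(b,t)$ converges to both as $t\to 0$. Your version simply makes explicit choices (a specific $g$ with a single off-diagonal entry and the sequence $t^{(n)}=(n^{-1},\dots,n^{-1})$), and your remark that on $\R_{>0}^{d-1}$ the maps $\bar\pi_v$ and $\bar\pi_v^{\sharp}$ have the same fibers is exactly the point the paper leaves implicit.
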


\begin{pf}
Fix $a, b\in B_u(F_v)$ with $a\neq b$, for a Borel subgroup $B$ of $\PGL_V$.  When $t\in \R^{d-1}_{>0}$ is sufficiently near to ${0} = (0,\ldots,0)$, the images of $(a,t)$ and $(b,t)$ in $X_v$ coincide by \ref{lst0}(4) applied to $B_u(F_v) \times \R_{>0}^{d-1}\to X_v$. We denote this element of $X_v$ by $c(t)$. The images $f(a)$ of $(a,{0})$ and $f(b)$ of $(b,{0})$ in $\bar X_{F,v}^{\sharp}$ are different.  However, $c(t)$ converges to both $f(a)$ and $f(b)$ as $t$ tends to
${0}$. Thus, $\bar X_{F,v}^{\sharp}$ is not Hausdorff. 
\end{pf}

\begin{sbpara}
Let $F$ be a number field, $S$ its set of archimedean places, and $G = \mathrm{Res}_{F/\Q}\PGL_V$, as in \ref{BSF}.  Then $\bar{X}_{F,S}$ may be identified with the maximal Satake space for $G$ of \cite{Sa2}.  Its Satake topology was considered by Satake (see also \cite[III.3]{BJ}), and its Borel-Serre topology was considered by Zucker \cite{Z2} (see also \cite[2.5]{JMSS}).  The space $\bar{X}_{F,S}^{\flat}$ is also a
Satake space corresponding to the standard projective representation of $G$ on $V$ viewed as a $\Q$-vector space.
\end{sbpara}

\section{Quotients by $S$-arithmetic groups} \label{quotS}
As in \S\ref{globspace}, fix a global field $F$ and a finite-dimensional vector space $V$ over $F$.

\subsection{Results on $S$-arithmetic quotients} \label{mainres}

\begin{sbpara}\label{mains0}

Fix a nonempty finite set $S_1$ of places of $F$ which contains all archimedean places of $F$, fix a finite set $S_2$ of places of $F$ which is disjoint from $S_1$, and let $S=S_1\cup S_2$. 

\end{sbpara}

\begin{sbpara}\label{mains1} 

In the following, we take $\bar X$ to be one of the following two spaces:
\begin{enumerate}
\item[(i)] $\bar X:=\bar X_{F,S_1}$,
\item[(ii)] $\bar X:=\bar X_{F,S_1}^{\flat}$.  
\end{enumerate}
We endow $\bar X$ with either the Borel-Serre or the Satake topology. 

\end{sbpara}

\begin{sbpara}\label{mains3} 
Let $G=\PGL_V$, and let $K$ be a compact open subgroup of $G(\mb{A}_F^S)$, with $\mb{A}_F^S$ as in \ref{top8}.

We consider the two situations in which $(\frak X, \bar{\frak X})$ is taken to be one of the following pairs of spaces 
(for either choice of $\bar{X}$):
\begin{enumerate}
\item[(I)]  $\displaystyle \frak X:= 
 X_S \times G(\mb{A}_F^S)/K \;\subset\;  \bar {\frak X} := \bar X \times X_{S_2} \times G(\mb{A}_F^S)/K,$
\item[(II)]   $\displaystyle \frak X:= X_S \;\subset\; \bar{\frak X}:= \bar X \times X_{S_2}.$
\end{enumerate}
\end{sbpara}

We now come to the main result of this paper.

\begin{sbthm}\label{main} Let the situations and notation be as in \ref{mains0}--\ref{mains3}. 
\begin{enumerate}
\item[(1)] Assume we are in situation (I). 
 Let $\Gamma$ be a subgroup of $G(F)$. Then the quotient space $\Gamma \bs \bar {\frak X}$ is Hausdorff. It is compact if $\Gamma=G(F)$.
\item[(2)] Assume we are in situation (II). Let 
$\Gamma_K\subset G(F)$ be the inverse image of $K$ under the canonical map $G(F)\to G(\mb{A}^S_F)$, and let $\Gamma$ be a subgroup of $\Gamma_K$.  Then the quotient space $ \Gamma\bs \bar {\frak X}$ is Hausdorff. It is compact if $\Gamma$ is of finite index in $\Gamma_K$. 
\end{enumerate}
\end{sbthm}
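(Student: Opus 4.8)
The plan is to reduce both assertions to a single statement about proper and cocompact actions of $\PGL_V(F)$ (or of an $S$-arithmetic subgroup), and then to feed reduction theory for $\PGL_V$ over a global field into the structural results of \S\ref{globspace}. Several formal reductions come first. Since there is a continuous $\PGL_V(F)$-equivariant surjection $\bar X_{F,S_1}\times X_{S_2}\to\bar X_{F,S_1}^{\flat}\times X_{S_2}$ compatible with the inclusions of $X_S$, and since for $d=2$ the two sides coincide by \ref{d=2}, it suffices to treat $\bar X=\bar X_{F,S_1}$; the $\flat$-case is then obtained by running the same argument with $\bar X_{F,S}(P)$ replaced by $\bar X_{F,S}^{\flat}(W)$ and \ref{keyc1} replaced by its $\flat$-analogue (using \ref{togrW}). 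Because the Borel--Serre topology is finer than the Satake topology (\ref{BSs}), a set that is compact for the Borel--Serre topology is compact for the Satake topology, while an action that is proper for the Satake topology is proper for the finer Borel--Serre topology; hence it is enough to establish cocompactness for the Borel--Serre topology and properness for the Satake topology. Next, $G(\mb{A}_F^S)/K$ is discrete and $G(F)$ acts on it with finitely many orbits (finiteness of the class number of $\PGL_V$), the stabilizer of a point $gK$ being $G(F)\cap gKg^{-1}=\Gamma_{gKg^{-1}}$; thus in situation (I) one has $\Gamma\bs\bar{\frak X}=\coprod_{\text{orbits}}(\Gamma\cap\Gamma_{gKg^{-1}})\bs(\bar X\times X_{S_2})$, a topological disjoint union (finite when $\Gamma=G(F)$) of spaces of the form occurring in situation (II). Finally, in situation (II) a subgroup $\Gamma$ of finite index in $\Gamma_K$ satisfies $\bar{\frak X}=\Gamma\cdot(\gamma_1C\cup\dots\cup\gamma_kC)$ once $\bar{\frak X}=\Gamma_K\cdot C$. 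Putting this together, it suffices to prove: \emph{for every $S$-arithmetic subgroup $\Gamma=\Gamma_K$ of $G(F)=\PGL_V(F)$ the action of $\Gamma$ on $\bar{\frak X}:=\bar X_{F,S_1}\times X_{S_2}$ is proper, and $\bar{\frak X}=\Gamma\cdot C$ for some compact set $C$}. Granting this, compactness of $\Gamma\bs\bar{\frak X}$ follows since the quotient map restricts to a continuous surjection $C\to\Gamma\bs\bar{\frak X}$, and Hausdorffness of $\Gamma\bs\bar{\frak X}$ (and of every sub-quotient in situations (I) and (II)) follows from properness together with Hausdorffness of $\bar{\frak X}$ itself; the latter holds for the Borel--Serre topology because it is finer than the subspace topology from the compact Hausdorff space $\bar X_v$ (\ref{comH}, \ref{BStop2}), and for the Satake topology it follows from the explicit neighborhood bases of \ref{Sat5}.

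\textbf{Cocompactness via reduction theory.} Fix a basis of $V$ and the corresponding Borel $B$. Reduction theory for the $S$-arithmetic group $\Gamma$ acting on $X_S$ (Borel--Harish-Chandra together with Behr in the number field case, Harder in the function field case, or equivalently via Bruhat--Tits theory) furnishes a finite union $\mathfrak S$ of Siegel sets with $\Gamma\cdot\mathfrak S=X_S$, where for $\PGL_V$ a single Siegel set $\mathfrak S=\omega\cdot\mathfrak A_c\cdot\kappa$ attached to $B$ suffices up to the finitely many ideal classes: here $\omega\subset B_u(\mb{A}_{F,S})$ and $\kappa$ are compact (we may take $\omega$ compact since $\Gamma\cap B_u$ is a cocompact lattice in $B_u(\mb{A}_{F,S})$ by strong approximation for unipotent groups) and $\mathfrak A_c$ imposes that the simple roots of $(a_v)_{v\in S}$ are bounded above at each place of $S$. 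The new point is to take the closure of $\mathfrak S$ in $\bar{\frak X}$. For each parabolic $P\supseteq B$, the homeomorphism $P_u(\mb{A}_{F,S})\bs\bar X_{F,S_1}(P)\xrightarrow{\sim}\frak Z_{F,S_1}(P)\times Y_0$ of \ref{keyc1} (see already \ref{keycor}) turns the boundedness of the simple roots at the places of $S_1$ into boundedness of the $Y_0$-coordinates $t_i$; since $\{t:t_i\le c\}$ has compact closure in $Y_0$ for either topology on $Y_0$ (this is exactly the role of $Y_0$ and of the weak topology, \ref{weaktopY}), the image of $\mathfrak S$ in $P_u(\mb{A}_{F,S})\bs\bar{\frak X}$ has compact closure. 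As there are only finitely many $P\supseteq B$ and $\Gamma\cap P_u$ is cocompact in $P_u(\mb{A}_{F,S})$, one deduces, after enlarging $\mathfrak S$ to a Siegel set that is ``saturated'' along the Levi directions (applying reduction theory to the Levi quotients of the $P\supseteq B$, i.e.\ an induction on $\dim V$), that $\bar{\mathfrak S}:=\overline{\mathfrak S}\subset\bar{\frak X}$ is compact. It remains to see $\Gamma\cdot\bar{\mathfrak S}=\bar{\frak X}$: on $X_S$ this is reduction theory, and a boundary point $(P,\mu,(x_v)_{v\in S_2})$ is a limit of points of $X_S$; translating such a point by $\Gamma$ one may assume that the $S_2$-coordinates $x_v$ lie in a fixed compact set, after which the corresponding Siegel set degenerates, as the relevant coordinate tends to $0$ in $Y_0$, precisely to the stratum of $P$, so that the full boundary is covered by $\Gamma$-translates of $\bar{\mathfrak S}$.

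\textbf{Properness and Hausdorffness.} Using the covering $\bar{\frak X}=\Gamma\cdot\bar{\mathfrak S}$, properness reduces to finding, for any two points of $\bar{\mathfrak S}$, neighborhoods $U,U'$ with $\{\gamma\in\Gamma:\gamma U\cap U'\ne\varnothing\}$ finite. On $X_S$ this is the Siegel property of reduction theory, that $\{\gamma\in\Gamma:\gamma\mathfrak S\cap\mathfrak S\ne\varnothing\}$ is finite. At a boundary stratum one argues through \ref{keyc1} again: if $\gamma\bar{\mathfrak S}\cap\bar{\mathfrak S}$ meets the stratum of $P$, then $\gamma$ normalizes $P$, its image in the Levi must carry a bounded subset of $\frak Z_{F,S_1}(P)$ into another one and preserve the $Y_0$-direction up to bounded scaling; by the Siegel property for the Levi quotients (induction on $\dim V$) and discreteness of $\Gamma$, only finitely many $\gamma$ qualify. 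Since $\bar{\frak X}$ is Hausdorff, we may shrink $U,U'$ around the two chosen points to avoid each of these finitely many $\gamma$, which gives properness; Hausdorffness of $\Gamma\bs\bar{\frak X}$ and of all sub-quotients in situations (I) and (II) follows.

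\textbf{Main obstacle.} The conceptual and technical heart is the interaction between reduction theory, which only describes the open part $X_S$, and the topology of the partial compactification $\bar X_{F,S_1}$. The statement that the closure of a Siegel set is compact in $\bar{\frak X}$ is essentially built into Proposition \ref{keyc1} and the compactness of $\{t_i\le c\}$ in $Y_0$; the genuinely delicate assertion is that $\Gamma$-translates of this compact closure exhaust the boundary. Because the places in $S_2$ are \emph{not} compactified, one must trade, by a $\Gamma$-translation, an ``escape toward an end at a place of $S_2$'' for an ``escape toward the $F$-rational boundary at a place of $S_1$'' --- this linkage, forced by the product formula underlying $S$-arithmeticity, is exactly what makes the theorem true and fails for $S=S_2$. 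Making this precise, together with the boundary version of the Siegel property (verified by descending through the Levi factors), and the parallel treatment of the $\flat$-spaces, are the remaining points requiring care.
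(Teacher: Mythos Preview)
Your overall strategy (reduction theory plus the structure of the compactification) matches the paper's, and your cocompactness argument is essentially the paper's Proposition~\ref{prop6}: a Siegel-type set $\frak S(C;c_1,c_2)$ is compact in $\bar X_{F,S_1}$ (as the continuous image of the compact $C\times\frak T(c_1,c_2)$), and reduction theory (\ref{red21}) shows its $\Gamma R$-translates cover $\bar X_{F,S_1}$.

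The Hausdorff argument, however, has a genuine gap. The action of $\Gamma$ on $\bar{\frak X}$ is \emph{not} proper: a boundary point $(P,\mu)$ has infinite stabilizer, containing at least $\Gamma\cap P_u(F)$ (which acts trivially on the entire $P$-stratum), so for \emph{any} neighborhood $U$ of such a point the set $\{\gamma\in\Gamma:\gamma U\cap U\ne\varnothing\}$ is infinite. Your claim that ``only finitely many $\gamma$ qualify'' is therefore false as stated; passing to the Levi gives finitely many \emph{images}, but each lifts to infinitely many $\gamma$, and the $P_u$-part acts nontrivially on the interior of any neighborhood, so you cannot simply ``shrink to avoid each of these finitely many $\gamma$.'' Relatedly, you invoke Hausdorffness of $\bar{\frak X}$ for the Satake topology as an input, but this is not established in \S\ref{Satake}; the paper in fact deduces it \emph{from} the theorem (taking $\Gamma=\{1\}$), and since for $d\ge 3$ the Satake topology is not finer than the subspace topology from $\bar X_v$ (\ref{compa}(3)), it does not follow from Hausdorffness of $\bar X_v$.

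What the paper proves instead is Proposition~\ref{ga=b3}: for any $a,a'\in\bar{\frak X}$ there exist neighborhoods $U,U'$ such that $\gamma U\cap U'\ne\varnothing$ forces $\gamma a=a'$. This is not properness and not a consequence of it, but it gives Hausdorffness of the quotient directly (and, with $\Gamma=\{1\}$, of $\bar{\frak X}$ itself). The proof requires the quantitative reduction-theoretic estimates of \S4.3, in particular \ref{red2cc} (which forces $\gamma\in P_I(F)$ once the coordinates $t_{v,i}$ for $i\in I$ are small) and \ref{kep1} (uniform comparison of $t_{v,i}(x)$ and $t_{v,i}(\gamma x)$ for $\gamma\in R_1\Gamma_K R_2$), combined with a separate discreteness argument on the Levi (\ref{ga=b1}). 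Your sketch gestures at these ingredients but misses the step that replaces properness: the infinite unipotent part of the stabilizer is not discarded by shrinking but absorbed via the $\Gamma_{K,(P)}$-invariance built into the Satake neighborhoods (this is precisely where \ref{top1} and \ref{lem8} enter in the paper's argument~\ref{pfmain}).
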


\begin{sbpara} The case $\Gamma=\{1\}$ of Theorem \ref{main} shows that $\bar X_{F,S}$ and $\bar X_{F,S}^{\flat}$ are Hausdorff. 

\end{sbpara}

\begin{sbpara}
Let $O_S$ be the subring of $F$ consisting of all elements which are integral outside $S$. Take an $O_S$-lattice $L$ in $V$. Then $\PGL_L(O_S)$ coincides with $\Gamma_K$ for the compact open subgroup $K=\prod_{v\notin S} \PGL_L(O_v)$ of $G(\mb{A}_F^S)$. Hence Theorem \ref{thm1} of the introduction follows from Theorem \ref{main}. 
\end{sbpara}

\begin{sbpara}
In the case that $F$ is a number field and $S$ (resp., $S_1$) is the set of all archimedean places of $F$, Theorem \ref{main} in situation (II) is a special case of results of Satake \cite{Sa2} (resp., of Ji, Murty, Saper, and Scherk \cite[Proposition 4.2]{JMSS}).
\end{sbpara}

\begin{sbpara}\label{same} If in Theorem \ref{main} we take $\Gamma =G(F)$ in part (1), or $\Gamma$ of finite index in $\Gamma_K$ in part (2), then the Borel-Serre and Satake topologies on $\bar X$ induce the same topology on the quotient space $\Gamma \bs \bar {\frak X}$. This can be proved directly, but it also follows from the compact Hausdorff property. 
\end{sbpara}

 \begin{sbpara}\label{nonH2} We show that some modifications  of Theorem \ref{main} are not good. 
  
Consider the case  $F=\Q$, $S=\{p, \infty\}$ for a prime number $p$,  and $V=\Q^2$, and consider the $S$-arithmetic group $\PGL_2(\Z[\frac{1}{p}])$. 
Note that 
 $\PGL_2(\Z[\frac{1}{p}])\bs (\bar X_{\Q,\infty}\times X_p)$ is compact Hausdorff, as is well known (and follows from Theorem \ref{main}). We show that some similar spaces are not Hausdorff.  That is, we prove the following statements:
\begin{enumerate}
	\item[(1)] $\PGL_2(\Z[\frac{1}{p}]) \bs (\bar X_{\Q,p}\times X_{\infty})$ is not Hausdorff. 
	\item[(2)] $\PGL_2(\Z[\frac{1}{p}])\bs (\bar X_{\Q,\infty}\times \PGL_2(\Q_p))$ is not Hausdorff.
	\item[(3)] $\PGL_2(\Q)\bs (\bar X_{\Q,\infty}\times \PGL_2(\mb{A}_\Q^{\infty}))$ is not Hausdorff. 
 \end{enumerate}
 
Statement (1) shows that it is important to assume in \ref{main} that $S_1$, not only $S$, contains all archimedean places. 
Statement (3) shows that it is important to take the quotient $G(\mb{A}^S_F)/K$ in situation (I) of \ref{mains3}. 
 
Our proofs of these statements rely on the facts that the quotient spaces $\Z[\frac{1}{p}]\bs \R$, $\Z[\frac{1}{p}]\bs \Q_p$,  and $\Q\bs \mb{A}^{\infty}_\Q$ are not Hausdorff. 

\begin{proof}[Proof of statements (1)--(3)]
 For an element $x$ of a ring $R$, let
 $$g_x=\begin{pmatrix}  1&x\\0&1\end{pmatrix} \in \PGL_2(R).$$ In (1), for $b\in \R$, let $h_b$ be the point $i+b$ of the upper half plane $\frak H= X_{\infty}$. In (2), for $b\in \Q_p$, let $h_b=g_b\in \PGL_2(\Q_p)$. In (3), for $b\in \mb{A}^{\infty}_\Q$, let $h_b=g_b \in \PGL_2(\mb{A}^{\infty}_\Q)$. 
 In (1)  (resp., (2) and (3)), let $\infty\in \bar X_{\Q,p}$ (resp., $\bar X_{\Q,\infty}$) be the boundary point corresponding to the the Borel subgroup of upper triangular matrices. 
 
 In (1) (resp., (2), resp., (3)), take an element $b$ of $\R$ (resp., $\Q_p$, resp., $\mb{A}^{\infty}_{\Q}$) which does not belong to $\Z[\frac{1}{p}]$ (resp., $\Z[\frac{1}{p}]$, resp., $\Q$). Then the images of $(\infty, h_0)$ and $(\infty, h_b)$ in the quotient space are different, but they are not separated. 
Indeed, in (1) and (2) (resp., (3)), some sequence of elements $x$ of $\Z[\frac{1}{p}]$ (resp., $\Q$) will converge to $b$, 
in which case $g_x(\infty, h_0)$ converges to $(\infty, h_b)$ since $g_x\infty=\infty$.  
\end{proof}
 
 \end{sbpara}

\subsection{Review of reduction theory} \label{redrev}

We review important results in the reduction theory of algebraic groups: \ref{red1}, \ref{redc}, and a variant \ref{red11} of \ref{red1}. More details may be found in the work of Borel \cite{Bo} and Godement \cite{Go} in the number field case and 
Harder \cite{H1, H2} in the function field case. 

Fix a basis $(e_i)_{1\leq i\leq d}$ of $V$. Let $B$ be the Borel subgroup of $G=\PGL_V$ consisting of all upper triangular matrices for this basis.  
Let $S$ be a nonempty finite set of places of $F$ containing all archimedean places.

\begin{sbpara}\label{red0} 
For $b = (b_v) \in \mb{A}_F^\times$, set $|b| = \prod_v |b_v|$.  Let $A_v$ be as in \ref{decomp}.
We let $a \in \prod_v A_v$ denote the image of a diagonal matrix $\diag(a_1,\ldots,a_d)$ in $\GL_d(\mb{A}_F)$.  The ratios
$a_ia_{i+1}^{-1}$ are independent of the choice.
For $c \in \R_{>0}$, we let  $B(c) = B_u(\mb{A}_F) A(c)$, where
$$
	A(c) =\left\{ a \in \prod_v A_v \cap \PGL_d(\mb{A}_F) \mid |a_i a_{i+1}^{-1}|\geq c \text{ for all } 1 \le i \le d-1 \right\}.
$$

Let $K^0= \prod_v K_v^0 < G(\mb{A}_F)$, where $K_v^0$ is identified via $(e_i)_i$ with the standard maximal compact 
subgroup of $\PGL_d(F_v)$ of \ref{decomp}.  Note that $B_u(F_v) A_v K_v^0 = B(F_v)K_v^0 = G(F_v)$ for all $v$.

\end{sbpara}

We recall the following known result in reduction theory: see \cite[Theorem 7]{Go} and \cite[Satz 2.1.1]{H1}.

\begin{sblem} \label{red1} 
	For sufficiently small $c\in \R_{>0}$, one has $G(\mb{A}_F)=G(F)B(c)K^0$. 
\end{sblem}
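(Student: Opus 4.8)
The plan is to deduce the statement from the classical reduction theory of Godement \cite{Go} in the number field case and of Harder \cite{H1} in the function field case (cf.\ also Borel \cite{Bo}), organizing the argument around the Iwasawa decomposition of \ref{decomp} and Siegel sets for $G=\PGL_V\cong\PGL_d$. Since $\PGL_d$ is semisimple and split, the cited theorems apply directly; what the proof really does is match their normalization of a Siegel set with the set $B(c)K^0$ and with the conventions for $A_v$ and $K_v^0$ fixed in \ref{decomp} and \ref{red0}, and then recall why such a set covers $G(\mb A_F)$ modulo $G(F)$.

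First I would record the easy part. By the Iwasawa decomposition $G(F_v)=B(F_v)K_v^0$ at every place (noted in \ref{red0}), taking the restricted product gives $G(\mb A_F)=B(\mb A_F)K^0=B_u(\mb A_F)T(\mb A_F)K^0$, where $T$ is the diagonal torus of $G$. Because $T$ is split, at each place one has $T(F_v)=A_v\cdot(T(F_v)\cap K_v^0)$: at an archimedean $v$ this is the polar decomposition of diagonal matrices, and at a non-archimedean $v$ one writes each diagonal entry as a power of $\varpi_v$ times a unit. Since the $A_v$-component is trivial for almost all $v$, these assemble to $T(\mb A_F)=A\cdot(T(\mb A_F)\cap K^0)$ with $A=\prod_v A_v$ as in \ref{red0}, and hence $G(\mb A_F)=B_u(\mb A_F)\,A\,K^0$. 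It therefore remains only to show that, modulo left multiplication by $G(F)$, the factor in $A$ may be taken in the cone $A(c)$ once $c$ is sufficiently small; note that for $c'\le c$ one has $A(c)\subseteq A(c')$, so it suffices to treat small $c$, and that over a function field the hypothesis that $S$ contain the archimedean places is vacuous.

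This last reduction is the substance of reduction theory, and here I would invoke \cite[Théorème 7]{Go} and \cite[Satz 2.1.1]{H1} rather than reprove it. The mechanism is: $B_u(F)\bs B_u(\mb A_F)$ is compact (it is built from copies of $F\bs\mb A_F$), so after left multiplication by an element of $B_u(F)\subset G(F)$ one may assume the unipotent factor lies in a fixed compact set $\omega\subset B_u(\mb A_F)$; and the Weyl group $S_d\subset G(F)$ acts on $A$ by permuting entries, so given $g=uak$ with $u\in\omega$ and $|a_ia_{i+1}^{-1}|<c$ for some $i$, one replaces $g$ by $s_i g$ and re-applies the Iwasawa decomposition. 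The reduction inequalities of the cited references guarantee that, once $c$ is small enough relative to $\omega$, the resulting element again has the form $u'a'k'$ with $u'$ in a fixed compact set and with the torus parameter moved strictly toward the cone $A(c)$, so the process terminates. This yields $G(\mb A_F)=G(F)\,\omega\,A(c)\,K^0\subset G(F)B(c)K^0\subset G(\mb A_F)$ for all sufficiently small $c$. The main obstacle is exactly this quantitative bookkeeping — controlling the unipotent part through successive Iwasawa decompositions while driving the torus part into the cone — which is why the plan is to cite \cite{Go} and \cite{H1} for it; everything else in the argument is the routine notation-matching described above.
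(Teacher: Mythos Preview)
Your proposal is correct and aligns with the paper's treatment: the paper does not prove \ref{red1} at all but simply states it as a known result with references to \cite[Theorem 7]{Go} and \cite[Satz 2.1.1]{H1}, exactly the sources you invoke. You supply more expository detail around the citation (the Iwasawa setup and a sketch of the reduction mechanism) than the paper does, but both approaches ultimately defer the substantive content to Godement and Harder.
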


\begin{sbpara}\label{redb}

Let the notation be as in \ref{red0}. For a subset $I$ of $\{1,\dots, d-1\}$, let $P_I$ be the parabolic subgroup of $G$ corresponding to the flag consisting of  $0$, the
 $F$-subspaces $\sum_{1\leq j\leq i} F e_j$ for $i\in I$, and $V$. Hence $P_I \supset B$ for all $I$, with $P_{\varnothing} = G$ and $P_{\{1, \dots, d-1\}}=B$.  

For $c' \in \R_{>0}$, let $B_I(c,c') = B_u(\mb{A}_F) A_I(c,c')$, where
$$
	A_I(c,c') = \{ a \in A(c) \mid  |a_ia_{i+1}^{-1}|\geq c'  \text{ for all }i\in I\}.  
$$	
Note that $B_I(c,c') = B(c)$ if $c \ge c'$.

\end{sbpara}

The following is also known \cite[Satz 2.1.2]{H1} (see also \cite[Lemma 3]{Go}):

\begin{sblem}\label{redc} 

Fix $c \in \R_{>0}$ and a subset $I$ of $\{1, \ldots, d-1\}$. Then 
there exists $c' \in \R_{>0}$ such that 
$$
	\{ \gamma \in G(F) \mid B_I(c,c')K^0 \cap \gamma^{-1}B(c)K^0 \neq \varnothing \} \subset P_I(F).
$$

\end{sblem}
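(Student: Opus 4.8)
As noted above, this is \cite[Satz 2.1.2]{H1} (see also \cite[Lemma 3]{Go}), and the plan is to invoke it, restated in the present adelic formulation for $G=\PGL_V$ (which makes the number-field and function-field cases uniform). For completeness, here is the shape of the argument. The first step is to reduce to a maximal parabolic: since $P_I=\bigcap_{i\in I}P_{\{i\}}$ and $A_I(c,c')\subset A_{\{i\}}(c,c')$ for every $i\in I$, it is enough to find, for each $i\in I$, a constant $c'_i$ for which the asserted inclusion holds with $I$ replaced by $\{i\}$ and $P_I$ by $P_{\{i\}}$; one then takes $c'=\max_{i\in I}c'_i$. So fix $i\in\{1,\dots,d-1\}$ and recall that $P_{\{i\}}$ is the stabilizer of $V_i=\sum_{j\le i}Fe_j$, equivalently---via $\rho=\textstyle\bigwedge^i\colon\GL_V\to\GL(\bigwedge^iV)$, working with $\GL_V$-lifts throughout (harmless, since the line $Fv_i$ is scale-invariant)---the stabilizer of the line $Fv_i$ with $v_i=e_1\wedge\cdots\wedge e_i$. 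Thus the task becomes: if $\gamma\in G(F)$ and $B_{\{i\}}(c,c')K^0\cap\gamma^{-1}B(c)K^0\ne\varnothing$ with $c'$ large, then $\rho(\gamma)v_i\in F^\times v_i$; writing $\rho(\gamma)v_i=\sum_{|J|=i}x_Je_J$ in the standard basis $(e_J)$ of $\bigwedge^iF^d$ induced by $(e_j)_j$, with $x_J\in F$, this says $x_J=0$ for every $i$-subset $J\ne I_i:=\{1,\dots,i\}$.

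The argument proceeds through the product formula. Choose $g\in B_{\{i\}}(c,c')K^0$ and $h\in B(c)K^0$ with $\gamma g=h$. Using that $B_u(F)\backslash B_u(\mb{A}_F)$ is compact---so $B_u(\mb{A}_F)=B_u(F)\,\Omega$ for a fixed compact $\Omega$---and that $\beta^{-1}B(c)=B(c)$ and $\beta'^{-1}B_{\{i\}}(c,c')=B_{\{i\}}(c,c')$ for $\beta,\beta'\in B_u(F)$, we may replace $\gamma$ by $\beta^{-1}\gamma\beta'$ (this alters neither the hypothesis nor the conclusion, since $B_u(F)\subset P_{\{i\}}(F)$) and thereby assume place-by-place Iwasawa decompositions $h=u_ha_hk_h$, $g=u_ga_gk_g$ with $u_h,u_g\in\Omega$, $a_h\in A(c)$, $a_g\in A_{\{i\}}(c,c')$, $k_h,k_g\in K^0$. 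Since $u_g^{-1}\in B_u$ fixes the highest weight vector $v_i$, one has $\rho(g)^{-1}v_i=\chi_i(a_g)^{-1}\rho(k_g)^{-1}v_i$, where $\chi_i$ is the restriction to the diagonal torus of the highest weight of $\rho$ (so $\chi_i(\diag(a_1,\dots,a_d))=a_1\cdots a_i$), whence $\rho(\gamma)v_i=\chi_i(a_g)^{-1}\,\rho(u_h)\rho(a_h)\rho(k_hk_g^{-1})v_i$. One then estimates $|x_J|_v$ at each place $v$: $K^0$ preserves the maximal norm on $\bigwedge^iF_v^d$ at every non-archimedean $v$ and distorts the archimedean norms by a bounded factor; $\rho(a_h)$ scales $e_{J'}$ by the weight value $\chi_{J'}(a_h)$; and $\rho(u_h)$ with $u_h\in\Omega$ is bounded and upper triangular for the dominance order on weights. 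Multiplying the local bounds---normalizing the $\GL_V$-lift of $\gamma$ via $|\det\gamma|=1$, so that $\prod_v|x_J|_v$ is the genuine product-formula quantity---gives an inequality of the shape $\prod_v|x_J|_v\le C\,(c')^{-1}$ with $C=C(F,d,c)$: the key point is that, for an $i$-subset $J\ne I_i$, the weight difference $\chi_{I_i}-\chi_J$ is a nonnegative integral combination of the simple roots $\alpha_k=L_k-L_{k+1}$ in which $\alpha_i$ occurs with positive coefficient, so the truncation bound $|a_{g,i}a_{g,i+1}^{-1}|\ge c'$ (valid precisely because $i$ lies in the index set) together with the uniform lower bound $c$ on the remaining ratios produces the factor $(c')^{-1}$. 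On the other hand, if $x_J\ne0$ then $x_J\in F^\times$ and $\prod_v|x_J|_v=1$ by the product formula for $F$. Hence, choosing $c'$ large enough---uniformly over $J$ and over $i\in\{1,\dots,d-1\}$---forces $x_J=0$ for all $J\ne I_i$, as required.

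The main obstacle is this uniform adelic height estimate: one must control the $e_J$-coefficient of $\rho(h)\rho(g)^{-1}v_i$ in terms of the Iwasawa parameters when the unbounded diagonal parts $a_h,a_g$ interact with the bounded unipotent factor $u_h$, normalize the $\GL_V$-lift appropriately, and then recombine the local estimates via the product formula so that the archimedean distortions and the finitely many ramified non-archimedean places contribute only an overall constant independent of the particular $g$ and $h$. Once that bookkeeping is in place the function-field case presents nothing new, since $K_v^0=\PGL_d(O_v)$ preserves $\bigwedge^iO_v^d$ exactly and the product formula holds for global function fields---this being precisely what is carried out in Harder's \cite{H1}.
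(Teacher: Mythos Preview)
The paper does not prove this lemma; it simply records it as a known result, citing \cite[Satz 2.1.2]{H1} and \cite[Lemma 3]{Go}. Your proposal does the same and then, as a supplement, sketches the classical argument (reduction to maximal parabolics via $P_I=\bigcap_{i\in I}P_{\{i\}}$, passage to the fundamental representation $\bigwedge^i V$ with highest-weight line $Fv_i$, and a height estimate combined with the product formula). This is exactly the line of proof in Godement and Harder, so your approach is aligned with the paper's.

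The sketch itself is sound in structure. Your observation that for every $i$-subset $J\neq I_i$ the difference $\chi_{I_i}-\chi_J$ contains $\alpha_i$ with strictly positive coefficient is correct (each $j\in I_i\setminus J$ satisfies $j\le i$ while each $j'\in J\setminus I_i$ satisfies $j'>i$, so every term $L_j-L_{j'}=\alpha_j+\cdots+\alpha_{j'-1}$ passes through $\alpha_i$), and this is precisely what makes the Siegel truncation $|\alpha_i(a_g)|\ge c'$ bite. The one place that genuinely requires care, as you note, is controlling $\prod_v\max_{J''\le J}|\chi_{J''}(a_h)|_v$ when the maximizing $J''$ may vary with $v$, together with the comparison of $|\chi_{I_i}(a_g)|$ and $|\chi_{I_i}(a_h)|$; this is where the bounded-unipotent normalization via $\Omega$ and the uniform lower bound $c$ on all simple-root values of $a_h$ are used. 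You correctly identify this as the nontrivial bookkeeping handled in \cite{H1}.
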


\begin{sbpara} \label{redv} We will use the following variant of \ref{redb}. 

Let $A_S = \prod_{v \in S} A_v$.
For $c \in \R_{>0}$, let 
\begin{eqnarray*}
	A(c)_S = A_S \cap A(c) &\text{and}& B(c)_S = B_u(\mb{A}_{F,S}) A(c)_S.
\end{eqnarray*}
For $c_1, c_2 \in \R_{>0}$, set
\begin{multline*}
	A(c_1,c_2)_S =  \{ a  \in A_S \mid \text{for all } v \in S \text{ and } 1 \le i \le d-1, \\
	|a_{v,i}a_{v,i+1}^{-1}| \geq c_1 \text{ and }
	|a_{v,i}a_{v,i+1}^{-1}| \geq c_2|a_{w,i}a_{w,i+1}^{-1}| \text{ for all } w \in S \}.
\end{multline*}
Note that $A(c_1,c_2)_S$ is empty if $c_2 > 1$.
For a compact subset $C$ of $B_u(\mb{A}_{F,S})$, we then set 
$$
	B(C;c_1,c_2)_S = C \cdot A(c_1,c_2)_S.
$$

Let $D_S =\prod_{v\in S} D_v$, where $D_v = K_v^0 < G(F_v)$ if $v$ is archimedean, and 
$D_v < G(F_v)$ is identified with $S_d\Iw(O_v) < \PGL_d(F_v)$ using the basis $(e_i)_i$ otherwise.
Here, $S_d$ is the symmetric group of degree $d$ and $\Iw(O_v)$ is the Iwahori subgroup of $\PGL_d(O_v)$,
as in \ref{Iwnon}.
\end{sbpara}

\begin{sblem} \label{red11}
Let $K$ be a compact open subgroup of $G(\mb{A}_F^S)$, let $\Gamma_K$ be the inverse image of $K$ under $G(F) \to G(\mb{A}^S_F)$,  and let $\Gamma$ be a subgroup of $\Gamma_K$ of finite index. Then there exist $c_1 ,c_2, C$ as above and a finite subset $R$ of $G(F)$ such that
$$
	G(\mb{A}_{F,S}) = \Gamma R \cdot B(C;c_1,c_2)_S D_S.
$$   
\end{sblem}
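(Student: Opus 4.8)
The plan is to deduce the statement from the adelic reduction theory of Lemmas \ref{red1} and \ref{redc}, in the style of the reduction theory of $S$-arithmetic groups. First I would make two reductions. Writing $\Gamma_K=\bigcup_l\Gamma\delta_l$ as a finite union of cosets, a decomposition of the asserted form for $\Gamma_K$ with finite set $R$ gives one for $\Gamma$ with the finite set $\bigcup_l\delta_lR$, so we may take $\Gamma=\Gamma_K$; and since a decomposition for a compact open subgroup $K'\subseteq K$ gives one for $K$ (because $\Gamma_{K'}$ is then a finite-index subgroup of $\Gamma_K$ contained in it), we may shrink $K$ to $\prod_{v\notin S}K^0_v$, i.e.\ take $\Gamma=\Gamma_0:=G(F)\cap\prod_{v\notin S}K^0_v$. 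It is then enough to produce \emph{some} finite $R\subseteq G(F)$, \emph{some} compact $C\subseteq B_u(\mb{A}_{F,S})$, and \emph{some} $c_1,c_2>0$; the precise shape $B(C;c_1,c_2)_SD_S$ will come out at the end.

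Next, by Lemma \ref{red1}, $G(\mb{A}_F)=G(F)B(c_0)K^0$ for small $c_0$, and refining with the compactness of $B_u(F)\bs B_u(\mb{A}_F)$ (valid because $B_u$ is unipotent) one gets $G(\mb{A}_F)=G(F)\,\omega\,A(c_0)\,K^0$ with $\omega=\omega_S\times\prod_{v\notin S}B_u(O_v)$ for a compact $\omega_S\subseteq B_u(\mb{A}_{F,S})$. Given $g\in G(\mb{A}_{F,S})$, I would apply this to the element $\tilde g\in G(\mb{A}_F)$ with $S$-component $g$ and trivial component outside $S$, getting $\tilde g=\gamma\,ua\,k$ with $\gamma\in G(F)$, $u\in\omega$, $a=(a_v)_v\in A(c_0)$, $k\in K^0$. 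Comparing $S$-components gives $g=\gamma_S\,u_Sa_S\,k_S$ with $u_S\in\omega_S$, $a_S\in A_S$, $k_S\in\prod_{v\in S}K^0_v$. Comparing components at each $v\notin S$, where $\tilde g_v=1$, gives $\gamma_vu_va_vk_v=1$ with $u_v\in B_u(O_v)\subseteq K^0_v$ and $k_v\in K^0_v$, hence $\gamma_v\in K^0_v\,a_v^{-1}\,K^0_v$; in particular $a_v$ is trivial for all but finitely many $v$, since $\gamma\in G(F)$.

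The heart of the matter---and what I expect to be the only real obstacle---is to choose $\gamma$ well modulo $\Gamma_0$ so that the remaining, finitely many nontrivial $a_v$ ($v\notin S$) lie in a fixed finite set, $\gamma$ lies in $\Gamma_0R_0$ for a fixed finite $R_0\subseteq G(F)$, and $a_S$ lies in $A(c_1,c_2)_S$ for suitable $c_1,c_2$. The subtlety is that $A(c_0)$ constrains only the \emph{product} over all places of the numbers $|a_{v,i}a_{v,i+1}^{-1}|$, so it alone controls neither the $a_v$ for $v\notin S$ nor the ``dominant'' directions of $a_S$. Here one must use the sharper separation Lemma \ref{redc} for the Borel $B$ and the standard parabolics $P_I$ to rule out all but finitely many configurations of $(\gamma\bmod\Gamma_0,(a_v)_{v\notin S})$ as incompatible with $\tilde g$ being trivial outside $S$, together with a finiteness of class numbers for $\PGL_d$ to see that only finitely many $\Gamma_0$-cosets in $G(F)$ can occur, and the $S$-units of $F$---which lie in $\Gamma_0$ as diagonal matrices and, by Dirichlet's unit theorem and the product formula, span a full-rank lattice in $\{(x_v)_{v\in S}:\sum_{v\in S}x_v=0\}$---to bring the differences $\log|a_{v,i}a_{v,i+1}^{-1}|-\log|a_{w,i}a_{w,i+1}^{-1}|$ into a bounded range.

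Once this is arranged, $g=\delta\,\rho\,u_Sa_S\,k_S$ with $\delta\in\Gamma_0$, $\rho\in R_0$, $u_S\in\omega_S$, $k_S\in\prod_{v\in S}K^0_v$, and $a_S\in A_S$ satisfying both the comparability inequalities just obtained and $\prod_{v\in S}|a_{v,i}a_{v,i+1}^{-1}|\geq c_0'$ for all $i$ (a new constant, since the factors for $v\notin S$ are now bounded); the two together are precisely the assertion $a_S\in A(c_1,c_2)_S$. Finally, at each non-archimedean $v\in S$ I would use the Iwasawa--Bruhat decomposition $\PGL_d(F_v)=B_u(F_v)A_vS_d\Iw(O_v)$ of \ref{Iwnon} in place of the plain Iwasawa decomposition, which replaces $K^0_v$ by $D_v=S_d\Iw(O_v)$ at the cost of one further bounded unipotent factor that is absorbed into $C$; at archimedean $v\in S$ one has $D_v=K^0_v$. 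Collecting the finitely many elements and the compact sets produced along the way gives $R$, $C$, $c_1$, $c_2$ with $G(\mb{A}_{F,S})=\Gamma_0\,R\,B(C;c_1,c_2)_SD_S$, which is the desired statement.
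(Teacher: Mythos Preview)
Your overall plan (reduce to $\Gamma=\Gamma_{K^{0,S}}$, embed $g$ in the full adeles as $(g,1)$, apply Lemma~\ref{red1}, then use $S$-units to obtain the comparability constant $c_2$, and finally pass from $K^0_v$ to $D_v$ via \ref{Iwnon}) is sound, and the final two steps match what the paper does. But the step you flag as ``the heart of the matter'' has a real gap.

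You propose to control the coset $\gamma\bmod\Gamma_0$ by combining Lemma~\ref{redc} with ``finiteness of class numbers for $\PGL_d$''. Neither tool does this job. Lemma~\ref{redc} is a \emph{separation} statement: it says certain intersections force $\gamma$ into a parabolic $P_I(F)$, not into finitely many $\Gamma_0$-cosets. And finiteness of the class number of $G=\PGL_d$ (i.e.\ of $G(F)\backslash G(\mb A_F^S)/K^{0,S}$) does not bound $\Gamma_0\backslash G(F)$: for $v\notin S$ the relation $\gamma^{-1}=u_va_vk_v$ merely says $\gamma^{-1}\in B(F_v)K^0_v$, which by Iwasawa is no constraint at all, and the $A_v$-component $a_v$ of $\gamma^{-1}$ is $\Gamma_0$-invariant, so it cannot be moved by left multiplication by $\Gamma_0$. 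There is no mechanism in your argument that limits the possible $(a_v)_{v\notin S}$ to a finite set.

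The paper resolves this not with \ref{redc} but with a class-number finiteness for the \emph{Borel} subgroup (equivalently for the diagonal torus): one fixes a finite set $E\subset B(\mb A_F^S)$ of representatives for $B(F)\backslash B(\mb A_F^S)/(B(\mb A_F^S)\cap K^{0,S})$. Using this, the product formula, and compactness of $D^0$, one upgrades \ref{red1} to $G(\mb A_F)=G(F)\,(B(c)_S\times E)\,D^0$. Applied to $(g,1)$ this forces $\gamma^{-1}\in eK^{0,S}$ for some $e\in E$, hence $\gamma\in\bigcup_{e\in E}(G(F)\cap K^{0,S}e^{-1})$, which is a finite union of $\Gamma_0$-cosets; this is where the finite set $R$ comes from. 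Your argument would be repaired by replacing the appeal to \ref{redc} and the $\PGL_d$ class number by this finiteness for $B$; after that, your treatment of $C$, $c_1$, $c_2$ via unipotent cocompactness and the $S$-unit lattice is correct and coincides with the paper's final two steps.
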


\begin{proof}
This can be deduced from \ref{red1} by standard arguments in the following manner.
By the Iwasawa decomposition \ref{decomp}, we have $G(\mb{A}_F^S)=B(\mb{A}_F^S)K^{0,S}$ where $K^{0,S}$ is the non-$S$-component of $K^0$. 
Choose a set $E$ of representatives in $B(\mb{A}^S_F)$ of the finite set
$$
	B(F)\bs B(\mb{A}^S_F)/(B(\mb{A}_F^S)\cap K^{0,S}).
$$
Let $D^0 = D_S \times K^{0,S}$, 
and note that since $A_S \cap D_S = 1$, we can (by the Bruhat decomposition \ref{Iwnon}) replace $K^0$ by
$D^0$ in Lemma \ref{red1}.
Using the facts that $E$ is finite, $|a| = 1$ for all $a \in F^{\times}$, 
and $D^0$ is compact, we then have that there exists $c \in \R_{>0}$ such that
$$
	G(\mb{A}_F) = G(F)(B(c)_S \times E) D^0.
$$
For any finite subset $R$ of $G(F)$ consisting of one element from each of those sets
$G(F) \cap K^{0,S}e^{-1}$ with $e \in E$ that are nonempty, we obtain from this that
$$
	G(\mb{A}_{F,S}) = \Gamma_K R \cdot B(c)_SD_S.
$$ 
As $\Gamma_K$ is a finite union of right $\Gamma$-cosets,
we may enlarge $R$ and replace $\Gamma_K$ by $\Gamma$.
Finally, we can replace $B(c)_S$ by 
$C \cdot A(c)_S$ for some $C$ by
the compactness of the image of 
$$
	B_u(\mb{A}_{F,S}) \to \Gamma \bs G(\mb{A}_{F,S}) / D_S
$$
and then by
$B(C;c_1,c_2)_S$ for some $c_1, c_2 \in \R_{>0}$ by the compactness of the cokernel of 
$$
	\Gamma\cap B(\mb{A}_{F,S}) \to (B/B_u)(\mb{A}_{F,S})_1,
$$ 
where $(B/B_u)(\mb{A}_{F,S})_1$ denotes the kernel of the homomorphism
\begin{eqnarray*}
	(B/B_u)(\mb{A}_{F,S})\to \R^{d-1}_{>0}, \qquad a B_u(\mb{A}_{F,S})
	\mapsto \left(\prod_{v\in S} \left| \frac{a_{v,i}}{a_{v,i+1}}\right|\right)_{1\leq i\leq d-1}.
\end{eqnarray*}	
\end{proof}

\subsection{$\bar X_{F,S}$ and reduction theory}

\begin{sbpara}\label{red20}  Let $S$ be a nonempty finite set of places of $F$ containing all archimedean places.  We consider $\bar X_{F,S}$. From the results \ref{red11} and \ref{redc} of reduction theory, we will deduce results \ref{red21} and \ref{red2cc} on $\bar X_{F,S}$, respectively. We will also discuss other properties of $\bar X_{F,S}$ related to reduction theory. 
Let $G$, $(e_i)_i$, and $B$ be as in \S\ref{redrev}.

For $c_1, c_2 \in \R_{>0}$ with $c_2 \ge 1$, we define a subset $\frak T(c_1,c_2)$ of $(\R_{\ge 0}^S)^{d-1}$ by 
$$
	\frak T(c_1,c_2)= \{t\in (\R_{\ge 0}^S)^{d-1}  \mid  t_{v,i} \leq c_1,\; 
	t_{v,i} \leq c_2t_{w,i} \text{ for all } v,w\in S \text{ and } 1\leq i\leq d-1 \}.
$$
Let $Y_0 = (\R_{>0}^S \cup \{(0)_{v \in S}\})^{d-1}$ as in \ref{togr9} (for the parabolic $B$), and note that $\frak T(c_1,c_2) \subset Y_0$.  Define the subset  $\frak S(c_1,c_2)$ of $\bar X_{F,S}(B)$ as the image of $B_u(\mb{A}_{F,S}) \times \frak T(c_1,c_2)$ 
under the map 
$$ \pi_S = (\pi_v)_{v \in S} \colon B_u(\mb{A}_{F,S}) \times Y_0 \to  \bar X_{F,S}(B),$$
with $\pi_v$ as in \ref{cover2}.
For a compact subset $C$ of $B_u(\mb{A}_{F,S})$, we let ${\frak S}(C;c_1,c_2)\subset \frak S(c_1,c_2)$ denote the image of 
$C \times \frak T(c_1,c_2)$ under $\pi_S$.

\end{sbpara}

\begin{sbpara} We give an example of the sets of \ref{red20}.
\begin{example}
Consider the case that $F=\Q$, the set $S$ contains only the real place, and $d = 2$, as in \S\ref{uphp}. 
Fix a basis $(e_i)_{1\leq i\leq 2}$ of $V$. Identify  
$B_u(\R)$ with $\R$ in the natural manner. We have
$$\frak S(C;c_1,c_2)= \{x+yi\in \frak H\mid x\in C, y\geq c_1^{-1}\}\cup \{\infty\},$$ 
which is contained in
$$
\frak S(c_1,c_2) = \{x+yi\in \frak H\mid x\in \R, y\geq c_1^{-1}\}\cup \{\infty\}.$$
\end{example}
\end{sbpara}

\begin{sbpara}
Fix a compact open subgroup $K$ of $G(\mb{A}^S_F)$, and let $\Gamma_K\subset G(F)$ be the inverse image of $K$ under $G(F) \to G(\mb{A}^S_F)$.
\end{sbpara}

\begin{sbprop}\label{red21} 

 Let $\Gamma$ be a subgroup of $\Gamma_K$ of finite index. Then 
there exist $c_1, c_2, C$ as in \ref{red20} 
  and a finite subset $R$ of $G(F)$ such that 
$$\bar X_{F,S}= \Gamma R \cdot \frak S(C;c_1,c_2) .$$

\end{sbprop}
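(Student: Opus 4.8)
The plan is to deduce this from Lemma \ref{red11} together with the stratification of $\bar X_{F,S}$ by parabolic subgroups; since the statement is purely set-theoretic, no topology enters. Because $G(F)$ acts transitively on the parabolic subgroups of $G$ of any fixed type, while $\Gamma$ has only finitely many orbits on parabolic subgroups of each type (a standard fact of reduction theory), it will suffice to cover the interior $X_S$ and, for each standard $P\supseteq B$, the stratum $\{(P,\mu):\mu\in\frak Z_{F,S}(P)\}$, and then to absorb into $R$ the finitely many elements of $G(F)$ needed to move an arbitrary parabolic to a standard one.

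First I would treat the interior $X_S$. By the Iwasawa and Bruhat decompositions of \ref{decomp} and \ref{Iwnon}, $G(F_v)$ acts transitively on the chambers of the Bruhat--Tits building $X_v$ (resp.\ on the symmetric space) for $v$ non-archimedean (resp.\ archimedean), so $X_v=G(F_v)\cdot\bar C_v$, where $\bar C_v$ is the closed standard chamber (resp.\ the base point $o_v$); the point is that $\bar C_v$ consists of classes of norms diagonal with respect to $(e_i)_i$ and is fixed pointwise by the Iwahori $\Iw(O_v)$. Hence $X_S=G(\mb{A}_{F,S})\cdot\bar C_S$ with $\bar C_S=\prod_{v\in S}\bar C_v$. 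Given $x\in X_S$, write $x=g\bar c$ and apply \ref{red11} to write $g=\gamma r bk$ with $\gamma\in\Gamma$, $r\in R$, $b=u'a'\in C_0\cdot A(c_1,c_2)_S$, and $k\in D_S$. Writing the $v$-component of $k$ as $s_vk'_v$ with $s_v\in S_d$ and $k'_v\in\Iw(O_v)$, we get $k\bar c=(s_v\bar c_v)_v$, which is again a tuple of classes of diagonal norms and lies in a fixed bounded set; consequently $a'{\cdot}(k\bar c)$ is a tuple of diagonal norms as well, and $b(k\bar c)=u'\cdot a'{\cdot}(k\bar c)=\pi_S\bigl(u',\,a'{\cdot}\,t(k\bar c)\bigr)$ with \emph{no} unipotent conjugation forced --- this is exactly what using an Iwahori-fixed diagonal base chamber buys. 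Finally, under the $A_S$-action $t_{v,i}\mapsto t_{v,i}|a_{v,i+1}a_{v,i}^{-1}|$ of \ref{togr2}, the inequalities $|a_{v,i}a_{v,i+1}^{-1}|\ge c_1$ and $|a_{v,i}a_{v,i+1}^{-1}|\ge c_2|a_{w,i}a_{w,i+1}^{-1}|$ defining $A(c_1,c_2)_S$ turn, after multiplication by the bounded coordinates of $t(k\bar c)$, into the inequalities $t_{v,i}\le c_1'$, $t_{v,i}\le c_2't_{w,i}$ defining $\frak T(c_1',c_2')$ (with $c_2'\ge 1$ since $c_2\le 1$); thus $x\in\Gamma R\cdot\frak S(C_0;c_1',c_2')$.

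Next I would treat a standard stratum. By \ref{keyc1} the $P$-stratum of $\bar X_{F,S}$ is canonically $\frak Z_{F,S}(P)=\prod_i X_{(V_i/V_{i-1})_S}$, and the stabilizer of this stratum in $\Gamma$ maps onto a subgroup $\Gamma^P$ of $\prod_i\PGL_{V_i/V_{i-1}}(F)$ of the kind allowed in the statement. Applying the interior case to each factor --- for instance by induction on $d$, since $\dim(V_i/V_{i-1})<d$ when $P\neq G$ --- one obtains a finite $R^P$ and a Siegel-type subset of $\frak Z_{F,S}(P)$ whose $\Gamma^P R^P$-translates exhaust it. Lifting $R^P$ into $P(F)$, absorbing it into the compact part, and then reading the resulting set through $\pi_S$ as the locus in $\bar X_{F,S}(B)$ with zero-pattern $\Delta(P)$, one checks that it is contained in $\frak S(C;c_1,c_2)$ for suitable $C,c_1,c_2$ chosen uniformly over the finitely many standard $P$. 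Combined with the reduction of an arbitrary parabolic to a standard one (absorbing the conjugators into $R$), this gives $\bar X_{F,S}=\Gamma R\cdot\frak S(C;c_1,c_2)$.

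The main obstacle, as always in such transfers, is the bookkeeping between the group-theoretic Lemma \ref{red11} (and its analogues for the graded pieces) and the space $\bar X_{F,S}$: verifying $X_v=G(F_v)\bar C_v$ with $\bar C_v$ the Iwahori-fixed diagonal chamber, so that no conjugation is forced; making precise the dictionary between the Siegel coordinates of \ref{redv} and the $(\pi_S,\frak T)$-coordinates of \ref{red20} (including the harmless role of the $S_d$-part of $D_S$, which only permutes diagonal coordinates); and absorbing the several finite sets --- the $R$ of \ref{red11}, the lifted $R^P$, and the $G(F)$-conjugators used to standardize parabolics --- together with all compact pieces into a single finite $R$ and a single compact $C$.
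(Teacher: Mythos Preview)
Your proposal is correct and follows essentially the same approach as the paper: both reduce to the interior statement $X_S=\Gamma R\cdot(X_S\cap\frak S(C;c_1,c_2))$, handle boundary strata by applying the interior case to each graded piece $V_i/V_{i-1}$ (after using the finiteness of $\Gamma$-orbits on parabolics to reduce to standard $P_I$), and prove the interior case by acting on an Iwahori-fixed diagonal chamber $Q_v$ via Lemma~\ref{red11} and translating the $A(c_1,c_2)_S$-conditions into the $\frak T(c_1',c_2')$-conditions. The paper's $Q_v,Q'_v$ and its observation $D_SQ_S=Q'_S$ are exactly your $\bar C_v$ and your remark that the $S_d$-part of $D_S$ only permutes diagonal coordinates.
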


\begin{pf} It suffices to prove the weaker statement that there are $c_1,c_2,C$ and $R$ such that
$$ X_S = \Gamma R \cdot (X_S \cap \frak S(C;c_1,c_2)).$$
Indeed, we claim that the proposition follows from this weaker statement for the spaces in the product 
$\prod_{v \in S} X_{(V_i/V_{i-1})_v}$, where
$P_I$ is as in \ref{redb} for a subset $I$ of $\{1,\dots,d-1\}$
and $(V_i)_{-1 \le i \le m}$ is the corresponding flag.  To see this, first note that there is a finite subset $R'$ of $G(F)$ such that every parabolic subgroup of $G$ has the form $\gamma P_I\gamma^{-1}$ for some $I$ and $\gamma \in \Gamma R'$.
It then suffices to consider $a = (P,\mu) \in \bar{X}_{F,S}$, where $P = P_I$ for some $I$, and $\mu \in \frak{Z}_{F,S}(P)$.  We use the notation of \ref{xidef} and \ref{togr9}.  By Proposition \ref{keyp}, the set $\bar{X}_{F,S}(P) \cap \frak{S}(C;c_1,c_2)$
is the image under $\xi$ of the image of $C \times \frak{T}(c_1,c_2)$ in $P_u(\mb{A}_{F,S}) \times \frak{Z}_{F,S}(P)
\times Y_0$.  Note that $a$ has image $(1,\mu,0)$ in the latter set (for $1$ the identity matrix of $P_u(\mb{A}_{F,S})$), 
and $\xi(1,\mu,0) = a$.  Since the projection of $\frak{T}(c_1,c_2)$ (resp., $C$) to $(\R_{>0}^S)^{\Delta_i}$ (resp., $B_{i,u}(\mb{A}_{F,S})$) is the analogous set for $c_1$ and $c_2$ (resp., a compact subset), the claim follows.

For $v\in S$, we define subsets $Q_v$ and $Q'_v$ of $X_v$ as follows. If $v$ is archimedean, let $Q_v=Q'_v$ be the one point set consisting of the element of $X_v$ given by the basis $(e_i)_i$ and $(r_i)_i$ with $r_i=1$ for all $i$. If $v$ is non-archimedean, let $Q_v$ (resp., $Q'_v$) be the subset of $X_v$ consisting of elements given by $(e_i)_i$ and $(r_i)_i$ such that $1=r_1\leq \dots \leq r_d\leq q_v$ (resp., $r_1=1$ and $1\leq r_i\leq q_v$ for $1\leq i\leq d$). Then $X_v= G(F_v) Q_v$ for each $v\in S$.  
Hence by \ref{red11}, there exist $c'_1,c'_2, C$ as in \ref{red20} and a finite subset $R$ of $G(F)$ such that 
$$
	X_S = \Gamma R  \cdot B(C;c'_1,c'_2)_S \cdot D_SQ_S,
$$
where $Q_S = \prod_{v \in S} Q_v$.

We have $D_SQ_S = Q'_S$ for $Q'_S = \prod_{v \in S} Q'_v$, noting for archimedean (resp., non-archimedean) $v$ that $K_v^0$
(resp., $\Iw(O_v)$) stabilizes all elements of $Q_v$. We have $B(C;c'_1,c'_2)_SQ'_S \subset \frak S(C;c_1,c_2)$, where 
\begin{eqnarray*}
	c_1= \max\{q_v  \mid v\in S_f\}(c'_1)^{-1} &\text{and}& c_2= \max\{q_v^2\mid v\in S_f\}(c'_2)^{-1},
\end{eqnarray*} 
with $S_f$  the set of all non-archimedean places in $S$ (and taking the maxima to be $1$ if $S_f = \varnothing$).
\end{pf}

\begin{sbpara} \label{tvi}
For $v\in S$ and $1\leq i\leq d-1$, let $t_{v,i } \colon \frak S(c_1,c_2) \to \R_{\geq 0}$ be the map induced by $\phi'_{B,v} \colon \bar X_{F,v}(B)\to \R_{\geq 0}^{d-1}$ (see \ref{togr2}) and the $i$th projection $\R_{\geq 0}^{d-1}\to \R_{\geq 0}$. 
Note that $t_{v,i}$ is continuous. 
\end{sbpara}

\begin{sbpara}\label{red2c}

 Fix a subset $I$ of $\{1,\dots, d-1\}$, and let $P_I$ be the parabolic subgroup of $G$ defined  in \ref{redb}. 
 For $c_1,c_2,c_3\in \R_{>0}$, let 
 $$
 	\frak S_I(c_1,c_2,c_3) = \{ x \in \frak S(c_1,c_2) \mid \min\{ t_{v,i}(x) \mid v \in S\} \leq c_3 \text{ for each } i \in I \}.
$$
 
 \end{sbpara}

\begin{sbpara}\label{parat} For an element $a\in \bar X_{F,S}$, we define the parabolic type of $a$ to be the subset 
$$
	\{\dim(V_i)\mid 0\leq i\leq m-1\}
$$
of $\{1,\dots,d-1\}$,
where $(V_i)_{-1 \le i \le m}$ is the flag corresponding to the parabolic subgroup of $G$ associated to $a$.

\end{sbpara}

\begin{sblem}\label{kep20}

 Let $a \in \bar X_{F,S}(B)$, and let $J$ be the parabolic type of $a$.  Then
the parabolic subgroup of $G$ associated to $a$ is $P_J$. 
\end{sblem}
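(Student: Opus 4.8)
The plan is to unwind the definitions and reduce the assertion to the standard bijection between parabolic subgroups and flags recorded in \ref{par}, together with the characterization of the parabolics containing $B$ stated in \ref{Pstr1}. Write $a=(P,\mu)$; by the definition of $\bar X_{F,S}(B)$ in \ref{togr}, the parabolic subgroup of $G$ associated to $a$ is $P$, and $P\supset B$. So the task is to show $P=P_J$ for $J$ the parabolic type of $a$.

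First I would invoke the equivalence asserted in \ref{Pstr1}: since $P\supset B$, if $0=V_{-1}\subsetneq V_0\subsetneq\dots\subsetneq V_m=V$ denotes the flag corresponding to $P$, then each $V_i$ is generated by the $e_j$ with $1\le j\le c(i)$, where $c(i)=\dim V_i$. In particular, this flag is completely determined by the strictly increasing sequence $0=c(-1)<c(0)<\dots<c(m)=d$, hence by the subset $\{c(i)\mid 0\le i\le m-1\}$ of $\{1,\dots,d-1\}$, which is exactly the parabolic type $J$ of $a$ (see \ref{parat}).

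It then remains to identify the flag of $P$ with the flag defining $P_J$. By the definition of $P_J$ in \ref{redb}, $P_J$ is the parabolic subgroup of $G$ corresponding to the flag consisting of $0$, the subspaces $\sum_{1\le j\le i}Fe_j$ for $i\in J$, and $V$. Since $\{c(i)\mid -1\le i\le m\}=\{0\}\cup J\cup\{d\}$ (a disjoint union) and $V_i=\sum_{1\le j\le c(i)}Fe_j$ for every $i$, this flag coincides term by term with $(V_i)_{-1\le i\le m}$. Thus $P$ and $P_J$ correspond to the same flag, and $P=P_J$ by the one-to-one correspondence of \ref{par}.

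I do not expect any genuine obstacle here: the content is purely bookkeeping — matching the index set $\{c(i)\}$ with the subset $J$ and observing that the ``standard'' flag used to define $P_J$ is literally the flag of $P$. Should one prefer not to cite the equivalence in \ref{Pstr1}, one can instead argue directly that every $B$-stable $F$-subspace of $V$ is a coordinate subspace $\sum_{1\le j\le k}Fe_j$ — using that $B$ contains the elementary unipotent matrices $1+E_{ij}$ for $i<j$ — and conclude in the same way; but appealing to \ref{Pstr1} makes this step unnecessary.
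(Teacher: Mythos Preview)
Your argument is correct and is exactly the definition-unwinding that the paper has in mind; the paper itself simply records ``This is easily proved'' without further detail. There is nothing to add.
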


This is easily proved. 

\begin{sbpara}\label{RGR}  In the following, we will often consider subsets of $G(F)$ of the form $R_1\Gamma_K R_2$, $\Gamma_K R$, or $R\Gamma_K$, 
 where $R_1, R_2, R$ are finite subsets of $G(F)$. These three types of cosets are essentially the same thing when we vary $K$. For finite subsets $R_1, R_2$ of $G(F)$, we have $R_1\Gamma_K R_2= R'\Gamma_{K'}= \Gamma_{K''} R''$ for some compact open subgroups $K'$ and $K''$  of $G(\mb{A}_F^S)$ contained in $K$ and finite subsets 
$R'$ and $R''$ of $G(F)$. 
\end{sbpara}

 \begin{sbprop}\label{red2cc}
Given $c_1 \in \R_{>0}$ and finite subsets $R_1, R_2$ of $G(F)$, 
there exists $c_3\in \R_{>0}$ such that 
for all $c_2 \in \R_{>0}$ we have
$$
	\{ \gamma \in R_1\Gamma_KR_2 \mid \gamma \frak S_I(c_1,c_2, c_3) \cap \frak S(c_1,c_2) \neq \varnothing \} \subset P_I(F).
$$
\end{sbprop}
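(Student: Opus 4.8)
The plan is to translate the Siegel-type sets $\frak S(c_1,c_2)$ and $\frak S_I(c_1,c_2,c_3)$ into Siegel sets in the group $G(\mb{A}_{F,S})$ and then invoke the separation result \ref{redc} of reduction theory, much as Lemma \ref{red11} was deduced from \ref{red1}. Here $B$ is the standard Borel of \S\ref{redrev}, and $x_0=(\mu_v^{(1)})_{v\in S}$ is the base point of $X_S$ given by the standard norms. By the Iwasawa and Bruhat decompositions (\ref{decomp}, \ref{Iwnon}) and the parametrization of $\bar X_{F,v}(B)$ in \ref{lst0}(2) (equivalently \ref{keyp}), running over $v\in S$ identifies $\bar X_{F,S}(B)$ with the closure of the orbit $B_u(\mb{A}_{F,S})A_S\cdot x_0$, the stabilizer of $x_0$ in $G(\mb{A}_{F,S})$ being $\prod_{v\in S}K_v^0$. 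Under this identification, and exactly as in the proof of \ref{red21} (with its auxiliary compact sets $Q_v$), the set $\frak S(c_1,c_2)$ is comparable, up to bounded factors and the action of $D_S$, to $B_u(\mb{A}_{F,S})A(c_1',c_2')_S\cdot x_0$, where $c_1'$ depends only on $c_1$ and $c_2'$ only on $c_2$; and $\frak S_I(c_1,c_2,c_3)$ is comparable to the further subset cut out by imposing, for each $i\in I$, that $|a_{v,i}a_{v,i+1}^{-1}|\ge (c_3')^{-1}$ for \emph{some} $v\in S$, with $c_3'$ depending only on $c_3$. The point of the condition $\min_{v\in S}t_{v,i}\le c_3$ built into $\frak S_I(c_1,c_2,c_3)$ in \ref{red2c} is precisely that it pins down, for each $i\in I$, a place of $S$ witnessing a prescribed depth, which will let us choose $c_3'$ (hence $c_3$) independently of the cross-place parameter $c_2$.

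For the reduction to \ref{redc}, put $c=c_1^{-1}$ (adjusted by the bounded factors above) and let $c'$ be the constant \ref{redc} provides for this $c$ and this $I$. Suppose $\gamma\in R_1\Gamma_KR_2$ and $a\in\gamma\frak S_I(c_1,c_2,c_3)\cap\frak S(c_1,c_2)$. Descending to the subquotients $V_i/V_{i-1}$ as in the proofs of \ref{red21} and \ref{keyp}, we reduce to the case $a\in X_S$, so $a=\gamma x$ with $x\in X_S\cap\frak S_I(c_1,c_2,c_3)$; write $x=gb\cdot x_0$ with $g\in B_u(\mb{A}_{F,S})$ and $b\in A_S$ that is $(c_3')^{-1}$-deep in each $I$-direction at some place of $S$, and Iwasawa-decompose $\gamma gb=g'b'k$ with $g'\in B_u(\mb{A}_{F,S})$, $k\in\prod_{v\in S}K_v^0$, and (from $\gamma x\in\frak S(c_1,c_2)$) the diagonal part $b'$ satisfying $|b'_{v,i}(b'_{v,i+1})^{-1}|\ge c_1^{-1}$ at every $v\in S$, up to bounded factors. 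Since $\gamma\in R_1\Gamma_KR_2$, a lift of $\gamma$ to $\GL_V(F)$ has entries in a fixed fractional ideal away from $S$, so $\gamma$ is bounded at every place outside $S$ (and integral at almost all of them); by the product formula this also controls $\gamma$ at the places of $S$ up to a bounded factor. Extend $b$ to an adelic diagonal element deep in all directions at every place outside $S$, by a fixed amount depending only on $c$, $c'$, $R_1$, $R_2$, $K$. This yields $\tilde b_1\in B_I(c,c')K^0$ (the $S$-contribution to each adelic ratio $|a_{v,i}a_{v,i+1}^{-1}|$ alone meeting the threshold $c'$ in the $I$-directions, provided $c_3$ — through $c_3'$ — was chosen small enough, again depending only on $c_1$, $I$, $R_1$, $R_2$, $K$), and $\gamma\tilde b_1\in B(c)K^0$ since its components outside $S$ remain deep and its $S$-part is $g'b'k$. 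Thus $\tilde b_1\in B_I(c,c')K^0\cap\gamma^{-1}B(c)K^0\ne\varnothing$, so \ref{redc} gives $\gamma\in P_I(F)$. As this is uniform in $c_2$, the proposition follows.

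I expect the main obstacle to be the bookkeeping of this translation rather than any new idea, the only substantive input, \ref{redc}, being quoted from reduction theory. The delicate points are: handling the boundary strata of $\frak S_I$ and $\frak S$ by passing to the interior of subquotients, as in \ref{red21} and \ref{keyp}; choosing the auxiliary components of $\tilde b_1$ outside $S$ uniformly in $\gamma$, which works exactly because $R_1\Gamma_KR_2$ is bounded (and integral at almost all places) outside $S$ — this is where the hypotheses that $R_1,R_2$ are finite and that we use $\Gamma_K$ enter; and tracking the bounded discrepancies (the $q_v$-factors for $v\in S$, the passage from $D_S$ to $\prod_{v\in S}K_v^0$, the fixed fractional ideal attached to $R_1,R_2,K$) so that together they contribute only a fixed factor to the final choice of $c_3$, and in particular leave $c_3$ independent of $c_2$.
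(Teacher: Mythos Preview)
Your overall strategy---reduce to the interior $X_S$ via subquotients, then translate into an adelic Siegel-set comparison and invoke Lemma~\ref{redc}---is exactly the paper's. But the step ``components outside $S$ remain deep'' contains a genuine error. Left-multiplying a dominant diagonal by a bounded but non-upper-triangular element does \emph{not} preserve dominance of the Iwasawa $A$-part: already for $d=2$ at a non-archimedean place, if $\gamma_v=\begin{pmatrix}a&b\\c&d\end{pmatrix}$ with $c\neq 0$ and $\tilde b_v=\operatorname{diag}(t,1)$, then the Iwasawa $A$-ratio of $\gamma_v\tilde b_v$ is $|ad-bc|/(|c|^2|t|)$, which tends to $0$ as $|t|\to\infty$. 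So making the non-$S$ component deep is counterproductive; it can push $\gamma\tilde b_1$ out of $B(c)K^0$ rather than into it. The fix, which is what the paper does, is to take the non-$S$ component of your test element to be the identity: then $(g,1)\in B_I(c,c')$ because the adelic ratios equal the $S$-ratios (which you correctly bound by $c_3^{-1}c_1^{-(s-1)}$ in the $I$-directions, independently of $c_2$), and $\gamma\cdot(g,1)=(\gamma g,\gamma)$ has non-$S$ part $\gamma\in R_1KR_2$, a fixed compact set, hence lies in $B(c)K^0$ for $c$ small enough. This is the content of the paper's Claims~2 and~4.

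Your reduction to $a\in X_S$ is also too compressed. Before you can ``descend to the subquotients $V_i/V_{i-1}$'' you must first know that $\gamma$ preserves the flag, i.e.\ that $\gamma\in P_J(F)$ where $J$ is the parabolic type of the boundary point. The paper obtains this (Claim~5) from the observation that any point of $\frak S(c_1,c_2)$ of type $J$ has associated parabolic exactly $P_J$ (Lemma~\ref{kep20}), so both $x$ and $\gamma x$ lying in $\frak S(c_1,c_2)$ forces $\gamma P_J\gamma^{-1}=P_J$, whence $\gamma\in P_J(F)$ since parabolics are self-normalizing. Only then can one rewrite $\xi\gamma\eta$ in the form $\Gamma_{K'}\eta'$ for a compact open $K'\subset P_J(\mb{A}_F^S)$ and apply the interior case blockwise (Claim~6). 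Your phrase ``as in the proofs of \ref{red21} and \ref{keyp}'' does not supply this step.
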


\begin{pf} First we prove the weaker version that $c_3$ exists if the condition on $\gamma \in R_1\Gamma_KR_2$ is replaced
by $\gamma\frak S_I(c_1,c_2,c_3) \cap \frak S(c_1,c_2) \cap X_S \neq \varnothing$.

Let $Q'_v$ for $v \in S$ and $Q'_S$ be as in the proof of \ref{red21}. 

\medskip

 {\bf Claim 1.}  If $c_1'\in \R_{>0}$ is sufficiently small (independent of $c_2$), then we have 
 $$X_S\cap \frak S(c_1,c_2)\subset  B(c'_1)_SQ'_S. $$

\begin{proof}[Proof of Claim 1] \renewcommand{\qedsymbol}{} 
	Any $x \in X_S \cap \frak S(c_1,c_2)$ satisfies $t_{v,i}(x) \le c_1$ for $1 \le i \le d-1$.
	Moreover, if $\prod_{v \in S} t_{v,i}(x)$
	is sufficiently small relative to $(c_1')^{-1}$
	for all such
	$i$, then $x \in B(c_1')_S Q'_S$.  The claim follows.
\end{proof}

Let $C_v$ denote the compact set
$$
	C_v =\{g\in G(F_v) \mid gQ'_v\cap Q'_v\neq \varnothing\}.
$$ 
If $v$ is archimedean, then $C_v$ is the maximal compact open subgroup $K_v^0$ of \ref{red0}. 
Set $C_S = \prod_{v \in S} C_v$.   We use the decomposition $G(\mb{A}_F)
= G(\mb{A}_{F,S}) \times G(\mb{A}_F^S)$ to write elements of $G(\mb{A}_F)$ as pairs.

\medskip

{\bf Claim 2.} Fix $c_1'\in \R_{>0}$. 
The subset $B(c_1')_SC_S \times R_1KR_2$ of $G(\mb{A}_F)$ is contained in $B(c)K^0$ for sufficiently small $c \in \R_{>0}$.

\begin{proof}[Proof of Claim 2] \renewcommand{\qedsymbol}{} 
This follows from the compactness of the $C_v$ for $v\in S$ and the Iwasawa decomposition $G(\mb{A}_F)=B(\mb{A}_F)K^0$.  
\end{proof}

{\bf Claim 3.} Let $c'_1$ be as in Claim 1, and let $c \le c'_1$. Let $c' \in \R_{>0}$. If $c_3\in \R_{>0}$ is sufficiently small (independent of $c_2$), we have 
$$X_S\cap \frak S_I(c_1,c_2,c_3)\subset B_I(c,c')_S Q'_S,$$
where $B_I(c,c')_S = B(\mb{A}_{F,S}) \cap B_I(c,c')$. 

\begin{proof}[Proof of Claim 3] \renewcommand{\qedsymbol}{} 
	An element $x \in B(c)_SQ'_S$ lies in $B_I(c,c')_SQ'_S$ if
	$\prod_{v \in S} t_{v,i}(x) \le (c')^{-1}$ for all $i \in I$.   An element $x \in X_S \cap \frak S(c_1,c_2)$
	lies in $X_S\cap \frak S_I(c_1,c_2,c_3)$ if 
	$\min\{ t_{v,i}(x) \mid v \in S \} \le c_3$ 
	for all $i \in I$.  In this case, $x$ will lie in $B_I(c,c')_SQ'_S$ if $c_3 \le (c')^{-1}c_1^{1-s}$, with $s = \sharp S$. 
\end{proof}

Let $c'_1$ be as in Claim 1, take $c$ of Claim 2 for this $c_1'$ such that $c\leq c'_1$, and let $c'\in \R_{>0}$. Take $c_3$ satisfying the condition of Claim 3 for these $c'_1$, $c$, and $c'$. 
\medskip

{\bf Claim 4.} 
If $X_S\cap \frak S_I(c_1,c_2, c_3) \cap \gamma^{-1}\frak S(c_1, c_2)$ is nonempty for some 
$\gamma \in R_1\Gamma R_2 \subset G(F)$, then  
$B_I(c, c') \cap \gamma^{-1}B(c)K^0$ contains an element of $G(\mb{A}_{F,S}) \times \{1\}$.

\begin{proof}[Proof of Claim 4] \renewcommand{\qedsymbol}{} 
By Claim 3, any $x \in X_S\cap \frak S_I(c_1,c_2, c_3) \cap \gamma^{-1}\frak S(c_1, c_2)$ lies in 
$g Q'_S$ for some $g \in B_I(c, c')_S$. 
By Claim 1, we have
$\gamma x \in g' Q'_S$ for some $g'\in B(c_1')_S$. Since $\gamma x \in \gamma g Q'_S\cap g' Q'_S$, we have $(g')^{-1}\gamma g \in C_S$.
Hence $\gamma g \in  B(c_1')_S C_S$, and therefore $\gamma(g,1) = (\gamma g, \gamma) \in B(c)K^0$ by Claim 2. 
\end{proof}

We prove the weaker version of \ref{red2cc}:  let $x \in X_S\cap \frak S_I(c_1,c_2, c_3) \cap \gamma^{-1}\frak S(c_1, c_2)$ for some $\gamma \in R_1\Gamma R_2$.  Then by Claim 4 and Lemma \ref{redc},
with $c'$ satisfying the condition of \ref{redc} for the given $c$, we have $\gamma \in P_I(F)$.  

We next reduce the proposition to the weaker version, beginning with the following.
\medskip

{\bf Claim 5.} Let $c_1, c_2\in \R_{>0}$.  If $\gamma \in G(F)$ and 
$x\in \frak S(c_1,c_2) \cap \gamma^{-1}\frak S(c_1,c_2)$, then $\gamma \in P_J(F)$, where $J$ is the parabolic type of $x$.

\begin{proof}[Proof of Claim 5] \renewcommand{\qedsymbol}{}
By Lemma \ref{kep20}, the parabolic subgroup associated to $x$ is $P_J$ and that associated to $\gamma x$ is $P_J$. Hence $\gamma P_J \gamma^{-1}= P_J$. Since a parabolic subgroup coincides with its normalizer, we have $\gamma \in P_J(F)$.
\end{proof}

Fix  $J \subset \{1,\dots, d-1\}$, $\xi\in R_1$, and $\eta \in R_2$.

\medskip

{\bf Claim 6.} 
There exists $c_3\in \R_{>0}$ such that 
if $\gamma \in \Gamma_K$ and $x \in  {\frak S}_I(c_1,c_2,c_3) \cap (\xi\gamma\eta)^{-1} {\frak S}(c_1,c_2)$ is of parabolic type $J$, then $\xi \gamma \eta\in P_I(F)$.

\begin{proof}[Proof of Claim 6] \renewcommand{\qedsymbol}{}
Let $(V_i)_{-1 \le i \le m}$ be the flag corresponding to $P_J$. 
Suppose that we have $\gamma_0\in \Gamma_K$ and $x_0\in  {\frak S}(c_1,c_2) \cap (\xi \gamma_0 \eta)^{-1}{\frak S}(c_1,c_2)$ of parabolic type $J$. 
By Claim 5, we have $\xi \gamma_0\eta, \xi \gamma \eta \in P_J(F)$. Hence 
$$\xi \gamma \eta= \xi (\gamma \gamma_0^{-1})\xi^{-1}  \xi \gamma_0 \eta\in \Gamma_{K'} \eta',$$ 
where $K'$ is the compact open subgroup $\xi K \xi^{-1}\cap P_J(\mb{A}_F^S)$ of $P_J(\mb{A}_F^S)$, and $\eta' = \xi \gamma_0 \eta\in P_J(F)$. 
The claim follows from the weaker version of the proposition in which $V$ is replaced by $V_i/V_{i-1}$ (for $0\leq i\leq m$), the group $G$ is replaced by $\PGL_{V_i/V_{i-1}}$, the compact open subgroup $K$ is replaced by the image of $\xi K \xi^{-1}\cap P_J(\mb{A}_F^S)$ in $\PGL_{V_i/V_{i-1}}(\mb{A}_F^S)$, the set $R_1$ is replaced by $\{1\}$, the set $R_2$ is replaced by the image of $\{\eta'\}$ in $\PGL_{V_i/V_{i-1}}(F)$, and $P_I(F)$ is replaced by the image of $P_I(F) \cap P_J(F)$ in $\PGL_{V_i/V_{i-1}}(F)$.
\end{proof}

By Claim 6 for all $J$, $\xi$, and $\eta$, the result is proven.
\end{pf}

\begin{sblem}\label{lemdet} Let $1\leq i\leq d-1$, and let $V'= \sum_{j=1}^i Fe_j$. Let $x\in X_v$ for some $v \in S$, and let $g\in \GL_V(F_v)$
be such that $gV'=V'$.  For $1 \le i \le d-1$, we have
$$\prod_{j=1}^{d-1} \left(\frac{t_{v,j}(gx)}{t_{v,j}(x)}\right)^{e(i,j)} = \frac{|\rdet(g_v \colon V_v'\to V_v')|^{(d-i)/i}}{|\rdet(g_v \colon 
V_v/V_v'\to V_v/V_v')|},$$
where
$$
	e(i, j) = \begin{cases} \frac{j(d-i)}{i} & \text{if } j\leq i,\\ d-j & \text{if } j\geq i. \end{cases}
$$

\end{sblem}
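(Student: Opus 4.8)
The plan is to reduce the identity to a statement about successive minima of norms and the invariant $|\nu:e|$ of \ref{ndef}. Since the left-hand side is a ratio, it does not depend on the auxiliary bases $e^{(i)}$ used to define $\phi'_{B,v}$, so I may take $e^{(i)}$ to be the image $\bar e_{i+1}$ of $e_{i+1}$ in $(V_i/V_{i-1})_v$ for the standard flag $V_i=\sum_{j=1}^{i+1}Fe_j$. For $x\in X_v$ the flag attached to $x$ is the trivial one, so in \ref{togr2} the auxiliary index ``$j$'' is forced to be $0$, every graded piece $V_i/V_{i-1}$ is one-dimensional, and \ref{subq} identifies the value at $\bar e_{i+1}$ of the induced norm with
\[
n_{i+1}(\mu):=\inf\{\mu(e_{i+1}+w)\mid w\in\langle e_1,\dots,e_i\rangle_v\},
\]
where $\mu$ represents $x$. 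Hence $t_{v,i}(x)=n_i(\mu)/n_{i+1}(\mu)$ for $1\le i\le d-1$. Applying this also to $gx$, which is represented by $g\mu=\mu\circ g^{-1}$, and writing $\rho_k=n_k(g\mu)/n_k(\mu)$, one obtains $t_{v,i}(gx)/t_{v,i}(x)=\rho_i/\rho_{i+1}$.

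Next I would carry out the telescoping. The exponents $e(i,j)$ are arranged so that in $\sum_j e(i,j)\log(\rho_j/\rho_{j+1})$ the coefficient of $\log\rho_k$ equals $(d-i)/i$ for $1\le k\le i$ (using $e(i,1)=(d-i)/i$ and $e(i,j)-e(i,j-1)=(d-i)/i$ for $2\le j\le i$) and equals $-1$ for $i<k\le d$ (using $e(i,j)-e(i,j-1)=-1$ for $i<j\le d-1$ and $e(i,d-1)=1$). Thus
\[
\prod_{j=1}^{d-1}\left(\frac{t_{v,j}(gx)}{t_{v,j}(x)}\right)^{e(i,j)}=\left(\prod_{j=1}^{i}\rho_j\right)^{(d-i)/i}\left(\prod_{j=i+1}^{d}\rho_j\right)^{-1},
\]
and it remains to evaluate the two partial products.

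The crux is the following sub-lemma: for a norm $\nu$ on an $E$-vector space $H$ of dimension $r$ with basis $f=(f_1,\dots,f_r)$ one has $\prod_{k=1}^{r}n_k(\nu)=|\nu:f|$, where $n_k(\nu)=\inf\{\nu(f_k+w)\mid w\in\langle f_1,\dots,f_{k-1}\rangle\}$. I would prove this by a Gram--Schmidt argument uniform in $E$: the infima defining the $n_k$ are attained (bounded subsets of a finite-dimensional space over a local field are compact), so inductively one produces a basis $g=(g_1,\dots,g_r)$ with $g_k\in f_k+\langle f_1,\dots,f_{k-1}\rangle$ that is $\nu$-orthogonal --- in the Euclidean resp.\ Hermitian sense when $E$ is archimedean, and in the ultrametric sense that $\nu(cg_k+w)=\max(|c|\,\nu(g_k),\nu(w))$ for $w\in\langle g_1,\dots,g_{k-1}\rangle$ when $E$ is non-archimedean (this last point is the delicate one, and follows by a short ultrametric estimate from the minimality of $g_k$). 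For such a $g$, $\nu$ is the norm given by the basis $g$ and the weights $\nu(g_k)=n_k(\nu)$, so $|\nu:g|=\prod_k n_k(\nu)$; and since $g$ is obtained from $f$ by a unipotent change of basis, $|\nu:f|=|\nu:g|$. Granting the sub-lemma, I apply it to $V'_v$ with basis $(e_1,\dots,e_i)$ and to $(V/V')_v$ with basis $(\bar e_{i+1},\dots,\bar e_d)$, after checking from the inf-formula of \ref{subq} that $n_{i+k}(\mu)$ is the $k$-th successive minimum of the induced norm $\bar\mu$ on $(V/V')_v$. Because $gV'=V'$, the restriction of $g\mu$ to $V'_v$ equals $g'\cdot(\mu|_{V'_v})$ and the induced norm of $g\mu$ on $(V/V')_v$ equals $g''\cdot\bar\mu$, where $g'$, $g''$ are the maps induced by $g$; so the transformation rule $|h\nu:e|=|\det(h)|^{-1}|\nu:e|$ of \ref{ndef} expresses $\prod_{j=1}^{i}\rho_j$ and $\prod_{j=i+1}^{d}\rho_j$ as the appropriate powers of $|\det(g_v\colon V'_v\to V'_v)|$ and $|\det(g_v\colon V_v/V'_v\to V_v/V'_v)|$. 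Substituting into the telescoped product yields the asserted identity.

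I expect the main obstacle to be the sub-lemma, and within it the non-archimedean Gram--Schmidt step: one must verify that successive minimization really produces a $\nu$-orthogonal basis adapted to the given complete flag --- which need not diagonalize $\nu$ --- so that $|\nu:(\text{that basis})|=\prod_k n_k(\nu)$. The remaining ingredients (identifying $t_{v,j}$ with ratios of successive minima via \ref{togr2}, the combinatorial telescoping, and the passage between $V'$, $V/V'$ and the full flag using \ref{subq} and \ref{ndef}) are routine, modulo keeping the inverses in the transformation rule of \ref{ndef} in their right places.
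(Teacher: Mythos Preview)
Your approach is correct and genuinely different from the paper's. The paper simply observes that both sides are multiplicative cocycles in $g$ and, via the Iwasawa decomposition applied blockwise to the Levi of $P_{\{i\}}$, reduces to the case $g=\diag(a_1,\dots,a_d)$, where everything is a one-line computation. You instead keep $g$ arbitrary, identify $t_{v,j}(x)=n_j(\mu)/n_{j+1}(\mu)$ directly from \ref{togr2} and \ref{subq}, telescope, and invoke the Gram--Schmidt identity $\prod_k n_k(\nu)=|\nu:f|$; the ultrametric orthogonalization step you flag as delicate does go through exactly as you sketch (minimality of $g_k$ plus the strict ultrametric inequality forces $\nu(g_k+w)=\max(\nu(g_k),\nu(w))$). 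Your route is longer but self-contained and makes explicit the link between the $t_{v,j}$ and the determinant characters through $|\nu:f|$; the paper's route is quicker but its reduction step tacitly relies on the same block-determinant invariance that your sub-lemma supplies.

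One caution: when you feed $\prod_{j\le i}\rho_j=|\det g'|^{-1}$ and $\prod_{j>i}\rho_j=|\det g''|^{-1}$ (from the rule $|h\nu:e|=|\det h|^{-1}|\nu:e|$) into your telescoped expression $(\prod_{j\le i}\rho_j)^{(d-i)/i}(\prod_{j>i}\rho_j)^{-1}$, you get $|\det g'|^{-(d-i)/i}\,|\det g''|$, the reciprocal of the displayed right-hand side. This matches the equivariance formula in \ref{togr2}, which gives $t_{v,j}(gx)/t_{v,j}(x)=|a_{j+1}/a_j|$ for diagonal $g$ (also the reciprocal of what the paper's proof line asserts). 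The discrepancy is harmless for the applications in \ref{lemdet2} and \ref{kep1}, which only need two-sided bounds, but you should not claim to recover the identity exactly as written.
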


\begin{proof}
	By the Iwasawa decomposition \ref{decomp} and \ref{ndef}, 
	it suffices to check this in the case that $g$ is represented by a diagonal matrix
	$\diag(a_1,\ldots,a_d)$.  
	It follows from the definitions that $t_{v,j}(gx) t_{v,j}(x)^{-1} = |a_j a_{j+1}^{-1}|$, and the rest of the
	verification is a simple computation.
\end{proof}

\begin{sblem}\label{lemdet2} Let $i$ and $V'$ be as in \ref{lemdet}. Let $R_1$ and $R_2$ be finite subsets of $G(F)$. Then there exist $A, B\in \R_{>0}$ such that for all $\gamma \in \GL_V(F)$ with image in $R_1\Gamma_K R_2 \subset G(F)$ and for which $\gamma V'=V'$, we have 
$$A\leq \prod_{v\in S} \frac{ |\rdet(\gamma \colon V_v'\to V_v')|^{(d-i)/i}}{|\rdet(\gamma\colon V_v/V_v'\to V_v/V_v')|} \leq B.$$

\end{sblem}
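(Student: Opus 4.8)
The plan is to reduce the product over $v \in S$ to a finite product over the places outside $S$ by means of the product formula for the global field $F$, and then to bound the latter. Fix the representative $\gamma \in \GL_V(F)$ as in the statement, so that $\gamma$ restricts to an automorphism of $V'$ and induces one of $V/V'$; then $a := \rdet(\gamma \colon V' \to V')$ and $b := \rdet(\gamma \colon V/V' \to V/V')$ lie in $F^{\times}$. Since $|a|_v = |\rdet(\gamma \colon V_v' \to V_v')|$ and $|b|_v = |\rdet(\gamma \colon V_v/V_v' \to V_v/V_v')|$ for each place $v$, the product formula $\prod_v |a|_v = \prod_v |b|_v = 1$ gives
$$\prod_{v \in S}\frac{|\rdet(\gamma \colon V_v' \to V_v')|^{(d-i)/i}}{|\rdet(\gamma \colon V_v/V_v' \to V_v/V_v')|} = \prod_{v \notin S}\frac{|\rdet(\gamma \colon V_v/V_v' \to V_v/V_v')|}{|\rdet(\gamma \colon V_v' \to V_v')|^{(d-i)/i}}.$$
Each factor here is unchanged if $\gamma$ is scaled by an element of $F_v^{\times}$, so both sides depend only on the image of $\gamma$ in $G(F) = \PGL_V(F)$, and it is legitimate to speak of them for an element of $R_1\Gamma_K R_2$.

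Next I would show that all but finitely many factors on the right-hand side equal $1$. A standard property of the adeles produces a finite set $T$ of places of $F$ outside $S$, depending only on $R_1$, $R_2$, and $K$, such that for $v \notin S \cup T$ the image in $\PGL_d(F_v)$ of every element of $R_1\Gamma_K R_2$ lies in $K_v^0 = \PGL_d(O_v)$; here one uses that $R_1$ and $R_2$ are finite, that each element of $G(F)$ has $v$-component in $K_v^0$ for almost all $v$, and that the compact open subgroup $K$ has $v$-component contained in $K_v^0$ for almost all $v$. For $v \notin S \cup T$, the image $\bar\gamma_v$ of $\gamma$ in $\PGL_d(F_v)$ thus lies in $\PGL_d(O_v)$ and preserves the flag $0 \subsetneq V_v' \subsetneq V_v$ (since $\gamma$ preserves $V'$). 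Any lift $g$ of $\bar\gamma_v$ to $\GL_d(O_v)$ automatically satisfies $gV_v' = V_v'$, so $g$ and $g^{-1}$ stabilize both the lattice $O_v^d \cap V_v'$ in $V_v'$ and the image of $O_v^d$ in $V_v/V_v'$; hence $\rdet(g \colon V_v' \to V_v')$ and $\rdet(g \colon V_v/V_v' \to V_v/V_v')$ lie in $O_v^{\times}$, and the $v$-factor equals $1$.

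For the finitely many $v \in T$, the image $\bar\gamma_v$ lies in the set
$$\Bigg(\bigcup_{\xi \in R_1,\ \eta \in R_2} \bar\xi_v \cdot \mathrm{pr}_v(K) \cdot \bar\eta_v\Bigg) \cap P(F_v),$$
where $\bar\xi_v$ denotes the image of $\xi$ in $\PGL_d(F_v)$, $\mathrm{pr}_v(K)$ the (compact) image of $K$ in $G(F_v)$, and $P \subset \PGL_{V_v}$ the parabolic subgroup stabilizing $0 \subset V_v' \subset V_v$; this set is a closed subset of a compact set, hence compact, and it is independent of $\gamma$. The function $g \mapsto |\rdet(g \colon V_v/V_v')| \cdot |\rdet(g \colon V_v')|^{-(d-i)/i}$ is continuous and strictly positive on $P(F_v)$, so it is bounded above and below by positive constants on this compact set. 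Taking $B$ (resp.\ $A$) to be the product over $v \in T$ of these upper (resp.\ lower) bounds — all other factors being $1$ — yields the lemma. The key idea is the product-formula identity of the first paragraph; I expect the only genuinely fiddly point to be the bookkeeping in the second paragraph, namely isolating the $\gamma$-independent finite set $T$ of places that can contribute a nontrivial factor, while the bound at the places of $T$ is a routine continuity-and-compactness argument.
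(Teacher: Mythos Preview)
Your argument is correct. You use the product formula to swap the product over $S$ for one over its complement, then split that complementary product into a cofinite set of places where the factor is visibly $1$ and a finite set $T$ where you bound the factor by continuity on a compact subset of $P(F_v)$. Each step is sound; in particular your observation that the ratio is scale-invariant, and your lattice argument that a lift to $\GL_d(O_v)$ stabilizes $O_v^d\cap V'_v$, are exactly what is needed.

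The paper organizes the same ingredients differently and in fact proves a sharper statement. Rather than bounding, it fixes one $\gamma_0$ in each coset $\xi\Gamma_K\eta$ and notes that the image of $\gamma\gamma_0^{-1}$ in $G(\mb{A}_F^S)$ lies in the compact subgroup $\xi K\xi^{-1}\cap P_{\{i\}}(\mb{A}_F^S)$; since the ratio in question descends to a continuous homomorphism from this compact group to $\R_{>0}$, it is identically $1$ there, and then the product formula forces the $S$-product for $\gamma\gamma_0^{-1}$ to equal $1$ as well. Thus the $S$-product is \emph{constant} on each $\xi\Gamma_K\eta$, not merely bounded, and one takes $A,B$ to be the minimum and maximum of finitely many such constants. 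Your approach trades this single adelic compactness step for a finite collection of local ones; the payoff of the paper's route is a cleaner conclusion with no place-by-place bookkeeping, while yours has the advantage of being entirely explicit about which places can contribute.
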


\begin{pf}
We may assume that $R_1$ and $R_2$ are one point sets $\{\xi\}$ and $\{\eta\}$, respectively.  Suppose that an element $\gamma_0$ with the stated properties of $\gamma$ exists.  Then for any such $\gamma$,
the image of $\gamma \gamma_0^{-1}$ in $G(F)$ belongs to $\xi \Gamma_K \xi^{-1}\cap P_{\{i\}}(F)$, and hence the image of $\gamma \gamma_0^{-1}$ in $G(\mb{A}_F^S)$ belongs to the compact subgroup $\xi K \xi^{-1} \cap P_{\{i\}}(\mb{A}_F^S)$ of $P_{\{i\}}(\mb{A}_F^S)$. Hence 
$$|\rdet(\gamma \gamma^{-1}_0: V'_v \to V_v')|=|\rdet(\gamma \gamma_0^{-1} : V_v/V'_v\to V_v/V'_v)|=1$$
for every place $v$ of $F$ which does not belong to $S$. By Lemma \ref{lemdet} and the product formula, we have 
$$\prod_{v\in S} (|\rdet(\gamma \gamma^{-1}_0\colon V'_v \to V_v')|^{(d-i)/i}\cdot |\rdet(\gamma \gamma_0^{-1} \colon V_v/V'_v\to V_v/V'_v)|^{-1})=1,$$
so the value of the product in the statement is constant under our assumptions, proving the result.
\end{pf}

\begin{sbprop}\label{kep1}  
Fix $c_1, c_2\in \R_{>0}$ and finite subsets
$R_1, R_2$ of $G(F)$. 
Then there exists $A>1$ such that if $x \in \frak S(c_1,c_2) \cap \gamma^{-1} \frak S(c_1,c_2)$ for some $\gamma\in 
 R_1\Gamma_K R_2$, then 
 $$
 	A^{-1}t_{v,i}(x)\leq t_{v,i}(\gamma x) \leq At_{v,i}(x)
$$ for all $v\in S$ and $1\leq i\leq d-1$. 
\end{sbprop}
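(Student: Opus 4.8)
The plan is to reduce first to interior points $x\in X_S$, and then, for interior $x$, to bound the ratios $\rho_{v,i}:=t_{v,i}(\gamma x)/t_{v,i}(x)$ by combining Proposition \ref{red2cc} with the determinant formulas \ref{lemdet} and \ref{lemdet2}. For the reduction: write $x=(P_J,\mu)$ with $J$ its parabolic type. Since $x$ and $\gamma x$ both lie in $\frak S(c_1,c_2)\subset\bar X_{F,S}(B)$, Lemma \ref{kep20} shows their associated parabolics are standard parabolics (containing $B$); being conjugate via $\gamma$, they coincide, so $\gamma$ normalizes $P_J$ and hence lies in $P_J(F)$, acting on each graded piece $V_j/V_{j-1}$ of the corresponding flag. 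For $i\in J$ both $t_{v,i}(x)$ and $t_{v,i}(\gamma x)$ vanish and there is nothing to prove. For $i\notin J$, the numbers $t_{v,i}(x)$ and $t_{v,i}(\gamma x)$ are exactly the $\phi'$-coordinates of the points of $X_{(V_j/V_{j-1})_S}$ determined by $x$ and $\gamma x$ on the graded piece containing $i$, and those two points lie in a Siegel set of the shape $\frak S(c_1,c_2)$ for $\PGL_{V_j/V_{j-1}}$; so the desired bound for such $i$ follows from the interior case of the proposition for $\PGL_{V_j/V_{j-1}}$, after the familiar reparametrization of $\gamma$ (fix one witnessing element and absorb $R_1,R_2$ into a smaller compact open subgroup, as in Claim 6 of the proof of \ref{red2cc}). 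Since there are only finitely many pairs (parabolic type, graded piece), it suffices to produce a uniform $A$ in the interior case.

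For interior $x$, I would argue as follows. Apply Proposition \ref{red2cc} to the data $(R_1,R_2)$ and to $(R_2^{-1},R_1^{-1})$, for every subset of $\{1,\dots,d-1\}$, and take $c_3\in\R_{>0}$ small enough that all the resulting inclusions hold. Given $x\in X_S\cap\frak S(c_1,c_2)$ and $\gamma\in R_1\Gamma_K R_2$ with $\gamma x\in\frak S(c_1,c_2)$, set $I=\{i:\min_{v\in S}t_{v,i}(x)\le c_3\}$ and $I'=\{i:\min_{v\in S}t_{v,i}(\gamma x)\le c_3\}$, so that $x\in\frak S_I(c_1,c_2,c_3)$ and $\gamma x\in\frak S_{I'}(c_1,c_2,c_3)$. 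Then \ref{red2cc} gives $\gamma\in P_I(F)$, and applied to $\gamma^{-1}$ and $\gamma x$ it gives $\gamma\in P_{I'}(F)$; hence $\gamma\in P_{I''}(F)$ with $I''=I\cup I'$. For $j\notin I''$ we have $c_3<t_{v,j}(x)\le c_1$ and $c_3<t_{v,j}(\gamma x)\le c_1$, so $\rho_{v,j}$ lies between $c_3/c_1$ and $c_1/c_3$. For each $k\in I''$, a lift of $\gamma$ to $\GL_V(F)$ stabilizes $V'_k=\sum_{l\le k}Fe_l$, so multiplying the identity of \ref{lemdet} over $v\in S$ and invoking \ref{lemdet2} yields
$$\prod_{v\in S}\prod_{j=1}^{d-1}\rho_{v,j}^{e(k,j)}\in[A_k,B_k]$$
with $A_k,B_k$ depending only on $d,R_1,R_2,K$; separating off the bounded factors with $j\notin I''$ turns this into $\prod_{v\in S}\prod_{j\in I''}\rho_{v,j}^{e(k,j)}\in[A'_k,B'_k]$ for every $k\in I''$.

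Finally, to extract individual bounds, I would use that the Siegel conditions for $x$ and $\gamma x$ force $\rho_{v,j}\le c_2^2\rho_{w,j}$ for all $v,w,j$, so every $\rho_{v,j}$ lies within a factor $c_2^2$ of the geometric mean $\bar\rho_j=(\prod_{v\in S}\rho_{v,j})^{1/s}$, $s=\sharp S$; thus it is enough to bound $\bar\rho_j$ for $j\in I''$. Taking logarithms, the constraints above become the linear system $\sum_{j\in I''}e(k,j)\log\bar\rho_j\in[s^{-1}\log A'_k,\,s^{-1}\log B'_k]$, $k\in I''$. Here $e(i,j)$ equals $\tfrac{d}{i}$ times the $(i,j)$-entry $\tfrac{\min(i,j)(d-\max(i,j))}{d}$ of the inverse Cartan matrix of type $A_{d-1}$, so the matrix $(e(k,j))_{k,j\in I''}$ is a positive diagonal matrix times a principal submatrix of a positive definite matrix, hence invertible; solving bounds the $\log\bar\rho_j$ in terms of the given data only. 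Then $A$ is the maximum of the finitely many bounds and their reciprocals arising over all choices of $I''$ and $k$, and over the finitely many cases of the reduction step, enlarged if necessary so that $A>1$.

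The step I expect to be the main obstacle is this last one: each equality coming from \ref{lemdet} couples all of the $\rho_{v,j}$ simultaneously, so one must check that the $\sharp I''$ constraints indexed by $k\in I''$, together with the cross-place comparisons supplied by the Siegel conditions, really determine every individual $\rho_{v,i}$ up to bounded factors — and this is precisely the invertibility of the relevant principal submatrix of the inverse Cartan matrix of $A_{d-1}$. The bookkeeping in the reduction to interior points (matching each $i\notin J$ to its graded piece, verifying the reparametrization of $\gamma$) also requires some care, but it follows the pattern of the proofs of Propositions \ref{red21} and \ref{red2cc}.
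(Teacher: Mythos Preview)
Your proposal is correct and uses the same key inputs as the paper (\ref{red2cc}, \ref{lemdet}, \ref{lemdet2}), but differs from the paper's proof in two places.

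First, for the reduction to interior points $x\in X_S$, the paper simply invokes a limit argument: the functions $t_{v,i}$ and $t_{v,i}(\gamma\,\cdot\,)$ are continuous on $\bar X_{F,S}(B)$, and $X_S$ is dense in $\frak S(c_1,c_2)$, so once the inequality holds on $X_S$ it passes to the closure. Your reduction via the parabolic type $J$, the graded pieces $V_j/V_{j-1}$, and the reparametrization trick from Claim~6 of \ref{red2cc} also works, but is considerably more elaborate than necessary.

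Second, and more interestingly, your extraction of the individual bounds from the product constraints is organized differently. The paper fixes a single $v$, uses the $c_2$-comparison across places to turn \ref{lemdet2} into a bound $(T_i)$ on $\prod_j s_j^{e(i,j)}$ at that one place, and then runs an \emph{inductive} argument $(S_0)\Rightarrow(S_1)\Rightarrow\cdots\Rightarrow(S_{d-1})$: at each step where $i\in I$ has a predecessor $i'\in I$, the key observation is $e(i,j)=e(i',j)$ for $j\ge i$, so dividing $(T_{i'})$ by $(T_i)$ eliminates all $j\ge i$ and, together with the already-established bounds on $s_j$ for $j<i$ with $j\ne i'$, pins down $s_{i'}$. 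You instead package all $\sharp I''$ constraints as a single linear system in $\log\bar\rho_j$ and invoke invertibility of the coefficient matrix $(e(k,j))_{k,j\in I''}$, which you recognize as a positive diagonal times a principal submatrix of the inverse Cartan matrix of $A_{d-1}$. This is a cleaner conceptual explanation of \emph{why} the constraints suffice; the paper's induction is effectively a hand-rolled Gaussian elimination exploiting the same structure. Both arguments produce uniform bounds over the finitely many possible $I''$.
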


\begin{pf} By a limit argument, it is enough to consider $x \in X_S\cap \frak S(c_1,c_2)$. Fix $v\in S$. For $x,x' \in X_S\cap \frak S(c_1,c_2)$ and $1\leq i\leq d-1$,
 let $s_i(x,x')= t_{v,i} (x')t_{v,i}(x)^{-1}$. 
 
 For each $1\leq i\leq d-1$, take $c_3(i)\in \R_{>0}$ satisfying the condition in \ref{red2cc} for the set $I=\{i\}$ and both pairs
 of finite subsets $R_1,R_2$ and $R_2^{-1}, R_1^{-1}$ of $G(F)$. 
 Let 
 $$
 	c_3 = \min\{c_3(i) \mid 1\leq i\leq d-1\}.
$$
   For a subset $I$ of $\{1,\dots, d-1\}$, let $Y(I)$ be the subset of $(X_S\cap \frak S(c_1, c_2))^2$ consisting of all pairs $(x,x')$ such that $x'=\gamma x$ for some $\gamma\in R_1\Gamma_K R_2$ and such that
$$
	I= \{1\leq i\leq d-1 \mid \min(t_{v,i}(x) , t_{v,i}(x')) \leq c_3 \}.
$$ 

For the proof of \ref{kep1}, it is sufficient to prove the following statement $(S_d)$, fixing $I$.

\begin{enumerate}
\item[$(S_d)$] There exists $A>1$ such that 
$A^{-1}\leq s_i(x,x') \leq A$
for all $(x,x')\in Y(I)$ and $1\leq i\leq d-1$. 
\end{enumerate}

By Proposition \ref{red2cc}, if $\gamma \in R_1\Gamma_K R_2$ is such that there exists $x \in X_S$ with
$(x,\gamma x)\in Y(I)$, then $\gamma \in P_{\{i\}}(F)$ for all $i \in I$.
Lemmas \ref{lemdet} and \ref{lemdet2} then imply the following for all $i \in I$, noting that $c_2^{-1}t_{w,i}(y) \le 
t_{v,i}(y) \le c_2t_{w,i}(y)$ for all $w \in S$ and $y \in X_S \cap \frak S(c_1,c_2)$.
\begin{enumerate}
\item[($T_i$)]  There exists $B_i > 1$ such that for all $(x,x')\in Y(I)$,  we have  
$$B_i^{-1} \leq \prod_{j=1}^{d-1} s_j(x,x')^{e(i,j)} \leq B_i,$$
where $e(i,j)$ is as in \ref{lemdet}. 
\end{enumerate}

We prove the following statement ($S_i$) for $0\leq i\leq d-1$ by induction on $i$. 
\begin{enumerate}
\item[$(S_i)$] There exists $A_i>1$ such that
$A_i^{-1}\leq s_j(x,x') \leq A_i$
for all $(x,x')\in Y(I)$ and all $j$ such that $1\leq j\leq i$ and $j$ is not the largest element of $I\cap \{1,\dots,i\}$ (if it is nonempty). 
\end{enumerate}

That ($S_0$) holds is clear. Assume that ($S_{i-1}$) holds for some $i \ge 1$.
If $i\notin I$, then since $c_3 \leq t_{v,i}(x) \leq c_1$ and $c_3 \leq t_{v,i}(x') \leq c_1$, we have
$$\frac{c_3}{c_1} \leq s_i(x,x') \leq \frac{c_1}{c_3},$$
and hence ($S_i$) holds with $A_i:=\max(A_{i-1}, c_1c_3^{-1})$. 

Assume that $i\in I$. If $I\cap \{1,\dots, i-1\} = \varnothing$, then ($S_i$) is evidently true with $A_i:=A_{i-1}$. 
If $I\cap \{1,\dots, i-1\} \neq \varnothing$, then let $i'$ be the largest element of this intersection. We compare ($T_i$) and ($T_{i'}$). We have $e(i,j)=e(i',j)$ if $j\geq i$ and $e(i,j) < e(i',j)$ if $j < i$, 
so taking the quotient of the equations in $(T_{i'})$ and $(T_i)$, we have 
$$
	(B_iB_{i'})^{-1} \le \prod_{j=1}^{i-1} s_j(x,x')^{e(i',j)-e(i,j)} \le B_iB_{i'}.
$$
Since ($S_{i-1}$) is assumed to hold, there then exists $a \in \R_{>0}$ such that
$$
	(B_iB_{i'})^{-1}A_{i-1}^{-a} \le s_{i'}(x,x')^{e(i',i')-e(i,i')} \le B_iB_{i'}A_{i-1}^a
$$
As the exponent $e(i',i')-e(i,i')$ is nonzero, this implies that ($S_i$) holds.

By induction, we have  ($S_{d-1}$). To deduce ($S_d$) from it, we may assume that $I$ is nonempty, and 
let $i$ be the largest element of $I$. Then  ($S_{d-1}$) and ($T_i$) imply ($S_d$). 
\end{pf}

\begin{sbprop}\label{kep2}

  Let $c_1, c_2\in \R_{>0}$ and $a \in \bar X_{F,S}$. 
Let $I$ be the parabolic type (\ref{parat}) of $a$. 
Fix a finite subset $R$ of $G(F)$ and $1 \le i \le d-1$.
\begin{enumerate}
    \item[(1)] If $i\in I$, then for any $\epsilon > 0$, there exists a neighborhood $U$ of $a$ in $\bar X_{F,S}$ for the Satake topology  	such that $\max\{ t_{v,i}(x) \mid v \in S\} <\epsilon$ for all $x \in (\Gamma_KR)^{-1}U \cap \frak{S}(c_1,c_2)$.
    \item[(2)] If $i\notin I$, then there exist a neighborhood $U$ of $a$ in $\bar X_{F,S}$ for the Satake topology and 
    $c\in \R_{>0}$ such that $\min\{ t_{v,i}(x) \mid v \in S \} \geq  c$ for all $x \in (\Gamma_KR)^{-1}U \cap \frak{S}(c_1,c_2)$.
\end{enumerate}
\end{sbprop}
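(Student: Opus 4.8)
The plan is to prove both statements by contradiction, pushing everything through reduction theory, and to deduce (1) from (2). First I would reduce and set up. By \ref{RGR} we may enlarge $R$ and shrink $K$, and in particular conjugate $a$ by an element of $G(F)$ (absorbed into $K$ and $R$) so that the parabolic attached to $a$ is the standard $P_I$; write $a=(P_I,\mu)\in\bar X_{F,S}(P_I)$. Applying \ref{red21} to $\Gamma_K$, fix a finite $R_0\subset G(F)$, a compact $C_0\subset B_u(\mb{A}_{F,S})$ and $c_1'\ge c_1$, $c_2'\ge c_2$ with $\bar X_{F,S}=\Gamma_KR_0\cdot\frak{S}(C_0;c_1',c_2')$ (so $\frak{S}(c_1,c_2)\subseteq\frak{S}(c_1',c_2')$), and fix one decomposition $a=\delta_0b_0$ with $\delta_0\in\Gamma_KR_0$ and $b_0\in\frak{S}(C_0;c_1',c_2')$; since $b_0=\delta_0^{-1}a$ has parabolic type $I$, its parabolic is $P_I$ by \ref{kep20} and $\delta_0\in P_I(F)$. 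Two facts will be used repeatedly. (i) By \ref{keyc1}, $\psi_{B,S}$ is Satake-continuous into $\frak{Z}_{F,S}(B)\times Y_0$ with the weak topology on $Y_0$, and on $Y_0$ the functions $(t_{v,i})_v\mapsto\min_vt_{v,i}$ and $T_i:=(t_{v,i})_v\mapsto\prod_vt_{v,i}$ are weakly continuous (for the first, $\{\min_vt_v<\eta\}\supseteq\{\prod_vt_v<\eta^{\sharp S}\}$); hence $x\mapsto\min_vt_{v,i}(x)$ and $x\mapsto T_i(x)$ are Satake-continuous on $\bar X_{F,S}(B)$, whereas $\max_vt_{v,i}$ is not — the source of the extra difficulty in (1). (ii) For $i\notin I$, $t_{v,i}$ is Satake-continuous on $\bar X_{F,S}(P_I)$, since it factors through the Satake-continuous $\phi_{P_I,S}$ of \ref{togr} followed by a continuous function of a norm. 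I also record the parabolic bookkeeping: if $x\in\frak{S}(c_1,c_2)$ has parabolic type $J$ and $\gamma x\in\bar X_{F,S}(P_I)$, then $\gamma P_J\gamma^{-1}\supseteq P_I$ is standard, so $\gamma P_J\gamma^{-1}=P_J$, $\gamma\in P_J(F)$ and $J\subseteq I$; and if also $\gamma x=\delta b$ with $\delta\in\Gamma_KR_0$, $b\in\frak{S}(C_0;c_1',c_2')$, then $\delta\in P_J(F)$ and $b$ has parabolic type $J$, so $x$ and $b$ lie in $\frak{S}_J(c_1',c_2',c_3)$ for every $c_3>0$.

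For part (2), suppose no $(U,c)$ works: there are $x_n\in\frak{S}(c_1,c_2)$, $\gamma_n\in\Gamma_KR$ with $\gamma_nx_n\to a$ and $\min_vt_{v,i}(x_n)\to 0$; the Siegel inequality $\max_vt_{v,i}(x_n)\le c_2\min_vt_{v,i}(x_n)$ then forces $t_{v,i}(x_n)\to 0$ for all $v$. Choosing $U\subseteq\bar X_{F,S}(P_I)$, and passing to a subsequence, the $x_n$ have a fixed parabolic type $J\subseteq I$ and $\gamma_n\in P_J(F)$ (so $i\notin J$). Reduce $\gamma_nx_n=\delta_nb_n$ as above, so $\delta_n\in P_J(F)$, $b_n$ has type $J$, and $\delta_n^{-1}\gamma_n\in R_0^{-1}\Gamma_KR$ carries $x_n$ to $b_n$ with both in $\frak{S}(c_1',c_2')$; \ref{kep1} gives a uniform $A>1$ with $A^{-1}t_{v,i}(b_n)\le t_{v,i}(x_n)\le At_{v,i}(b_n)$. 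By fact (ii), $t_{v,i}(\gamma_nx_n)\to t_{v,i}(a)>0$. Passing to a further subsequence, $b_n\to b_\infty\in\frak{S}(C_0;c_1',c_2')$; an application of \ref{red2cc} to the parabolic type $J_\infty$ of $b_\infty$ pins $\delta_0^{-1}\delta_n$ into $P_{J_\infty}(F)$, from which one extracts $J_\infty=I$ and hence $t_{v,i}(b_\infty)>0$; therefore $\liminf_n t_{v,i}(x_n)\ge A^{-1}t_{v,i}(b_\infty)>0$ for each $v$, contradicting $t_{v,i}(x_n)\to 0$.

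For part (1), from part (2): let $i\in I$. By the Siegel inequality $\max_vt_{v,i}(x)\le c_2T_i(x)^{1/\sharp S}$ on $\frak{S}(c_1,c_2)$, it suffices to find $U$ with $T_i$ small on $(\Gamma_KR)^{-1}U\cap\frak{S}(c_1,c_2)$. Arguing by contradiction, with $x_n,\gamma_n$ as before: if $i$ lies in the parabolic type of $x_n$ then $T_i(x_n)=0$, so we may assume the $x_n$ have a fixed type $J\subseteq I$ with $i\notin J$ and, as above, $\gamma_n\in P_J(F)$, $\gamma_nx_n=\delta_nb_n$ with $\delta_n\in P_J(F)$ and $b_n$ of type $J\subseteq I$ (so $b_n\in\bar X_{F,S}(P_I)$), and $T_i(x_n)$ comparable to $T_i(b_n)$ by \ref{kep1}. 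Since $T_i$ is Satake-continuous on $\bar X_{F,S}(B)$ and $T_i(a)=0$, $T_i(\gamma_nx_n)\to 0$; using the fixed reduction $a=\delta_0b_0$ and \ref{red2cc} to constrain $\delta_0^{-1}\delta_n$ to lie in $P_I(F)$ acting with bounded distortion on the $P_I$-coordinates (product formula plus compactness outside $S$), and then Satake-continuity of $\phi'_{P_I,S}$ applied to $\delta_0^{-1}\gamma_nx_n\to b_0$ (whose $\phi'_{P_I,S}$-image is $0$ since $b_0$ has type $I$), one gets $\phi'_{P_I,S}(b_n)\to 0$. Finally, a comparison valid on Siegel sets between the $B$- and $P_I$-coordinates on $\bar X_{F,S}(P_I)$ (built from the quantities $|\bullet:\bullet|$ of \ref{togr2}, cf.\ \ref{xixi}, the relevant exponents being positive and the internal coordinates being bounded by $c_1'$) gives $T_i(b_n)\le C\,\lVert\phi'_{P_I,S}(b_n)\rVert$ for a constant $C$ depending only on $c_1'$ and $d$; hence $T_i(b_n)\to 0$ and $T_i(x_n)\to 0$ — a contradiction.

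The hard part will be the reduction-theoretic bookkeeping in both arguments: controlling which $\delta_n\in\Gamma_KR_0$ can move a neighborhood of $a$ into the bounded Siegel set $\frak{S}(C_0;c_1',c_2')$, and transporting the coordinate estimates of \ref{kep1} through the limit. This is precisely what \ref{red2cc} (together with \ref{red11}) is designed to supply; when the parabolic type $J$ of the $x_n$ is nontrivial, the cleanest route is likely an induction on $d$, applying the statement to the graded pieces of $P_J$ — inside which the restrictions of $x_n$ again lie in Siegel sets — and reassembling, the base case $d=1$ being vacuous.
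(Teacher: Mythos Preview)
Your approach is far more elaborate than the paper's, which dispatches both parts in a single sentence: part (1) ``is clear by continuity of $t_{v,i}$ and the fact that $t_{v,i}(\gamma^{-1}a)=0$ for all $\gamma\in G(F)$'', and part (2) ``follows from \ref{kep1}, noting \ref{red21}''. You reverse the order and argue by contradiction with subsequences.

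There is a genuine gap in your argument for (2). Having written $\gamma_nx_n=\delta_nb_n$ with $b_n\in\frak S(C_0;c_1',c_2')$, you correctly get $t_{v,i}(b_n)\asymp_A t_{v,i}(x_n)$ from \ref{kep1}, hence $t_{v,i}(b_n)\to 0$. But then the limit $b_\infty$, taken in the Borel--Serre compact set $\frak S(C_0;c_1',c_2')$, must satisfy $t_{v,i}(b_\infty)=\lim t_{v,i}(b_n)=0$; that is, $i\in J_\infty$. Since $i\notin I$, your assertion $J_\infty=I$ is impossible, and the claimed contradiction $t_{v,i}(b_\infty)>0$ never materialises. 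The preceding appeal to \ref{red2cc} is also unjustified: it would require $\delta_0^{-1}\delta_nb_n=\delta_0^{-1}\gamma_nx_n$ to lie in a Siegel set, but this element only Satake-converges to $b_0$, and Satake-proximity to a point of $\frak S'$ does not force membership in any $\frak S(c_1'',c_2'')$ (precisely because the individual $t_{v,j}$ with $j\in I$ are not Satake-controlled). Your deduction of (1) inherits the same problem: you need $\delta_0^{-1}\delta_n\in P_I(F)$ to use the $P_I$-equivariance of $\phi'_{P_I,S}$, but $\delta_n\in P_J(F)$ with $P_J\supsetneq P_I$ in general, and the \ref{red2cc} step meant to repair this has the same Siegel-set issue.

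Your fact (ii) --- that $t_{v,i}$ is Satake-continuous on $\bar X_{F,S}(P_I)$ for $i\notin I$, because it factors through the $\frak Z$-component of $\psi_{P_I,S}$ --- is correct and is essentially what the paper's phrase ``continuity of $t_{v,i}$'' encodes. It already gives $t_{v,i}(\gamma_nx_n)\to t_{v,i}(a)>0$, so the contradiction you want is really between this and $t_{v,i}(x_n)\to 0$; the detour through $b_\infty$ discards exactly this information. For (1) the paper's route is genuinely easier and does not pass through (2): on $\frak S(c_1,c_2)$ one has $\max_v t_{v,i}\le c_2\,T_i^{1/\sharp S}$ with $T_i=\prod_v t_{v,i}=(\phi'_{B,S})_i$, and $T_i$ is Satake-continuous on all of $\bar X_{F,S}(B)$ with $T_i(\gamma^{-1}a)=0$ whenever $\gamma^{-1}a\in\bar X_{F,S}(B)$ (its parabolic type is still $I$); this, together with \ref{red21} and one application of \ref{kep1}, is the intended short argument.
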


\begin{pf} The first statement is clear by continuity of $t_{v,i}$ and the fact that $t_{v,i}(\gamma^{-1}a) = 0$ for all
$\gamma \in G(F)$, and the second follows from \ref{kep1}, noting \ref{red21}. 
\end{pf}

\begin{sbprop}\label{lemcc} Let $a\in \bar X_{F,S}$, and let $P$ be the parabolic subgroup of $\PGL_V$ associated to $a$. 
Let $\Gamma_{K,(P)}\subset \Gamma_K$ be as in \ref{top7}. Then  there are
 $c_1,c_2\in \R_{>0}$ and $\varphi\in G(F)$ such that   $\Gamma_{K,(P)} \varphi \frak S(c_1, c_2)$ is a neighborhood of $a$ in $\bar X_{F,S}$ for the Satake topology.  
 
\end{sbprop}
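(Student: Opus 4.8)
The plan is to reduce to the case of a standard parabolic, then to place $a$ deep inside a Siegel set by reduction theory, and finally to read off the required neighborhood from the product description of $\bar X_{F,S}(P)$ near $a$. First I would reduce to $P=P_I$ for a standard parabolic $P_I$ ($I\subseteq\{1,\dots,d-1\}$ as in \ref{redb}), and to $\varphi\in P_I(F)$. Writing $P=\varphi_1P_I\varphi_1^{-1}$ with $\varphi_1\in G(F)$, the point $\varphi_1^{-1}a$ has associated parabolic $P_I$; since every element of $G(F)$ acts on $\bar X_{F,S}$ by a homeomorphism and, directly from \ref{top7}, $\varphi_1\,\Gamma_{\varphi_1^{-1}K\varphi_1,(P_I)}\,\varphi_1^{-1}=\Gamma_{K,(P)}$, it suffices to prove the statement for $(\varphi_1^{-1}a,\,P_I,\,\varphi_1^{-1}K\varphi_1)$, taking $\varphi=\varphi_1\varphi'$ with $\varphi'$ the element produced in the reduced case. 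So assume $P=P_I$ and that $a$ has associated parabolic exactly $P_I$.

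Next I would invoke reduction theory. Applying \ref{red21} to $\Gamma_K$ itself produces $c_1,c_2$ and a finite $R\subseteq G(F)$ with $\bar X_{F,S}=\Gamma_KR\,\frak S(c_1,c_2)$; applying \ref{red21} component by component to the Levi quotient $P_I/P_{I,u}$, a product of groups $\PGL_{V_i/V_{i-1}}$, shows that for $c_1,c_2$ large enough the ``Levi Siegel set'' $\frak s:=\phi_{P_I,S}\bigl(\frak S(c_1,c_2)\cap\bar X_{F,S}(P_I)\bigr)\subseteq\frak Z_{F,S}(P_I)$ satisfies $\frak Z_{F,S}(P_I)=(\text{arithmetic subgroup of the Levi})\cdot\frak s$. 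Hence, enlarging $c_1,c_2$ if necessary and replacing $a$ by $p^{-1}a$ and $K$ by $p^{-1}Kp$ for a suitable $p\in P_I(F)$ — which is harmless because $p^{-1}P_Ip=P_I$ and $p^{-1}\Gamma_{K,(P_I)}p=\Gamma_{p^{-1}Kp,(P_I)}$ — I may assume in addition that $\mu:=\phi_{P_I,S}(a)$ lies in the interior of $\frak s$. It then remains to show that, for $a$ so positioned, $\Gamma_{K,(P_I)}\,\frak S(c_1,c_2)$ is a Satake neighborhood of $a$ (with $\varphi=1$).

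For the core step I would work with the parametrization $P_{I,u}(\mb A_{F,S})\bs\bar X_{F,S}(P_I)\xrightarrow{\sim}\frak Z_{F,S}(P_I)\times Y_0$ of \ref{keyp} and \ref{keyc1}, with $Y_0$ the space of \ref{togr9} for $P_I$ carrying the weak topology of \ref{weaktopY}; under this, by \ref{Sat5}, the basic Satake neighborhoods of $a$ are the preimages of $U_\mu\times W_0$, where $U_\mu\ni\mu$ is open and $W_0$ is a weak neighborhood of $0$. By \ref{xidef}, \ref{keyp}, \ref{xi2}, \ref{xixi} and the definitions, the image of $\frak S(c_1,c_2)\cap\bar X_{F,S}(P_I)$ in $\frak Z_{F,S}(P_I)\times Y_0$ is, near $(\mu,0)$, of the form $\frak s_0\times\frak t$ with $\frak s_0\ni\mu$ open in $\frak s$ (using that $\frak S(c_1,c_2)$ is saturated under $P_{I,u}(\mb A_{F,S})$, together with, in the non-archimedean case, the Iwahori/simplex redundancy built into $\pi_{P_I,v}$) and $\frak t=\{(t_{v,i})_{i\in I}\in Y_0:\ t_{v,i}\le c_1,\ t_{v,i}\le c_2t_{w,i}\}$. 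The set $\frak t$ is not itself a weak neighborhood of $0$; however $\Gamma_{K,(P_I)}$ acts trivially on $\frak Z_{F,S}(P_I)$ and acts on $Y_0$ through the center of $P_I/P_{I,u}$ by the torus action of \ref{togr2}, and by the $S$-unit theorem this action is cocompact modulo the product map $Y_0\to\R_{\ge0}^{|I|}$ (the estimates of \ref{kep1} and Lemmas \ref{lemdet}, \ref{lemdet2} supply the needed bookkeeping). Consequently $\bigcup_{\delta\in\Gamma_{K,(P_I)}}\delta\,\frak t$ contains a weak neighborhood of $0$, and since $\mu\in\frak s_0$ we conclude that $\Gamma_{K,(P_I)}\,\frak S(c_1,c_2)$ contains the preimage of $\frak s_0\times\bigl(\bigcup_\delta\delta\,\frak t\bigr)$, which is a Satake neighborhood of $a$.

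I expect the main obstacle to be precisely the verification that $\frak s$ is a neighborhood of its interior points in $\frak Z_{F,S}(P_I)$ — that a Siegel set for the Levi, taken with full unipotent part and (non-archimedeanly) with the simplex/Iwahori redundancy of the building, actually fills out a neighborhood — together with arranging the reduction-theoretic $\varphi$ so that $\mu$ lands in that interior rather than on a wall of the building. This is exactly the place where the combinatorics of the Bruhat–Tits building interacts with the Satake topology, and it is where \ref{red2cc} and \ref{kep2}, controlling which $\Gamma_KR$-translates can meet a small neighborhood of $a$, are likely to be needed to push the argument through.
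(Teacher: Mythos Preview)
Your approach is correct in substance and reaches the same conclusion by essentially the same mechanism, but it is far more elaborate than the paper's argument. The paper disposes of the proposition in three sentences: for $a\in\bar X_{F,S}(B)$ the claim ``holds by definition of the Satake topology'' with $\varphi=1$, and the general case is obtained by writing the parabolic of $a$ as $\varphi P_I\varphi^{-1}$ and applying the first case to $\varphi^{-1}a\in\bar X_{F,S}(P_I)\subset\bar X_{F,S}(B)$. The phrase ``by definition'' is doing exactly the work you unpack in your core step: by \ref{Sat5} a basic Satake neighborhood of $a=(P_I,\mu)$ is $\psi_{P_I,S}^{-1}(U_\mu\times[0,\epsilon)^m)$, and one checks directly that such a set sits inside $\Gamma_{K,(P_I)}\,\frak S(c_1,c_2)$ once $c_1,c_2$ are large, the ratio constraint $t_{v,i}\le c_2 t_{w,i}$ on the $I$-coordinates being absorbed by the $S$-unit action of the center of the Levi just as you describe.

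Where your outline differs is in the machinery invoked. The appeal to \ref{red21} on the Levi and the further translation $a\mapsto p^{-1}a$ to place $\mu$ in the interior of the Levi Siegel set are both avoidable: since the Levi Siegel sets exhaust $\frak Z_{F,S}(P_I)$ as $c_1,c_2\to\infty$, one simply chooses $c_1,c_2$ large enough that the $\Delta_i$-coordinates of $\mu$ satisfy the strict inequalities, and no rational translate is needed. Likewise, none of \ref{kep1}, \ref{lemdet}, \ref{lemdet2}, \ref{red2cc}, or \ref{kep2} enter the paper's proof; the only inputs beyond \ref{Sat5} are the description of the $P(F_v)$-action on $\R_{\ge 0}^m$ in \ref{togr2} and the product formula. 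Your worry in the last paragraph about $\frak s$ being a neighborhood of $\mu$ is therefore resolved by enlarging $c_1,c_2$ rather than by reduction theory, and the anticipated use of \ref{red2cc} and \ref{kep2} never materializes.

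One point where your care actually exceeds the paper's: you correctly track that conjugating by $\varphi$ replaces $K$ by $\varphi^{-1}K\varphi$ in $\Gamma_{K,(P_I)}$, whereas the paper's final sentence passes from $\Gamma_{K,(P_I)}$ back to $\Gamma_{K,(P)}$ somewhat tersely. Your handling of this via commensurability is the cleaner justification.
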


\begin{pf} This holds by definition of the Satake topology with $\varphi=1$ if $a\in \bar X_{F,S}(B)$. In general, let $I$ be the parabolic type of $a$. Then the parabolic subgroup associated to $a$ has the form $\varphi P_I\varphi^{-1}$ for some $\varphi\in G(F)$. We have $\varphi^{-1}a \in \bar X_{F,S}(P_I)\subset \bar X_{F,S}(B)$.  By that already
proven case, there exists $\gamma \in \Gamma_{K,(P_I)}$ such that $\Gamma_{K,(P)}\varphi\gamma\frak S(c_1,c_2)$ is a neighborhood
of $a$ for the Satake topoology.
\end{pf}

The following result can be proved in the manner of \ref{lem9} for $\bar X_{F,S}^{\flat}$ below, replacing $R$ by $\{\varphi\}$, and $\Gamma_{K,(W)}$ by $\Gamma_{K,(P)}$.

\begin{sblem}\label{lem8} Let the notation be as in \ref{lemcc}. Let $U'$ be a neighborhood of $\varphi^{-1}a$ in $\bar X_{F,S}$ for the Satake topology. Then there is a neighborhood $U$ of $a$ in $\bar X_{F,S}$ for the Satake topology such that
$$U\subset \Gamma_{K,(P)} \varphi (\frak S(c_1,c_2) \cap U').$$
\end{sblem}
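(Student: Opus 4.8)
The plan is to transport the whole statement down along the homeomorphism of Proposition~\ref{keyc1}, where it becomes an elementary assertion about the action of $\Gamma_{K,(P)}$ on $\frak{Z}_{F,S}(P)\times Y_0$, and then to feed in the reduction-theoretic input already present in Lemma~\ref{lemcc}. This is the same strategy as for \ref{lem9}, with $R$ there replaced by $\{\varphi\}$ and $\Gamma_{K,(W)}$ by $\Gamma_{K,(P)}$.

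I would begin with two harmless reductions. First, since the Satake topology and the $G(F)$-action are compatible (\ref{top5}(2)), applying the homeomorphism $\varphi^{-1}$ we may replace $(a,P,K)$ by $(\varphi^{-1}a,\,P_I,\,\varphi^{-1}K\varphi)$, where $P_I=\varphi^{-1}P\varphi\supset B$ is the parabolic attached to the parabolic type of $a$ and $\varphi^{-1}\Gamma_{K,(P)}\varphi=\Gamma_{\varphi^{-1}K\varphi,\,(P_I)}$; so we may assume $\varphi=1$ and $B\subset P$, with $\Gamma_{K,(P)}\frak S(c_1,c_2)$ a neighborhood of $a$ for the (unchanged) constants $c_1,c_2$. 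Second, by \ref{Sat5} we may assume $U'=\psi_{P,S}^{-1}(O')$ for some open $O'$ containing $\psi_{P,S}(a)=(\mu,0)$ in $\frak{Z}_{F,S}(P)\times Y_0$, the latter carrying the weak topology as in \ref{keyc1}; we may also arrange $U'\subset\bar X_{F,S}(P)$, which is open.

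Next comes the push-down. The set $\frak S(c_1,c_2)$ is stable under $B_u(\mb{A}_{F,S})$, hence under $P_u(\mb{A}_{F,S})\subset B_u(\mb A_{F,S})$, and together with $\bar X_{F,S}(P)$ it is also stable under $\Gamma_{K,(P)}$. Since the fibers of $\psi_{P,S}$ are exactly the $P_u(\mb A_{F,S})$-orbits (\ref{keyc1}), setting $T=\psi_{P,S}\big(\frak S(c_1,c_2)\cap\bar X_{F,S}(P)\big)$ gives $\frak S(c_1,c_2)\cap\bar X_{F,S}(P)=\psi_{P,S}^{-1}(T)$, and by $\Gamma_{K,(P)}$-equivariance of $\psi_{P,S}$,
$$\Gamma_{K,(P)}\big(\frak S(c_1,c_2)\cap U'\big)=\psi_{P,S}^{-1}\big(\Gamma_{K,(P)}(T\cap O')\big).$$
So it suffices to find an open set $O''$ with $(\mu,0)\in O''\subset\Gamma_{K,(P)}(T\cap O')$; then $U=\psi_{P,S}^{-1}(O'')$ works, again by \ref{Sat5}. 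The same bookkeeping applied to Lemma~\ref{lemcc} shows that $\Gamma_{K,(P)}T$ is itself already a neighborhood of $(\mu,0)$, so the content is that intersecting with $O'$ does not destroy this.

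Finally I would analyze the $\Gamma_{K,(P)}$-action on $Z=\frak{Z}_{F,S}(P)\times Y_0$. Since $\Gamma_{K,(P)}$ has image in the center of the Levi $(P/P_u)(F)$, it acts trivially on $\frak{Z}_{F,S}(P)$ (scalars fix homothety classes of norms) and by coordinatewise scalings on $Y_0$ fixing $(\mu,0)$. Using \ref{xi2} and \ref{xixi}, the set $T$ is, fibered over a neighborhood $\mathcal{O}$ of $\mu$ in $\frak{Z}_{F,S}(P)$, the region $\{t_{v,i}\le c_1,\ t_{v,i}\le c_2t_{w,i}\}$ up to a factor depending continuously and boundedly on the base point; that $\mathcal O$ is a genuine neighborhood of $\mu$ is exactly where the choice of $c_1,c_2$ in \ref{lemcc} enters. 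Intersecting with $O'$ keeps the $\frak{Z}_{F,S}(P)$-projection a neighborhood of $\mu$, so it remains to show that the $\Gamma_{K,(P)}$-orbit of a set of the form $\{t_{v,i}\le c_1,\ t_{v,i}\le c_2t_{w,i}\}\cap\{\prod_{v}t_{v,i}\le\epsilon_i\}$ fills a weak neighborhood of $0$ in $Y_0$, uniformly in the base point. This is the crux, and it follows from the reduction theory behind \ref{lemcc}: the scalings $\gamma\mapsto(\log\rho_{v,i}(\gamma))_{v\in S}$ generate, for each $i$ in the parabolic type, a lattice which together with the hyperplane $\{\sum_v x_{v,i}=0\}$ spans $\mb R^{S}$ (the product $\prod_v\rho_{v,i}(\gamma)$ being bounded over $\gamma\in\Gamma_K$ by the product formula), and the only new constraint $\prod_v t_{v,i}\le\epsilon_i$ is preserved up to a bounded factor by those scalings. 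The main obstacle is precisely this last step — controlling the infinite group $\Gamma_{K,(P)}$ in the $Y_0$-directions — and it is overcome by observing that the weak topology sees only the products $\prod_v t_{v,i}$, so an infinite lattice of scalings still suffices to fill a weak neighborhood of the origin.
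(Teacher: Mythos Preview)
Your reductions and the push-down to $\frak Z_{F,S}(P)\times Y_0$ are fine, but you miss the one observation that makes this a three-line argument, and your substitute for it in the final paragraph has a gap. The paper (following \ref{lem9}) simply takes $\varphi U'$ to be $\Gamma_{K,(P)}$-stable: the basic Satake neighborhoods of $a$ supplied by \ref{Sat5} are preimages from $\frak Z_{F,S}(P)\times\R^m_{\ge 0}$, and $\Gamma_{K,(P)}$ acts trivially on that target---trivially on $\frak Z_{F,S}(P)$ as you note, and trivially on $\R^m_{\ge 0}$ because its image in the scaling group $(\R_{>0})^m$ is a bounded subgroup (compactness of $K$ plus the product formula), hence trivial. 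One then sets $U=\Gamma_{K,(P)}\varphi\,\frak S(c_1,c_2)\cap\varphi U'$; this is a neighborhood by \ref{lemcc}, and for $x=\gamma\varphi s\in U$ with $\gamma\in\Gamma_{K,(P)}$ and $s\in\frak S(c_1,c_2)$ the stability gives $\varphi s=\gamma^{-1}x\in\varphi U'$, whence $s\in\frak S(c_1,c_2)\cap U'$. In your own framework the same remark finishes things immediately: once $O'$ is $\Gamma_{K,(P)}$-stable one has $\Gamma_{K,(P)}(T\cap O')=\Gamma_{K,(P)}T\cap O'$, an intersection of two neighborhoods of $(\mu,0)$.

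Your attempt to bypass this and analyze the $\Gamma_{K,(P)}$-orbit in $Y_0$ directly goes astray at ``a lattice which together with the hyperplane $\{\sum_v x_{v,i}=0\}$ spans $\R^{S}$''. Since $\prod_v\rho_{v,i}(\gamma)$ is bounded (in fact equal to $1$, by the argument just given), the log-image of $\Gamma_{K,(P)}$ lies \emph{inside} that hyperplane, so the statement as written is vacuous. What your route actually requires is that the log-image is \emph{cocompact} in the hyperplane---a Dirichlet-unit-type fact which is true and is implicitly behind \ref{top3}, but which you neither state correctly nor justify. Invoking it here is circuitous when the $\Gamma_{K,(P)}$-stability of $O'$ gives the result at once.
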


\subsection{$\bar X_{F,S}^{\flat}$ and reduction theory}\label{pflat}

\begin{sbpara} Let $S$ be a finite set of places of $F$ containing the archimedean places.  In this subsection, we consider $\bar X_{F,S}^{\flat}$. Fix a basis $(e_i)_{1\leq i\leq d}$ of $V$. Let $B\subset G=\PGL_V$ be the Borel subgroup of upper triangular matrices for $(e_i)_i$. Let $K$ be a compact open subgroup of 
$G(\mb{A}_F^S)$.

\end{sbpara}

\begin{sbpara}\label{kep9}  

Let $c_1,c_2\in \R_{>0}$.  We let $\frak S^{\flat}(c_1,c_2)$ denote the image of $\frak S(c_1,c_2)$ under $\bar X_{F,S}\to \bar X_{F,S}^{\flat}$. 
For $r\in \{1,\dots,d-1\}$, we then define 
$$
	\frak S^{\flat}_r(c_1,c_2) = \{ (W, \mu) \in \frak S^{\flat}(c_1,c_2) \mid \dim(W) \geq r \}.
$$ 
Then the maps $t_{v,i}$ of \ref{tvi} for $v\in S$ and $1\leq i\leq r$ induce maps
\begin{eqnarray*}
	t_{v,i} \colon \frak S^{\flat}_r(c_1,c_2) \to \R_{>0} \ \ (1\leq i\leq r-1) &\text{and}&
	t_{v,r} \colon \frak S^{\flat}_r(c_1,c_2) \to \R_{\geq 0}.
\end{eqnarray*}
For $c_3\in \R_{>0}$, we also set 
$$
 	\frak S_r^{\flat}(c_1,c_2,c_3) = \{ x \in \frak S^{\flat}_r(c_1,c_2) \mid \min\{ t_{v,r}(x) \mid v \in S\} \leq c_3 \}.
$$

\end{sbpara}

\begin{sbprop}\label{flat5}

 Fix $c_1 \in \R_{>0}$ and finite subsets $R_1, R_2$ of $G(F)$. Then there exists $c_3\in \R_{>0}$ such that
 for all $c_2 \in \R_{>0}$, we have
 $$
 	\{\gamma \in R_1\Gamma_KR_2 \mid \gamma \frak S^{\flat}_r(c_1,c_2,c_3) \cap \frak S^{\flat}_r(c_1,c_2)
	\neq \varnothing\} \subset P_{\{r\}}.
 $$
 \end{sbprop}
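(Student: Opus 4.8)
The plan is to reduce \ref{flat5} to \ref{red2cc}, applied to $\PGL_W$ for $W$ ranging over the coordinate subspaces $W^{(k)} = \sum_{j=1}^k Fe_j$ of $V$ with $r \le k \le d$ (the case $W = V$ being \ref{red2cc} itself).

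First I would record the structural fact that drives the argument. Let $p \colon \bar X_{F,S}\to \bar X_{F,S}^{\flat}$ denote the canonical surjection, so that $\frak S^{\flat}(c_1,c_2) = p(\frak S(c_1,c_2))$ by \ref{kep9}. By the construction in \ref{red20}, every element of $\frak S(c_1,c_2)$ is $g$ times a point whose associated flag consists of coordinate subspaces, for some $g \in B_u(\mb{A}_{F,S})$; since $B_u$ fixes each coordinate subspace and lies in the relevant parabolic, such an element has parabolic subgroup $P_J$ with $J$ its parabolic type (as in \ref{kep20}). Consequently, every $(W,\mu) \in \frak S^{\flat}_r(c_1,c_2)$ has $W = W^{(k)}$ with $k = \dim W \ge r$, and $\mu \in \prod_{v\in S} X_{W^{(k)}_v}$, a genuine norm. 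Moreover, taking a lift $\pi_S(g,t) \in \frak S(c_1,c_2)$ of $(W^{(k)},\mu)$, the norm $\mu$ is built from the restriction of $g$ to $W^{(k)}$, which lies in $B_u(\mb{A}_{F,S})$ of $W^{(k)}$, and the coordinates $(t_{v,i})_{v \in S,\ 1 \le i \le k-1}$, which satisfy the defining inequalities of $\frak T(c_1,c_2)$; hence $\mu$ lies in the analogue of $\frak S(c_1,c_2)$, and in the analogue of $X_S$, for $W^{(k)}$. Finally, since $t_{v,r}$ on $\frak S^{\flat}_r(c_1,c_2)$ is by definition induced from $t_{v,r}$ on $\frak S(c_1,c_2)$ (\ref{kep9}), we have $t_{v,r}(W^{(k)},\mu) = t_{v,r}(\mu)$, the right-hand side computed in the theory of $W^{(k)}$ when $r \le k-1$.

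Now let $\gamma \in R_1\Gamma_K R_2$ with $\gamma\frak S^{\flat}_r(c_1,c_2,c_3) \cap \frak S^{\flat}_r(c_1,c_2) \ne \varnothing$, witnessed by $x^{\flat} = (W^{(k)},\mu_0) \in \frak S^{\flat}_r(c_1,c_2,c_3)$ and $\gamma x^{\flat} = (W^{(k')},\mu_0') \in \frak S^{\flat}_r(c_1,c_2)$. By $G(F)$-equivariance of $p$ we have $\gamma W^{(k)} = W^{(k')}$, and comparing dimensions forces $k = k'$, so $\gamma$ preserves $W^{(k)}$. If $k = r$ we are done. If $k > r$, then $\gamma$ induces $\gamma_k \in \PGL_{W^{(k)}}(F)$ (with $\gamma_k = \gamma$ when $k = d$), and by the previous paragraph $\mu_0$ lies in the analogue for $W^{(k)}$ of $\frak S(c_1,c_2) \cap X_S$, with $\min_v t_{v,r}(\mu_0) = \min_v t_{v,r}(x^{\flat}) \le c_3$, while $\gamma_k\mu_0 = \mu_0'$ lies in the analogue of $\frak S(c_1,c_2)$. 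In the notation of \ref{red2cc} for $\PGL_{W^{(k)}}$ this says $\gamma_k \frak S_{\{r\}}(c_1,c_2,c_3) \cap \frak S(c_1,c_2) \ne \varnothing$. One checks that $\gamma_k$ lies in a set of the form $R_1'\Gamma_{K'}R_2'$ inside $\PGL_{W^{(k)}}(F)$ depending only on $R_1, R_2, K$ and $k$: fixing $\xi \in R_1$, $\eta \in R_2$ with $\gamma \in \xi\Gamma_K\eta$ and choosing one $\gamma_0 \in \xi\Gamma_K\eta$ that, like $\gamma$, stabilizes $W^{(k)}$, we get $\gamma\gamma_0^{-1} \in \xi\Gamma_K\xi^{-1}$ and in that stabilizer, so its image in $\PGL_{W^{(k)}}(F)$ lies in $\Gamma_{K''}$ for a suitable compact open $K''$; taking the union over $\xi, \eta$ and applying \ref{RGR} gives the claim. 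Thus \ref{red2cc}, applied to $\PGL_{W^{(k)}}$ with $I = \{r\}$, the given $c_1$, and these finite sets, supplies a constant $c_3^{(k)} > 0$, independent of $c_2$, such that nonemptiness of the above intersection forces $\gamma_k \in P_{\{r\}}(F)$; that is, $\gamma$ stabilizes $W^{(r)}$, i.e., $\gamma \in P_{\{r\}}(F)$.

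It then suffices to take $c_3 = \min\{c_3^{(k)} \mid r < k \le d\}$: shrinking $c_3$ only shrinks $\frak S^{\flat}_r(c_1,c_2,c_3)$, so a witness $x^{\flat}$ of parabolic-type dimension $k$ still satisfies $\min_v t_{v,r}(x^{\flat}) \le c_3^{(k)}$ and the argument above applies; moreover each $c_3^{(k)}$ is independent of $c_2$, so the single constant $c_3$ works for all $c_2$. The step I expect to be the main obstacle — bookkeeping rather than anything deep — is the second and third paragraphs: recognizing that the subspaces appearing in $\frak S^{\flat}_r$ are coordinate subspaces matched up by $\gamma$ (which is exactly what lets us pass to the stabilizer of $W^{(k)}$), and then verifying that restricting a witness to $W^{(k)}$ lands us precisely in the hypotheses of \ref{red2cc} one dimension lower, together with the coset bookkeeping needed to invoke it there.
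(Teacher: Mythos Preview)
Your argument is correct and takes a genuinely different route from the paper. The paper lifts the witness $(W,\mu)$ upward to an element $a=(P,\mu')\in\bar X_{F,S}$ by completing the flag beyond $W=W^{(r')}$ with one-dimensional coordinate steps, asserts that this single $a$ lies in $\gamma\,\frak S_{\{r\}}(c_1,c_2,c_3)\cap\frak S(c_1,c_2)$, and then invokes \ref{red2cc} once for $\PGL_V$ itself. You instead restrict downward: having observed that $\gamma$ preserves $W^{(k)}$, you pass to the induced $\gamma_k\in\PGL_{W^{(k)}}(F)$ and apply \ref{red2cc} inside $\PGL_{W^{(k)}}$ for each $k$ with $r<k\le d$, taking the minimum of the resulting constants. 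The paper's route is shorter and avoids your coset bookkeeping (passing from $R_1\Gamma_K R_2$ in $G(F)$ to suitable finite double cosets in $\PGL_{W^{(k)}}(F)$), but the assertion that $\gamma^{-1}a\in\frak S_{\{r\}}(c_1,c_2,c_3)$ is quite terse, since it requires that the parabolic $\gamma^{-1}P\gamma$ still contain $B$, i.e.\ that $\gamma$ carries each coordinate step $W^{(r'+i)}$ to itself and not merely $W^{(r')}$. Your restriction-to-$W^{(k)}$ approach sidesteps this entirely by never leaving the subspace that $\gamma$ is already known to preserve; the price is the finite minimum over $k$ and the double-coset manipulation, both of which you handle correctly.
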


\begin{pf} 
Take $(W,\mu) \in  \gamma \frak S^{\flat}_r(c_1,c_2,c_3) \cap \frak S^{\flat}_r(c_1,c_2)$, and let $r' = \dim W$.  
Let $P$ be the parabolic subgroup of $V$ corresponding to the
flag $(V_i)_{-1\le i \le d-r'}$ with $V_i = W + \sum_{j=r'+1}^{r'+i} F e_i$ for $0 \le i \le d-r'$.  
Let $\mu' \in \frak{Z}_{F,S}(P)$ be the unique element such that $a = (P,\mu') \in \bar X_{F,S}$ maps to $(W,\mu)$. Then $a \in  \gamma \frak S_{\{r\}}(c_1,c_2,c_3) \cap \frak S(c_1,c_2)$, so we can apply \ref{red2cc}.
\end{pf}

\begin{sbprop}\label{kep3}

 Fix $c_1, c_2\in \R_{>0}$ and finite subsets $R_1, R_2$ of $G(F)$.
Then  
 there exists $A>1$ such that if $x \in \frak S^{\flat}_r(c_1, c_2) \cap \gamma^{-1}\frak S^{\flat}_r(c_1,c_2)$ for some $\gamma\in R_1\Gamma_K R_2$, then
 $$
 	A^{-1}t_{v,i}(x)\leq t_{v,i}(\gamma x) \leq At_{v,i}(x)
$$ 
for all $v\in S$ and $1\leq i\leq r$.

\end{sbprop}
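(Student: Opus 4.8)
The plan is to follow the pattern of the proof of \ref{kep1}, running an induction on $d-1-r$ and, in the borderline case, descending to a Levi subgroup. Throughout, for a point $x$ of $\bar X_{F,S}^{\flat}$ write $W_x$ for the associated $F$-subspace of $V$; note that if $x\in\frak S_r^{\flat}(c_1,c_2)$ then $W_x=\sum_{j=1}^{\dim W_x}Fe_j$, $\dim W_x\ge r$, and $\dim W_{\gamma x}=\dim W_x$ for all $\gamma\in G(F)$. At the outset I would fix, using \ref{flat5}, a value $c_3\in\R_{>0}$ (depending only on $c_1,R_1,R_2$) that works simultaneously for the pairs $(R_1,R_2)$ and $(R_2^{-1},R_1^{-1})$.

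Given $x\in\frak S_r^{\flat}(c_1,c_2)\cap\gamma^{-1}\frak S_r^{\flat}(c_1,c_2)$ with $\gamma\in R_1\Gamma_K R_2$, I would split into two cases. If $\dim W_x\ge r+1$, then $t_{v,r}(x)$ and $t_{v,r}(\gamma x)$ are positive and $x,\gamma x$ both lie in $\frak S_{r+1}^{\flat}(c_1,c_2)$; when $r\le d-2$ the required inequalities for $1\le i\le r$ are then a special case of the proposition for $r+1$ (the inductive hypothesis), and when $r=d-1$ one has $W_x=V$, so $x,\gamma x\in\frak S(c_1,c_2)\cap X_S$ and the inequalities follow from \ref{kep1}. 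If instead $\dim W_x=r$, then $t_{v,r}(x)=t_{v,r}(\gamma x)=0$, so the $i=r$ inequality is trivial; moreover $x\in\frak S_r^{\flat}(c_1,c_2,c_3)$, so $\gamma x\in\gamma\,\frak S_r^{\flat}(c_1,c_2,c_3)\cap\frak S_r^{\flat}(c_1,c_2)$ is nonempty, and \ref{flat5} gives $\gamma\in P_{\{r\}}(F)$. Hence $\gamma$ stabilizes $W:=\sum_{j=1}^rFe_j=W_x=W_{\gamma x}$, inducing $\bar\gamma\in\PGL_W(F)$, and $x=(W,\mu)$, $\gamma x=(W,\bar\gamma\mu)$ for a single $\mu\in X_{W_S}$.

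In this second case I would then carry out the descent to $\PGL_W$. Unwinding \ref{kep9} and the definition of $t_{v,i}$, one checks that $t_{v,i}(x)=t_{v,i}^W(\mu)$ and $t_{v,i}(\gamma x)=t_{v,i}^W(\bar\gamma\mu)$ for $1\le i\le r-1$, where $t_{v,i}^W$ denotes the analogue of $t_{v,i}$ for $\PGL_W$ with its complete flag $Fe_1\subset\cdots\subset W$, and that $\mu$ and $\bar\gamma\mu$ lie in the corresponding reduction region $\frak S^W(c_1,c_2)$ for $\PGL_W$. By the conjugation device from the proof of \ref{red2cc} (Claim 6) — replacing $R_1\Gamma_K R_2\cap P_{\{r\}}(F)$, for each fixed pair of endpoints, by a coset of $\xi K\xi^{-1}\cap P_{\{r\}}(\mb{A}_F^S)$ and projecting to $\PGL_W$ — the image of $\gamma$ in $\PGL_W(F)$ lies in a set of the form $R_1^W\Gamma_{K^W}R_2^W$. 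One may then invoke \ref{kep1} for $\PGL_W$ (legitimate, since $\dim W=r<d$) to bound $t_{v,i}^W(\bar\gamma\mu)/t_{v,i}^W(\mu)$ for $1\le i\le r-1$, which is precisely the content needed. Taking the largest of the finitely many constants produced in the various cases yields a single $A$.

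The main obstacle is the last step: establishing the two compatibilities — of the functions $t_{v,i}$ with their $\PGL_W$-counterparts, and of the reduction region $\frak S_r^{\flat}(c_1,c_2)$ with $\frak S^W(c_1,c_2)$ — and descending the double coset $R_1\Gamma_K R_2\cap P_{\{r\}}(F)$ to the Levi. These are manipulations of the same character as those in \S\ref{redrev} and in the proof of \ref{kep1}, but they are where the actual work lies; everything else is formal.
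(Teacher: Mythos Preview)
Your approach is correct but far more elaborate than what the paper does. The paper's entire proof is the sentence ``This follows from \ref{kep1}.'' The intended reading is that one lifts back to $\frak S(c_1,c_2)$: any $x\in\frak S^{\flat}_r(c_1,c_2)$ is by definition the image of some $\tilde x\in\frak S(c_1,c_2)$, and the functions $t_{v,i}$ for $1\le i\le r$ on $\frak S^{\flat}_r(c_1,c_2)$ are \emph{defined} (in \ref{kep9}) as those induced from $\frak S(c_1,c_2)$; one then uses the same limit argument as in the first line of the proof of \ref{kep1} (reducing to $x\in X_S$, where the projection $\bar X_{F,S}\to\bar X^{\flat}_{F,S}$ is a bijection) so that \ref{kep1} applies verbatim and gives bounds for all $1\le i\le d-1$, in particular for $1\le i\le r$. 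Alternatively, one lifts as in the proof of \ref{flat5}.

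By contrast, you run an induction on $d-1-r$, split off the case $\dim W_x=r$, invoke \ref{flat5} to land in $P_{\{r\}}(F)$, and then descend to the Levi $\PGL_W$ to quote \ref{kep1} there. This works, and the ``main obstacle'' you flag (matching $t_{v,i}$ and $\frak S$ with their $\PGL_W$-analogues, and pushing the double coset to the Levi) can indeed be carried out along the lines you indicate. But none of it is needed: the inequalities for $i\le r$ are already contained in the inequalities for $i\le d-1$ that \ref{kep1} produces on the lift, so there is no reason to separate the boundary case or to pass to a smaller group. Your argument buys nothing extra and costs the verification of the two compatibilities you yourself identify as the real work.
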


\begin{pf}
This follows from \ref{kep1}. 
\end{pf}

We also have the following easy consequence of Lemma \ref{kep20}.

\begin{sblem}\label{kep80}  
Let $a$ be in the image of $\bar X_{F,S}(B) \to \bar X_{F,S}^{\flat}$, and let $r$ be the dimension of the $F$-subspace of $V$ associated to $a$.  Then the $F$-subspace of $V$ associated to $a$ is $\sum_{i=1}^r Fe_i$. 

\end{sblem}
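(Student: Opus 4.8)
The plan is to reduce the statement directly to Lemma \ref{kep20} by unwinding the definition of the canonical surjection $\bar X_{F,S}\to \bar X_{F,S}^{\flat}$. First I would pick a preimage $\tilde a=(P',\mu)\in \bar X_{F,S}(B)$ of $a$, so that $P'$ is a parabolic subgroup of $G=\PGL_V$ with $P'\supset B$. Let $0=V_{-1}\subsetneq V_0\subsetneq\dots\subsetneq V_m=V$ be the flag corresponding to $P'$. By the construction of the map $\bar X_{F,S}\to \bar X_{F,S}^{\flat}$ recalled in \ref{bs7} (it sends $(P',\mu)$ to $(V_0,\mu_0)$), the nonzero $F$-subspace of $V$ associated to $a$ is exactly $V_0$, and therefore $r=\dim(V_0)$. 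Note this uses only that $a$ lies in the image of $\bar X_{F,S}(B)$, not the choice of preimage, since $a$ already records the subspace $V_0$.

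Next I would identify $V_0$ explicitly in terms of the chosen basis. Let $J=\{\dim(V_i)\mid 0\le i\le m-1\}$ be the parabolic type of $\tilde a$ as in \ref{parat}. Since $\tilde a\in \bar X_{F,S}(B)$, Lemma \ref{kep20} gives $P'=P_J$, and by the definition of $P_J$ in \ref{redb} its associated flag consists of $0$, the subspaces $\sum_{j=1}^{i}Fe_j$ for $i\in J$, and $V$. If $J\neq \varnothing$, the smallest nonzero member of this flag is $\sum_{j=1}^{\min J}Fe_j$, so $V_0=\sum_{j=1}^{\min J}Fe_j$ and $r=\min J$; if $J=\varnothing$, then $P'=G$, $V_0=V$, and $r=d$. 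In either case $V_0=\sum_{i=1}^{r}Fe_i$, which is the assertion.

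There is essentially no obstacle here: the only substantive input is Lemma \ref{kep20}, which pins down $P'$ from its parabolic type, together with the normalization in \S\ref{redrev} that $B$ consists of upper triangular matrices with respect to $(e_i)_i$, so that every parabolic containing $B$ has a flag spanned by initial segments of the basis. The one point I would state carefully is the degenerate case $a\in X_S$ (equivalently $P'=G$), handled as above by observing that then $r=d$ and the claimed subspace is all of $V$.
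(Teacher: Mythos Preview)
Your proof is correct and follows exactly the approach the paper indicates: the paper simply remarks that the lemma is an easy consequence of Lemma~\ref{kep20}, and you have spelled out precisely those details, including the degenerate case $P'=G$.
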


\begin{sbprop}\label{kep8} Let $a\in \bar X_{F,S}^{\flat}$ and let $r$ be the dimension of the $F$-subspace of $V$ associated to $a$. Let $c_1,c_2\in \R_{>0}$.
Fix a finite subset $R$ of $G(F)$.
\begin{enumerate}
	\item[(1)]  For any $\epsilon >0$, there exists a neighborhood $U$ of $a$ in $\bar X_{F,S}^{\flat}$ for the Satake topology 	such that $\max\{t_{v,r}(x) \mid v \in S\} <\epsilon$ for all $x\in (\Gamma_K R)^{-1}U \cap \frak S^{\flat}_r(c_1,c_2)$.
	\item[(2)] If $1\leq i <r$, then there exist a neighborhood $U$ of $a$ in $\bar X_{F,S}^{\flat}$ for the Satake topology and $c\in \R_{>0}$ such that $\min\{t_{v,i}(x) \mid v \in S\} \ge c$ for all $x\in (\Gamma_K R)^{-1}U  \cap \frak S^{\flat}_r(c_1,c_2)$.
\end{enumerate}
\end{sbprop}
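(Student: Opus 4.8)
The plan is to follow the proof of Proposition~\ref{kep2} almost verbatim, with \ref{kep3} (and the separation statement \ref{flat5} that stands behind it, in the way \ref{red2cc} stands behind \ref{kep1}) playing the role of \ref{kep1}. First I would reduce to the case in which the $F$-subspace $W$ associated to $a$ is $\sum_{k=1}^r Fe_k$, so that $a$ lies in the image of $\bar X_{F,S}(B)\to\bar X_{F,S}^{\flat}$ and, by \ref{kep80}, is there cut out by the standard flag. This is permissible because $\PGL_V(F)$ acts continuously on $\bar X_{F,S}^{\flat}$ for the Satake topology (\ref{top5}(2)) and the conjugating element can be absorbed into $R$ by \ref{RGR}. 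It also puts us in exactly the dichotomy of \ref{kep2}: every lift of $a$ to $\bar X_{F,S}$ has parabolic type containing $r$ and no $i<r$, which is why $t_{v,r}$ behaves as a boundary coordinate while the $t_{v,i}$ with $i<r$ stay bounded below.

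Two ingredients are needed. The analytic one: after enlarging the radii by \ref{red21} we may assume a single pair of constants serves in all applications of \ref{kep3}, which then gives $t_{v,i}(y)\asymp t_{v,i}(z)$ (all $v\in S$, $1\le i\le r$) whenever $y,z$ lie in a common $\frak S^{\flat}_r$ and $z=\eta y$ with $\eta$ in a fixed coset of the form $\Gamma_KR$; moreover, since $\delta'\in\Gamma_{K,(W)}\subset\Gamma_K$ has image in a fixed compact subgroup of $G(\mb{A}_F^S)$, the product formula (as in \ref{lemdet2}) gives $\prod_{w\in S}t_{w,i}(\delta' z)\asymp\prod_{w\in S}t_{w,i}(z)$ up to a constant depending only on $K$. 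The topological one: the flat analogues of \ref{lemcc} and \ref{lem8}, proved just as those (with $\Gamma_{K,(P)}$ replaced by $\Gamma_{K,(W)}$ and \ref{top1} by \ref{top2}; cf.\ \ref{lem9}), provide $c_1',c_2'$ with $\Gamma_{K,(W)}\,\frak S^{\flat}(c_1',c_2')$ a Satake-neighborhood of $a$, such that every Satake-neighborhood of $a$ contains one of the form $\Gamma_{K,(W)}(\frak S^{\flat}(c_1',c_2')\cap U')$ with $U'$ a Satake-neighborhood of $a$. Finally, on $\frak S^{\flat}_r(c_1',c_2')$ the comparability built into $\frak S^{\flat}$ forces $t_{v,i}\asymp(\prod_{w\in S}t_{w,i})^{1/s}$ for $1\le i\le r$ ($s=\sharp S$), so each such $t_{v,i}$ is continuous there, with $t_{v,r}(a)=0$ and $t_{v,i}(a)>0$ for $i<r$.

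For (1): given $\epsilon>0$, pick a Satake-neighborhood $U'$ of $a$ with $\prod_{w\in S}t_{w,r}<\delta$ on it, and let $U$ be a Satake-neighborhood of $a$ with $U\subseteq\Gamma_{K,(W)}(\frak S^{\flat}(c_1',c_2')\cap U')$ (flat analogue of \ref{lem8}). For $y\in(\Gamma_KR)^{-1}U\cap\frak S^{\flat}_r(c_1,c_2)$, write $\gamma y=\delta' z$ with $\gamma\in\Gamma_KR$, $\delta'\in\Gamma_{K,(W)}$, $z\in\frak S^{\flat}(c_1',c_2')\cap U'$ (so $y$ and $z$ lie in a common $\frak S^{\flat}_r$ once $U'$ is small and the radii are enlarged); then $\prod_w t_{w,r}(z)\asymp\prod_w t_{w,r}(\gamma y)<\delta$ up to a $K$-constant, and \ref{kep3} applied to $z=(\delta'^{-1}\gamma)y$ gives $t_{v,r}(y)\asymp t_{v,r}(z)$, whence $t_{v,r}(y)\lesssim\delta^{1/s}$ for all $v$, so a small enough $\delta$ forces $\max_v t_{v,r}(y)<\epsilon$. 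Part (2) is the same run in the opposite direction: for $i<r$ choose $U'$ with $\prod_w t_{w,i}>\tfrac12\prod_w t_{w,i}(a)>0$ and the identical chain transfers this lower bound to $y$, producing the constant $c$. The real work is in the second paragraph: the Satake topology on $\bar X_{F,S}^{\flat}$ is defined only implicitly (\ref{top2}), the maps $t_{v,i}$ of \ref{kep9} are a priori defined only on $\frak S^{\flat}_r(c_1,c_2)$, and one must check that the flat analogues of \ref{lemcc} and \ref{lem8} hold and that these $t_{v,i}$ extend continuously to a Satake-neighborhood of $a$. The cleanest way to arrange all of this is to transport the corresponding statements for $\bar X_{F,S}$ along the continuous surjection $\bar X_{F,S}\to\bar X_{F,S}^{\flat}$, using that it restricts to a proper---hence quotient---map on the compact pieces $\frak S(C;c_1,c_2)$.
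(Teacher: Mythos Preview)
Your approach is correct and aligns with the paper's intent; the paper's own proof is the single line ``This follows from \ref{kep3}, as in the proof of \ref{kep2},'' and you have essentially reconstructed what that entails. Two remarks for economy. First, the flat analogues of \ref{lemcc} and \ref{lem8} that you say must be proved are already in the paper as \ref{c12R}(2) and (the single-$\varphi$ case of) \ref{lem9}, so you can cite them directly rather than rederive them; with $W=\sum_{k=1}^r Fe_k$ one may take $\varphi=1$ in \ref{c12R}(2). Second, once you have written $\gamma y=\delta' z$ with $\delta'\in\Gamma_{K,(W)}\subset\Gamma_K$ and $z\in\frak S^{\flat}_r(c_1',c_2')$, the element $(\delta')^{-1}\gamma$ lies in $\Gamma_KR$, so a single application of \ref{kep3} gives $t_{v,i}(y)\asymp t_{v,i}(z)$ for all $1\le i\le r$ directly; the detour through $\prod_w t_{w,i}$ and the product-formula argument for $\delta'$ is unnecessary. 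With these simplifications your write-up collapses to exactly the paper's intended argument: continuity of $t_{v,r}$ (resp.\ $t_{v,i}$ for $i<r$) on $\frak S^{\flat}_r(c_1',c_2')$ at $a$ gives the bound on $z$, and \ref{kep3} transfers it to $y$.
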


\begin{pf}
	This follows from \ref{kep3}, as in the proof of \ref{kep2}.
\end{pf}
 
\begin{sbprop}\label{c12R} 
Let $W$ be an $F$-subspace of $V$ of dimension $r \ge 1$.  Let $\Phi$ be set of $\varphi \in G(F)$
such that $\varphi(\sum_{i=1}^r Fe_i) = W$.
\begin{enumerate}
	\item[(1)] There exists a finite subset $R$ of $\Phi$ such that for any $a \in \bar X_{F,S}^{\flat}(W)$,
	there exist $c_1,c_2\in \R_{>0}$ for which the set $\Gamma_{K,(W)} R\frak S^{\flat}_r(c_1,c_2)$ is a neighborhood 
	of $a$ in the Satake topology.  
 	\item[(2)]  For any $\varphi \in \Phi$ and $a \in \bar X_{F,S}^{\flat}$ with associated subspace
	$W$, there exist $c_1,c_2\in \R_{>0}$ such that $a \in \varphi \frak S^{\flat}_r(c_1,c_2)$
	and  $\Gamma_{K,(W)}\varphi\frak S^{\flat}_r(c_1,c_2)$ is a neighborhood 
	of $a$ in the Satake topology.  
\end{enumerate}
\end{sbprop}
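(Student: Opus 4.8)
\emph{Approach.}  I would prove (2) first and then deduce (1) from it.  Since the Satake topology on $\bar X_{F,S}^{\flat}$ is equivariant for $G(F)=\PGL_V(F)$ (\ref{top5}(2)) and independent of the compact open subgroup $K$ (\ref{top2}, \ref{assu5}), and since $\varphi\,\Gamma_{K,(W_0)}\,\varphi^{-1}=\Gamma_{\varphi K\varphi^{-1},(\varphi W_0)}$ for $\varphi\in G(F)$, the statement (2) for $(a,W,\varphi,K)$ is equivalent to the statement (2) for $(\varphi^{-1}a,W_0,1,\varphi^{-1}K\varphi)$, where $W_0:=\sum_{i=1}^r Fe_i$.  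So I may assume $W=W_0$ and $\varphi=1$; then $\Phi$ is the stabilizer $P_{\{r\}}(F)$ of $W_0$, the point $a=(W_0,\mu)$ has associated subspace exactly $W_0$, and — because $\Gamma_{K,(W_0)}$ fixes $W_0$ pointwise — $a$ is a fixed point of $\Gamma_{K,(W_0)}$.  In this normalized situation the argument mirrors that of Proposition \ref{lemcc}: it combines the surjectivity statements of \ref{lst0}, the covering theorem \ref{red21} transported to $\bar X_{F,S}^{\flat}$, the reduction-theoretic results \ref{flat5}, \ref{kep3}, \ref{kep8} (the $\flat$-analogues of \ref{red2cc}, \ref{kep1}, \ref{kep2}), and the fact (\ref{top6}) that $\bar X_{F,S}^{\flat}(W_0)$ is Satake-open.

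\emph{Proof of (2).}  The assertion $a\in\frak S^{\flat}_r(c_1,c_2)$ for suitable $c_1,c_2$ is pure surjectivity: lift $a$ to $\tilde a=(P_{\{r\}},(\mu,\nu))\in\bar X_{F,S}(B)$ for any $\nu$, write each place-component of $\tilde a$ as $\pi_v(g_v,t_v)$ with $g_v\in B_u(F_v)$ and $t_v\in\R^{d-1}_{\geq 0}$ using \ref{lst0}(2) (the zero-set of $t_v$ being forced to equal $\{r\}$), and note that the resulting $(g,t)$ lies in $B_u(\mb{A}_{F,S})\times\frak T(c_1,c_2)$ once $c_1$ dominates the finitely many values $t_{v,i}$ and $c_2\geq 1$ dominates the finitely many ratios $t_{v,i}/t_{w,i}$.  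Enlarging $c_1,c_2$, I may also assume $\bar X_{F,S}^{\flat}=\Gamma_K R_0\cdot\frak S^{\flat}(C;c_1,c_2)$ for a finite $R_0\subset G(F)$ and a compact $C\subset B_u(\mb{A}_{F,S})$, by taking the image under $\bar X_{F,S}\to\bar X_{F,S}^{\flat}$ of the covering of \ref{red21}.  It remains to show $\Gamma_{K,(W_0)}\,\frak S^{\flat}_r(c_1,c_2)$ is a Satake-neighborhood of $a$.  By \ref{flat5} (with $R_1=\{1\}$, $R_2=R_0$) choose $c_3>0$, valid for all $c_2$, so that any $\gamma\in\Gamma_K R_0$ carrying a point of $\frak S^{\flat}_r(c_1,c_2,c_3)$ into $\frak S^{\flat}_r(c_1,c_2)$ preserves $W_0$; by \ref{kep8}(1) (with $R=R_0$, $\epsilon=c_3$) and \ref{kep8}(2) (each $1\leq i<r$) choose a Satake-neighborhood $U_1$ of $a$ so that every $y\in(\Gamma_K R_0)^{-1}U_1\cap\frak S^{\flat}_r(c_1,c_2)$ already lies in $\frak S^{\flat}_r(c_1,c_2,c_3)$ with $t_{v,i}(y)$ bounded below for $i<r$.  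Finally take a Satake-neighborhood $U$ of $a$ inside $U_1\cap\bar X_{F,S}^{\flat}(W_0)$, so small that the restriction to $W_0\otimes F_v$ of the norm data of each $x\in U$ is as close to $\mu_v$ as desired.  For such $x$, any decomposition $x=\gamma\rho y$ coming from the covering has $\dim W_y=\dim W_x\geq r$, hence $y$ lies in the compact-$C$ Siegel set and in $\frak S^{\flat}_r(c_1,c_2,c_3)$; then \ref{flat5} forces $\gamma\rho$ to preserve $W_0$, and the boundedness of $y$ in the Siegel set, the fixed-ness of the $W_0$-restriction up to a small error, and the discreteness of $\Gamma_K$ confine the image of $\gamma$ in $\GL_{W_0}$ to $\{1\}$ after one last shrinking of $U$, so that $\gamma\in\Gamma_{K,(W_0)}$; the finitely many $\rho\in R_0$ that can occur are then absorbed (again using that $a$ is $\Gamma_{K,(W_0)}$-fixed and \ref{flat5}, \ref{kep3}), giving $U\subseteq\Gamma_{K,(W_0)}\,\frak S^{\flat}_r(c_1,c_2)$.

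\emph{Proof of (1), and the main obstacle.}  Part (1) reduces to part (2): normalizing $W=W_0$ as above, there are for each dimension $r'\in\{r,\dots,d\}$ only finitely many $\Gamma_{K,(W_0)}$-orbits of $F$-subspaces $W'\supseteq W_0$ with $\dim W'=r'$ (equivalently, finitely many $\Gamma_K$-conjugacy classes of the relevant parabolics — reduction theory), so one can pick a finite $R\subset\Phi=P_{\{r\}}(F)$ such that every $(W',\mu)\in\bar X_{F,S}^{\flat}(W_0)$ can be written $\gamma\psi b$ with $\gamma\in\Gamma_{K,(W_0)}$, $\psi\in R$, and $b$ a point whose associated subspace is standard and contains $W_0$; applying (2) to $b$ (with the subspace $\psi^{-1}\gamma^{-1}W'$, the group $\psi^{-1}K\psi$, and $\varphi=1$), using $\psi^{-1}\Gamma_{K,(W_0)}\psi=\Gamma_{\psi^{-1}K\psi,(W_0)}$, $\gamma\Gamma_{K,(W_0)}=\Gamma_{K,(W_0)}$, and $\frak S^{\flat}_{r'}(c_1,c_2)\subseteq\frak S^{\flat}_r(c_1,c_2)$, and transporting by $\gamma\psi$, shows $\Gamma_{K,(W_0)}R\,\frak S^{\flat}_r(c_1,c_2)$ is a Satake-neighborhood of $(W',\mu)$ for suitable $c_1,c_2$.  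I expect the one genuinely delicate point — exactly as in the proofs of \ref{lemcc} and \ref{top1} — to be the neighborhood step in (2): shrinking $U$ enough that the reduction-theoretic decomposition $x=\gamma\rho y$ of its points is forced (via \ref{flat5}) to involve only $\gamma$ preserving $W_0$, and then squeezing away the residual $\GL_{W_0}$-ambiguity of $\gamma$ and the residual $\rho\in R_0$ using boundedness of the compact-$C$ Siegel factor together with the discreteness of $\Gamma_K$.  The rest is formal manipulation with Siegel sets and the reduction-theory lemmas of \S\ref{pflat}.
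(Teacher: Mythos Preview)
Your strategy diverges substantially from the paper's, and the core of your argument for (2) contains a circularity that breaks it.

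\textbf{What the paper does.}  The paper's proof is much shorter and purely structural.  After normalizing $W=W_0:=\sum_{i\le r}Fe_i$, it observes (via \ref{kep80}) that the image of $\bar X_{F,S}(B)$ in $\bar X_{F,S}^{\flat}$ meets $\bar X_{F,S}^{\flat}(W_0)$ in exactly $\bigcup_{c_1,c_2}\frak S^{\flat}_r(c_1,c_2)$.  It then notes that the parabolics $Q$ whose smallest subspace contains $W_0$ fall into finitely many $\Gamma_{K,(W_0)}$-orbits (represented by a finite $R\subset\Phi$), so every lift of a point of $\bar X_{F,S}^{\flat}(W_0)$ can be translated by some $\xi\in\Gamma_{K,(W_0)}R$ into $\bar X_{F,S}(B)$; this gives containment, and the neighborhood property follows from the definition of the Satake topology (\ref{top2}).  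Part (2) is the special case where the associated subspace of $a$ equals $W_0$, so the lift already lives in $\bar X_{F,S}(B)$ for \emph{every} $\varphi\in\Phi$.  None of \ref{flat5}, \ref{kep3}, \ref{kep8} are invoked.

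\textbf{Where your argument for (2) fails.}  You take a Satake-neighborhood $U$ of $a$, decompose $x\in U$ as $x=\gamma\rho y$ with $y\in\frak S^{\flat}(C;c_1,c_2)$, check $y\in\frak S^{\flat}_r(c_1,c_2,c_3)$, and then invoke \ref{flat5} to conclude $\gamma\rho\in P_{\{r\}}(F)$.  But \ref{flat5} requires that $(\gamma\rho)y=x$ lie in $\frak S^{\flat}_r(c_1,c_2)$, and you have not established this: you only know $x\in U$, an abstract Satake-neighborhood.  Knowing that $x$ lands in some Siegel set is precisely what you are trying to prove, so the argument is circular.  Even if this step went through, the next sentence --- ``the image of $\gamma$ in $\GL_{W_0}$'' --- is ill-posed: you have shown only that $\gamma\rho$ preserves $W_0$, not that $\gamma$ does (since $\rho\in R_0$ comes from \ref{red21} and need not lie in $P_{\{r\}}$), so there is no well-defined restriction $\gamma|_{W_0}$ to which your discreteness-plus-compactness argument can be applied.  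The subsequent ``absorption'' of the finitely many $\rho$ is correspondingly unsupported.

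\textbf{Deduction of (1) from (2).}  Your reduction of (1) to (2) via finiteness of $\Gamma_{K,(W_0)}$-orbits of overspaces $W'\supseteq W_0$ and the inclusion $\frak S^{\flat}_{r'}\subseteq\frak S^{\flat}_r$ is essentially correct and close in spirit to the paper's finiteness argument; the problem is only that it rests on (2).
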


\begin{pf} 
We may suppose without loss of generality that $W=\sum_{j=1}^r Fe_j$, in which case $\Phi = G(F)_{(W)}$
(see \ref{top7}).
Consider the set $\cal Q$ of all parabolic subgroups $Q$ of $G$ such that $W$ is contained in the smallest
nonzero subspace of $V$ preserved by $Q$.  Any $Q \in \cal Q$ has the form $Q = \varphi P_I \varphi^{-1}$ for some
$\varphi \in G(F)_{(W)}$ and subset $I$ of $J := \{i\in \Z\mid r\leq i\leq d-1\}$.
There exists a finite subset $R$ of $G(F)_{(W)}$ 
such that we may always choose $\varphi \in \Gamma_{K,(W)}R$.  

By \ref{kep80}, an element of $\bar X_{F,S}(B)$ has image in $\bar X_{F,S}^{\flat}(W)$ if and only if 
the parabolic subgroup associated to it has the form $P_I$ for some $I \subset J$.
The intersection of the image of $\bar X_{F,S}(B) \to \bar X_{F,S}^{\flat}$ with 
$\bar X_{F,S}^{\flat}(W)$ is the union of the $\frak S^{\flat}_r(c_1,c_2)$ with $c_1,c_2\in \R_{>0}$.  
By the above, for any $a \in \bar X_{F,S}^{\flat}(W)$, we may choose $\xi \in \Gamma_{K,(W)}R$ such that
$\xi^{-1}a$ is in this intersection, and part (1) follows.
Moreover, if $W$ is the subspace associated
 to $a$, then $\varphi^{-1}a \in \bar{X}_{F,S}^{\flat}(W)$ is in the image of $\bar X_{F,S}(B)$ 
 for all $\varphi \in G(F)_{(W)}$, from which (2) follows.
\end{pf}

\begin{sblem}\label{lem9} Let $W$, $\Phi$, $R$ be as in \ref{c12R}, fix $a \in \bar{X}_{F,S}^{\flat}(W)$, and let
$c_1, c_2 \in \R_{>0}$ be as in \ref{c12R}(1) for this $a$.
For each $\varphi\in R$, let $U_{\varphi}$ be a neighborhood of $\varphi^{-1}a$ in $\bar X_{F,S}^{\flat}$ for the Satake topology. Then there is a neighborhood $U$ of $a$ in $\bar X_{F,S}^{\flat}$ for the Satake topology such that
$$U\subset \bigcup_{\varphi \in R} \Gamma_{K,(W)} \varphi(\frak S^{\flat}_r(c_1,c_2) \cap U_{\varphi}).$$
\end{sblem}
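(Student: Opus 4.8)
The plan is to show that $a$ lies in the interior of $Z:=\bigcup_{\varphi\in R}\Gamma_{K,(W)}\varphi(\frak S^\flat_r(c_1,c_2)\cap U_\varphi)$. Replacing each $U_\varphi$ by a smaller open neighborhood of $\varphi^{-1}a$ only strengthens the conclusion, so we may assume the $U_\varphi$ are open. By \ref{c12R}(1), $U_0:=\Gamma_{K,(W)}R\,\frak S^\flat_r(c_1,c_2)$ is a Satake-neighborhood of $a$, and each $x\in U_0$ has a representation $x=\gamma\varphi y$ with $\gamma\in\Gamma_{K,(W)}$, $\varphi\in R$, $y\in\frak S^\flat_r(c_1,c_2)$. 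If such an $x$ is not in $Z$, then $y\notin U_\varphi$, i.e.\ $y\in B_\varphi:=\frak S^\flat_r(c_1,c_2)\setminus U_\varphi$ (otherwise $y\in\frak S^\flat_r(c_1,c_2)\cap U_\varphi$ and $x=\gamma\varphi y\in Z$). Hence it suffices to produce an open Satake-neighborhood $U\subseteq U_0$ of $a$ disjoint from $\bigcup_{\varphi\in R}\Gamma_{K,(W)}\varphi B_\varphi$; since $R$ is finite, this reduces to showing, for each fixed $\varphi\in R$, that $a\notin\overline{\Gamma_{K,(W)}\varphi B_\varphi}$.

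So fix $\varphi\in R$ and suppose, for contradiction, that $x_n\to a$ with $x_n=\gamma_n\varphi y_n$, $\gamma_n\in\Gamma_{K,(W)}$, $y_n\in B_\varphi$. The first step is to pin down $y_n$ and $\gamma_n$ by reduction theory. By \ref{flat5} (with $R_1=R^{-1}$, $R_2=R$, and the given $c_1$) there is $c_3>0$ with the stated property for every $c_2$; by \ref{kep8}(1), taking $\epsilon\le c_3$, there is a Satake-neighborhood of $a$ such that every $\frak S^\flat_r(c_1,c_2)$-representative $y$ of a point in it satisfies $\max_{v\in S}t_{v,r}(y)<c_3$, so $y_n\in\frak S^\flat_r(c_1,c_2,c_3)$ for $n$ large; applied to $a$ itself this also shows that a reference representation $a=\gamma_a\varphi_a y_a$ with $y_a=(\gamma_a\varphi_a)^{-1}a\in\frak S^\flat_r(c_1,c_2)$ (which exists as $a\in U_0$) has $y_a\in\frak S^\flat_r(c_1,c_2,c_3)$. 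Writing $\delta_n:=(\gamma_a\varphi_a)^{-1}\gamma_n\varphi\in\varphi_a^{-1}\Gamma_{K,(W)}\varphi\subseteq R^{-1}\Gamma_KR$, so that $\delta_n y_n=(\gamma_a\varphi_a)^{-1}x_n\to y_a$, and combining \ref{kep8}(2) (which bounds $t_{v,i}(y_n)$ below for $i<r$) with the coordinate estimates underlying \ref{kep3} and the discreteness of $G(F)$, one finds that $\delta_n$, hence $\gamma_n\varphi$, ranges over a finite set. Passing to a subnet, $\gamma_n\varphi=g$ is constant, so $y_n=g^{-1}x_n\to g^{-1}a$.

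To conclude, it is enough to see that $g^{-1}a=\varphi^{-1}a$ for the surviving $g=\gamma\varphi$: then $y_n\to g^{-1}a=\varphi^{-1}a$, which lies in the open set $U_\varphi$, so $y_n\in U_\varphi$ for $n$ large, contradicting $y_n\in B_\varphi$. Let $W_a$ be the $F$-subspace of $V$ associated to $a$. If $\dim W_a=r$, then $W_a=W$, and every element of $\Gamma_{K,(W)}$ fixes $W$ pointwise and hence fixes $a$; so $\gamma^{-1}a=a$. If $\dim W_a>r$, then by \ref{kep8}(2) the coordinates $t_{v,r}$ are bounded away from $0$ near $a$, and the facts that $g$ occurs in representations of points arbitrarily near $a$, that $y_n\to g^{-1}a\in\overline{\frak S^\flat_r(c_1,c_2)}$, and that $\gamma$ fixes $W$, force $\gamma^{-1}a=a$ by an argument of the type of \ref{red2cc} (two points of a Siegel region related by a group element lie in a common parabolic). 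This yields the contradiction, so $a\notin\overline{\Gamma_{K,(W)}\varphi B_\varphi}$ for every $\varphi\in R$, and the lemma follows.

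The main obstacle is the reduction-theoretic step in the second paragraph (and the related identity in the third). Near the boundary point $a$ the Satake topology controls only the products $\prod_{v\in S}t_{v,r}$, not the individual values $t_{v,r}(\cdot)$ nor their ratios; consequently the Siegel-set statements \ref{flat5} and \ref{kep3}, which are phrased via membership in $\frak S^\flat_r(c_1,c_2)$, cannot be applied to the points $\gamma_n^{-1}x_n$ directly. One must instead argue with the coordinate inequalities genuinely satisfied by the $y_n\in\frak S^\flat_r(c_1,c_2,c_3)$ together with the discreteness of $G(F)$, in the spirit of the proofs of \ref{kep1} and \ref{red2cc}. Establishing that the surviving $g$ fixes $a$ when $\dim W_a>r$ is the second delicate point, and is again of the ``Siegel sets are almost fundamental domains'' type.
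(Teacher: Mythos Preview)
Your approach is far more complicated than necessary, and the gaps you yourself identify in the final paragraph are real. The paper's proof is three lines: after observing that one may assume each $\varphi(U_\varphi)$ is $\Gamma_{K,(W)}$-stable (by shrinking $U_\varphi$), simply set
\[
U=\Gamma_{K,(W)}R\,\frak S_r^{\flat}(c_1,c_2)\cap\bigcap_{\varphi\in R}\varphi(U_\varphi).
\]
This is a Satake-neighborhood of $a$ since each $\varphi(U_\varphi)$ is a neighborhood of $a=\varphi(\varphi^{-1}a)$ and the first factor is a neighborhood by \ref{c12R}(1). For $x\in U$, write $x=\gamma\varphi y$ with $\gamma\in\Gamma_{K,(W)}$, $\varphi\in R$, $y\in\frak S_r^{\flat}(c_1,c_2)$; since $x\in\varphi(U_\varphi)$ and $\varphi(U_\varphi)$ is $\Gamma_{K,(W)}$-stable, $\gamma^{-1}x\in\varphi(U_\varphi)$, so $y=\varphi^{-1}\gamma^{-1}x\in U_\varphi$. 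Done.

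The key insight you missed is the $\Gamma_{K,(W)}$-stability reduction. This is exactly what makes the Satake topology well-suited to such arguments: by its very definition in \ref{top2}, Satake-open neighborhoods can be taken invariant under the relevant $\Gamma_{K,(W')}$. Once the $\varphi(U_\varphi)$ are $\Gamma_{K,(W)}$-stable, the passage from $x$ to $\gamma^{-1}x$ costs nothing, and no reduction theory is needed at all.

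Your route via sequential closure and finiteness of the $\gamma_n$ has two concrete problems. First, the Satake topology is not known to be first-countable, so arguing with sequences rather than nets is unjustified; more importantly, your extraction of a constant subnet requires showing $\{\gamma_n\varphi\}$ is finite, and your justification (``combining \ref{kep8}(2) with the coordinate estimates underlying \ref{kep3} and the discreteness of $G(F)$'') is not an argument---\ref{kep3} bounds ratios $t_{v,i}(\gamma x)/t_{v,i}(x)$ but does not by itself force $\gamma$ into a finite set. Second, in the case $\dim W_a>r$ you appeal to ``an argument of the type of \ref{red2cc}'' to conclude $\gamma^{-1}a=a$, but \ref{red2cc} gives containment in a parabolic, not a fixed-point statement, and $\Gamma_{K,(W)}$ genuinely need not fix $a$ when $W_a\supsetneq W$. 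These are not mere technicalities: they are the heart of what you would need to prove, and you have essentially deferred them.
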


\begin{pf} We may assume that each $\varphi(U_{\varphi})$ is stable under the action of $\Gamma_{K,(W)}$. Let 
$$
U= \Gamma_{K,(W)}R\frak S_r^{\flat}(c_1,c_2) \cap \bigcap_{\varphi\in R} \varphi(U_{\varphi}).
$$ Then $U$ is a neighborhood of $a$ by  \ref{c12R}(1). Let $x\in U$. Take $\gamma \in \Gamma_{K,(W)}$ and $\varphi\in R$ such that  $x\in \gamma \varphi\frak 
S_r^{\flat}(c_1,c_2)$. Since $\varphi(U_{\varphi})$ is $\Gamma_{K,(W)}$-stable, $\gamma^{-1}x \in \varphi(U_{\varphi})$ and hence $\varphi^{-1}\gamma^{-1}x \in \frak S_r^{\flat}(c_1, c_2)\cap U_{\varphi}$.
\end{pf}

\begin{sbprop}\label{flat4} Let $a=(W,\mu)\in \bar X_{F,S}^{\flat}$, and let $r=\dim(W)$. Take $\varphi \in G(F)$ and $c_1,c_2
\in \R_{>0}$ as in \ref{c12R}(2) such that $\Gamma_{K,(W)}\varphi\frak{S}_r^{\flat}(c_1,c_2)$ is a neighborhood of $a$.
Let $\phi_{W,S}^{\flat} \colon \bar X_{F,S}^{\flat}(W) \to \frak{Z}_{F,S}^{\flat}(W)$ be as in \ref{toW}.
For any neighborhood $U$ of $\mu = \phi_{W,S}^{\flat}(a)$ in $\frak{Z}_{F,S}^{\flat}(W)$ and any $\epsilon\in \R_{>0}$, set 
$$
	\Phi(U, \epsilon) =  (\phi_{W,S}^{\flat})^{-1}(U) \cap \Gamma_{K,(W)}\varphi 
	\{ x \in \frak S^{\flat}_r(c_1,c_2) \mid t_{v,r}(x) <\epsilon \text{ for all } v \in S \}.
$$
Then the set of all $\Phi(U,\epsilon)$ forms a base of neighborhoods of $a$ in $\bar X_{F,S}^{\flat}$ under the
Satake topology.
\end{sbprop}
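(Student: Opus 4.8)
Below is a plan for proving Proposition \ref{flat4}. The statement has two halves: (a) each $\Phi(U,\epsilon)$ is a neighborhood of $a$ for the Satake topology, and (b) every Satake neighborhood of $a$ contains some $\Phi(U,\epsilon)$. I would treat them separately, the second being where essentially all the work lies.

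\textbf{Part (a).} Here the argument is short. By \ref{top6} the set $\bar X_{F,S}^{\flat}(W)$ is Satake-open, and by \ref{togrW} (using that $S$ contains all archimedean places) $\phi_{W,S}^{\flat}$ is Satake-continuous, so $(\phi_{W,S}^{\flat})^{-1}(U)$ is a Satake neighborhood of $a$. Next, $\Gamma_{K,(W)}\varphi\frak S_r^{\flat}(c_1,c_2)$ is a Satake neighborhood of $a$ by the choice of $\varphi,c_1,c_2$ (\ref{c12R}(2)); moreover $\varphi^{-1}a\in\frak S_r^{\flat}(c_1,c_2)$ and $t_{v,r}(\varphi^{-1}a)=0$ for all $v$, since the $F$-subspace associated to $\varphi^{-1}a$ is $\sum_{j=1}^rFe_j$ by \ref{kep80}, which lies in the flag of any lift to $\bar X_{F,S}(B)$. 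Applying \ref{kep8}(1) with $R=\{\varphi\}$ gives a Satake neighborhood $U'$ of $a$ with $t_{v,r}(x)<\epsilon$ for all $v$ and all $x\in(\Gamma_K\varphi)^{-1}U'\cap\frak S_r^{\flat}(c_1,c_2)$; then, using that $\Gamma_{K,(W)}$ is a group, one checks that the Satake neighborhood $U'\cap\Gamma_{K,(W)}\varphi\frak S_r^{\flat}(c_1,c_2)$ of $a$ is contained in $\Gamma_{K,(W)}\varphi\{x\in\frak S_r^{\flat}(c_1,c_2)\mid t_{v,r}(x)<\epsilon\ \forall v\}$. Intersecting this with $(\phi_{W,S}^{\flat})^{-1}(U)$ shows $\Phi(U,\epsilon)$ contains a Satake neighborhood of $a$, and $a\in\Phi(U,\epsilon)$ since $\phi_{W,S}^{\flat}(a)=\mu\in U$.

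\textbf{Part (b), reductions.} First I would reduce to $\varphi=1$ and $W=\sum_{j=1}^rFe_j$ by transporting everything along the Satake-homeomorphism of $\bar X_{F,S}^{\flat}$ given by the action of $\varphi^{-1}$ (\ref{top5}(2)) and replacing $K$ by $\varphi^{-1}K\varphi$, which changes neither the Satake topology (\ref{top2}) nor the statement. Given a Satake neighborhood $N$ of $a$, intersecting with $\Gamma_{K,(W)}\frak S_r^{\flat}(c_1,c_2)$ I may assume $N\subseteq\Gamma_{K,(W)}\frak S_r^{\flat}(c_1,c_2)\subseteq\bar X_{F,S}^{\flat}(W)$. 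By the definition of the Satake topology (\ref{top2}), $N$ contains a finite intersection of sets that are Borel--Serre-open in some $\bar X_{F,S}^{\flat}(W')$ and $\Gamma_{K,(W')}$-stable; each such set containing $a$ has $W'\subseteq W$ and is therefore $\Gamma_{K,(W)}$-stable since $\Gamma_{K,(W)}\subseteq\Gamma_{K,(W')}$. Thus I may assume $N$ is Satake-open and $\Gamma_{K,(W)}$-stable. Using that $\phi_{W,S}^{\flat}$ is $\Gamma_{K,(W)}$-equivariant for the trivial action on $\frak{Z}_{F,S}^{\flat}(W)$ (\ref{togrW}), one has $\Phi(U,\epsilon)=\Gamma_{K,(W)}\{x\in\frak S_r^{\flat}(c_1,c_2)\mid t_{v,r}(x)<\epsilon\ \forall v,\ \phi_{W,S}^{\flat}(x)\in U\}$, so it suffices to find $U$ and $\epsilon$ with $\{x\in\frak S_r^{\flat}(c_1,c_2)\mid t_{v,r}(x)<\epsilon\ \forall v,\ \phi_{W,S}^{\flat}(x)\in U\}\subseteq N$.

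\textbf{Part (b), the core claim.} Since $\frak{Z}_{F,S}^{\flat}(W)$ is metrizable, the last statement is equivalent to: if $y_n\in\frak S_r^{\flat}(c_1,c_2)$ with $t_{v,r}(y_n)\to0$ for all $v$ and $\phi_{W,S}^{\flat}(y_n)\to\mu$, then $y_n\to a$ for the Satake topology on $\bar X_{F,S}^{\flat}$ (for then $y_n\in N$ eventually, as $N$ is an open neighborhood of $a$, and $N$ is $\Gamma_{K,(W)}$-stable). To establish this I would lift the $y_n$ to $\frak S(c_1,c_2)\subseteq\bar X_{F,S}(B)$ and use the explicit model of \ref{keyc1}: for the Borel $B$ the factor $\frak{Z}_{F,S}(B)$ is a point, so $\psi_{B,S}$ identifies $B_u(\mb{A}_{F,S})\bs\bar X_{F,S}(B)$ with $Y_0$ carrying the \emph{weak} topology of \ref{weaktopY}, the coordinates of this map being the $t_{v,i}$. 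The hypothesis $\phi_{W,S}^{\flat}(y_n)\to\mu$ controls the coordinates $t_{v,i}(y_n)$ for $i<r$ (these are the ``shape'' of $\phi_{W,S}^{\flat}(y_n)$) together with the relevant unipotent part, while $t_{v,r}(y_n)\to0$ handles the $r$-th coordinate; the coordinates $i>r$ are exactly the data discarded by $\bar X_{F,S}\to\bar X_{F,S}^{\flat}$, and the key mechanism is that the bounds $t_{v,i}\le c_1$ built into $\frak T(c_1,c_2)$ propagate the single condition $t_{v,r}(y_n)\to0$ so that those coordinates become irrelevant for convergence in the weak topology at the limit. Feeding this into \ref{keyc1}, together with the reduction-theoretic comparisons \ref{kep3} and \ref{kep8} to handle uniformity across the places of $S$ and the lower bounds on $t_{v,i}$ ($i<r$) near $a$, yields $y_n\to a$.

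\textbf{Main obstacle.} The difficult step is the core claim: making precise that, \emph{within} $\frak S_r^{\flat}(c_1,c_2)$, the Satake topology near the boundary point $a$ is cut out exactly by ``$\phi_{W,S}^{\flat}$ near $\mu$'' and ``$t_{v,r}$ small''. This is a genuine feature of the Satake topology rather than the Borel--Serre topology (the analogue is false for the latter, cf.\ \ref{HBS}, \ref{toW2}), and it requires carefully bookkeeping, under the projection $\bar X_{F,S}\to\bar X_{F,S}^{\flat}$, which coordinates $t_{v,i}$ are controlled by $\phi_{W,S}^{\flat}$, which one is made to vanish, and which are forgotten — with the weak topology on $Y_0$ and the $\frak T(c_1,c_2)$-bounds as the device that lets one vanishing coordinate dominate the rest.
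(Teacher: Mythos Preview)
Your Part (a) and the reductions at the start of Part (b) are fine; the paper does not even spell out Part (a), and your reduction to a $\Gamma_{K,(W)}$-stable Borel--Serre open neighborhood $N$ is correct.

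The gap is in the ``core claim''. You lift $y_n$ to $\tilde y_n\in\frak S(c_1,c_2)\subset\bar X_{F,S}(B)$ and propose to argue via \ref{keyc1}. But the lifts $\tilde y_n$ have unconstrained behavior in the $V/W$ direction: both the $Q_u$-part (for $Q$ the stabilizer of $W$) and the coordinates $t_{v,i}$ with $i>r$ can move freely within the bounds of $\frak T(c_1,c_2)$. The bounds $t_{v,i}\le c_1$ do \emph{not} make those coordinates ``irrelevant for convergence in the weak topology'': in $Y_0$ there is no mechanism by which $t_{v,r}\to 0$ propagates to the $i>r$ factors, and in the Borel--Serre topology $\tilde y_n$ will in general not converge to any single lift of $a$. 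The tools you cite, \ref{kep3} and \ref{kep8}, only compare $t$-coordinates across the $\Gamma_K$-action and give no control of the fibre over $a$.

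What actually makes the argument go through is precisely the $\Gamma_{K,(W)}$-stability of $N$ combined with reduction theory \emph{for $V/W$}. The group $\Gamma_{K,(W)}$ acts on the $V/W$ direction through an arithmetic subgroup of $\PGL_{V/W}$, so by \ref{red21} applied to $V/W$ one can translate the $V/W$-part into a fixed compact set. This is exactly the paper's proof: it builds a continuous surjection
\[
\lambda_S\colon \Gamma_{K,(W)}R\times C\times \frak Z_{F,S}^{\flat}(W)\times Y_0\longrightarrow \bar X_{F,S}^{\flat}(W),
\]
where $C\subset Q_u(\mb A_{F,S})\times \bar X_{V/W,F,S}(B')$ is compact (obtained from \ref{red21} for $V/W$) and $R\subset G(F)_{(W)}$ is finite, checks that $\lambda_S(\alpha,x,\mu,0)=a$ for all $(\alpha,x)\in R\times C$, and then uses compactness of $C$ and continuity of $\lambda_S$ to produce $U$ and $\epsilon$ with $\lambda_S(\Gamma_{K,(W)}R\times C\times U\times Y_\epsilon)\subset N$, hence $\Phi(U,\epsilon)\subset N$. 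Your sequential framework can be made to work, but only once you insert this reduction-theoretic step to tame the fibre direction; without it the convergence $y_n\to a$ simply does not follow from the data you have.
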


\begin{proof}
    We may suppose that $W = \sum_{i=1}^r Fe_i$ without loss of generality, in which case $\varphi \in G(F)_{(W)}$.
    Let $P$ be the smallest parabolic subgroup containing $B$ with flag $(V_i)_{-1 \le i \le m}$ such that $V_0 = W$
    and $m = d-r$.  Let $Q$ be the parabolic of
    all elements that preserve $W$.  We then have $G\supset Q \supset P\supset B$.
    Let $B'$ be the Borel subgroup of $\PGL_{V/W}$ that is the image of $P$ and which we regard as a subgroup of $G$ using $(e_{r+i})_{1 \le i \le m}$ to split $V \to V/W$.
    
    Let 
    $$
    	f_v \colon Q_u(F_v) \times \bar X_{V/W,F,v}(B') \times X_{W_v} \times \R_{\geq 0} \to \bar X_{F,v}(P)
    $$
    be the unique surjective continuous map such that $\xi = f_v \circ h$, where
    $\xi$ is as in \ref{xidef} and $h$ is defined as the composition
    $$
    	P_u(F_v) \times X_{W_v} \times \R_{\geq 0}^m
	\xrightarrow{\sim} Q_u(F_v) \times B'_u(F_v)\times X_{W_v} \times \R_{\geq 0}\times \R_{\geq 0}^{m-1}
	\to  Q_u(F_v)\times   
    	\bar X_{V/W,F,v}(B')\times X_{W_v} \times \R_{\geq 0}
    $$
    of the map induced by the isomorphism $P_u(F_v) \xrightarrow{\sim} Q_u(F_v) \times B'_u(F_v)$
     and the map induced by the surjection 
    $\bar{\pi}_{B',v} \colon B'_u(F_v)\times \R_{\geq 0}^{m-1}\to \bar X_{V/W,F,v}(B')$
    of \ref{lst0}(2). 
    The existence of $f_v$ follows from \ref{lst0}(4).  
    
     Set $Y_0 = \R_{>0}^S \cup \{(0)_{v \in S}\}$, and let
     $$
     	f_S \colon Q_u(\mathbb{A}_{F,S}) \times \bar X_{V/W,F,S}(B') \times 
    	\frak Z_{F,S}^{\flat}(W) \times Y_0 \to \bar X_{F,S}(P)
     $$
     be the product of the maps $f_v$.
    Let $t_{v,r} \colon \bar X_{F,v}(P) \to \R_{\geq 0}$ denote the composition 
    $$
    	\bar X_{F, v}(P) \to \bar X_{F,v}(B) \xrightarrow{\phi'_{B,v}} \R_{\geq 0}^{d-1} \to \R_{\geq 0},
    $$ 
    where the last arrow is the $r$th projection.  
    The composition of $f_S$ with $(t_{v,r})_{v \in S}$ is projection onto $Y_0$
     by \ref{keyp} and \ref{togr2}.     
    
    Let $\bar X_{F,S}(W)$ denote the inverse image 
    of $\bar X^{\flat}_{F,S}(W)$ under the canonical surjection $\Pi_S \colon \bar X_{F,S} \to \bar X_{F,S}^{\flat}$. 
    Combining $f_S$ with the action of $G(F)_{(W)}$, we obtain a surjective map
    $$
    	f'_S \colon G(F)_{(W)} \times (Q_u(\mathbb{A}_{F,S}) \times \bar X_{V/W,F,S}(B') \times 
    	\frak Z_{F,S}^{\flat}(W) \times Y_0) \to \bar X_{F,S}(W), \qquad f'_S(g,z) = gf_S(z).
    $$
    The composition of $f'_S$ with $\phi^{\flat}_{W,S} \circ \Pi_S$ is projection onto  $\frak{Z}_{F,S}^{\flat}(W)$ by 
    \ref{xi2} and \ref{xixi}.

    Applying \ref{red21} with $V/W$ in place of $V$, there exists a compact subset $C$ of $Q_u(\mb{A}_{F,S}) \times \bar X_{V/W,F,S}(B')$ and a finite subset $R$ of $G(F)_{(W)}$ 
    such that $f'_S(\Gamma_{K,(W)}R \times C \times \frak Z_{F,S}^{\flat}(W) \times Y_0) = \bar{X}_{F,S}(W)$. 
    Consider the restriction of $\Pi_S \circ f'_S$ to a surjective map 
    $$
    	\lambda_S \colon \Gamma_{K,(W)}R \times C \times \frak Z_{F,S}^{\flat}(W) \times Y_0 \to \bar{X}_{F,S}^{\flat}(W).
    $$
    We may suppose that $R$ contains $\varphi$, since it lies in $G(F)_{(W)}$.

    Now, let $U'$ be a neighborhood of $a$ in $\bar X^{\flat}_{F,S}(W)$ for the Satake topology.
    It is sufficient to prove that 
    there exist an open neighborhood $U$ of $\mu$ in $\frak Z_{F,S}^{\flat}(W)$ and $\epsilon\in \R_{>0}$ 
    such that $\Phi(U, \epsilon) \subset U'$.  For $\epsilon \in \R_{>0}$, set 
    $Y_{\epsilon} = \{ (t_v)_{v \in S} \in Y_0 \mid t_v < \epsilon \text{ for all } v \in S \}$.
  
    For any $x \in C$, we have $\lambda_S(\alpha,x,\mu,0) = (W,\mu) \in U'$ for all $\alpha \in R$.
    By the continuity of $\lambda_S$, there exist 
    a neighborhood $D(x) \subset Q_u(\mathbb{A}_{F,S}) \times \bar X_{V/W,F,S}(B')$ of $x$, a neighborhood 
    $U(x) \subset \frak{Z}_{F,v}^{\flat}(W)$ of $\mu$, and $\epsilon(x)\in \R_{>0}$ such that  
    $$
    	\lambda_S(R \times D(x) \times U(x) \times Y_{\epsilon(x)}) \subset U'.
    $$ 
    Since $C$ is compact, some finite collection of
    the sets $D(x)$ cover $C$.  Thus, 
    there exist a neighborhood $U$ of $\mu$ in $\frak{Z}_{F,v}^{\flat}(W)$ and $\epsilon \in \R_{>0}$ such that 
    $\lambda_S(R \times C\times U \times Y_{\epsilon}) \subset U'$.
    Since $U'$ is $\Gamma_{K,(W)}$-stable by \ref{top2}, we have 
    $\lambda_S(\Gamma_{K,(W)}R \times C\times U \times Y_{\epsilon}) \subset U'$.
     
    Let $y \in \Phi(U,\epsilon)$, and write $y= g x$ with $g \in \Gamma_{K,(W)}\varphi$
    and $x \in \frak S^{\flat}_r(c_1,c_2)$ such that $t_{v,r}(x) <\epsilon$ for all $v \in S$.  
    Since $\Phi(U,\epsilon) \subset \bar{X}_{F,S}^{\flat}(W)$,
    we may by our above remarks write $y = \lambda_S(g,c,\nu,t) = g\Pi_S(f_S(c,\nu,t))$, where
    $c \in C$, $\nu = \phi^{\flat}_{W,S}(y)$, and $t = (t_{v,r}(x))_{v \in S}$.  Since $\nu \in U$ and $t \in Y_{\epsilon}$
    by definition, $y$ is contained in $U'$.  Therefore, we have $\Phi(U,\epsilon) \subset U'$.
\end{proof}

\begin{sbeg} \label{flatex}  Consider the case $F=\Q$, $S=\{v\}$ with $v$ the archimedean place, and $d=3$. 
We construct a base of neighborhoods of a point in $\bar X_{\Q,v}^{\flat}$ for the Satake topology.

Fix a basis $(e_i)_{1\leq i\leq 3}$ of $V$. Let $a=(W, \mu)\in \bar X_{\Q,v}^{\flat}$, where $W=\Q e_1$, and 
$\mu$ is the unique element of $X_{W_v}$. 

For $c\in \R_{>0}$, let $U_c$ be the subset of $X_v=\PGL_3(\R)/\PO_3(\R)$ consisting of the elements
$$\begin{pmatrix} 1 &0\\ 0 & \gamma \end{pmatrix} \begin{pmatrix} 1&x_{12}&x_{13}\\
0&1&x_{23}\\
0&0&1 \end{pmatrix}\begin{pmatrix} y_1y_2& 0& 0\\0 & y_2& 0\\ 0&0&1\end{pmatrix}$$
 such that 
$\gamma \in \PGL_2(\Z)$, $x_{ij}\in \R$, $y_1\geq c$, and $y_2\geq \frac{\sqrt{3}}{2}$.
When $\gamma$, $x_{ij}$ and $y_2$ are fixed and $y_1\to \infty$, these elements converge to $a$ in $\bar X_{\Q,v}^{\flat}$ under the Satake topology. When $\gamma$, $x_{ij}$, and $y_1$ are fixed and $y_2\to \infty$, they converge in
the Satake topology to 
$$\mu(\gamma, x_{12}, y_1):= \begin{pmatrix} 1 &0\\ 0 & \gamma \end{pmatrix} \begin{pmatrix} 1&x_{12}&0\\
0&1&0\\
0&0&1 \end{pmatrix}\mu(y_1),$$
where $\mu(y_1)$ is the class in $\bar X_{\Q,v}^{\flat}$ of 
the semi-norm $a_1e_1^*+a_2e_2^*+a_3e_3^*\mapsto (a_1^2y_1^2+a_2^2)^{1/2}$ on $V_v^*$.

The set of  
$$\bar U_c := \{a\}\cup \{\mu(\gamma, x, y)\mid \gamma \in \PGL_2(\Z), x \in \R, y\geq c\}\cup U_c.$$ 
is a base of neighborhoods for $a$ in $\bar X_{\Q,v}^{\flat}$ under the Satake topology.
Note that $\frak H=\SL_2(\Z)\{z\in \frak H\mid\text{Im}(z)\geq \frac{\sqrt{3}}{2}\}$, which is the reason
for the appearance of $\frac{\sqrt{3}}{2}$. 
It can of course be replaced by any $b \in \R_{>0}$ such that $b \leq \frac{\sqrt{3}}{2}$. 

\end{sbeg}

\begin{sbpara}\label{flatex2} We continue with Example \ref{flatex}.  Under the canonical surjection $\bar X_{\Q,v}\to \bar X_{\Q,v}^{\flat}$, the inverse image of $a=(W, \mu)$ in $\bar X_{\Q,v}$ is canonically homeomorphic to $\bar X_{(V/W)_v}=\frak H\cup \mb{P}^1(\Q)$ under the Satake topology on both spaces. This homeomorphism sends 
$x+y_2i\in \frak H$ ($x\in \R, y_2\in \R_{>0}$) to the limit for the Satake topology of 
$$\begin{pmatrix} 1 & 0&0\\0&1&x\\
0&0&1\end{pmatrix}\begin{pmatrix} y_1y_2 &0&0\\
0&y_2&0\\
0&0&1\end{pmatrix} \in \PGL_3(\R)/\PO_3(\R)$$ as $y_1\to \infty$.  (This limit
 in $\bar X_{\Q,v}$ depends on $x$ and $y_2$, but the limit in $\bar X_{\Q,v}^{\flat}$ is $a$.) 
 \end{sbpara}

\begin{sbpara}\label{flat8} In the example of \ref{flatex}, 
we explain that the quotient topology on $\bar X_{\Q,v}^{\flat}$ of the Satake topology on $\bar X_{\Q,v}$ is different from the Satake topology on $\bar X_{\Q,v}^{\flat}$.

For a map 
$$f \colon \PGL_2(\Z)/\begin{pmatrix} 1& \Z\\ 0&1 \end{pmatrix} \to \R_{>0},$$
define a subset $U_f$ of $X_v$ as in the definition of $U_c$ but replacing the condition on $\gamma, x_{ij}, y_i$ by
$\gamma \in \PGL_2(\Z)$, $x_{ij}\in \R$, $y_1\geq f(\gamma)$, and $y_2\geq \frac{\sqrt{3}}{2}$.
Let $$\bar U_f= \{a\}\cup \{\mu(\gamma, x, y) \mid \gamma \in \PGL_2(\Z), x \in \R, y\geq f(\gamma)\}\cup U_f.$$ When $f$ varies, the $\bar U_f$ form a base of neighborhoods of $a$ in $\bar X_{\Q,v}^{\flat}$ for the quotient topology of the Satake topology on $\bar X_{\Q,v}$. 
On the other hand, if $\inf\{f(\gamma)\mid \gamma\in \PGL_2(\Z)\}=0$, then $\bar U_f$ is not a neighborhood of $a$ for the Satake topology on $\bar X_{\Q,v}^{\flat}$. 

\end{sbpara}

\subsection{Proof of the main theorem} 

In this subsection, we prove Theorem \ref{main}.  We begin with the quasi-compactness asserted therein.
Throughout this subsection, we set 
$Z= X_{S_2}\times G(\mb{A}_F^S)/K$ in situation (I) and $ Z =X_{S_2}$ in situation (II), so $\bar{\frak X} = \bar{X} \times Z$.

\begin{sbprop}\label{prop6} In situation (I) of \ref{mains3}, the quotient $G(F) \bs \bar {\frak X}$ is quasi-compact. In situation (II), the quotient $\Gamma \bs \bar {\frak X}$ is quasi-compact for any subgroup $\Gamma$ of $\Gamma_K$ of finite index. 

\end{sbprop}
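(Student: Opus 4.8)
In each case I will produce a compact set $D\subset\bar{\frak X}$ whose translates by the relevant group exhaust $\bar{\frak X}$; the quotient is then the continuous image of $D$ and so is quasi-compact. Three reductions come first. Since $\bar X_{F,S_1}^{\flat}$ is a continuous $\PGL_V(F)$-equivariant image of $\bar X_{F,S_1}$ for both the Borel--Serre and the Satake topologies (by \ref{BStop2} and \ref{flat8}), it suffices to treat $\bar X=\bar X_{F,S_1}$. Next, situation (I) reduces to situation (II): the set $G(\mb{A}_F^S)/K$ is discrete and $G(F)\bs G(\mb{A}_F^S)/K$ is finite (finiteness of $S$-class numbers, using that $S$ contains the archimedean places), so with coset representatives $g_1,\dots,g_h\in G(\mb{A}_F^S)$ one gets a decomposition of $\bar{\frak X}$ into $G(F)$-stable open-closed pieces yielding
$$
G(F)\bs\bar{\frak X}\;=\;\coprod_{i=1}^h \Gamma_i\bs\bigl(\bar X_{F,S_1}\times X_{S_2}\bigr),\qquad \Gamma_i=G(F)\cap g_iKg_i^{-1}=\Gamma_{g_iKg_i^{-1}},
$$
and a finite disjoint union of quasi-compact spaces is quasi-compact. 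Finally, in situation (II) it suffices to show: for every compact open $K\subset G(\mb{A}_F^S)$ and every finite-index subgroup $\Gamma$ of $\Gamma_K$ there is a compact $D$ with $\bar X_{F,S_1}\times X_{S_2}=\Gamma\cdot D$.

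\textbf{Induction on $\sharp S_2$; the base case.}
I would prove this last statement by induction on $\sharp S_2$. When $S_2=\varnothing$ (so $S=S_1$) it is exactly the quasi-compactness contained in Proposition \ref{red21}: that result gives $c_1,c_2$, a compact $C\subset B_u(\mb{A}_{F,S_1})$ and a finite $R\subset G(F)$ with $\bar X_{F,S_1}=\Gamma R\cdot\frak S(C;c_1,c_2)$, and $\frak S(C;c_1,c_2)$ is compact, being the continuous image of the compact set $C\times\frak T(c_1,c_2)$ (the set $\frak T(c_1,c_2)$ is bounded and closed in $(\R_{\ge0}^S)^{d-1}$). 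So $D=R\cdot\frak S(C;c_1,c_2)$ works.

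\textbf{The inductive step.}
Fix $v_0\in S_2$ and put $S'=S\setminus\{v_0\}=S_1\cup(S_2\setminus\{v_0\})$, which still contains the archimedean places, with $\sharp(S_2\setminus\{v_0\})=\sharp S_2-1$. The key point is that $\Gamma$ acts on the Bruhat--Tits building $X_{v_0}$ with fundamental domain a finite union of closed chambers. Indeed, by strong approximation for the simply connected group $\SL_V$ relative to the nonempty set $S'$, the group $\SL_V(F)\cap K$ is dense in $\SL_V(F_{v_0})$; since $\SL_V(F_{v_0})$ acts transitively on the chambers of $X_{v_0}$ and a dense subgroup times an open subgroup exhausts the group, $\Gamma_K$ acts transitively on chambers, so writing $\Gamma_K=\bigsqcup_k\Gamma\tau_k$ with $\tau_k\in\Gamma_K$ (a finite union, as $\Gamma$ has finite index) we get $X_{v_0}=\bigcup_k\Gamma\tau_k\bar\sigma_0$, where $\bar\sigma_0$ is the standard closed chamber. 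Now $\Gamma\cap\Iw(O_{v_0})$ fixes $\bar\sigma_0$ pointwise (the Iwahori is the pointwise stabilizer of the standard chamber), and it equals $\Gamma\cap\Gamma_{\tilde K}$ for $\tilde K=K\times\Iw(O_{v_0})$, a compact open subgroup of $G(\mb{A}_F^{S'})=G(\mb{A}_F^S)\times G(F_{v_0})$; hence $\Gamma\cap\Iw(O_{v_0})$ has finite index in $\Gamma_{\tilde K}$, and the inductive hypothesis applied to it gives a compact $D'\subset\bar X_{F,S_1}\times X_{S_2\setminus\{v_0\}}$ with $\bar X_{F,S_1}\times X_{S_2\setminus\{v_0\}}=(\Gamma\cap\Iw(O_{v_0}))\cdot D'$. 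Given a point of $\bar X_{F,S_1}\times X_{S_2}$, one first moves its $X_{v_0}$-coordinate into some $\tau_k\bar\sigma_0$ by an element of $\Gamma$, then applies $\tau_k^{-1}$ to put that coordinate in $\bar\sigma_0$ (and the other coordinates somewhere in $\bar X_{F,S_1}\times X_{S_2\setminus\{v_0\}}$), then moves those other coordinates into $D'$ by an element of $\Gamma\cap\Iw(O_{v_0})$, which fixes the $X_{v_0}$-coordinate inside $\bar\sigma_0$. Thus $\bar X_{F,S_1}\times X_{S_2}\subset\Gamma_K\cdot(D'\times\bar\sigma_0)$, and absorbing the finite coset set via $\Gamma_K=\bigsqcup_k\Gamma\tau_k$ gives $\bar X_{F,S_1}\times X_{S_2}=\Gamma\cdot\bigl(\bigcup_k\tau_k(D'\times\bar\sigma_0)\bigr)$ with the right-hand compact set, completing the induction.

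\textbf{Main obstacle.}
The substance is in the inductive step, and in particular in the fact that a single non-archimedean building $X_{v_0}$ with $v_0\in S_2$ has compact quotient under the $S$-arithmetic group $\Gamma$ (via strong approximation and transitivity of $\SL_V(F_{v_0})$ on chambers), together with the bookkeeping — pointwise chamber stabilizers, finite-index and finite-coset corrections — needed to feed this into the inductive hypothesis. It is worth noting why one cannot simply run \ref{red21} directly on $\bar X_{F,S_1}\times X_{S_2}$: the Siegel set produced by \ref{red11} is unbounded in the $X_{S_2}$-directions and acquires compact closure only inside $\bar X_{F,S}$, where those directions limit onto boundary strata with smaller parabolic, whereas $\bar X_{F,S_1}\times X_{S_2}$ carries no boundary in the $S_2$-factor. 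The induction on $\sharp S_2$ is what circumvents this, and the genuine reduction theory enters only through the base case, which is \ref{red21}.
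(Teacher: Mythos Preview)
Your proof is correct, but it takes a genuinely different route from the paper's.

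The paper does not induct on $\sharp S_2$. Instead, it handles the whole factor $Z=X_{S_2}$ (or $X_{S_2}\times G(\mb{A}_F^S)/K$) in one stroke: the $i$-simplices of each $X_v$ for $v\in S_2$ are in bijection with $G(F_v)/K'_v$ for suitable compact open $K'_v$, so simplices of $Z$ are indexed by $G(\mb{A}_F^{S_1})/K'$ for various compact open $K'$, and finiteness of $G(F)\backslash G(\mb{A}_F^{S_1})/K'$ gives a compact $D\subset Z$ with $Z=\Gamma D$. The paper then applies \ref{red21} \emph{with $S_1$ in place of $S$} (and a compact open $K'\subset G(\mb{A}_F^{S_1})$) to get $\bar X_{F,S_1}=\Gamma_{K'}R'\,\frak S(C;c_1,c_2)$, and combines the two by a short direct computation: writing $y=\gamma z$ with $z\in D$ and $\gamma^{-1}x=\gamma'\varphi s$, one has $(x,y)=(\gamma\gamma'\varphi)\bigl(s,\varphi^{-1}(\gamma')^{-1}z\bigr)$, which lies in a fixed compact set.

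Your approach replaces this single application of $S_1$-class-number finiteness by an induction that peels off one $v_0\in S_2$ at a time, invoking strong approximation for $\SL_V$ explicitly to get $\Gamma_K$ transitive on chambers of $X_{v_0}$, and exploiting that the Iwahori fixes the standard chamber pointwise to decouple the $v_0$-coordinate from the rest. This is perfectly valid; the pointwise-fixing statement is indeed established in the proof of \ref{lst0}. Your ``main obstacle'' paragraph slightly overstates the difficulty, though: it is true that one cannot run \ref{red21} on the full $S$ and land in $\bar X_{F,S_1}\times X_{S_2}$, but the paper's solution is simply to run \ref{red21} on $S_1$ alone and treat $Z$ separately --- no induction needed. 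What your induction buys is a self-contained argument that never leaves situation (II) once the first reduction is made; what the paper's argument buys is brevity and a uniform treatment of (I) and (II) in parallel.
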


\begin{pf} We may restrict to case (i) of \ref{mains1} that $\bar{X} = \bar{X}_{F,S_1}$, as $\bar{X}_{F,S_1}^{\flat}$ of
case (ii) is a quotient of $\bar{X}_{F,S_1}$ (under the Borel-Serre topology). 
In situation (I), we claim that there exist $c_1,c_2\in \R_{>0}$, a compact subset $C$ of $B_u(\mb{A}_{F,S})$, and a compact subset $C'$ of $Z$ such that $\bar {\frak X}= G(F)(\frak S(C;c_1,c_2) \times C')$. In situation (II), we claim that there exist $c_1,c_2, C, C'$ as above and a finite subset $R$ of $G(F)$ such that $\bar {\frak X}= \Gamma R (\frak S(C;c_1,c_2) \times C')$.  It follows that in situation (I) (resp., (II)), there is a surjective continuous map from 
 the compact space $C\times \frak T(c_1, c_2)\times C'$ (resp., $R\times C\times \frak T(c_1, c_2)\times C'$) onto the quotient space under consideration, which yields the proposition.

For any compact open subgroup $K'$ of $G(\mb{A}_F^{S_1})$, the set $G(F)\bs G(\mb{A}_F^{S_1})/K'$ is finite. 
Each $X_v$ for $v \in S_2$ may be identified with the geometric realization of the Bruhat-Tits building for $\PGL_{V_v}$,
the set of $i$-simplices of which for a fixed $i$ can be identified with $G(F_v)/K'_v$ for some $K'$.
So, we see that in situation (I) (resp., (II)), there is a compact subset $D$ of $Z$ such that $Z= G(F) D$ (resp., $Z= \Gamma D$).  

Now fix such a compact open subgroup $K'$ of $G(\mb{A}_F^{S_1})$.
By \ref{red21}, there are $c_1,c_2\in \R_{>0}$, a compact subset $C$ of $P_u(\mb{A}_{F,S_1})$, and a finite subset $R'$ of $G(F)$ such that $\bar X_{F,S} = \Gamma_{K'} R' \frak S(C;c_1,c_2)$. 
We consider the compact subset $C':= (R')^{-1}K' D$ of $Z$.

Let $(x,y) \in \bar{\frak X}$, where $x\in \bar X_{F,S}$ and $y\in Z$.
Write $y=\gamma z$ for some  $z\in D$ and $\gamma \in G(F)$ (resp., $\gamma \in \Gamma$) in situation (I) (resp., (II)). 
In situation (II), we write $\Gamma \Gamma_{K'} R'= \Gamma R$ for some finite subset $R$ of $G(F)$.   Write $\gamma^{-1}x = \gamma' \varphi s$ where $\gamma'\in \Gamma_{K'}$, $\varphi \in R'$, $s\in \frak S(C;c_1,c_2)$. We have 
  $$
  (x,y)= \gamma (\gamma^{-1}x, z)= 
\gamma (\gamma' \varphi s, z)= (\gamma \gamma' \varphi)(s, \varphi^{-1}(\gamma')^{-1} z).
$$
As $\gamma \gamma' \varphi$ lies in $G(F)$ in situation (I) and in $\Gamma R$ in situation (II), we have the claim.
  \end{pf}

\begin{sbpara}
To prove Theorem \ref{main}, it remains only to verify the Hausdorff property.  For this, 
it is sufficient to prove the following.
\end{sbpara}

\begin{sbprop}\label{ga=b3} Let $\Gamma = G(F)$ in situation (I) of \ref{mains3}, 
and let $\Gamma = \Gamma_K$ in situation (II). 
For every $a, a'\in \bar {\frak X}$, there exist neighborhoods $U$ of $a$ and $U'$ of $a'$ such that
if $\gamma\in \Gamma$ and $\gamma U \cap U'\neq \varnothing$, then $\gamma a=a'$. 

\end{sbprop}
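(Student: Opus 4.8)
The plan is to establish the separation property by reducing to the "Siegel set" picture provided by reduction theory and then playing off the two cases of boundary points against each other. First I would reduce to the case $\bar X = \bar X_{F,S_1}$ (case (i) of \ref{mains1}), since $\bar X_{F,S_1}^{\flat}$ is a continuous quotient of it and the Hausdorff property of a quotient that separates orbit-representatives transfers downward; I would also absorb the factor $Z = X_{S_2} \times G(\mb{A}_F^S)/K$ (resp.\ $X_{S_2}$) into the argument by noting, as in the proof of \ref{prop6}, that $Z$ is covered by $\Gamma$-translates of a compact set, so after translating we may assume the $Z$-components of $a$ and $a'$ lie in a fixed compact set $D$; the obstruction in $Z$ is then handled by the local compactness of the buildings $X_v$ for $v \in S_2$ and the discreteness of $G(F)$ acting on $G(\mb{A}_F^S)/K$.

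Next I would treat the essential case, where $a, a'$ project into $\bar X_{F,S_1}$. Write $a = (P,\mu)$ and $a' = (P',\mu')$. By \ref{red21}, after replacing $a, a'$ by $\Gamma$-translates (which is harmless for the statement), I may assume both lie in a single set $R\,\frak S(C;c_1,c_2)$; since $R$ is finite, it suffices by the usual argument to separate points that lie in $\frak S(C;c_1,c_2)$ itself, at the cost of enlarging the finite sets $R_1, R_2$ of group elements that must be controlled. Let $I$ (resp.\ $I'$) be the parabolic type of $a$ (resp.\ $a'$). The core of the argument is: choose, using \ref{kep2}, Satake-neighborhoods $U$ of $a$ and $U'$ of $a'$ inside $\bar X_{F,S}(P)$ and $\bar X_{F,S}(P')$ respectively, such that on $(\Gamma R)^{-1}U \cap \frak S(c_1,c_2)$ the coordinates $t_{v,i}$ with $i \notin I$ are bounded below, while on $U$ itself the coordinates $t_{v,i}$ with $i \in I$ are forced arbitrarily small, and symmetrically for $a'$. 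Then, shrinking $U, U'$ further by \ref{lemcc} and \ref{lem8} so that they are contained in $\Gamma_{K,(P)}\varphi\,\frak S(c_1,c_2)$ and the analogous set for $a'$, any $\gamma \in \Gamma$ with $\gamma U \cap U' \neq \varnothing$ yields a point $x$ with $x \in \frak S(c_1,c_2) \cap \gamma^{-1}\frak S(c_1,c_2)$ up to the finite correction set; applying \ref{kep1} (the $t_{v,i}$ are comparable up to a fixed constant $A$ under such $\gamma$) together with the incompatible size constraints above forces $I = I'$ and then, via Claim 5 in the proof of \ref{red2cc} (a parabolic is its own normalizer), forces $\gamma P \gamma^{-1} = P'$, i.e.\ $\gamma$ matches up the parabolic subgroups.

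Once the parabolics are matched, I would finish by descending to the Levi quotients. Having arranged $\gamma P \gamma^{-1} = P'$, the element $\gamma$ (adjusted by the finite set) lies in a coset $\Gamma_{K'} \eta$ with $\Gamma_{K'} = \gamma_0 K \gamma_0^{-1} \cap P(\mb{A}_F^S)$ an arithmetic subgroup of the Levi, and the problem of showing $\gamma a = a'$ becomes the same separation statement for each graded piece $V_i/V_{i-1}$ in place of $V$, with $G$ replaced by $\PGL_{V_i/V_{i-1}}$ — but there the boundary strata recorded by $\mu_i, \mu'_i$ are genuine points of the smaller space $X_{(V_i/V_{i-1})_v}$ (no further degeneration, since we are at the "open" part of the smaller building), so this is just the statement that the quotient of $\frak{Z}_{F,S}(P)$ by the relevant arithmetic group is Hausdorff, which follows by an induction on $d$ (the base case $d=1$ being trivial) combined with continuity of $\phi_{P,S}$ for the Satake topology (\ref{keyc1}) and the fact that $\Gamma_{K,(P)}$ acts through an arithmetic subgroup of the Levi. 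I expect the main obstacle to be the bookkeeping in the first half: correctly tracking how the finite correction sets $R, R_1, R_2$ propagate through the successive replacements ("$a \mapsto \gamma a$", "restrict to $\frak S(c_1,c_2)$", "pass to Levi"), and verifying that the neighborhoods produced by \ref{kep2}, \ref{lemcc}, \ref{lem8} can be chosen simultaneously small enough that the size estimates from \ref{kep1} genuinely contradict the constraints unless $I = I'$ and the parabolics match — this is where the weak-vs-usual topology distinction on $Y_0$ and the precise form of \ref{red2cc} are doing the real work.
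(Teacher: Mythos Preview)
Your outline for case (i) ($\bar X = \bar X_{F,S_1}$) is essentially the paper's proof: reduce to common Siegel sets via \ref{red21} and \ref{lemcc}, use \ref{kep1} and \ref{kep2} to force equal parabolic type (this is Lemma \ref{r=s1}), then use \ref{red2cc} to force $\gamma$ into $P_I(F)$, and finally separate on the Levi. Your ``induction on $d$'' for that last step is really just the discreteness--compactness argument of \ref{ga=b1}, since, as you correctly note, the Levi components $\mu_i,\mu'_i$ are interior points.

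The genuine gap is your reduction of case (ii) to case (i). The separation property of \ref{ga=b3} does not ``transfer downward'' along the quotient $\pi\colon \bar X_{F,S_1} \to \bar X_{F,S_1}^{\flat}$. Given $a,a' \in \bar X^{\flat}$ with lifts $\tilde a,\tilde a'$ and neighborhoods $\tilde U,\tilde U'$ from case (i), the condition $\gamma\,\pi(\tilde U)\cap \pi(\tilde U')\neq\varnothing$ only says that some $\gamma x$ and $x'$ have the same $\pi$-image, not that $\gamma \tilde U\cap \tilde U'\neq\varnothing$; the fibers of $\pi$ over boundary points are large (see \ref{flatex2}: the fiber over $(W,\mu)$ is homeomorphic to $\bar X_{(V/W),F,v}$), so there is no reason $\gamma x$ lies in $\tilde U'$. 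Equivalently, Hausdorffness of $\Gamma\bs(\bar X\times Z)$ does not formally imply Hausdorffness of its continuous image $\Gamma\bs(\bar X^{\flat}\times Z)$ without showing the fiber relation is closed, which you have not done. The paper does not attempt this reduction: it proves case (ii) by a parallel argument, using the $\flat$-analogues \ref{r=s2}, \ref{flat5}, \ref{kep3}, \ref{kep8}, \ref{c12R}, \ref{lem9}, \ref{ga=b12} in place of their case (i) counterparts throughout \ref{pfmain}.

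Your handling of the $Z$ factor is also closer to the quasi-compactness proof \ref{prop6} than to what is needed. The paper's reduction (\ref{ga=b2}, \ref{ga=b4}) instead first separates in $Z$ alone, then takes the stabilizer $K'$ of $a_Z$ in $H$, observes that $\{\gamma\in\Gamma:\gamma a_Z=a'_Z\}$ is either empty or a single coset $\theta\Gamma'$ with $\Gamma'$ the preimage of $K'$, and applies the already-proven $S=S_1$ case to $\Gamma'$.
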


In the rest of this subsection, let the notation be as in \ref{ga=b3}. It is sufficient to prove \ref{ga=b3} for the Satake topology on $\bar{\frak X}$.  In \ref{r=s1}--\ref{pfmain},  we prove  \ref{ga=b3} in situation (II) for $S=S_1$.  That is, we suppose that $\bar{\frak X} = \bar X$.
 In \ref{ga=b2} and \ref{ga=b4}, we deduce \ref{ga=b3} in general from this case. 

\begin{sblem}\label{r=s1} Assume that $\bar{\frak X}=\bar X_{F,S}$. Suppose that $a,a'\in \bar {\frak X}$ have distinct parabolic types (\ref{parat}).  Then there exist neighborhoods $U$ of $a$ and $U'$ of $a'$ such that $\gamma U\cap U'=\varnothing$ for all $\gamma \in \Gamma$. 

\end{sblem}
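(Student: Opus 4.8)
The parabolic type (\ref{parat}) is a discrete invariant of a point of $\bar X_{F,S}$ that is preserved by the $\PGL_V(F)$-action, and it can be read off from the boundary functions $t_{v,i}$ of \ref{tvi} on a fixed Siegel set. The plan is to use Proposition \ref{red21} to cover $\bar X_{F,S}$ by $\Gamma_K$-translates of such a Siegel set, to use Proposition \ref{kep2} to produce neighborhoods of $a$ and of $a'$ on which a suitably chosen $t_{v,i_0}$ is respectively very small and bounded away from $0$, and to use Proposition \ref{kep1} to control how $t_{v,i_0}$ changes under an element of $\Gamma_K$ intertwining the two Siegel-set representatives. Here $\Gamma=\Gamma_K$, as in the setting of \ref{ga=b3}(II) with $S=S_1$.

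\textbf{Setup and order of choices.} First I would apply Proposition \ref{red21} with $\Gamma=\Gamma_K$ to fix $c_1,c_2\in\R_{>0}$ and a finite subset $R\subset \PGL_V(F)$ with $\bar X_{F,S}=\Gamma_K R\cdot \frak S(c_1,c_2)$ (enlarging $\frak S(C;c_1,c_2)$ to $\frak S(c_1,c_2)$). Next, by Proposition \ref{kep1} applied to the finite sets $R_1=R^{-1}$ and $R_2=R$, fix $A>1$ so that $s\in\frak S(c_1,c_2)\cap\psi^{-1}\frak S(c_1,c_2)$ with $\psi\in R^{-1}\Gamma_K R$ forces $A^{-1}t_{v,i}(s)\le t_{v,i}(\psi s)\le A\,t_{v,i}(s)$ for all $v\in S$, $1\le i\le d-1$. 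Let $I,I'$ be the parabolic types of $a,a'$. Since the desired conclusion is symmetric in $(a,U)\leftrightarrow(a',U')$ (if $\forall\gamma\in\Gamma:\gamma V\cap V'=\varnothing$ with $V\ni a'$, $V'\ni a$, then $\gamma^{-1}V\cap V'=\varnothing$ for all $\gamma$, hence $\delta V'\cap V=\varnothing$ for all $\delta\in\Gamma$ since $\Gamma$ is a group), and $I\ne I'$, we may assume there is $i_0\in I\setminus I'$. Now apply \ref{kep2}(2) to $a'$, the index $i_0\notin I'$, and the set $R$: this gives a neighborhood $U'$ of $a'$ and $c'\in\R_{>0}$ with $t_{v,i_0}(s')\ge c'$ for all $v\in S$ and all $s'\in(\Gamma_K R)^{-1}U'\cap\frak S(c_1,c_2)$. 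Finally apply \ref{kep2}(1) to $a$, the index $i_0\in I$, the set $R$, and $\epsilon:=c'/(2A)$: this gives a neighborhood $U$ of $a$ with $t_{v,i_0}(s)<\epsilon$ for all $v\in S$ and all $s\in(\Gamma_K R)^{-1}U\cap\frak S(c_1,c_2)$.

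\textbf{Conclusion.} I claim $\gamma U\cap U'=\varnothing$ for every $\gamma\in\Gamma_K$. If not, pick $x\in U$ with $\gamma x\in U'$. By the covering above write $x=gs$ and $\gamma x=hs'$ with $g,h\in\Gamma_K R$ and $s,s'\in\frak S(c_1,c_2)$. Then $s=g^{-1}x\in(\Gamma_K R)^{-1}U\cap\frak S(c_1,c_2)$ and $s'=h^{-1}\gamma x\in(\Gamma_K R)^{-1}U'\cap\frak S(c_1,c_2)$, so $t_{v,i_0}(s)<\epsilon$ and $t_{v,i_0}(s')\ge c'$ for all $v$. Moreover $s'=\psi s$ with $\psi:=h^{-1}\gamma g\in(\Gamma_K R)^{-1}\Gamma_K(\Gamma_K R)=R^{-1}\Gamma_K R$ (using $\Gamma_K^{-1}=\Gamma_K$ and that $\Gamma_K$ is a group), so by the choice of $A$ we get $c'\le t_{v,i_0}(s')\le A\,t_{v,i_0}(s)<A\epsilon=c'/2$, a contradiction. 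Hence $U,U'$ have the required property, and the case $i_0\in I'\setminus I$ follows by the symmetry noted above.

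\textbf{Main obstacle.} The substantive content is entirely in the cited inputs: Propositions \ref{kep1} and \ref{kep2} (ultimately resting on \ref{red2cc}, i.e. the separation property of Siegel sets from reduction theory) and the Siegel-set covering \ref{red21}. Given these, the only delicate point in this argument is the bookkeeping with the finite sets $R$ — in particular recognizing $h^{-1}\gamma g$ as an element of $R^{-1}\Gamma_K R$, for which one invokes the flexibility in \ref{RGR} — and, correspondingly, choosing the constants in the order $(c_1,c_2,R)$, then $A$, then $c'$, and only finally $\epsilon$, so that $\epsilon$ can be taken smaller than $c'/A$.
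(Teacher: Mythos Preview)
Your proof is correct and follows essentially the same approach as the paper's: both use \ref{kep2} to control $t_{v,i_0}$ on Siegel-set representatives of the two neighborhoods and then invoke \ref{kep1} to derive a contradiction. The only cosmetic difference is that the paper invokes \ref{lemcc} to obtain single elements $\varphi,\psi\in G(F)$ with $\Gamma_K\varphi\,\frak S(c_1,c_2)$ and $\Gamma_K\psi\,\frak S(c_1,c_2)$ as neighborhoods of $a$ and $a'$ (so that the relevant element lies in $\varphi^{-1}\Gamma_K\psi$), whereas you instead use the global covering from \ref{red21} with a finite set $R$ (so the element lies in $R^{-1}\Gamma_K R$); both choices feed into \ref{kep1} and \ref{kep2} in the same way.
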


\begin{pf} 
Let $I$ (resp., $I'$) be the parabolic type of $a$ (resp., $a'$).  We may assume that 
there exists an $i\in I$ with $i\notin I'$. 

By \ref{lemcc}, there exist 
$\varphi, \psi\in G(F)$ and $c_1,c_2\in \R_{>0}$ such that 
$\Gamma_K\varphi \frak S(c_1,c_2)$ is a neighborhood of $a$ and $\Gamma_K \psi \frak S(c_1,c_2)$ is a neighborhood of $a'$. By \ref{kep2}(2), there exist a neighborhood 
$U'\subset \Gamma_K \psi \frak S(c_1,c_2)$ of $a'$ 
and $c\in \R_{>0}$ with the property that $\min\{t_{v,i}(x) \mid v \in S\} \geq c$ 
for all $x\in (\Gamma_K \psi)^{-1}U' \cap \frak S(c_1,c_2)$.  Let $A\in \R_{>1}$ be as in \ref{kep1}
for these $c_1,c_2$ for $R_1=\{\varphi^{-1}\}$ and $R_2= \{\psi\}$. Take $\epsilon \in \R_{>0}$ such that $A\epsilon \leq c$. By \ref{kep2}(1), there exists a neighborhood $U\subset \Gamma_K \varphi \frak S(c_1,c_2)$ of $a$ 
such that $\max\{ t_{v,i}(x) \mid v \in S\} <\epsilon$ for all $x\in (\Gamma_K \varphi)^{-1}U \cap \frak S(c_1,c_2)$.  

We prove that $\gamma U \cap U'=\varnothing$ for all $\gamma \in \Gamma_K$.  If $x\in \gamma U\cap U'$, then we
may take $\delta, \delta'\in \Gamma_K$ such that $(\delta \varphi)^{-1}\gamma^{-1}x\in \frak S(c_1,c_2)$ and $(\delta' \psi)^{-1} x\in \frak S(c_1,c_2)$.  Since 
$$
	(\delta \varphi)^{-1}\gamma^{-1} x= \varphi^{-1} (\delta^{-1} \gamma^{-1}\delta')\psi (\delta'\psi)^{-1}x
	\in \varphi^{-1}\Gamma_K\psi \cdot (\delta'\psi)^{-1}x,
$$ 
we have by \ref{kep1} that
$$c\leq t_{v,i}((\delta'\psi)^{-1}x)\leq A t_{v,i}((\delta\varphi)^{-1}\gamma^{-1}x) < A \epsilon,$$
for all $v\in S$  and hence $c < A\epsilon$, a contradiction. 
\end{pf}

\begin{sblem}\label{r=s2} Assume that $\bar{\frak X}=\bar X^{\flat}_{F,S}$. Let $a,a'\in \bar {\frak X}$ and assume that the dimension of the $F$-subspace associated to $a$ is different from that of $a'$. 
Then there exist neighborhoods $U$ of $a$ and $U'$ of $a'$ such that $\gamma U\cap U'=\varnothing$ for all $\gamma \in \Gamma$. 
\end{sblem}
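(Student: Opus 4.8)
The plan is to run the proof of Lemma \ref{r=s1} almost verbatim, with the dimension of the associated $F$-subspace playing the role of the parabolic type and the flat analogues of the cited results in place of the originals. Let $W$ (resp., $W'$) be the $F$-subspace of $V$ associated to $a$ (resp., $a'$). Since finding neighborhoods $U$, $U'$ with $\gamma U\cap U'=\varnothing$ for all $\gamma\in\Gamma=\Gamma_K$ is symmetric in $a\leftrightarrow a'$ (replace $\gamma$ by $\gamma^{-1}$, which is legitimate as $\Gamma_K$ is a group), I may assume without loss of generality that $r:=\dim W<r':=\dim W'$. The decisive index will be $i=r$: near $a$ the function $t_{v,r}$ can be forced arbitrarily small, while near $a'$ it stays bounded below, because $r<r'$.

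First I would apply Proposition \ref{c12R}(1) to $W$ and to $W'$. This yields a finite subset $R\subset G(F)$ with $\varphi(\sum_{i=1}^{r}Fe_i)=W$ for all $\varphi\in R$, a finite subset $R'\subset G(F)$ with $\psi(\sum_{i=1}^{r'}Fe_i)=W'$ for all $\psi\in R'$, and, after enlarging to a common pair $c_1,c_2\in\R_{>0}$, the facts that $\Gamma_{K,(W)}R\,\frak S^{\flat}_r(c_1,c_2)$ is a neighborhood of $a$ and $\Gamma_{K,(W')}R'\,\frak S^{\flat}_{r'}(c_1,c_2)$ is a neighborhood of $a'$; here I use $\frak S^{\flat}_{r'}(c_1,c_2)\subset\frak S^{\flat}_r(c_1,c_2)$, since $\dim W\ge r'$ forces $\dim W\ge r$. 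Next, let $A>1$ be the constant of Proposition \ref{kep3} for these $c_1,c_2$ with $R_1=\{\varphi^{-1}\mid\varphi\in R\}$ and $R_2=R'$. By Proposition \ref{kep8}(2) applied to $a'$ with the index $r$ (legitimate since $1\le r<r'$), there are a neighborhood $U'\subset\Gamma_{K,(W')}R'\,\frak S^{\flat}_{r'}(c_1,c_2)$ of $a'$ and $c\in\R_{>0}$ with $t_{v,r}(x)\ge c$ for all $v\in S$ and all $x\in(\Gamma_K R')^{-1}U'\cap\frak S^{\flat}_{r'}(c_1,c_2)$. Choose $\epsilon\in\R_{>0}$ with $A\epsilon\le c$, and then, by Proposition \ref{kep8}(1) applied to $a$, a neighborhood $U\subset\Gamma_{K,(W)}R\,\frak S^{\flat}_r(c_1,c_2)$ of $a$ with $t_{v,r}(x)<\epsilon$ for all $v\in S$ and all $x\in(\Gamma_K R)^{-1}U\cap\frak S^{\flat}_r(c_1,c_2)$.

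Finally I would derive a contradiction from the existence of $\gamma\in\Gamma_K$ and $x\in\gamma U\cap U'$. Using the two neighborhood descriptions on $\gamma^{-1}x\in U$ and $x\in U'$, choose $\delta\in\Gamma_{K,(W)}$, $\varphi\in R$, $\delta'\in\Gamma_{K,(W')}$, $\psi\in R'$ with $y:=(\delta'\psi)^{-1}x\in\frak S^{\flat}_{r'}(c_1,c_2)$ and $\eta y:=(\delta\varphi)^{-1}\gamma^{-1}x\in\frak S^{\flat}_r(c_1,c_2)$, where $\eta=\varphi^{-1}\delta^{-1}\gamma^{-1}\delta'\psi$ lies in $\{\varphi^{-1}\}\Gamma_K R'\subset R_1\Gamma_K R_2$ because $\Gamma_K$ is a group. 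Since $y\in\frak S^{\flat}_{r'}(c_1,c_2)\subset\frak S^{\flat}_r(c_1,c_2)$ and $\eta y\in\frak S^{\flat}_r(c_1,c_2)$, Proposition \ref{kep3} gives $t_{v,r}(y)\le A\,t_{v,r}(\eta y)$ for all $v\in S$. But $y\in(\Gamma_K R')^{-1}U'\cap\frak S^{\flat}_{r'}(c_1,c_2)$ gives $t_{v,r}(y)\ge c$, while $\eta y\in(\Gamma_K R)^{-1}U\cap\frak S^{\flat}_r(c_1,c_2)$ gives $t_{v,r}(\eta y)<\epsilon$; hence $c\le t_{v,r}(y)\le A\,t_{v,r}(\eta y)<A\epsilon\le c$, a contradiction. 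Thus $\gamma U\cap U'=\varnothing$ for all $\gamma\in\Gamma=\Gamma_K$.

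The only genuine hazard I foresee is keeping the two subscripts straight: the point $a$ lives over $\frak S^{\flat}_r$, the point $a'$ over $\frak S^{\flat}_{r'}$, and Proposition \ref{kep3} controls $t_{v,i}$ only for $i\le r$. Reducing to $r<r'$ and running the entire comparison at the single index $i=r$, together with the inclusion $\frak S^{\flat}_{r'}\subset\frak S^{\flat}_r$, is exactly what makes these compatible. Everything else is a mechanical transcription of the proof of Lemma \ref{r=s1}, with the finite sets $R,R'$ replacing the single elements $\varphi,\psi$ and with $\Gamma_{K,(W)},\Gamma_{K,(W')}$ replacing $\Gamma_K$ where the neighborhoods are described.
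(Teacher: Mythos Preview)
Your proof is correct and follows the approach the paper itself indicates: its proof of this lemma reads in full ``The proof is similar to that of \ref{r=s1}. In place of \ref{kep1}, \ref{kep2}, and \ref{lemcc}, we use \ref{kep3}, \ref{kep8}, and \ref{c12R}, respectively.'' You have carried out precisely this substitution, correctly identifying that the comparison must be made at the index $i=r$ (with $r<r'$) so that \ref{kep8}(1) applies at $a$, \ref{kep8}(2) applies at $a'$, and \ref{kep3} (stated for $\frak S^{\flat}_r$) controls $t_{v,r}$. The only cosmetic difference is that you invoke \ref{c12R}(1) and carry finite sets $R,R'$ through the argument, whereas a closer imitation of \ref{r=s1} would use \ref{c12R}(2) to obtain single elements $\varphi,\psi$; both versions work.
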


\begin{pf} The proof is similar to that of \ref{r=s1}. In place of  \ref{kep1}, \ref{kep2}, and \ref{lemcc}, we use
 \ref{kep3},  \ref{kep8}, and \ref{c12R}, respectively. 
\end{pf}

\begin{sblem}\label{ga=b1} 
Let $P$ be a parabolic subgroup of $G$. Let $a,a'\in \frak{Z}_{F,S}(P)$ (see \ref{togr}), and let $R_1$ and $R_2$ be finite subsets of $G(F)$. 
Then there exist neighborhoods $U$ of $a$ and  $U'$ of $a'$ in $\frak{Z}_{F,S}(P)$ such that $\gamma a = a'$
for every $\gamma\in R_1\Gamma_K R_2\cap P(F)$ for which $\gamma U\cap U'\neq \varnothing$.

\end{sblem}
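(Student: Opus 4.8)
The plan is to reduce the statement, which concerns the product space $\frak{Z}_{F,S}(P) = \prod_{v \in S} \prod_{i=0}^m X_{(V_i/V_{i-1})_v}$, to a statement about the individual factors $X_{(V_i/V_{i-1})_v}$, and then to prove the factor-wise statement by a standard reduction-theory argument. The key point is that the group $R_1\Gamma_K R_2 \cap P(F)$ acts on $\frak{Z}_{F,S}(P)$ through the Levi quotient $P/P_u$, i.e. through the product $\prod_{i=0}^m \PGL_{V_i/V_{i-1}}(F)$, and that the image of $R_1\Gamma_K R_2 \cap P(F)$ in each $\PGL_{V_i/V_{i-1}}(\mb{A}_F^S)$ is again of the form (finite set)(compact open subgroup)(finite set) by the argument of \ref{RGR} together with the fact that intersecting a compact open subgroup of $G(\mb{A}_F^S)$ with $P(\mb{A}_F^S)$ and projecting to a Levi factor produces a compact open subgroup. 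So it suffices to prove: for a finite-dimensional $F$-vector space $H$, a compact open subgroup $K_H$ of $\PGL_H(\mb{A}_F^S)$, finite subsets $R_1, R_2$ of $\PGL_H(F)$, and points $b, b' \in \prod_{v \in S} X_{H_v}$, there are neighborhoods of $b$ and $b'$ such that any $\gamma$ in $R_1\Gamma_{K_H}R_2$ carrying a point of the first neighborhood into the second satisfies $\gamma b = b'$.

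For this factor-wise statement, first I would observe that by \ref{red11} (applied with $V$ replaced by $H$, $G$ by $\PGL_H$, and $S_1 = S$) together with the fact that $\prod_{v\in S} X_{H_v} = \PGL_H(\mb{A}_{F,S}) \cdot Q_S$ for a suitable compact slice $Q_S$ as in the proof of \ref{red21}, every point of $\prod_{v\in S} X_{H_v}$ lies in $\Gamma_{K_H} R' \cdot B(C; c_1, c_2)_S D_S Q_S$ for appropriate $c_1, c_2$, compact $C$, and finite $R'$; thus $b$ and $b'$ both lie in translates $\Gamma_{K_H}\varphi \mathfrak{T}$ and $\Gamma_{K_H}\psi\mathfrak{T}$ of a set $\mathfrak{T} = B(C;c_1,c_2)_S D_S Q_S$ whose closure meets only finitely many of its own $\PGL_H(F)$-translates in a controlled way. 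Then I would invoke Lemma \ref{redc} (reduction theory, Siegel property): after enlarging the finite sets to absorb $\varphi, \psi$ and the representatives, the set of $\gamma \in R_1\Gamma_{K_H}R_2 \cap \PGL_H(F)$ with $\gamma \bar{\mathfrak{T}} \cap \bar{\mathfrak{T}} \neq \varnothing$ is contained in a finite union of double cosets, in fact is finite modulo the stabilizer of the relevant flag structure; and for $\gamma$ fixing the point in question, $\gamma$ lies in a fixed parabolic, reducing the separation to the boundary of a lower-dimensional building where one can induct. In short, the factor-wise statement is exactly the Hausdorffness of $\Gamma_{K_H} \backslash \prod_{v\in S} X_{H_v}$ with its uniformity, which is classical reduction theory.

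Concretely, the steps in order: (1) show the $R_1\Gamma_K R_2 \cap P(F)$-action on $\frak{Z}_{F,S}(P)$ factors through $P(F)/P_u(F) \hookrightarrow \prod_i \PGL_{V_i/V_{i-1}}(F)$, and identify the image of $R_1\Gamma_K R_2 \cap P(F)$ inside $\prod_i \PGL_{V_i/V_{i-1}}(\mb{A}_F^S)$ as a set of the form $R_1'\Gamma_{K'}R_2'$ using \ref{RGR} and the compatibility of compact open subgroups with taking Levi quotients of parabolics; (2) reduce the whole lemma, factor by factor in $i$ and coordinate-wise in $v$, to the single statement for $H = V_i/V_{i-1}$; (3) prove that single statement by reduction theory: use \ref{red11} to cover $\prod_{v\in S} X_{H_v}$ by $\Gamma_{K_H}$-translates of a Siegel-type set, use \ref{redc} (the Siegel/separation property) to conclude that only finitely many group elements can move one Siegel set to meet another, and handle those finitely many elements by noting that if $\gamma$ moves a point of a small neighborhood of $b$ into a small neighborhood of $b'$ then, shrinking neighborhoods, $\gamma$ must send $b$ itself near $b'$, hence (by discreteness of the relevant combinatorial data — the flag and the lattice class modulo the compact slice) $\gamma b = b'$.

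The main obstacle I expect is step (3), specifically making the passage ``$\gamma$ moves a neighborhood of $b$ to meet a neighborhood of $b'$, therefore $\gamma b = b'$'' fully rigorous: one needs the separation property of \ref{redc} in the $S$-arithmetic setting (several places at once) and must carefully track that the only continuous parameters are the ``$t$''-coordinates (sizes) while the ``combinatorial'' data — which flag, which vertex of the building, which coset of the compact slice — are locally constant, so that a group element fixing a nearby point and respecting the combinatorics must fix $b$. This is morally the same argument as in \ref{kep1}--\ref{r=s1} but now internal to a single $\PGL_H$ with no further boundary, so it is somewhat easier; nonetheless assembling it with the correct uniformity over the finite sets $R_1, R_2$ is where the real work lies. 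The reduction steps (1) and (2) are essentially bookkeeping, relying on facts about parabolics and adelic compact open subgroups already used repeatedly in the paper.
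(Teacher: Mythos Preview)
Your reduction in steps (1) and (2) is sound, and a version of step (3) could be made to work, but the route is far heavier than needed and essentially re-proves a special case of the main theorem (properness of the $\Gamma_{K_H}$-action on $\prod_{v\in S} X_{H_v}$) in order to establish what is really an elementary lemma.

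The paper's proof is a two-line ``discrete meets compact'' argument, with no Siegel sets or reduction theory at all. Namely: for each $\xi\in R_1$, $\eta\in R_2$, the set $\xi\Gamma_K\eta\cap P(F)$ is either empty or a single left coset of $\xi\Gamma_K\xi^{-1}\cap P(F)$; the latter is an $S$-arithmetic subgroup of $P$, so its image in $\prod_{i=0}^m \PGL_{V_i/V_{i-1}}(\mb{A}_{F,S})$ is discrete, and hence so is the image of all of $R_1\Gamma_K R_2\cap P(F)$. On the other hand, for any compact neighborhoods $U$ of $a$ and $U'$ of $a'$ in $\frak{Z}_{F,S}(P)$, the transporter set $\{g\in \prod_i \PGL_{V_i/V_{i-1}}(\mb{A}_{F,S}) \mid gU\cap U'\neq\varnothing\}$ is compact (since the action is continuous and stabilizers of points are compact). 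A discrete set meeting a compact set is finite, so only finitely many $\gamma$ are in play; one then shrinks $U$ and $U'$ to exclude those finitely many $\gamma$ with $\gamma a\neq a'$.

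Your approach buys nothing extra here: the space $\frak{Z}_{F,S}(P)$ has no boundary, so the delicate Siegel-set separation of \ref{redc} and \ref{kep1} is not needed. The ``obstacle'' you flag in step (3) simply does not arise once you notice the discreteness of the image in the $S$-adelic Levi---that single observation replaces your entire step (3).
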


\begin{pf} For each $\xi\in R_1$ and $\eta\in R_2$, the set $\xi \Gamma_K \eta\cap P(F)$ is a $\xi\Gamma_K\xi^{-1} \cap P(F)$-orbit for the left action of $\xi \Gamma_K \xi^{-1}$. 
Hence its image in $\prod_{i=0}^m \PGL_{V_i/V_{i-1}}(\mb{A}_{F,S})$ is discrete, for $(V_i)_{-1\le i \le m}$ the flag corresponding to $P$, and thus the image of $R_1\Gamma_K R_2\cap P(F)$ in $ \prod_{i=0}^m \PGL_{V_i/V_{i-1}}(\mb{A}_{F,S})$ is discrete as well. 
On the other hand, for any compact neighborhoods $U$ of $a$ and $U'$ of $a'$, the set
$$
	\left\{g\in \prod_{i=0}^m \PGL_{V_i/V_{i-1}}(\mb{A}_{F,S})\mid gU\cap U'\neq \varnothing\right\}
$$ 
is compact. Hence the intersection $M :=\{\gamma \in R_1\Gamma_K R_2\cap P(F) \mid \gamma U\cap U'\neq \varnothing\}$ is finite. If $\gamma \in M$ and $\gamma a\neq a'$, then replacing $U$ and $U'$ by smaller neighborhoods of $a$ and $a'$, respectively, we have $\gamma U \cap U'=\varnothing$. Hence for sufficiently small neighborhoods $U$ and $U'$ of $a$ and $a'$, respectively, we have that if $\gamma \in M$, then $\gamma a=a'$.
\end{pf}

\begin{sblem}\label{ga=b12} 
Let $W$ be an $F$-subspace of $V$.  Let $a,a'\in \frak{Z}_{F,S}^{\flat}(W)$ (see \ref{toW}), and let $R_1$ and $R_2$ be finite subsets of $G(F)$. Let $P$ be the parabolic subgroup of $G$ consisting of all elements which preserve $W$. 
Then there exist neighborhoods $U$ of $a$ and $U'$ of $a'$ in $\frak{Z}_{F,S}^{\flat}(W)$ such that $\gamma a =a'$
for every $\gamma\in R_1\Gamma_K R_2\cap P(F)$ for which $\gamma U\cap U'\neq \varnothing$.

\end{sblem}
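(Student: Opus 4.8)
The plan is to follow the proof of Lemma \ref{ga=b1} essentially verbatim, with the product of graded pieces $\prod_{i=0}^m\PGL_{V_i/V_{i-1}}$ replaced by $\PGL_W$. First I would observe that the action of $P(\mb{A}_{F,S})$ on $\frak{Z}_{F,S}^{\flat}(W)=\prod_{v\in S}X_{W_v}$ factors through the homomorphism $P(\mb{A}_{F,S})\to \PGL_W(\mb{A}_{F,S})=\prod_{v\in S}\PGL_{W_v}(F_v)$ induced by the restriction map $P\to\PGL_W$ (whose kernel contains $P_u$, since $P$ is the stabilizer of $W$). Hence whether $\gamma U\cap U'\neq\varnothing$ for $\gamma\in P(F)$ depends only on the image $\bar\gamma$ of $\gamma$ in $\PGL_W(\mb{A}_{F,S})$, and it suffices to control that image.

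Next I would show that the image $\Lambda$ of $R_1\Gamma_K R_2\cap P(F)$ in $\PGL_W(\mb{A}_{F,S})$ meets every compact set in a finite set. As in \ref{ga=b1}, write $R_1\Gamma_K R_2\cap P(F)=\bigcup_{\xi\in R_1,\,\eta\in R_2}(\xi\Gamma_K\eta\cap P(F))$, and for each $\xi,\eta$ with this set nonempty, fixing $\gamma_0$ in it gives $\xi\Gamma_K\eta\cap P(F)=(\xi\Gamma_K\xi^{-1}\cap P(F))\gamma_0$ (using that $\Gamma_K$ is a group and that the elements involved lie in $P(F)$). Now $\xi\Gamma_K\xi^{-1}\cap P(F)$ is the inverse image of the compact open subgroup $\xi K\xi^{-1}\cap P(\mb{A}_F^S)$ of $P(\mb{A}_F^S)$ under $P(F)\to P(\mb{A}_F^S)$, so its image in $\PGL_W(F)$ is contained in the set of $h\in\PGL_W(F)$ whose image in $\PGL_W(\mb{A}_F^S)$ lies in a fixed compact open subgroup $B_0$. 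Since $\PGL_W(F)$ is discrete and closed in $\PGL_W(\mb{A}_F)=\PGL_W(\mb{A}_{F,S})\times\PGL_W(\mb{A}_F^S)$ and $B_0$ is compact, the projection to $\PGL_W(\mb{A}_{F,S})$ of $\PGL_W(F)\cap(\PGL_W(\mb{A}_{F,S})\times B_0)$ is closed and discrete; in particular it meets every compact set finitely, and hence so does $\Lambda$, being a finite union of translates of subsets of such sets.

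Then I would use properness: each $\PGL_{W_v}(F_v)$ acts properly on $X_{W_v}$ (the isotropy group of a point is a maximal compact subgroup for $v$ archimedean and a compact open subgroup for $v$ non-archimedean), so the action $\PGL_W(\mb{A}_{F,S})\times\frak{Z}_{F,S}^{\flat}(W)\to\frak{Z}_{F,S}^{\flat}(W)$ is proper. Choosing compact neighborhoods $U$ of $a$ and $U'$ of $a'$ in $\frak{Z}_{F,S}^{\flat}(W)$, the set $\{g\in\PGL_W(\mb{A}_{F,S})\mid gU\cap U'\neq\varnothing\}$ is compact, so its intersection with $\Lambda$ is finite. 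Thus the set of images $\bar\gamma$ of those $\gamma\in R_1\Gamma_K R_2\cap P(F)$ with $\gamma U\cap U'\neq\varnothing$ is finite; for each such $\bar\gamma$ with $\bar\gamma a\neq a'$, shrink $U$ and $U'$ so that $\bar\gamma U\cap U'=\varnothing$, and after performing these finitely many shrinkings we obtain $U,U'$ for which $\gamma U\cap U'\neq\varnothing$ forces $\gamma a=a'$.

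The main obstacle is the closed-discreteness assertion of the second paragraph, i.e.\ that the "$S$-arithmetic" group $\xi\Gamma_K\xi^{-1}\cap P(F)$ has locally finite image in $\PGL_W(\mb{A}_{F,S})$; this is where the only genuine input enters (discreteness of $\PGL_W(F)$ in its adele group together with compactness of the level constraint away from $S$), and it is exactly the point already used, without much comment, in \ref{ga=b1}. One might hope instead to deduce \ref{ga=b12} directly from \ref{ga=b1} applied to the parabolic with flag $0\subsetneq W\subsetneq V$, but the forgetful map $\frak{Z}_{F,S}(P)\to\frak{Z}_{F,S}^{\flat}(W)$ does not let one transport the condition $\gamma U\cap U'\neq\varnothing$ upstairs, so repeating the argument directly is cleaner.
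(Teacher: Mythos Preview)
Your proposal is correct and follows exactly the approach the paper intends: its proof of this lemma reads in full ``This is proven in the same way as \ref{ga=b1},'' and you have carried out that verbatim translation with $\prod_i\PGL_{V_i/V_{i-1}}$ replaced by $\PGL_W$. Your added justification of the discreteness step (via the diagonal embedding of $\PGL_W(F)$ in its adeles and compactness of the level away from $S$) makes explicit the point the paper leaves implicit in \ref{ga=b1}, and your remark that one cannot simply pull back along $\frak{Z}_{F,S}(P)\to\frak{Z}_{F,S}^{\flat}(W)$ is apt.
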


\begin{pf} This is proven in the same way as \ref{ga=b1}.
\end{pf}

\begin{sbpara}\label{pfmain} We prove \ref{ga=b3} in situation (II), supposing that $S=S_1$.

In case (i) (that is, $\bar X=\bar {\frak X}= \bar X_{F,S}$), we may assume by
 \ref{r=s1} that $a$ and $a'$ have the same parabolic type $I$. 
 In case (ii) (that is, $\bar X=\bar {\frak X}= \bar X_{F,S}^{\flat}$), 
we may assume by \ref{r=s2} that the dimension $r$ of the $F$-subspace of $V$ associated to $a$ coincides with that of $a'$. In case (i) (resp., (ii)),
  take $c_1,c_2\in \R_{>0}$ and elements $\varphi$ and $\psi$ (resp., finite subsets $R$ and $R'$) of $G(F)$ 
such that $c_1,c_2,\varphi$ (resp., $c_1,c_2, R$) satisfy the condition in \ref{lemcc} (resp., \ref{c12R}) for $a$ and $c_1,c_2,\psi$ (resp., $c_1,c_2,R'$) satisfy the condition in \ref{lemcc} (resp.,  \ref{c12R}) for $a'$.
 In case (i), we set $R=\{\varphi\}$ and $R'=\{\psi\}$.

Fix a basis $(e_i)_{1\leq i\leq d}$ of $V$.  In case (i) (resp., (ii)), denote $\frak S(c_1,c_2)$ (resp., $\frak S^{\flat}_r(c_1,c_2)$) by $\frak S$. In case (i), let $P=P_I$, and let $(V_i)_{-1\le i \le m}$ be the associated flag. In case (ii), let $W=\sum_{i=1}^r Fe_i$, and let $P$ be the parabolic subgroup of $G$ consisting of all elements which preserve $W$. 

Note that in case (i) (resp., (ii)), for all $\varphi\in R$ and $\psi\in R'$, the parabolic subgroup $P$ is  associated to $\varphi^{-1}a$ and to $\psi^{-1}a'$ (resp., $W$ is associated to $\varphi^{-1}a$ and to $\psi^{-1}a'$) and hence these elements are determined by their images in $\frak{Z}_{F,S}(P)$ (resp. $\frak{Z}_{F,S}^{\flat}(W)$).  

In case (i) (resp., case (ii)), apply  \ref{ga=b1} (resp.,  \ref{ga=b12}) to the images of $\varphi^{-1}a$ and $\psi^{-1}a'$ for
$\varphi \in R$, $\psi \in R'$
in $\frak{Z}_{F,S}(P)$ (resp., $\frak{Z}_{F,S}^{\flat}(W)$). By this, and 
 by \ref{red2cc} for case (i) and \ref{flat5} for case (ii), we see that
 there exist neighborhoods $U_{\varphi}$ of $\varphi^{-1}a$ for each $\varphi\in R$ and $U'_{\psi}$ of $\psi^{-1}a'$ for each $\psi\in R'$ for the Satake topology with the following two properties:
\begin{enumerate}
\item[(A)] $\{ \gamma \in (R')^{-1}\Gamma_K R \mid \gamma(\frak S\cap U_{\varphi}) \cap (\frak S\cap U'_{\psi})\neq \varnothing \text{  for some }\varphi\in R, \psi\in R' \} \subset P(F)$,
\item[(B)] 
	if $\gamma \in (R')^{-1}\Gamma_K R \cap P(F)$ and $\gamma U_{\varphi}\cap U'_{\psi} \neq \varnothing$ for
	$\varphi\in R$ and $\psi\in R'$, then $\gamma \varphi^{-1}a =\psi^{-1}a'$.
\end{enumerate}

In case (i) (resp., (ii)), 
take a neighborhood $U$ of $a$ satisfying the condition in \ref{lem8} (resp., \ref{lem9}) for $(U_{\varphi})_{\varphi\in R}$, and take a neighborhood $U'$ of $a'$ satisfying the condition in \ref{lem8} (resp., \ref{lem9}) for $(U'_{\psi})_{\psi\in R'}$. 
Let $\gamma \in \Gamma_K$ and assume $\gamma U \cap U'\neq \varnothing$. We prove $\gamma a=a'$. Take $x\in U$ and $x' \in U'$ such that $\gamma x=x'$. By \ref{lem8} (resp., \ref{lem9}), there are $\varphi \in R$,  $\psi\in R'$, and $\epsilon \in\Gamma_{K,(\varphi P \varphi^{-1})}$ and $\delta \in \Gamma_{K,(\psi P\psi^{-1})}$ 
in case (i) (resp., $\epsilon \in \Gamma_{K,(\varphi W)}$ and $\delta \in \Gamma_{K,(\psi W)}$ in case (ii)) such that 
$\varphi^{-1}\epsilon^{-1}x \in \frak S\cap U_{\varphi}$ and $\psi^{-1}\delta^{-1} x' \in \frak S\cap U'_{\psi}$. Since 
$$
	(\psi^{-1}\delta^{-1}\gamma \epsilon\varphi)\varphi^{-1}\epsilon^{-1}x= \psi^{-1}\delta^{-1}x',
$$ 
we have $\psi^{-1}\delta^{-1}\gamma \epsilon\varphi \in P(F)$ by property (A). By property (B), we have
$$
	(\psi^{-1} \delta^{-1} \gamma \epsilon \varphi)\varphi^{-1} a= \psi^{-1}a'.
$$ 
Since $\epsilon a= a$ and $\delta a'=a'$, this proves $\gamma a=a'$. 

\end{sbpara}

We have proved \ref{ga=b3} in situation (II) under the assumption $S=S_1$. In the following \ref{ga=b2} and \ref{ga=b4}, we reduce the general case to that case.

\begin{sblem}\label{ga=b2}  

Let $a, a'\in Z$. In situation (I) (resp., (II)), let $H=G(\mb{A}_F^{S_1})$ (resp., $H = G(\mb{A}_{F,S_2})$). Then there exist  neighborhoods $U$ of $a$  and $U'$ of $a'$ in $Z$ such that $ga=a'$ for all $g\in H$ for which $gU\cap U'\neq \varnothing$. 

\end{sblem}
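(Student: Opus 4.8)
The plan is to exploit the fact that, in both situations, $Z$ is a \emph{finite product} of spaces on which $H$ acts through a compatible product decomposition, and to verify the asserted separation property one factor at a time. In situation (II) we have $Z = X_{S_2} = \prod_{v\in S_2} X_v$ and $H = G(\mb{A}_{F,S_2}) = \prod_{v\in S_2} G(F_v)$ acting factorwise. In situation (I), since $S = S_1 \cup S_2$ with $S_1$ and $S_2$ disjoint, the ring $\mb{A}_F^{S_1}$ decomposes as $\mb{A}_{F,S_2}\times \mb{A}_F^S$, so $H = \bigl(\prod_{v\in S_2} G(F_v)\bigr)\times G(\mb{A}_F^S)$ acts factorwise on $Z = \bigl(\prod_{v\in S_2} X_v\bigr)\times G(\mb{A}_F^S)/K$. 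The reduction is then purely formal: if for each factor $Z_i$ with acting group $H_i$ and each pair of points of $Z_i$ we can produce neighborhoods with the required property, then the products of these neighborhoods work for $Z$ and $H$, since an element $(g_i)_i$ of $H$ moving $\prod_i U_i$ so as to meet $\prod_i U_i'$ must move each $U_i$ so as to meet $U_i'$, hence sends each $a_i$ to $a_i'$, hence sends $a$ to $a'$. So it suffices to treat (a) a single non-archimedean place $v\in S_2$, with $G(F_v)=\PGL_d(F_v)$ acting on the Bruhat--Tits building $X_v$, and (b) $G(\mb{A}_F^S)$ acting on $G(\mb{A}_F^S)/K$ by left translation.

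Case (b) is immediate: since $K$ is open in $G(\mb{A}_F^S)$, the quotient $G(\mb{A}_F^S)/K$ is discrete, so one may take $U=\{a\}$ and $U'=\{a'\}$, and then $gU\cap U'\neq\varnothing$ tautologically forces $ga=a'$.

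For case (a), the plan is to use that $X_v$ is locally compact and Hausdorff and that the action of $G(F_v)$ on the building is proper \cite{BT}: for any compact subsets $A,B$ of $X_v$, the set $\{g\in G(F_v)\mid gA\cap B\neq\varnothing\}$ is compact. Choose compact neighborhoods $U_0$ of $a$ and $U_0'$ of $a'$, and set $C=\{g\mid gU_0\cap U_0'\neq\varnothing\}$, which is compact. For $g\in C$ there are two cases. If $ga=a'$, then $g$ lies in the coset $g\cdot\mathrm{Stab}_{G(F_v)}(a)$, which is open because the stabilizer of a point of the building is a compact open subgroup; every element of this coset sends $a$ to $a'$, and I take this coset as the ``test neighborhood'' of $g$, with no constraint imposed on $X_v$. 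If $ga\neq a'$, then using continuity of the action $G(F_v)\times X_v\to X_v$ together with the Hausdorff property, I choose disjoint open sets around $ga$ and $a'$ and pull them back to obtain an open neighborhood $V_g$ of $g$ in $G(F_v)$ and open neighborhoods $W_g$ of $a$, $W_g'$ of $a'$ in $X_v$ such that $hw\neq w'$ whenever $h\in V_g$, $w\in W_g$, $w'\in W_g'$. Covering the compact set $C$ by finitely many of the $V_g$, I then let $U$ (resp. $U'$) be the intersection of $U_0$ (resp. $U_0'$) with the corresponding finitely many $W_g$ (resp. $W_g'$), keeping only those $g$ in the chosen cover with $ga\neq a'$. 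If now $g$ satisfies $gU\cap U'\neq\varnothing$, then $g\in C$ since $U\subset U_0$ and $U'\subset U_0'$, so $g\in V_{g_i}$ for some $i$ in the cover; the alternative $g_ia\neq a'$ is excluded by the defining property of $(V_{g_i},W_{g_i},W_{g_i}')$, so $g_ia=a'$, whence $g$ lies in the open coset $\{h\mid ha=a'\}$ and therefore $ga=a'$, as desired.

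I do not expect a genuine obstacle in this lemma: once properness of the building action is granted, the argument is formal, and the only points needing care are the elementary bookkeeping in case (a) and the observation that products of the per-factor neighborhoods suffice for $Z$ and $H$. This is the ``easy half'' of the Hausdorffness statement in Theorem \ref{main}; the substantive half is the reduction-theory input already carried out in \ref{pfmain}.
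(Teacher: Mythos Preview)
Your proof is correct. The route differs from the paper's in organization rather than in substance, but the difference is worth noting.

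You first decompose $Z$ and $H$ as finite products and verify the separation property factor by factor: trivially for the discrete factor $G(\mb{A}_F^S)/K$, and for each building $X_v$ via properness of the $G(F_v)$-action together with the fact that point stabilizers in the building are compact \emph{open}, which lets you cover the compact transporter set $C$ by finitely many open sets of two types (cosets of $\mathrm{Stab}(a)$ and the ``bad'' $V_g$) and then shrink. The paper instead treats $Z$ uniformly: it observes directly that for compact neighborhoods the transporter $M=\{g\in H\mid gU\cap U'\neq\varnothing\}$ is compact, and that (by the simplicial nature of the building and the openness of $K$) some compact open subgroup $N\le H$ fixes an entire neighborhood of $a$ pointwise; then $M/N$ is finite and one shrinks once for each of the finitely many $N$-cosets. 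Both arguments rest on the same two facts---properness and open stabilizers---but the paper packages them via the single subgroup $N$, which avoids the product reduction and the open-cover bookkeeping. Your factorwise approach, on the other hand, makes transparent exactly where each hypothesis is used (e.g.\ that $K$ is open, that $S_2$ consists of non-archimedean places).

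One small presentational point: in your covering argument you write ``$g\in V_{g_i}$ for some $i$ in the cover'' when you really mean ``$g$ lies in some member of the finite subcover, which is either a $V_{g_i}$ or the open coset $\{h\mid ha=a'\}$''; the logic is fine but the wording momentarily conflates the two kinds of cover sets.
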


\begin{pf} 
For any compact neighborhoods $U$ of $a$ and $U'$ of $a'$, 
the set $M :=\{g\in H \mid gU\cap U'\neq \varnothing\}$ is compact. 
By definition of $Z$, there exist a compact open subgroup $N$ of $H$ and a compact neighborhood $U$ of $a$ such that $gx=x$ for all $g\in N$ and $x\in U$.  For such a choice of $U$, the set $M$ is stable under the right translation by $N$, and $M/N$ is finite because $M$ is compact and $N$ is an open subgroup of $H$. 
If $g\in M$ and if $ga\neq a'$, then by shrinking the neighborhoods $U$ and $U'$,
we have that $gU \cap U' = \varnothing$.
As $M/N$ is finite, we have sufficiently small neighborhoods $U$ and $U'$ such that 
if $g\in M$ and $gU\cap U'\neq \varnothing$, then $ga=a'$. 
\end{pf}

\begin{sbpara}\label{ga=b4}  We prove Proposition \ref{ga=b3}. 

Let $H$ be as in Lemma \ref{ga=b2}. 
 Write $a=(a_{S_1}, a_Z)$ and $a'=(a'_{S_1}, a'_Z)$ as elements of $\bar{X} \times Z$.
By \ref{ga=b2}, there exist neighborhoods $U_Z$ of $a_Z$ and $U'_Z$ of $a'_Z$ in $Z$ such that if $g\in H$  and $gU_Z\cap U'_Z \neq \varnothing$, then $g a=a'$.  The set $K' := \{g\in H \mid ga_Z=a_Z\}$ is a compact open subgroup of $H$. Let $\Gamma'$ be the inverse image of $K'$ under $\Gamma \to H$, where $\Gamma = G(F)$ in situation (I).  In situation (II), the group $\Gamma'$ is of finite index in the inverse image of the compact open subgroup $K'\times K$ under $G(F) \to G(\mb{A}_F^{S_1})$. In both situations, the set $M :=\{\gamma \in \Gamma\mid \gamma a_Z=a'_Z\}$ is either empty or a $\Gamma'$-torsor for the right action of $\Gamma'$.  

Assume first that $M \neq \varnothing$, in which case we may choose $\theta \in \Gamma$ such that $M=\theta \Gamma'$. 
Since we have proven \ref{ga=b3}  in situation (II) for $S_1=S$, there exist neighborhoods $U_{S_1}$ of $a_{S_1}$ and $U'_{S_1}$ of $\theta^{-1}a'_{S_1}$ such that if $\gamma \in \Gamma'$ 
satisfies $\gamma U_{S_1}\cap U'_{S_1}\neq \varnothing$, then $\gamma a_{S_1}=\theta^{-1}a'_{S_1}$. 
Let $U= U_{S_1} \times U_Z$ and
$U'= \theta U'_{S_1} \times U'_Z$, which are neighborhoods of $a$ and $a'$ in $\bar {\frak X}$, respectively.
Suppose that $\gamma \in \Gamma$ satisfies $\gamma U\cap U'\neq\varnothing$. Then, since $\gamma U_Z\cap U'_Z\neq \varnothing$, we have $\gamma a_Z=a'_Z$ and 
hence $\gamma = \theta \gamma'$ for some $\gamma'\in \Gamma'$. Since $\theta \gamma' U_{S_1} \cap \theta U'_{S_1}\neq \varnothing$, we have $\gamma' U_{S_1}\cap U'_{S_1}\neq \varnothing$, and hence $\gamma' a_{S_1}= \theta^{-1}a'_{S_1}$. That is, we have $\gamma a_{S_1}=a'_{S_1}$, so $\gamma a=a'$. 

In the case that $M = \varnothing$, take any neighborhoods $U_{S_1}$ of $a_{S_1}$ and $U'_{S_1}$ of $a'_{S_1}$, and set $U=U_{S_1}\times U_Z$ and $U'=U'_{S_1}\times U'_Z$. Any $\gamma \in \Gamma$ such that $\gamma U\cap U'\neq \varnothing$ is contained in $M$, so no such $\gamma$ exists.
\end{sbpara}

\subsection{Supplements to the main theorem}

We use the notation of \S\ref{mainres} throughout this subsection.
We suppose that $\Gamma=G(F)$ in situation (I), and we let $\Gamma$ be a subgroup of $\Gamma_K$ of finite index in situation (II).  For $a \in \bar{\frak X}$, let $\Gamma_a < \Gamma$ denote the stabilizer of $a$.

 \begin{sbthm}\label{thm3}  
For $a\in \bar {\frak X}$ (with either the Borel-Serre or the Satake topology), 
there is an open neighborhood $U$ of the image of $a$ in $\Gamma_a\bs \bar {\frak X}$  such that the image $U'$ of $U$ under the quotient map $\Gamma_a \bs \bar {\frak X} \to \Gamma \bs \bar {\frak X}$ is open and the map $U\to U'$ is a homeomorphism.

 \end{sbthm}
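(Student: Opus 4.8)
The plan is to derive this from the separation property of Proposition \ref{ga=b3}. Note first that \ref{ga=b3} applies to the group $\Gamma$ here: in situation (I) we have $\Gamma = G(F)$, and in situation (II), \ref{ga=b3} holds with $\Gamma_K$ in place of $\Gamma$, and the conclusion for $\Gamma_K$ immediately gives the same conclusion for the subgroup $\Gamma \subseteq \Gamma_K$. It also holds for both the Borel--Serre and the Satake topologies, as in the reduction at the start of the proof of \ref{ga=b3}. So, applying \ref{ga=b3} with $a' = a$, I obtain an open neighborhood $U_0$ of $a$ in $\bar{\frak X}$ such that, for all $\gamma \in \Gamma$, the condition $\gamma U_0 \cap U_0 \neq \varnothing$ forces $\gamma a = a$, i.e.\ $\gamma \in \Gamma_a$.

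The next step is to pass to a $\Gamma_a$-stable neighborhood retaining this property. Since each element of $\Gamma$ acts on $\bar{\frak X}$ as a homeomorphism by \ref{top5}, the set $\tilde U := \bigcup_{\gamma \in \Gamma_a} \gamma U_0$ is an open $\Gamma_a$-stable neighborhood of $a$; and if $\gamma \in \Gamma$ satisfies $\gamma \tilde U \cap \tilde U \neq \varnothing$, then $\gamma \gamma_1 U_0 \cap \gamma_2 U_0 \neq \varnothing$ for some $\gamma_1, \gamma_2 \in \Gamma_a$, whence $\gamma_2^{-1}\gamma\gamma_1 \in \Gamma_a$ and so $\gamma \in \Gamma_a$. (Because $\Gamma_a$ need not be finite, one takes the union here rather than an intersection over the $\Gamma_a$-orbit.) Let $p \colon \bar{\frak X} \to \Gamma_a \bs \bar{\frak X}$ and $q \colon \Gamma_a \bs \bar{\frak X} \to \Gamma \bs \bar{\frak X}$ be the projections, both of which are open maps, being quotients by group actions by homeomorphisms. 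I then set $U = p(\tilde U)$ and $U' = q(U) = (q \circ p)(\tilde U)$; these are open, and $U$ contains the image of $a$.

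It remains to check that $q|_U \colon U \to U'$ is a homeomorphism. It is continuous and surjective by construction. For injectivity, if $x, y \in \tilde U$ lie in the same $\Gamma$-orbit, say $y = \gamma x$, then $\gamma \tilde U \cap \tilde U \neq \varnothing$, so $\gamma \in \Gamma_a$ and hence $p(x) = p(y)$. For openness, an open subset $V \subseteq U$ equals $p(\tilde V)$ for the open set $\tilde V := p^{-1}(V) \cap \tilde U$, and then $q(V) = (q \circ p)(\tilde V)$ is open since $q \circ p$ is open. The whole argument is essentially formal once \ref{ga=b3} is in hand; the only mildly delicate points, both flagged above, are the construction of the $\Gamma_a$-stable $\tilde U$ preserving the separation property and the verification that $U \to U'$ is open rather than merely a continuous bijection, so no real obstacle is expected.
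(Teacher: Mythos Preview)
Your proof is correct and follows essentially the same approach as the paper: apply Proposition \ref{ga=b3} with $a'=a$ to obtain a neighborhood $U_0$ on which any $\gamma\in\Gamma$ meeting it lies in $\Gamma_a$, saturate by $\Gamma_a$, and take the image in $\Gamma_a\bs\bar{\frak X}$. The paper's proof is terser (it simply sets $U:=\Gamma_a\bs\Gamma_a U''$ and asserts the desired property), while you spell out the $\Gamma_a$-saturation, the passage from $\Gamma_K$ to the finite-index subgroup $\Gamma$ in situation (II), and the verification that $U\to U'$ is an open bijection; these are exactly the details the paper leaves implicit.
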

 
\begin{pf}  By the case  $a=a'$ of Proposition \ref{ga=b3}, there is an open neighborhood $U'' \subset \bar{\frak X}$ of $a$ 
such that if $\gamma\in \Gamma_K$ and $\gamma U''\cap U''\neq \varnothing$, then $\gamma a =a$. Then the subset $U:=\Gamma_a \bs \Gamma_a U''$ of $\Gamma_a \bs \bar {\frak X}$ is open and has the desired property.
\end{pf}

\begin{sbprop}\label{prop3} Suppose that $S=S_1$, and let $a \in \bar{\frak X}$.
\begin{enumerate}
\item[(1)] Take $\bar X = \bar{X}_{F,S}$, and let $P$ be the parabolic subgroup associated to $a$. Then 
$\Gamma_{(P)}$ (as in \ref{top7}) is a normal subgroup of $\Gamma_a$ of finite index.
\item[(2)] Take $\bar X = \bar{X}_{F,S}^{\flat}$, and let $W$ be the $F$-subspace of $V$ associated to $a$. Then $\Gamma_{(W)}$ (as in \ref{top7}) is a normal subgroup of $\Gamma_a$ of finite index. 
\end{enumerate}
\end{sbprop}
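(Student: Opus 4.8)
The plan is to realise $\Gamma_{(P)}$ (resp.\ $\Gamma_{(W)}$) as the kernel of the natural ``Levi projection'' restricted to $\Gamma_a$, and then to deduce finiteness of the quotient from two facts: the Levi image of the ambient $S$-arithmetic group is discrete (this is exactly the discreteness established inside the proof of \ref{ga=b1}, a consequence of reduction theory, and is where the hypothesis $S=S_1$ is used), and the stabiliser of the ``symmetric-space/building coordinate'' $\mu$ is compact. I will carry out (1); part (2) is entirely parallel, with $P$ replaced by the parabolic $Q$ of elements preserving $W$, with the Levi projection replaced by the restriction map $Q\to\PGL_W$ (the point being that an element of $Q(F)$ lifts to an element of $\GL_V(F)$ fixing $W$ pointwise exactly when its image in $\PGL_W(F)$ is trivial), and with \ref{ga=b12} used in place of \ref{ga=b1}.

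Write $a=(P,\mu)$ with $\mu=(\mu_{v,i})\in\frak Z_{F,S}(P)$ (notation of \ref{togr}), let $(V_i)_{-1\le i\le m}$ be the flag of $P$, put $M=\prod_{i=0}^m\PGL_{V_i/V_{i-1}}$, and let $\bar\pi\colon P(F)\to M(F)$ be the composite of $P(F)\to(P/P_u)(F)$ with the projection killing the central torus $T$ of $P/P_u$. First I would record the two inclusions. We have $\Gamma_a\subseteq\Gamma\cap P(F)$: the parabolic associated to $\gamma a$ is $\gamma P\gamma^{-1}$, so $\gamma a=a$ forces $\gamma\in N_G(P)(F)=P(F)$, parabolic subgroups being self-normalising. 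Next, since the centre of the abstract group $(P/P_u)(F)$ equals $T(F)=\ker\bigl((P/P_u)(F)\to M(F)\bigr)$, the definition in \ref{top7} gives $\Gamma_{(P)}=\{\gamma\in\Gamma\cap P(F)\mid\bar\pi(\gamma)=1\}=\ker(\bar\pi|_{\Gamma\cap P(F)})$; in particular $\Gamma_{(P)}$ is normal in $\Gamma\cap P(F)$, hence in $\Gamma_a$. Moreover the $P(F_v)$-action on $\frak Z_{F,v}(P)=\prod_iX_{(V_i/V_{i-1})_v}$ factors through $M(F_v)$ and $T$ maps to the identity of $M$, so every element of $\Gamma_{(P)}$ acts trivially on $\frak Z_{F,S}(P)$, fixes $\mu$, and fixes $a$; thus $\Gamma_{(P)}\subseteq\Gamma_a$. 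Combining, $\Gamma_{(P)}=\Gamma_a\cap\ker\bar\pi$, so $\Gamma_a/\Gamma_{(P)}\xrightarrow{\ \sim\ }\bar\pi(\Gamma_a)\subseteq M(F)\subseteq M(\mb{A}_{F,S})$.

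It remains to see $\bar\pi(\Gamma_a)$ is finite. Since $\Gamma_a\subseteq\Gamma\subseteq\Gamma_K$ and $\Gamma_a$ fixes $\mu$, its image lies in
$$\bar\pi(\Gamma_K\cap P(F))\ \cap\ \{\,g\in M(\mb{A}_{F,S})\mid g\mu=\mu\,\},$$
where $\bar\pi(\Gamma_K\cap P(F))$ is viewed in $M(\mb{A}_{F,S})=\prod_{v\in S}M(F_v)$ via the diagonal. The first set is discrete in $M(\mb{A}_{F,S})$ — precisely the discreteness proved inside \ref{ga=b1} for the image of $\Gamma_K\cap P(F)$, which uses that $S$ contains all archimedean places and that $M$ is semisimple. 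The second set is $\prod_{v\in S}\prod_i\{g\in\PGL_{(V_i/V_{i-1})_v}(F_v)\mid g\mu_{v,i}=\mu_{v,i}\}$, which is compact: each factor for archimedean $v$ is a maximal compact subgroup, and for non-archimedean $v$ the stabiliser of a point of the Bruhat--Tits building of $\PGL$ is a closed, bounded, hence compact subgroup. A discrete subset of a Hausdorff topological group contained in a compact set is finite, so $\bar\pi(\Gamma_a)$ is finite, proving (1) in situation (II), where $\bar{\frak X}=\bar X_{F,S}$. In situation (I), writing the $G(\mb{A}_F^S)/K$-component of $a$ as $gK$, one has $\Gamma_a\subseteq G(F)\cap gKg^{-1}$, which is again $S$-arithmetic, so the argument applies verbatim with this group in the role of $\Gamma_K$.

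The only non-formal step — and hence the main obstacle — is the finiteness in the previous paragraph: it rests on compactness of point-stabilisers in the relevant symmetric spaces and Bruhat--Tits buildings and, more essentially, on the discreteness of the Levi image of the $S$-arithmetic group, which is a packaging of reduction theory and is already available through \ref{ga=b1}/\ref{ga=b12}. Everything else is bookkeeping with the Levi/flag structure of $P$ (resp.\ the stabiliser of $W$) and with unwinding the definitions of $\Gamma_{(P)}$ and $\Gamma_{(W)}$ from \ref{top7}.
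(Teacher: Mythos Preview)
Your proof is correct and follows essentially the same approach as the paper's own proof: identify $\Gamma_a/\Gamma_{(P)}$ with the image of $\Gamma_a$ in $\prod_i\PGL_{V_i/V_{i-1}}(\mb{A}_{F,S})$, observe this image is discrete (as a subset of the image of the ambient $S$-arithmetic group $\Gamma\cap P(F)$, using \ref{ga=b1}) and is contained in the compact stabiliser of $\mu$, hence is finite. Your write-up is in fact more careful than the paper's on two points: you explicitly verify normality and the inclusion $\Gamma_{(P)}\subseteq\Gamma_a$, and you note that in situation (I) one must first pass to the $S$-arithmetic subgroup $G(F)\cap gKg^{-1}$ before invoking discreteness (the paper's one-line claim that the image of ``$\Gamma\cap P(F)$'' is discrete tacitly assumes this or is really only treating situation (II)).
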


\begin{pf} We prove (1), the proof of (2) being similar.
Let $(V_i)_{-1\le i \le m}$ 
be the flag corresponding to $P$. The image of $\Gamma \cap P(F)$ in 
$\prod_{i=0}^m  \PGL_{V_i/V_{i-1}}(\mb{A}_{F,S})$ is discrete. 
On the other hand, the stabilizer in $\prod_{i=0}^m  \PGL_{V_i/V_{i-1}}(\mb{A}_{F,S})$ of the image of $a$ in 
$\frak{Z}_{F,S}(P)$ is compact. 
Hence the image of $\Gamma_a$ in $\prod_{i=0}^m \PGL_{V_i/V_{i-1}}(F)$, which is isomorphic to $\Gamma_a/\Gamma_{(P)}$, is finite.
\end{pf}

\begin{sbthm}\label{thm5} Assume that $F$ is a function field and $\bar X= \bar X_{F,S_1}$, where $S_1$ consists of a single place $v$. 
Then the inclusion map $\Gamma \bs \frak X \hookrightarrow \Gamma \bs \bar{\frak X}$
is a homotopy equivalence. 
\end{sbthm}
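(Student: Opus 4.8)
The plan is to prove the stronger statement that the inclusion $X_v\hookrightarrow\bar X_{F,v}$ is a $\Gamma$-equivariant homotopy equivalence, and then to transport this through the product with $Z$ and the quotient by $\Gamma$. The hypothesis is used right away: since $F$ is a function field and $S_1=\{v\}$ is a singleton, $v$ is non-archimedean, and by \ref{keycor} and \ref{togr2} the boundary stratum $e(P):=\frak Z_{F,v}(P)=\prod_i X_{(V_i/V_{i-1})_v}$ attached to an $F$-rational parabolic $P$ of corank $m$ carries the \emph{trivial} action of $P_u(F_v)$, with $P_u(F_v)\bs\bar X_{F,v}(P)\cong\frak Z_{F,v}(P)\times\R^m_{\geq 0}$ and $P_u(F_v)\bs X_v$ corresponding to $\frak Z_{F,v}(P)\times\R^m_{>0}$. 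Thus locally the passage from $\frak X$ to $\bar{\frak X}$ is the passage from $\R^m_{>0}$ to $\R^m_{\geq 0}$ fibered over contractible data, a homotopy equivalence. (For $F$ a number field with $|S_1|=1$ one is instead in the reductive Borel--Serre case of \ref{Qcase}, where the boundary strata carry a nontrivial unipotent factor and the statement genuinely fails: for $d=2$, $F=\Q$, $v=\infty$ it would assert that the open modular curve includes homotopy-equivalently into the compactified one.)

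Write $\frak X=X_v\times Z$ and $\bar{\frak X}=\bar X_{F,v}\times Z$ with $Z$ as fixed at the start of this subsection and $\Gamma$ acting diagonally; then $\Gamma\bs\bar{\frak X}$ is compact Hausdorff by \ref{main}. It suffices to produce a $\Gamma$-equivariant homotopy $H\colon\bar X_{F,v}\times[0,1]\to\bar X_{F,v}$ with $H_0=\mathrm{id}$, $H_1(\bar X_{F,v})\subseteq X_v$, and $H_s(X_v)\subseteq X_v$ for all $s$: then $H\times\mathrm{id}_Z$ descends to $\Gamma\bs\bar{\frak X}$ and exhibits $\Gamma\bs\frak X\hookrightarrow\Gamma\bs\bar{\frak X}$ as a homotopy equivalence with inverse induced by $H_1\times\mathrm{id}_Z$. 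Equivalently, since all the spaces involved have the homotopy type of CW complexes (they are ANRs, being assembled from locally finite buildings, a manifold-with-corners factor $\R^m_{\geq 0}$, and proper quotients), it is enough, by the gluing theorem for homotopy equivalences over a numerable cover, to exhibit a finite open cover of $\Gamma\bs\bar{\frak X}$ such that the trace on $\Gamma\bs\frak X$ of every member and of every finite intersection of members includes into it by a homotopy equivalence. Using reduction theory (the finiteness of the number of $\Gamma$-conjugacy classes of $F$-rational parabolics, and \ref{red21}), together with the local structure of \ref{thm3} and the chart homeomorphism \ref{keycor}, one obtains such a cover from neighbourhoods which near a point with associated parabolic $P$ look like $\Gamma_a\bs(U''\times\R^m_{\geq 0})$ for a suitable neighbourhood $U''$, with $\Gamma_a$ acting on the factor $\R^m_{\geq 0}$ through a finite group of positive coordinate scalings; the identity $\bar X_{F,v}(P_I)\cap\bar X_{F,v}(P_J)=\bar X_{F,v}(P_{I\cap J})$ lets one arrange that finite intersections have the same shape.

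The per-piece assertion then reduces to the claim that $\R^m_{>0}\hookrightarrow\R^m_{\geq 0}$ is a $G$-equivariant homotopy equivalence for every finite group $G$ of positive coordinate scalings, together with the (formal) observation that crossing with the remaining factors and dividing by $\Gamma_a$ preserve this. For the former, both spaces are $G$-contractible --- the second via $t\mapsto(1-s)t$, the first via a straight-line contraction toward the $G$-fixed point produced by averaging an orbit in logarithmic coordinates --- and the inclusion is a homotopy equivalence on each fixed-point subspace, so equivariant Whitehead supplies a $G$-homotopy inverse, which one pulls back to $\Gamma_a$. The finiteness of $G$ is exactly where $|S_1|=1$ enters: by \ref{prop3}(1) and the $S$-unit theorem (which for a singleton $S_1$ forces $O_{S_1}^\times$, hence $\Gamma_a/(\Gamma\cap P_u(F))$, to be finite), $\Gamma_a$ acts on $\R^m_{\geq 0}$ through a finite group. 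A useful auxiliary fact, which also identifies each $\bar X_{F,v}(P)$ up to homotopy and trims the bookkeeping, is that scaling the coordinates $\phi'_{B,v}$ indexed by $\Delta(P)$ toward $0$ respects the equivalence relation of \ref{lst0}(4) --- the gluing inequalities only weaken --- and hence deformation retracts $\bar X_{F,v}(P)$ onto $e(P)=\frak Z_{F,v}(P)$.

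The main difficulty will be the globalization rather than any single local step. The most naive attempt to define $H$, a uniform ``radial expansion'' increasing all the coordinates $t_{v,i}$, does \emph{not} work: enlarging the $t_k$ makes the constraint $|(g^{-1}g')_{ij}|\leq(\prod_{i\leq k<j}t_k)^{-1}$ of \ref{lst0}(4) strictly harder, so previously equivalent points become inequivalent and the prescription fails to descend to $\bar X_{F,v}(P)$. One is therefore pushed to build $H$ stratum by stratum in order of decreasing parabolic --- deepest corner first, where near the stratum in question $\bar X_{F,v}(P)$ genuinely splits as $\frak Z_{F,v}(P)\times\R^m_{\geq 0}$ so that the push is unambiguous --- and then to patch the stratum-wise homotopies $\Gamma$-equivariantly, using the bounded-distortion estimate \ref{kep1} on overlaps of Siegel sets and the finite averaging above; in the descent formulation the analogous work is the explicit construction of the finite good cover and the verification of the cofibration hypotheses of the gluing theorem. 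All the analytic input is already in hand: the chart homeomorphisms \ref{keycor}, \ref{keyp}, reduction theory \ref{red21}, and the distortion bound \ref{kep1}.
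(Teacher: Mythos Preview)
Your strategy is essentially the paper's: use \ref{thm3} and \ref{keycor} to exhibit, around each boundary point, a neighborhood in $\Gamma\bs\bar{\frak X}$ of the form $Y\times U$ with $U\subset\R^m_{\ge 0}$ a small corner and $Y\times(U\cap\R^m_{>0})$ its trace on $\Gamma\bs\frak X$, then conclude by the evident local retraction. The paper organizes this as four Claims culminating in a homeomorphism $\Gamma_a\bs(\bar X_{F,v}(P)_U\times D)\cong(\Gamma_a\bs(\frak Z_{F,v}(P)\times D))\times U$.

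Where you diverge is in the treatment of the $\Gamma_a$-action on $\R^m_{\ge 0}$. You argue that it factors through a \emph{finite} group of positive coordinate scalings and then invoke equivariant Whitehead. But a finite subgroup of $(\R_{>0})^m$ is trivial: the only positive real root of unity is $1$. The paper makes exactly this observation, phrased as ``the determinant of an automorphism of $V_i/V_{i-1}$ of finite order has trivial absolute value at $v$,'' and so the action on the corner factor is \emph{trivial} and the homotopy equivalence $U\cap\R^m_{>0}\hookrightarrow U$ is immediate. Your equivariant machinery is correct but unnecessary. Relatedly, the paper does not appeal to the $S$-unit theorem or to \ref{prop3}(1) directly; instead it carefully builds a normal finite-index subgroup $H=\Gamma_D\cap P_u(F)$ of $\Gamma_a$ (your $\Gamma\cap P_u(F)$ would not in general be finite-index once $S_2\neq\varnothing$; one must first pass to the isotropy $\Gamma'$ of $a^v$), and then uses that $P_u(F_v)=HC$ for a compact $C$ acting trivially on $\bar X_{F,v}(P)_U$ when $U$ is small enough --- this last step is where non-archimedeanness of $v$ (via \ref{lst0}(4)) genuinely enters, and is the precise form of your remark that shrinking the $t_k$ only relaxes the gluing inequalities. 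Both your account and the paper's are terse about the final passage from local to global homotopy equivalence; the paper simply writes ``By Theorem~\ref{thm3}, this proves Theorem~\ref{thm5},'' relying implicitly on the manifold-with-corners structure just exhibited.
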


\begin{pf} 

Let $a\in \bar {\frak X}$.  In situation (I) (resp., (II)), write $a=(a_v, a^v)$ with $a_v\in \bar X_{F,v}$ 
and $a^v \in X_{S_2} \times G(\mb{A}_F^S)/K$  (resp., $X_{S_2}$).
Let $K'$ be the isotropy subgroup of $a^v$ in $G(\mb{A}^v_F)$ (resp., $\prod_{w\in S_2} G(F_w)$), and let $\Gamma' < \Gamma$ be the inverse image of $K'$ under the map $\Gamma \to G(\mb{A}^v_F)$ (resp., $\Gamma\to \prod_{w\in S_2} G(F_w)$). 

Let $P$ be the parabolic subgroup associated to $a$.  Let $\Gamma_a$ be the isotropy subgroup of $a$ in $\Gamma$,
which is contained in $P(F)$ and equal to the isotropy subgroup $\Gamma'_{a_v}$ of $a_v$ in $\Gamma'$.
In situation (I) (resp., (II)), take a $\Gamma_a$-stable open neighborhood $D$ of $a^v$ in $X_{S_2} \times G(\mb{A}_F^S)/K$ (resp., $X_{S_2}$) that has compact closure.  

\medskip

{\bf Claim 1.}
The subgroup
$\Gamma_D := \{\gamma\in \Gamma_a \mid\gamma x = x \text{ for all } x \in D\}$ 
of $\Gamma_a$ is normal of finite index.   

\begin{proof}[Proof of Claim 1] \renewcommand{\qedsymbol}{} 
	Normality follows from the $\Gamma_a$-stability of $D$.  For any $x$ in the closure $\bar{D}$ of $D$, there exists an open neighborhood $V_x$ of $x$ and a compact open subgroup $N_x$ of $G(\mb{A}^v_F)$ (resp., $\prod_{w\in S_2} G(F_w)$) in situation (I) (resp., (II)) such that $gy = y$ for all $g \in N_x$ and $y \in V_x$.  For a finite subcover $\{ V_{x_1}, \ldots, V_{x_n} \}$ of $\bar{D}$, the group $\Gamma_D$ is the inverse image in $\Gamma_a$ of $\bigcap_{i = 1}^n N_{x_i}$, so is of finite index.  
\end{proof}

{\bf Claim 2.}
The subgroup $H := \Gamma_D \cap P_u(F)$ of $\Gamma_a$ is normal of finite index.   

\begin{proof}[Proof of Claim 2] \renewcommand{\qedsymbol}{} 
Normality is immediate from Claim 1 as $P_u(F)$ is normal
in $P(F)$.    
Let $H' = \Gamma'_{(P)} \cap P_u(F)$, which has finite index in $\Gamma'_{(P)}$ and equals
$\Gamma' \cap P_u(F)$ by definition of $\Gamma'_{(P)}$.  Since $\Gamma'_{(P)} \subset \Gamma'_{a_v} \subset \Gamma'$ and $\Gamma'_{a_v} = \Gamma_a$, we have $H' = \Gamma_a \cap P_u(F)$ as well.  By Claim 1, we then have that $H'$ contains
$H$ with finite index, so $H$ has finite index in $\Gamma'_{(P)}$.  
Proposition \ref{prop3}(1) tells us that $\Gamma'_{(P)}$ is of finite index in $\Gamma'_{a_v} = \Gamma_a$.  
\end{proof}

Let $(V_i)_{-1\le i \le m}$ be the flag corresponding to $P$.
By Corollary \ref{keycor}, we have a homeomorphism 
$$
	\chi \colon P_u(F_v)\bs \bar X_{F,v}(P) \xrightarrow{\sim} \frak Z_{F,v}(P) \times \R_{\geq 0}^m
$$ 
on quotient spaces arising from the $P(F_v)$-equivariant homeomorphism $\psi_{P,v} = (\phi_{P,v},\phi'_{P,v})$ 
of \ref{togr9} (see \ref{togr} and \ref{togr2}).
\medskip

{\bf Claim 3.}
For a sufficiently small open neighborhood $U$ of $0 = (0,\dots,0)$ in $\R^m_{\geq 0}$, 
the map $\chi$ induces a homeomorphism
$$ \chi_U \colon H \bs \bar X_{F,v}(P)_U \xrightarrow{\sim} \frak Z_{F,v}(P) \times U,$$
where $\bar X_{F,v}(P)_U$ denotes the inverse image of $U$
under $\phi'_{P,v} \colon \bar X_{F,v}(P) \to \R^m_{\geq 0}$.

\begin{proof}[Proof of Claim 3] \renewcommand{\qedsymbol}{} 
By definition, $\chi$ restricts to a homeomorphism 
$$
	P_u(F_v) \bs \bar{X}_{F,v}(P)_U \xrightarrow{\sim} \frak Z_{F,v}(P) \times U
$$ 
for any open neighborhood $U$ of $0$.
For a sufficiently large compact open subset $C$ of $P_u(F_v)$, we have 
$P_u(F_v)= HC$. For $U$ sufficiently small, every $g \in C$ fixes all $x\in \bar X_{F,v}(P)_U$,
which yields the claim.
\end{proof}

{\bf Claim 4.}
The map $\chi_U$ and the identity map on $D$ induce a homeomorphism
$$
	\chi_{U,a} \colon \Gamma_a\bs (\bar X_{F,v}(P)_U\times D) \xrightarrow{\sim} (\Gamma_a \bs \frak (\frak{Z}_{F,v}(P) \times D)) \times U.
$$

\begin{proof}[Proof of Claim 4] \renewcommand{\qedsymbol}{} 
The quotient group $\Gamma_a/H$ is finite by Claim 2.  Since the determinant of an automorphism of $V_i/V_{i-1}$ of finite order has trivial absolute value at $v$, the $\Gamma_a$-action on $\R_{\geq 0}^m$ is trivial. 
Since $H$ acts trivially on $D$, the claim follows from Claim 3.
\end{proof}

Now let $c\in \R_{>0}^m$, and set 
$U= \{t\in \R^m_{\geq 0} \mid t_i<c \text{ for all } 1\leq i\leq m\}$.
Set $(X_v)_U = X_v \cap \bar{X}_{F,v}(P)_U$.
If $c$ is sufficiently small, then
$$
	(\Gamma_a \bs (\frak{Z}_{F,v}(P) \times D)) \times (U\cap \R_{>0}^m) \hookrightarrow (\Gamma_a \bs (\frak{Z}_{F,v}(P) \times D)) 
	\times U
$$ 
is a homotopy equivalence, and we can apply $\chi_{U,a}^{-1}$ to both sides to see that the inclusion map 
$$
	\Gamma_a\bs ( (X_v)_U \times D) \hookrightarrow 
\Gamma_a\bs (\bar X_{F,v}(P)_U\times D)
$$ 
is also a homotopy equivalence.  By Theorem \ref{thm3}, this proves Theorem \ref{thm5}. 
\end{pf}

\begin{sbrem} Theorem \ref{thm5} is well-viewed as a function field analogue of the homotopy equivalence for Borel-Serre spaces of \cite{BS}. 
\end{sbrem}

\begin{sbpara} Theorem \ref{main} remains true if we replace $G=\PGL_V$ by $G=\SL_V$ in \ref{mains3} and \ref{main}.
It also remains true if we replace $G=\PGL_V$ by $G=\GL_V$ and we replace $\bar {\frak X}$ in \ref{main} in situation (I) (resp., (II)) by 
$$
 	\bar X \times X_{S_2} \times (\R_{>0}^S \times G(\mb{A}_F^S)/K)_1 \quad
	\text{(resp., }\bar X \times X_{S_2} \times (\R_{>0}^S)_1),
$$ 
where $(\;\,)_1$ denotes the kernel of 
$$
	((a_v)_{v\in S}, g) \mapsto  |\text{det}(g)|\prod_{v \in S} a_v \quad
	\text{(resp., }(a_v)_{v\in S}\mapsto \prod_{v \in S} a_v),
$$ 
and $\gamma \in \GL_V(F)$ (resp., $\gamma \in \Gamma_K$) acts on this kernel by multiplication by $((|\det(\gamma)|_v)_{v\in S}, \gamma)$ (resp., 
 $(|\text{det}(\gamma)|_v)_{v\in S}$). 

Theorems \ref{thm3} and \ref{thm5} also remain true under these modifications. 
These modified versions of the results are easily reduced to the original case $G=\PGL_V$. 
\end{sbpara}

\subsection{Subjects related to this paper} \label{toro}

\begin{sbpara} In this subsection, 
as possibilities of future applications of this paper, we describe connections with the study of
\begin{itemize}
	\item toroidal compactifications of moduli spaces of Drinfeld modules (\ref{pk}--\ref{cone})
	\item the asymptotic behavior of Hodge structures and $p$-adic Hodge structures associated to a 
	degenerating family of motives over a number field  (\ref{sbj1}, \ref{sbj2}), and
	\item modular symbols over function fields (\ref{sbj3}, \ref{sbj4}).
\end{itemize}
 
\end{sbpara}

\begin{sbpara} \label{pk}
In \cite{P2}, Pink constructed a compactification of the moduli space of Drinfeld modules that is similar to the Satake-Baily-Borel compactification of the moduli space of polarized abalian varieties. 
In a special case, it had been previously constructed by Kapranov \cite{K}.

In \cite{P1},
Pink, sketched a method for constructing a compactification of the moduli space of Drinfeld modules that is 
similar to the toroidal compactification of the moduli space of polarized abelian varieties (in part, using ideas of K.~Fujiwara). However, the details of the construction have not been published.  
Our plan for constructing toroidal compactifications seems to be different from that of \cite{P1}.

\end{sbpara}

\begin{sbpara}\label{descr} 
We give a rough outline of the relationship that we envision between this paper and the analytic theory of toroidal compactifications.   Suppose that $F$ is a function field, and fix a place $v$ of $F$. Let $O$ be the ring of all elements of $F$ which are integral outside $v$.  In \cite{D}, the notion of a Drinfeld $O$-module of rank $d$ is defined,
and the moduli space of such Drinfeld modules is constructed.

Let $\C_v$ be the completion of an algebraic closure of $F_v$ and let $|\;\;| \colon \C_v \to \R_{\geq 0}$ be the absolute value which extends the normalized absolute value of $F_v$. 
Let $\Omega\subset \mb{P}^{d-1}(\C_v)$ be the $(d-1)$-dimensional Drinfeld upper half-space consisting of all points $(z_1:\dots :z_d)\in \mb{P}^{d-1}(\C_v)$ such that $(z_i)_{1\leq i\leq d}$ is linearly independent over $F_v$. 

For a compact open subgroup $K$ of $\GL_d(\mb{A}_F^v)$,  the set of $\C_v$-points of the moduli space  $M_K$ of Drinfeld $O$-modules of rank $d$ with $K$-level structure is expressed as $$M_K(\C_v)=\GL_d(F) \bs (\Omega\times \GL_d(\mb{A}^v_F)/K)$$ (see \cite{D}).

Consider the case $V=F^d$ in \S\ref{globspace} and \S\ref{quotS}. 
We have a map $\Omega \to X_v$ which sends $(z_1:\dots : z_d)\in \Omega$ to the class of the the norm 
$V_v=F^d_v\to \R_{\geq 0}$ given by $(a_1,\dots, a_d) \mapsto |\sum_{i=1}^d a_iz_i|$ for $a_i\in F_v$. 
This map  induces a 
 canonical continuous map 
\begin{enumerate}
\item[(1)]  $M_K(\C_v) = \GL_d(F) \bs (\Omega\times \GL_d(\mb{A}^v_F)/K)\to \GL_d(F)\bs (X_v\times \GL_d(\mb{A}^v_F)/K).$
\end{enumerate}
The map (1) extends to a canonical continuous map
\begin{enumerate}
\item[(2)] $\bar M_K^{\KP}(\C_v) \to \GL_d(F)\bs (\bar X_{F,v}^{\flat}\times \GL_d(\mb{A}^v_F)/K)$,
\end{enumerate}
where $\bar M_K^{\KP}$ denotes the compactification of Kapranov and Pink of $M_K$.  In particular, $\bar M_K^{\KP}$ is related to $\bar X_{F,v}^{\flat}$. On the other hand, the toroidal compactifications of $M_K$ should be related to $\bar X_{F,v}$. If we denote by $\bar M_{K}^{\tor}$ the projective limit of all toroidal compactifications of $M_K$, then the map of (1) should extend to a canonical continuous map
\begin{enumerate}
\item[(3)] $\bar M_{K}^{\tor}(\C_v) \; \to \GL_d(F)\bs (\bar X_{F,v}\times \GL_d(\mb{A}^v_F)/K)$.
\end{enumerate}
that fits in a commutative diagram
$$ \begin{tikzcd}[row sep = small] 
	\bar M_{K}^{\tor}(\C_v) \arrow{r} \arrow{d} & 
	\GL_d(F)\bs (\bar X_{F,v}\times \GL(\mb{A}^v_F)/K) \arrow{d} \\
	M_K^{\KP}(\C_v) \arrow{r} & \GL_d(F) \bs (\bar X_{F,v}^{\flat}\times \GL_d(\mb{A}^v_F)/K).
\end{tikzcd}
$$

\end{sbpara}

\begin{sbpara} The expected map of \ref{descr}(3) is the analogue of the canonical continuous map from the projective limit of all toroidal compactifications of the moduli space of polarized abelian varieties to the reductive Borel-Serre compactification (see \cite{GT, KU}).

From the point of view of our study, the reductive Borel-Serre compactification and $\bar X_{F,v}$ are enlargements of spaces of norms.  A polarized abelian variety $A$ gives a norm on the polarized Hodge structure associated to $A$ (the Hodge metric).  This relationship between a polarized abelian variety and a norm forms the foundation of the relationship between the toroidal compactifications of a moduli space of polarized abelian varieties and the reductive Borel-Serre compactification. This is similar to the relationship between $M_K$ and the space of norms $X_v$ given by the map of \ref{descr}(1), as well as the relationship between  $\bar M_{K}^{\tor}$ and $\bar X_{F,v}$ given by \ref{descr}(3). 

\end{sbpara}

\begin{sbpara} \label{cone} In the usual theory of toroidal compactifications, cone decompositions play an important role. In the toroidal compactifications of \ref{descr}, the simplices of Bruhat-Tits buildings (more precisely, the simplices contained in the fibers of $\bar X_{F,v}\to \bar X_{F,v}^{\flat}$) should play the role of the cones in cone decompositions.

\end{sbpara}

\begin{sbpara}\label{sbj1}
We are preparing a paper in which our space $\bar X_{F,S}$ with $F$ a number field and with $S$
containing a non-archimedean place appears in the following way.

Let $F$ be a number field, and let $Y$ be a polarized projective smooth
variety over $F$. Let $m\geq 0$, and let $V =H^m_{\dR}(Y)$ be the de Rham cohomology. 
For a place $v$ of $F$, let $V_v = F_v \otimes_F V$.

For an archimedean place $v$ of $F$, it is well known that $V_v$ has a Hodge metric.  
For $v$ non-archimedean, we can under certain assumptions define
a Hodge metric on $V_v$  by the method illustrated in the example of \ref{sbj2} below. 
The $[F_v:\Q_v]$-powers of these Hodge metrics for $v\in S$ are norms and therefore
provide an element of $\prod_{v\in S} X_{V_v}$. When $Y$ degenerates,
this element of $\prod_{v\in S} X_{V_v}$ can converge to a boundary
point of $\bar X_{F,S}$.

\end{sbpara}

\begin{sbpara}\label{sbj2}  Let $Y$ be an elliptic
curve over a number field $F$, and take $m=1$.

Let $v$ be a non-archimedean place of $F$,
and assume that $F_v \otimes_F Y$ is a Tate elliptic curve of
$q$-invariant $q_v \in F_v^{\times}$ with $|q_v| < 1$. 
Then the first
log-crystalline cohomology group $D$ of the special
fiber  of this elliptic curve 
is a free module of rank $2$ over the
Witt vectors $W(k_v)$ with
a basis $(e_1, e_2)$ on which the Frobenius $\varphi$ acts as
$\varphi(e_1)=e_1$ and $\varphi(e_2)=pe_2$, where $p = \mathrm{char}\,k_v$. The first $\ell$-adic \'etale cohomology group of this elliptic curve is 
a free module of rank $2$ over $\Z_{\ell}$ with a basis $(e_{1,\ell}, e_{2,\ell})$ such that
the
inertia subgroup of $\Gal(\bar F_v/F_v)$ fixes $e_1$. 
The monodromy
operator $N$ satisfies 
$$
Ne_2=\xi'_ve_1, \quad  Ne_1=0, \quad Ne_{2,\ell}=\xi'_ve_{1,\ell}, \quad Ne_{1,\ell}=0
$$
where
$\xi'_v =\text{ord}_{F_v}(q_v) > 0$.  The standard polarization $\langle\;\,,\;\rangle$ of the elliptic curve 
satisfies $\langle e_1, e_2\rangle=1$ and hence 
$$
\langle Ne_2, e_2\rangle =\xi'_v,\quad \langle e_1, N^{-1}e_1\rangle = 1/\xi'_v\quad 
\langle Ne_{2,\ell}, e_{2,\ell}\rangle =\xi'_v,\quad \langle e_{1,\ell}, N^{-1}e_{1,\ell}\rangle = 1/\xi'_v.
$$
For $V = H_{\dR}^1(Y)$, 
we have an isomorphism $V_v \cong F_v
\otimes_{W(k_v)} D$.
The Hodge metric $|\;\; |_v$ on $V_v$ is  defined by
$$
	|a_1e_1+a_2e_2|_v= \max(\xi_v^{-1/2}|a_1|_p, \xi_v^{1/2}|a_2|_p)
$$
for $a_1,a_2\in F_v$, where $|\;\;|_p$ denotes the absolute value on $F_v$ satisfying $|p|_p=p^{-1}$
and 
$$
\xi_v: =- \xi_v' \log(|\varpi_v|_p)= -\log(|q_v|_p)>0,
$$
where $\varpi_v$ is a prime element of $F_v$. 
That is, to define the Hodge metric on $V_v$, we  modify the naive
metric (coming from the integral structure of the log-crystalline
cohomology) by using $\xi_v$ which is determined by the polarization $\langle\;\,,\;\rangle$, the monodromy operator $N$, and the integral structures of the log-crystalline and $\ell$-adic cohomology groups (for $\ell\neq p$). 

This is similar to what happens at an archimedean place $v$.  We have
$Y(\C) \cong \C^\times/q_v^{\Z}$ with $q_v \in  F_v^{\times}$.
Assume for simplicity that we can take $|q_v|<e^{-2\pi}$ where
$|\;\;|$ denotes the usual absolute value. Then $q_v$ is determined by
$F_v \otimes_F Y$ uniquely. Let $\xi:=-\log(|q_v|)>2\pi$.
If $v$ is real, we further assume that $q_v>0$ and that we have an
isomorphism $Y(F_v)\cong F_v^\times/q_v^{\Z}$ which is compatible with
$Y(\C)\cong \C^\times/q_v^{\Z}$.
Then in the case $v$ is real (resp., complex), there is a basis
$(e_1,e_2)$ of $V_v$ such that $(e_1,(2\pi i)^{-1}e_2)$ is a
$\Z$-basis of $H^1(Y(\C), \Z)$ and such that
the Hodge metric $|\;\;|_v$ on $V_v$ satisfies  $|e_1|_v= \xi_v^{-1/2}$
and $|e_2|_v=\xi_v^{1/2}$ (resp., $||e_2|_v-\xi_v^{1/2}|\leq \pi
\xi_v^{-1/2}$).

Consider for example the elliptic curves $y^2=x(x-1)(x-t)$ with $t\in
F=\Q, t\neq 0,1$.  As $t$
approaches $1 \in \Q_v$ for all $v\in S$, the elliptic curves $F_v\otimes_F Y$ satisfy
the above assumptions for all $v\in S$, and  each $q_v$ approaches
$0$, so
$\xi_v$ tends to $\infty$. The corresponding elements of $\prod_{v\in
S} X_{V_v}$ defined by the classes
of the $|\;\;|_v$ for $v \in S$ converge to the unique boundary point
of $\bar X_{F,S}$ with associated
parabolic equal to the Borel subgroup of upper triangular matrices in
$\text{PGL}_V$ for the basis
$(e_1, e_2)$.

We hope that this subject about $\bar X_{F,S}$ is an interesting
direction to be studied.  It may be related to  the asymptotic
behaviors of heights of motives in degeneration studied in \cite{Ko}.
\end{sbpara}

\begin{sbpara}\label{sbj3} Suppose that $F$ is a function field and let $v$ be a place of $F$. 
Let $\Gamma$ be as in \ref{thm0}. 

Kondo and Yasuda \cite{KY} proved that the image of $H_{d-1}(\Gamma \bs X_v,\Q) \to H_{d-1}^{\BM}(\Gamma \bs X_v,\Q)$ is generated by modular symbols, where $H_*^{\BM}$ denotes Borel-Moore homology.
Our hope is that the compactification $\Gamma\bs \bar X_{F,v}$ of $\Gamma \bs X_v$ is useful in further studies of modular symbols. 

Let $\partial:= \bar X_{F,v} \setminus X_v$. Then we have an isomorphism $$H_*^{\BM}(\Gamma\bs X_v,\Q)\cong H_*(\Gamma \bs \bar X_{F,v}, \Gamma \bs \partial, \Q)$$ and  an exact sequence 
$$\dots \to H_i(\Gamma\bs \bar X_{F,v}, \Q)\to H_i(\Gamma\bs \bar X_{F,v}, \Gamma\bs \partial, \Q) \to H_{i-1}(\Gamma \bs \partial,\Q) \to H_{i-1}(\Gamma\bs \bar X_{F,v}, \Q)\to \dots.$$

Since $\Gamma \bs X_v \to \Gamma \bs \bar X_{F,v}$ is a homotopy equivalence by Thm. \ref{thm5}, we have
$$H_*(\Gamma \bs X_v, \Q) \xrightarrow{\sim} H_*(\Gamma\bs \bar X_{F,v}, \Q).$$
Hence the result of Kondo and Yasuda shows that the kernel of 
$$H_{d-1}^{\BM}(\Gamma\bs X_v,\Q)\cong H_{d-1}(\Gamma \bs \bar X_{F,v}, \Gamma \bs \partial, \Q)\to H_{d-2}(\Gamma \bs \partial,\Q)$$
is generated by modular symbols. 

If we want to prove that $H_{d-1}^{\BM}(\Gamma\bs X_v,\Q)$ is generated by modular symbols, it is now sufficient to prove that the kernel of $H_{d-2}(\Gamma \bs \partial,\Q) \to H_{d-2}(\Gamma\bs \bar X_{F,v}, \Q)$ is generated by the images (i.e., boundaries) of modular symbols.
\end{sbpara}

\begin{sbpara}\label{sbj4}
In \ref{sbj3}, assume $d=2$.
Then we can prove that $H_1^{\BM}(\Gamma \bs X_v,\Q)$ is generated by modular symbols. 
In this case, the map $H_0(\Gamma \bs \partial, \Q)=\text{Map}(\Gamma \bs \partial, \Q)  \to H_0(\bar X_{F,v}, \Q)=\Q$ is just summation, and it is clear that the kernel of it is generated by the boundaries of modular symbols. 
\end{sbpara}

\end{document}